 \newtheorem{thm}{Theorem}[section]
 \newtheorem{prop}[thm]{Proposition}
 \newtheorem{cor}[thm]{Corollary}
 \newtheorem{lem}[thm]{Lemma}
\theoremstyle{definition}
\newtheorem{defn}[thm]{Definition}
\theoremstyle{remark}
\renewcommand{\L}{\ifmmode {\mathcal{L}}\else$\mathcal{L}$\ \fi}
\newcommand{\bbC}{\ifmmode {\mathbb{C}}\else$\mathbb{C}$\ \fi}
\newcommand{\bbR}{\ifmmode {\mathbb{R}}\else$\mathbb{R}$\ \fi}
\newcommand{\R}{\mbox{$\Bbb R$}}
\newcommand{\be}{\begin{equation}}
\newcommand{\ee}{\end{equation}}
\newcommand{\fpbar}{\ifmmode {\overline{\mathbb{F}_p}}\else$\mathbb{F}_p$\ \fi}
\newcommand{\fp}{\ifmmode {\mathbb{F}_p}\else$\mathbb{F}_p$\ \fi}
\newcommand{\zp}{\ifmmode {\mathbb{Z}_p}\else$\mathbb{Z}_p$\ \fi}
\newcommand{\transpose}[1]{\text{$^t\!#1$}}
\newcommand{\Z}{\mathbb{Z}}
\newcommand{\M}{\ifmmode {\frak M}\else${\frak M}$ \fi}
\newcommand{\m}{\ifmmode {\frak m}\else$\frak m$ \fi}
\newcommand{\mh}{\ifmmode {\frak m}(H)\else${\frak m}(H)$ \fi}
\newcommand{\p}{\ifmmode {\frak p}\else${\frak p}$\ \fi}
\renewcommand{\P}{\ifmmode {\frak P}\else${\frak P}$\ \fi}
\newcommand{\e}{\ifmmode {\mathcal{E}}\else$\mathcal{E}$ \fi}
\newcommand{\G}{\ifmmode {\mathcal{G}}\else${\mathcal{G}}$\ \fi}
\newcommand{\A}{\ifmmode {\mathcal{A}}\else${\mathcal{ A}}$\ \fi}
\newcommand{\Qp}{\ifmmode {{\Bbb Q}_p}\else${\Bbb Q}_p$\ \fi}
\newcommand{\qp}{\ifmmode {{\Bbb Q}_p}\else${\Bbb Q}_p$\ \fi}
\newcommand{\ql}{\ifmmode {{\Bbb Q}_l}\else${\Bbb Q}_l$\ \fi}
\newcommand{\Q}{\ifmmode {\Bbb Q}\else${\Bbb Q}$\ \fi}
\newcommand{\q}{\ifmmode {\Bbb Q}\else${\Bbb Q}$\ \fi}
\def\sectionnam{\@empty}
\def\subsectionnam{\@empty}
\begin{document}

\title[The torsion-congruences for $p$-adic $L$-functions of unitary groups] 
 {Non-abelian $p$-adic $L$-functions and Eisenstein series of unitary groups; the CM method.}

\author{Thanasis Bouganis}
\address{Universit\"{a}t Heidelberg\\ Mathematisches Institut\\
Im Neuenheimer Feld 288\\ 69120 Heidelberg, Germany.}
\email{bouganis@mathi.uni-heidelberg.de}
\thanks{The author acknowledges support by the ERC}


\maketitle

In this work we prove the so-called ``torsion congruences'' between
abelian $p$-adic $L$-functions that are related to automorphic
representations of definite unitary groups. These congruences play a
central role in the non-commutative Iwasawa theory as it became
clear in the works of Kakde, Ritter and Weiss on the non-abelian
Main Conjecture for the Tate motive. We tackle these congruences for
a general definite unitary group of $n$ variables and we obtain more
explicit results in the special cases of $n=1$ and $n=2$. In both of
these cases we also explain their implications for some particular
``motives'', as for example elliptic curves with complex
multiplication. We use two approaches; both of them rely on the
doubling-method developed by Shimura, Garrett, Piateski-Shapiro and
Rallis. The first one, which is presented in this paper, is based on
the fact that the special values under consideration can be obtained
as values of Siegel-type Eisenstein series of $U(n,n)$ on CM points.
The second one, which is presented in \cite{Bouganis4}, is based on
the existence of particular Klingen-type Eisenstein series of
$U(n+1,1)$ whose constant term under a proper Fourier-Jacobi
expansion involves the $L$-values that we consider. In
\cite{Bouganis5} we apply the methods developed in this work in
order to tackle also the so-called M\"{o}bious-Wall congruences and
we also discuss the possibility to extend our methods to handle
special values of indefinite unitary groups.
\newline

\section{Introduction}

In \cite{CFKSV,FK} a vast generalization of the Main Conjecture of
the classical (abelian) Iwasawa theory to a non-abelian setting was
proposed. As in the classical theory, the non-abelian Main
Conjecture predicts a deep relation between an analytic object (a
non-abelian $p$-adic $L$-function) and an algebraic object (a Selmer
group or complex over a non-abelian $p$-adic Lie extension).
However, the evidences for this non-abelian Main Conjecture are
still very modest. One of the central difficulties of the theory
seems to be the construction of non-abelian $p$-adic $L$-functions.
Actually, the only known results in this direction are mainly
restricted to the Tate motive, initially for particular totally real
$p$-adic Lie extensions (see \cite{Hara,Kakde,Kato,RW}) and later
for a large family of totally real $p$-adic Lie extension as it is
shown by Ritter and Weiss in \cite{RW,RW2} and Kakde \cite{Kakde2}.

For other motives besides the Tate motive not much is known. For
elliptic curves there are some evidences for the existence of such
non-abelian $p$-adic $L$-functions offered in
\cite{Bouganis2,DelbWard} and also some computational evidences
offered in \cite{DelbWard2,Dokchitsers}. Also, there is some recent
progress, achieved in \cite{BV}, for elliptic curves with complex
multiplication defined over $\mathbb{Q}$ with repsect the $p$-adic
Lie extension obtained by adjoing to $\mathbb{Q}$ the $p$-power
torsion points of the elliptic curve.

The main aim of this work, as well as its companion work
\cite{Bouganis4}, is to tackle the question of the existence of
non-abelian $p$-adic $L$-functions for ``motives'', whose classical
$L$-functions can be studied through $L$-functions of automorphic
representations of definite unitary groups. In this work we will
prove the so called ``torsion congruences'' (to be explained below)
for these motives. In a second part of this work \cite{Bouganis5} we
use our approach to tackle also the so called M\"{o}bious-Wall
congruences (as for example are described in \cite{RW2}). Without
going into details, we simply mention here that these results allow
one to conclude the existence of the non-abelian $p$-adic
$L$-function in the $K_1(\widehat{\Lambda(G)_{S^*}})$. The stronger
result, that the non-abelian $p$-adic $L$-function actually lies in
$K_1(\Lambda(G)_{S^*})$, as is conjectured in \cite{CFKSV}, needs,
with the present knowledge, one to assume that the classical abelian
Main Conjecture holds for all the subfields of the $p$-adic Lie
extension that corresponds to $G$.

\textbf{The ``torsion-congruences'' for motives:} Let $p$ be an odd
prime number. We write $F$ for a totally real field and $F'$ for a
totally real Galois extension with $\Gamma:=Gal(F'/F)$ of order $p$.
We assume that the extension is unramified outside $p$. We write
$G_F:=Gal(F(p^\infty)/F)$ where $F(p^\infty)$ is the maximal abelian
extension of $F$ unramified outside $p$ (may be ramified at
infinity). We make the similar definition for $F'$. Our assumption
on the ramification of $F'/F$ implies that there exist a transfer
map $ver:G_F \rightarrow G_{F'}$, which induces also a map
$ver:\mathbb{Z}_p[[G_F]] \rightarrow \mathbb{Z}_p[[G_{F'}]]$,
between the Iwasawa algebras of $G_F$ and $G_{F'}$, both of them
taken with coefficients in $\mathbb{Z}_p$. Let us now consider a
motive $M/F$ (by which we really mean the usual realizations of it
and their compatibilities) defined over $F$ such that its $p$-adic
realization has coefficients in $\mathbb{Z}_p$. Then under some
assumptions on the critical values of $M$ and some ordinarity
assumptions at $p$ (to be made more specific later) it is
conjectured that there exists an element $\mu_F \in
\mathbb{Z}_p[[G_F]]$ that interpolates the critical values of $M/F$
twisted by characters of $G_F$. Similarly we write $\mu_{F'}$ for
the element in $\mathbb{Z}_p[[G_{F'}]]$ associated to $M/{F'}$, the
base change of $M/F$ to $F'$. Then the so-called torsion congruences
read
\[
ver(\mu_{F}) \equiv \mu_{F'} \mod{T},
\]
where $T$ is the trace ideal in $\mathbb{Z}_p[[G_F']]^\Gamma$
generated by the elements $\sum_{\gamma \in \Gamma} \alpha^\gamma$
with $\alpha \in \mathbb{Z}_p[[G_F']]$. These congruences have been
proved by Ritter and Weiss \cite{RW} for the Tate motive and under
some assumptions by the author \cite{Bouganis2} for $M/F$ equal to
the motive associated to an elliptic curve with complex
multiplication. In this work we prove these congruences for motives
that their $L$-functions can be studied by automorphic
representations of definite unitary groups.

\textbf{The general setting of this work:} We keep the notations
already introduced above. We now write $K$ for a totally imaginary
quadratic extension of $F$, that is $K$ is a CM field. On our prime
number $p$ we put the following ordinary assumption: all primes
above $p$ in $F$ are split in $K$. As before we consider a totally
real Galois extension $F'$ of $F$ of degree $p$ that is ramified
only at $p$. We write $K':=F'K$, a CM field with ${K'}^+=F'$. Now we
fix, once and for all, the embeddings $incl_\infty :
\overline{\mathbb{Q}} \hookrightarrow \mathbb{C}$ and $incl_p :
\overline{\mathbb{Q}} \hookrightarrow \mathbb{C}_p$. Next we fix,
with respect to the fixed embeddings $(incl_\infty,incl_p)$ an
ordinary CM type $\Sigma$ of $K$ and denote this pair by
$(\Sigma,K)$. We recall that $\Sigma$ is called ordinary (see
\cite{Katz2}) when the following condition is satisfied: ``whenever
$\sigma \in \Sigma$ and $\lambda \in \Sigma^\rho$ ($\rho$ is the
complex conjugation), the $p$-adic valuations induced from the
$p$-adic embeddings $incl_p \circ \sigma$ and $incl_p \circ \lambda$
are inequivalent''. We note that the splitting condition on $p$
implies the existence of such an ordinary CM type. We consider the
induced type $\Sigma'$ of $\Sigma$ to $K'$. That is, we fix a CM
type for $K'$ such that for every $\sigma \in \Sigma'$ we have that
its restriction $\sigma_{|K}$ to $K$ lies in $\Sigma$. We write
$(\Sigma',K')$ for this CM type, and we remark that this is also an
ordinary CM type. In addition to the splitting condition we also
impose the condition that the reflex field $E$ of $(K,\Sigma)$ has
the property that $E_w = \mathbb{Q}_p$, where $w$'s are the places
of $E$ corresponding to the embeddings $E \hookrightarrow
\overline{\mathbb{Q}} \hookrightarrow \mathbb{C}_p$. For example
this is the case if $p$ does not ramify in $F$ or if the type
$(K,\Sigma)$ is the lift of a type $(K_0,\Sigma_0)$ where $K_0$ is a
quadratic imaginary field, such that $K/K_0$ is a Galois extension
and $p$ splits in $K_0$.

Now we are ready to define the motives $M/F$ that appear in this
work. These are going to be of the form $M(\psi)/F \otimes M(\pi)/F$
where $M(\psi)/F$ and $M(\pi)/F$ are motives over $F$ to be defined
now. We start with $M(\psi)/F$ as first.

Let $\psi$ be a Hecke character of $K$ and assume that its infinite
type is $k\Sigma$ for some integer $k\geq 1$. We write $M(\psi)/F$
for the motive over $F$ that is obtained by ``Weil Restriction'' to
$F$ from the rank one motive over $K$ associated to $\psi$. In
particular we have that $L(M(\psi)/F,s)=L(\psi,s)$ or more generally
for a finite character $\chi$ of $G_F$ we have
\[
L(M(\psi) \otimes \chi,s)=L(\psi\tilde{\chi},s),
\]
where $\tilde{\chi}=\chi \circ N_{K/F}$, the base change of $\chi$
to $G(KF(p^\infty)/K)$. Now we consider the character $\psi':=\psi
\circ N_{K'/K}$, the base change of $\psi$ from $K$ to $K'$. It is a
Hecke character of infinite type $k\Sigma'$. Moreover we have that
$M(\psi')/F'$ is the base change of $M(\psi)/F$ to $F'$.

For the motives of the form $M(\pi)/F$ mentioned above we need to
introduce some notation from the theory of unitary groups. We
consider now a hermitian space $(W,\theta)$ over $K$, that means
that $W$ is a vector space over $K$, we write $n$ for its dimension,
and $\theta$ is a non-degenerate hermitian form on it. Moreover we
assume that the signature of the form $\theta_{\sigma}$ on the
complex vector space $W \otimes_{K,\sigma} \mathbb{C}$ is the same
for every embedding $\sigma : K \hookrightarrow \mathbb{C}$ in
$\Sigma$. In particular this implies that our hypothesis on the
splitting of the primes above $p$ in $K$ is the usual ordinary
condition; for a more general ordinarity condition the reader should
see \cite[page 8]{HLS2}. We write $U(\theta)$ for the corresponding
unitary group. That is, as a functor from the category of
$F$-algebras to the category of groups is defined by
\[
GU(\theta)(R):= \big\{ g \in GL_{K \otimes_F R}(W \otimes_F R) |
\theta(gx,gy) =\nu(g)\theta(x,y),\,\,\,x,y \in W \otimes_{F}
R,\,\,\,\nu(g) \in R^\times \big\},
\]
and
\[
U(\theta)(R)=\big\{ g \in GU(\theta)(R) | \nu(g)=1  \big\},
\]
We let $U(\theta')$ be the group $Res_{F'/F}U(W)/F'$, that is the
unitary group corresponding to $(W',\theta)$ where $W'= W \otimes_K
K'$. The $F$-rational points of $U(\theta')$ are the $F'$-rational
points of $U(\theta)$. We consider now a motive $M(\pi)/F$ over $F$
such that there exists an automorphic representation $\pi$ of some
unitary group $U(\theta)(\mathbb{A}_F)$ with the property that the
$L$-function $L(M(\pi)/K,s)$ of $M(\pi)/K$ over $K$ is equal to
$L(\pi,s)$. We now write $\pi'$ for the base change of $\pi$ to
$U(\theta')$. This exists in this general setting only conjecturally
by Langlands' functoriality conjectures but in the cases of interest
that we are going to consider later it is known to exist. Then we
have that
\[
L(M(\pi)/K',s)=L(\pi',s)=L(M(\pi')/K',s).
\]

Our aim in this work is to prove the torsion congruences for the
motive $M/F:=M(\psi)/F \otimes M(\pi)/F$ under the following three
assumptions
\begin{enumerate}
\item The $p$-adic realizations of $M(\pi)$ and $M(\psi)$ have
$\mathbb{Z}_p$-coefficients.
\item The motive $M(\pi)/F$ is associated to an automorphic
representation $\pi$ of a definite unitary group.
\item Let $n$ be the number of variables of the unitary group associated to
$M(\pi)/F$. Then for the weight of the character $\psi$ we have $k
\geq n$.
\end{enumerate}

Now we indicate some cases of special interest that are included in
the motives that we described above.

\textbf{The case $n=1$:} The main application in this case is
obtained with $\pi$ trivial. In this setting, our theorem proves the
``torsion congruences'' for elliptic curves with complex
multiplication, or in general for Hilbert modular forms of CM type.
Results in this direction have been also obtained in our previous
work \cite{Bouganis2} on these congruences. However we stress that
we do not only reobtain these results with our new methods but also
improve on the assumptions that we made there. Actually we obtain
the same result almost unconditionally. Finally we mention that in
\cite{Bouganis2} the main ingredient was the Eisenstein measure of
Katz as in \cite{Katz2} and is related to the automorphic theory of
the group $GL_2/F$, that is Hilbert modular forms. In this work we
use the automorphic theory of unitary groups and hence hermitian
modular forms.

\textbf{The case $n=2$:} Let us now discuss an application of the
case $n=2$. We consider a Hilbert cuspidal form $f$ of $F$, which is
assumed to be a normalized newform. We write $N_{f}$ for its
conductor. We assume that $N_f$ is square free and relative prime to
$p$. We now impose the following assumptions on $f$.
\begin{enumerate}
\item $f$ has a trivial Nebentypus.
\item There exists a finite set $S$ of finite places of $F$ such that
we have (i) $ord_v(N_f) \neq 0$ for all $v \in S$, (ii) for $v \in
S$ we have that $v$ is inert in $K$ and finally (iii) $\sharp S +
[F:\mathbb{Q}]$ is even.
\end{enumerate}
Let us write $D/F$ for the totally definite quaternion algebra that
we can associate to the set $S$, i.e. $D$ is ramified at all finite
places $v \in S$ and also at all infinite places. Note that our
assumptions imply that there exists an embedding $K \hookrightarrow
D$. If we write $\pi'$ for the cuspidal automorphic representation
of $GL_2(\mathbb{A}_F)$ associated to $f$ then our assumptions imply
that there exists a Jacquet-Langlands correspondence $\pi:=JL(\pi')$
to $D^\times(\mathbb{A}_F)$. As we will explain later there exists
an isomorphism
\[
(D^\times \times K^\times)/F^\times \cong GU(\theta)(F)
\]
for some totally definite two dimensional Hermitian form
$(W,\theta)$. In particular, the representation $\pi$ induces an
automorphic representation, by abuse of notation, $\pi$ on
$GU(\theta)$ and by restriction to $U(\theta)$. Moreover it is known
that
\[
L(\pi,s)=L(BC(\pi'),s),
\]
where $BC(\pi')$ is the base-change of $\pi'$ from
$GL_2(\mathbb{A}_F)$ to $GL_2(\mathbb{A}_K)$. In particular we may
pick $M(\pi)/F$ above to be the motive associated to the Hilbert
modular form $f$. This explains our interest in the case $n=2$. We
also remark here that Ming-Lun Hsieh in \cite{Hsieh2} has made
important progress with respect to the classical abelian Iwasawa
Main Conjecture of such motives, i.e. $M(\pi)/F \times M(\psi)/F$.

\textbf{The Main Theorem:} Now we are ready to state the main
theorem of this work.
\begin{thm} Assume that the prime $p$ is unramified in $F$ (but may ramify in
$F'$). Then we have,
\begin{enumerate}
\item For $n=1$: The torsion congruences hold true.
\item For $n=2$: We write $(\pi,\pi)$ for the standard normalized
Peterson inner product of $\pi$ (it will be made explicit later). If
$(\pi,\pi)$ has trivial valuation at $p$ then the torsion
congruences hold true.
\end{enumerate}
\end{thm}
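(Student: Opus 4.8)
The plan is to realize both $p$-adic $L$-functions $\mu_F$ and $\mu_{F'}$ as the $p$-adic interpolation of special values obtained by pairing an appropriate Hermitian modular form against a family of Siegel-type Eisenstein series on $U(n,n)$ restricted to CM points, following the doubling method of Shimura--Garrett--Piatetski-Shapiro--Rallis. Concretely, the critical values $L(M(\psi)\otimes M(\pi)\otimes\chi, s_0)$ that $\mu_F$ interpolates can be written, up to explicit periods and local factors, as $\langle \theta_\pi, E_\chi|_{\text{CM}}\rangle$, where $\theta_\pi$ is a fixed automorphic form on the definite unitary group $U(\theta)$ attached to $\pi$ (a vector-valued function on a finite set since the group is definite) and $E_\chi$ is the pullback to $U(\theta)\times U(\theta)$ of a Siegel Eisenstein series on $U(n,n)$ twisted by $\chi\tilde\psi$. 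The first step is thus to make this doubling identity precise, including the choice of local sections at $p$ that produce the correct Euler factors and the $p$-stabilization needed for ordinariness; then to assemble these sections into an Eisenstein measure on the relevant Iwasawa algebra, so that $\mu_F$ is literally the image of $\theta_\pi$ under the $\Lambda(G_F)$-linear functional given by integration against this measure and evaluation at CM points.

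Next I would set up the comparison between level $F$ and level $F'$. The transfer map $\mathrm{ver}\colon\mathbb{Z}_p[[G_F]]\to\mathbb{Z}_p[[G_{F'}]]$ is dual to the norm on characters, so $\mathrm{ver}(\mu_F)$ interpolates the values $L(M(\psi)\otimes M(\pi)\otimes(\chi'\circ\mathrm{ver}), s_0)$ for characters $\chi'$ of $G_{F'}$, while $\mu_{F'}$ interpolates $L(M(\psi')\otimes M(\pi')\otimes\chi', s_0)$. The key point is that both live inside $\mathbb{Z}_p[[G_{F'}]]^\Gamma$ and that $T$ is exactly the augmentation-type ideal generated by traces $\sum_{\gamma\in\Gamma}\alpha^\gamma$; by a standard lemma (as in Ritter--Weiss), it suffices to check the congruence after projecting to $\mathbb{Z}_p[[G_{F'}]]_\Gamma / T \cong \mathbb{Z}_p[[G_F]]/p\cdot(\text{something})$ — i.e.\ one reduces to comparing, modulo $p$, the value of $\mathrm{ver}(\mu_F)$ and $\mu_{F'}$ against those characters $\chi'$ of $G_{F'}$ that are fixed by $\Gamma$, equivalently characters of the form $\chi\circ\mathrm{ver}$ with $\chi$ a character of $G_F$. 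For such characters the two $L$-values coincide as complex numbers (the base-change $L$-function of $M(\psi\tilde\chi)\otimes M(\pi)$ from $K$ to $K'$ factors through $F$-data exactly when the character descends), so the congruence becomes a statement purely about the $p$-adic periods and the Eisenstein/automorphic input.

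The heart of the argument is then a congruence between automorphic forms, not between numbers: I would show that the Siegel Eisenstein series $E_{\chi\circ\mathrm{ver}}$ on $U(n,n)_{/F'}$ is congruent mod $p$ to the $\mathrm{ver}$-pullback of $E_\chi$ on $U(n,n)_{/F}$ — this is the analogue of the classical fact that $\mathrm{ver}$ on Iwasawa algebras is compatible with $q$-expansions modulo $p$ because $\Gamma$ acts trivially mod $p$ on the relevant $\Gamma$-orbits of Fourier coefficients — and simultaneously that the CM values of the fixed form $\theta_\pi$ at level $F$ and of $\theta_{\pi'}=BC(\theta_\pi)$ at level $F'$ agree modulo $p$, via the Galois descent of CM points and the fact that $[K':K]=p$. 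Here the hypotheses enter decisively: $p$ unramified in $F$ and $E_w=\mathbb{Q}_p$ guarantee that the CM points and the $p$-adic periods (Katz-type CM periods attached to $(\Sigma,K)$ versus $(\Sigma',K')$) behave well under the congruence; the ordinariness of $\Sigma$ lets one $p$-stabilize; and the condition $k\geq n$ ensures the relevant value is critical and the Eisenstein series is holomorphic. For $n=2$ the extra hypothesis that $(\pi,\pi)$ has trivial $p$-valuation is what makes the division by the Petersson norm (which appears when one normalizes the doubling integral to isolate a single $L$-value) harmless modulo $p$; for $n=1$ there is no cusp form, only the $\psi$-Eisenstein data, so no such normalization issue arises and the congruence holds unconditionally.

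The main obstacle I anticipate is precisely this last comparison of $p$-adic periods and of the normalizing Petersson inner products between the two levels: one must show that the ratio of the CM period attached to $(\Sigma',K')$ to the $p$-th power (or appropriate power) of the one attached to $(\Sigma,K)$ is a $p$-adic unit, and that $\mathrm{ver}$ does not introduce a denominator. Establishing that $\mathrm{ver}(\mu_F)$ and $\mu_{F'}$ actually land in $\mathbb{Z}_p[[G_{F'}]]$ (integrality, not just $p$-integrality after inverting the periods) and that the congruence survives is where the careful bookkeeping of local doubling integrals at ramified primes, at $p$, and at infinity must be done; the $n=2$ case additionally requires controlling the Jacquet--Langlands transfer and the isomorphism $(D^\times\times K^\times)/F^\times\cong GU(\theta)$ at the integral level, so that the chosen test vector $\theta_\pi$ has unit Petersson norm exactly under the stated valuation hypothesis.
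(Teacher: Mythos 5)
Your overall outline matches the paper's ``Approach A'': interpolate $L$-values by pairing an automorphic form on the definite unitary group against Siegel Eisenstein series on $U(n,n)$ evaluated at CM points (the Harris--Li--Skinner Eisenstein measure), reduce the torsion congruence to a pointwise mod-$p$ congruence of integrals via the Ritter--Weiss lemma, and prove a congruence between Eisenstein series over $F$ and over $F'$. The role of the Petersson norm hypothesis for $n=2$, the base-change compatibility of test vectors via Jacquet--Langlands, and the period comparison are also correctly identified as the delicate points.

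There is, however, a concrete error that would derail the argument as you have set it up. You assert that for a character $\chi'$ of $G_{F'}$ that is fixed by $\Gamma$ (equivalently $\chi'=\chi\circ\mathrm{ver}$), the two complex $L$-values $L(M(\psi')\otimes M(\pi')\otimes\chi',s_0)$ and $L(M(\psi)\otimes M(\pi)\otimes\chi,s_0)$ coincide, so that ``the congruence becomes a statement purely about periods.'' This is false. Even when $\chi'$ descends, $\mathrm{Ind}^{F'}_F(\chi\circ\mathrm{ver})\cong \chi\otimes(\text{regular representation of }\Gamma)$, so $L(M/F',\chi',s)=\prod_{\eta}L(M/F,\chi\eta,s)$ with $\eta$ running over the characters of $\Gamma$; the two values are genuinely different complex numbers. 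If they coincided the whole problem would collapse to a period comparison. The content of the paper is precisely a nontrivial mod-$p$ congruence between these numbers, and it is established not at the level of $L$-values but at the level of $q$-expansions: one shows (§5) that the pullback $\Delta^*E'$ of the $F'$-Eisenstein series under the diagonal embedding of symmetric spaces is congruent mod $p$ to $\mathrm{Frob}_p$ applied to the $F$-Eisenstein series attached to $\epsilon\circ\mathrm{ver}$ and the $p$-th power data $(\psi^p,\nu^p)$. The proof decomposes the Fourier coefficients into $\Gamma$-orbits: orbits of size $p$ are killed mod $p$, and fixed coefficients are matched to $F$-data by local computations (Propositions on the polynomials $g_{\beta,m,v}$, splitting into the split and inert cases for primes of $F$ in $F'$). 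The $\mathrm{Frob}_p$ twist is then removed at CM points via the Shimura reciprocity law, which is where the hypothesis $N_{K^*/\mathbb{Q}}(\mathfrak{p})=p$ is used; you omit this step entirely, and without it the $p$-th powers of $\psi$ and of the period $\Omega_p$ do not resolve.

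A second, smaller gap: the Ritter--Weiss reduction is not merely to $\Gamma$-invariant \emph{characters}. One must verify the congruence for all $\Gamma$-invariant \emph{locally constant} $\mathbb{Z}_p$-valued functions $\epsilon$. These are linear combinations of $\Gamma$-fixed characters and of trace sums $\sum_{\gamma\in\Gamma}\chi^\gamma$; the latter are handled by exactly the $\Gamma$-orbit cancellation in the Eisenstein series congruence. Conflating ``$\Gamma$-fixed characters'' with ``$\Gamma$-invariant test functions'' hides the place where the cardinality $|\Gamma|=p$ enters. Finally, the claim that ``CM values of $\theta_\pi$ and $\theta_{\pi'}$ agree mod $p$ via Galois descent'' needs the precise statement that the natural map $\mathcal{B}_K\to\mathcal{B}_{K'}^\Gamma$ of double-coset sets is a bijection; the paper proves this via a vanishing of $H^1(\Gamma,G(F'))$ (a Hasse-principle argument for unitary groups), and without this bijection the sums defining the two measures cannot be matched term by term.
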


Before we discuss the general strategy for proving the above theorem
we would like to remark that the condition, that $p$ is unramified
at $F$ is imposed because up to date the so-called $q$-expansion
principle (in its $p$-integral form) is not known for the group
$U(n,n)/F$ when $p$ is ramified in $F$.

\textbf{Strategy of the proof:} The key idea that this work and in
its continuation \cite{Bouganis4} is based is the following: Special
values of $L$ functions of unitary representations can be realized
with the help of the doubling method either (i) as values of
hermitian Siegel-type Eisenstein series on CM points of Hermitian
domains or (ii) as constant terms of hermitian Klingen-type
Eisenstein series for some proper Fourier-Jacobi expansion. We
explain briefly the two approaches. Approach A below is used in this
work and approach B is done in \cite{Bouganis4}.

\textbf{Approach A (Values of Eisenstein series on CM points):} In
this approach we consider Siegel-type Eisenstein series of the group
$U(n,n)$ with the property that their values at particular CM points
are equal to the special $L$-values that we want to study. The CM
points are obtained from the doubling method as indicated by the
embedding
\[
U(n,0) \times U(0,n) \hookrightarrow U(n,n)
\]
Then we make use of the fact that the CM pairs $(K,\Sigma)$ and
$(K',\Sigma')$ that we consider are closely related (i.e. the second
is induced from the first) which allows us to relate the various CM
points over $K$ and $K'$. Then we use the diagonal embedding,
induced from the embedding $K \hookrightarrow K'$, between the
symmetric space of $U(n,n)_{/F}$ and that of
$Res_{F'/F}U(n,n)_{/F'}$ to relate the Eisenstein series over the
different fields. This is also the idea that was used in
\cite{Bouganis2}.

\textbf{Approach B (Constant term of Fourier-Jacobi expansions):} In
this approach we obtain Klingen-type Eisenstein series of the group
$U(n+1,1)$ with the property that the constant term of their
Fourier-Jacobi expansion is related with the special values that we
want to study. Then again we use the embedding $K \hookrightarrow
K'$ to relate these Klingen-type Eisenstein series over the
different fields and hence also to obtain a relation between their
constant terms. The main difficulty here is that the Klingen-type
Eisenstein series have a rather complicated Fourier-Jacobi
expansion, which makes hard the direct study of the arithmetic
properties of these Eisenstein series. However the Klingen-type
Eisenstein series are obtained with the help of the pull-back method
from Siegel-type Eisenstein series of the group $U(n+1,n+1)$ using
the embedding
\[
U(n+1,1) \times U(0,n) \hookrightarrow U(n+1,n+1).
\]
The Siegel-type Eisenstein series have a much better understood
Fourier expansion, which turns out it suffices to study also the
Klingen-type Eisenstein series.

\textbf{Organization of the article:} This article is organized as
follows. The next section serves as an introduction to the theory of
hermitian forms, that is automorphic forms associated to unitary
groups both from the classical complex analytic point of view as
well as the arithmetic. Needless to say that nothing in that section
is new. In section 3 we introduce the Eisenstein measure studied by
Harris, Li and Skinner in \cite{HLS1,HLS2} plus some important input
from the work of Ming-Lun Hsieh \cite{Hsieh1,Hsieh2}. Also in this
section, up to some small modifications, there is not much new
material. In the next section we construct the measures $\mu_{F}$
and $\mu_{F'}$ that appear in the ``torsion-congruences''. These
measures are obtained by evaluating the Eisenstein measure of
Harris, Li and Skinner at particular CM points of $U(n,n)$. This
construction is implicit in the papers \cite{HLS1,HLS2} and it will
appear in full details in the forthcoming work of Eischen, Harris,
Li and Skinner \cite{EHLS}. For the needs of our work we will try to
provide here some parts of this construction restricting ourselves
only in the cases of interest. The main part of this work is in
section 5 where we prove congruences between Siegel-type Eisenstein
series. In section 6 we use these congruences between the Eisenstein
series to establish the ``torsion congruences'' for the various
motives that we make explicit in the introduction. Finally there is
an appendix where we simply reformulate a result of Ritter and Weiss
in \cite{RW}.

\textbf{Acknowledgments:} The author would like to thank Prof.
Coates and Prof. Venjakob for their interest in this work, which has
been a source of encouragement for the author.

\section{Automorphic forms on unitary groups and their $p$-adic counterparts.}

As we indicated in the introduction, in this section we simply
recall the definition and fix the notation of the key objects
(automorphic forms of unitary groups, Mumford Objects e.t.c.) that
we are going to use later. Our references are the two books of
Shimura \cite{Shimurabook1,Shimurabook2} and the papers
\cite{Eischen,HLS1,HLS2,Hsieh1,Hsieh2}, where all the material of
this section can be found. Actually we indicate separately, at each
paragraph, the references that we closely followed while writing
this section and hence the reader can find there more details if
he/she wishes.

Let $F$ be a field (local or global) of characteristic different
from two and we consider a couple $(K,\rho)$ of an $F$-algebra of
rank two and an $F$-linear automorphism of $K$. That is, $K$ is
either (i) a quadratic extension of $F$ and $\rho$ is the
non-trivial element of $Gal(K/F)$ or (ii) $K = F \times F$ and
$(x,y)^\rho=(y,x)$ for $(x,y) \in F \times F$. We will always
(except when we indicate otherwise) write $\mathfrak{g}$ for the
ring of integers of $F$ and $\mathfrak{r}$ for the ring of integers
of $K$ in case (i) and $\mathfrak{r}=\mathfrak{g} \times
\mathfrak{g}$ in case (ii).

Let now $V$ be a $K$-module isomorphic to $K_m^1$ and let
$\epsilon=\pm 1$. By an $\epsilon$-hermitian form on $V$ we mean an
$F$-linear map $\phi: V \times V \rightarrow K$ such that, (i)
$\phi(x,y)^\rho=\epsilon \phi(y,x)$ and (ii)
$\phi(ax,by)=a\phi(x,y)b^\rho$ for every $a,b \in K$. Assuming
$\phi$ is non-degenerate we define the algebraic group $GU(\phi)/F$
over $F$ as the algebraic group representing the functor from
$F$-algebras to groups:
\[
GU(\phi)(R):= \big\{ g \in GL_{K \otimes_F R}(V \otimes_F R) |
\phi(gx,gy) =\nu(g)  \phi(x,y),\,\,\,\nu(g) \in R^{\times} \big\},
\]
for an $F$-algebra $R$. Similarly we make the definition for
$U(\phi)/F$ by
\[
U(\phi)(R):= \big\{ g \in GL_{K \otimes_F R}(V \otimes_F R) |
\phi(gx,gy) =\phi(x,y) \big\}.
\]

\textbf{Complex analytic hermitian forms (see \cite[page
30]{Shimurabook2}):} We now pick $F =\mathbb{R}$ and $K =\mathbb{C}$
above and as $\rho$ the usual complex conjugation. We consider the
pair $(V,\phi)$ with $V=\mathbb{C}_n^1$ and with respect the
standard basis we write
\[
\phi=\left(
       \begin{array}{ccc}
         0 & 0 & -i1_r \\
         0 & \theta & 0 \\
         i1_r & 0 & 0 \\
       \end{array}
     \right)
\]
where $\theta \in GL_t(\mathbb{C})$ with $\theta^* = \theta > 0$.
That is, $-i\phi$ is a skew-Hermitian form and $\phi$ has signature
$(r+t,r)$ with $n=2r+t$. For the moment we assume that $r > 0$.

We now describe the (unbounded) symmetric spaces attached to this
unitary group as well as the operation of the unitary group on these
symmetric spaces. We put
\[
\mathfrak{Z}^{\phi}:=\mathfrak{Z}^r_\theta:=\left\{\left(
                               \begin{array}{c}
                                 x \\
                                 y \\
                               \end{array}
                             \right)
 \in \mathbb{C}_r^{r+t} | x \in \mathbb{C}_r^r, y \in
\mathbb{C}_r^t, i(x^*-x) > y^* \theta^{-1}y)\right\}
\]
For $t=0$, we have that $U(\phi)(\mathbb{R})$ is isomorphic to
$U(n,n)(\mathbb{R})$. We write $\mathbb{H}_n$ for its symmetric
space. We consider now an element $\alpha \in
G^\phi:=GU(\phi)(\mathbb{R})$ written as
\[
\alpha=\left(
         \begin{array}{ccc}
           a & b & c \\
           g & e & f \\
           h & l & d \\
         \end{array}
       \right)
\]
with $a,d \in \mathbb{C}_r^r$ and $e\in \mathbb{C}_t^t$. Then we
define an action of $G^\phi$ on
$\mathfrak{Z}:=\mathfrak{Z}^r_\theta$ by
\[
\alpha \left(
         \begin{array}{c}
           x \\
           y \\
         \end{array}
       \right) = \left(
                   \begin{array}{c}
                     x' \\
                     y' \\
                   \end{array}
                 \right)
\]
with $x' = (ax + by +c)(hx + ly + d)^{-1},\,\,\,y'=(gx+ey+f)(hx + ly
+ d)^{-1}$. Moreover we define the following factors of automorphy
\[
\lambda : G^\phi \times \mathfrak{Z} \rightarrow
GL_{r+t}(\mathbb{C}),\,\,\,\mu:G^\phi \times \mathfrak{Z}
\rightarrow GL_{r}(\mathbb{C})
\]
by
\[
\lambda(\alpha,z) = \left(
                     \begin{array}{cc}
                       \bar{h} x^t + \bar{d} & \bar{h}y^t+i\overline{l\theta} \\
                       -i\bar{\theta}^{-1}(\bar{g}x^t+\bar{f}) & -i\bar{\theta}^{-1}\bar{g}y^t + \bar{\theta}^{-1}\overline{e\theta} \\
                     \end{array}
                   \right)
\]
and
\[
\mu(\alpha,z)=hx+ly+d
\]
for $z=\left(
         \begin{array}{c}
           x \\
           y \\
         \end{array}
       \right) \in \mathfrak{Z}$. Finally we define
       $j(\alpha,z):=det(\mu(\alpha,z)$.

We now consider a CM-type $(K,\Sigma)$. We write $\mathbf{a}$ for
the set of archimedean places of $K$ determined by $\Sigma$ and we
define the set $\mathbf{b}:=\mathbf{a} \cup \rho \,\,\mathbf{a}$. We
fix a hermitian space $(V,\phi)$ over $K$ and consider the symmetric
space
\[
\mathbb{H}:=\mathbb{H}_{\phi}:=\prod_{v \in \mathbf{a}}
\mathfrak{Z}^{\phi_v}.
\]
The group $G_+:=G^{\phi}_{\mathbb{R}+}:=\prod_{v \in
\mathbf{a}}G^\phi_v:=\prod_{v \in \mathbf{a}}G^\phi$ operates on
$\mathbb{H}_{\phi}$ componentwise through the operation of each
component $G^\phi_v$ on $\mathfrak{Z}^{\phi_v}$ described above. We
write $(m_v,n_v)$ for $v \in \mathbf{a}$ for the type of $\phi_v$
and consider a finite dimensional rational representation on a
complex vector space $X$:
\[
\omega:\prod_{v \in \mathbf{a}} (GL_{m_v}(\mathbb{C}) \times
GL_{n_v}(\mathbb{C})) \rightarrow GL(X).
\]

For a function $f : \mathbb{H} \rightarrow X$ and $\alpha \in G_+$
we define the functions $f||_\omega \alpha : \mathbb{H} \rightarrow
X$ and $f|_\omega \alpha : \mathbb{H} \rightarrow X$ by
\[
(f||_\omega \alpha)(z):=\omega\left(\prod_{v \in \mathbf{a}}
(\mu(\alpha_v,z_v) \times \lambda(\alpha_v,z_v))\right)^{-1}
f(\alpha z),\,\,\,f|_\omega \alpha := f||_\omega
(\nu(\alpha)^{-1/2}\alpha).
\]

\begin{defn} Let $\Gamma$ be a congruence subgroup of $G$. Then a function $f:\mathbb{H} \rightarrow X$
is called a hermitian modular form for the congruence subgroup
$\Gamma$ of type $\omega$ if
\begin{enumerate}
\item $f$ is holomorphic,
\item $f|_\omega \gamma = f$ for all $\gamma \in \Gamma$,
\item $f$ is holomorphic at cusps.
\end{enumerate}
\end{defn}

In this work we will mainly be concerned with scalar weight
hermitian forms. That is, for an element $k \in
\mathbb{Z}^{\mathbf{b}}$ we may consider the representation
$\omega(x) := det(x)^k$. Then the above modular condition reads
\[
(f||_k\alpha)(z)=j_\alpha(z)^{-k}f(\alpha z),\,\,\,\forall \alpha\in
\Gamma,
\]
where $j_\alpha(z)^{-k}:=j(\alpha,z)^{-k}:=\prod_{v \in \mathbf{a}}
det\left(\mu(\alpha_v,z_v))^{-k_v}det(\lambda_v(\alpha_v,z_v))^{-k_{v\rho}}\right)$.
As in Shimura \cite[page 32]{Shimurabook2} one may write
\[
j_\alpha(z)^{-k}=\prod_{v \in
\mathbf{a}}det(\alpha_v)^{k_{v\rho}}det(\mu(\alpha_v,z_v))^{-k_v-k_{v\rho}}.
\]
In this work we always consider weights $k \in
\mathbb{Z}^{\mathbf{b}}$ of the form $k_{v\rho}=0$ and $k_v=k \in
\mathbb{Z}$ for all $v \in \mathbf{a}$. This space we will denote by
$M_k(\Gamma)$.

\textbf{Unitary automorphic forms (see \cite[page
80]{Shimurabook1}):} We write $G^\phi$ for $U(\phi)/F$ and
$G^\phi_{\mathbb{A}}:=G^\phi(\mathbb{A}_F):=\prod_{v \in
\mathbf{h}}G^\phi(F_v) \prod_{v \in \mathbf{a}}G^\phi(\mathbb{R})$
for the adelic points of the unitary group $G^\phi$. We define
$C_{\mathbf{a}}:=\{\alpha \in G^\phi_{\mathbf{a}} |
\alpha(\mathbf{i})=\mathbf{i}\}$. We say that
$\mathbf{f}:G^\phi_{\mathbb{A}} \rightarrow \mathbb{C}$ is an
(unitary) automorphic form of weight $k \in \mathbb{Z}^\mathbf{b}$
if there exist an open compact subgroup $D$ of $G^\phi_\mathbf{h}$
such that for all $\alpha \in G^\phi(F)$ and $w \in DC_{\mathbf{a}}$
we have
\[
\mathbf{f}(\alpha x w)=j(\alpha,\mathbf{i})^{-k}\mathbf{f}(x).
\]
The relation between classical hermitian forms and unitary
automorphic forms is as follows. We pick $q \in G^\phi_\mathbf{h}$
and define $\Gamma_q:=G^\phi(F) \cap qDq^{-1}$. Then the function
$f_q:\mathbb{H} \rightarrow \mathbb{C}$ defined by the rule
\[
\mathbf{f}(qy)=(f_q||_k y)(\mathbf{i}),\,\,\forall y \in
G^\phi_\mathbf{a}
\]
satisfies $f_q||_k \gamma=f_q$ for all $\gamma \in \Gamma_q$. We
call $\mathbf{f}$ a unitary automorphic form if the $f_q$'s are
hermitian modular forms for all $q \in G_{\mathbf{h}}$. We denote
this space by $\mathcal{M}_k(D)$. As it is well-known is we fix a
decomposition $G^\phi_\mathbb{A}=\coprod_{q\in \mathcal{B}}
G^\phi(F)q D$ for a finite set $\mathcal{B} \subset G_\mathbf{h}$
then the map $\mathbf{f} \mapsto f_q$ establishes a bijection
between $\mathcal{M}_k(D)$ and $\prod_{q\in
\mathcal{B}}M_k(\Gamma_q)$.

\textbf{Some special congruence subgroups (see
\cite{Shimurabook1,Shimurabook2}):} We now describe some congruences
subgroups that play an important role in this work. We start with
$G:=GU(n,n)$ and introduce some special open compact subgroups $D
\subset G_\mathbf{h}$. We consider two fractional ideals
$\mathfrak{a}$ and $\mathfrak{b}$ of $F$ such that
$\mathfrak{a}\mathfrak{b} \subseteq \mathfrak{g}$, the ring of
integers of $F$, and define using the notation of Shimura \cite[page
11]{Shimurabook2}
\[
D[\mathfrak{a},\mathfrak{b}]:=\left\{x=\left(
                                        \begin{array}{cc}
                                          a_x & b_x \\
                                          c_x & d_x \\
                                        \end{array}
                                      \right)
 \in G_\mathbf{h} | a_x \prec
\mathfrak{r}, b_x \prec \mathfrak{a}\mathfrak{r}, c_x \prec
\mathfrak{b}\mathfrak{r}, d_x \prec \mathfrak{r} \right\},
\]
where we recall $\mathfrak{r}$ is the ring of integers of $K$. One
usually picks either $\mathfrak{a}=\mathfrak{b}=\mathfrak{g}$ or
$\mathfrak{a}=\mathfrak{b}^{-1}=\mathfrak{d}^{-1}$ with
$\mathfrak{d}$ the different ideal of $F$ over $\mathbb{Q}$. As it
is explained in \cite[page 73]{Shimurabook1} we may pick the finite
set $\mathcal{B} \subset G_\mathbf{h}$ consisting of elements of the
form $diag[\hat{r},r]$ for $r \in GL_n(\mathbb{A}_{K,\mathbf{h}})$.
Actually we may even pick the elements $r$ to be of the form $\left(
                                                   \begin{array}{cc}
                                                     t & 0 \\
                                                     0 & 1_{n-1} \\
                                                   \end{array}
                                                 \right)$ with $t
                                                 \in
                                                 \mathbb{A}_{K,\mathbf{h}}$.
We moreover remark the following computation,
\[
\left(
  \begin{array}{cc}
    \hat{r} & 0 \\
    0 & r \\
  \end{array}
\right)\left(
         \begin{array}{cc}
           a & b \\
           c & d \\
         \end{array}
       \right)\left(
                \begin{array}{cc}
                  \hat{r} & 0 \\
                  0 & r \\
                \end{array}
              \right)^{-1}=\left(
                             \begin{array}{cc}
                               \hat{r}a\hat{r}^{-1} & \hat{r}br^{-1} \\
                               rc\hat{r}^{-1} & rdr^{-1} \\
                             \end{array}
                           \right)=\left(
                             \begin{array}{cc}
                               \hat{r}a r^{*} & \hat{r}br^{-1} \\
                               rcr^{*} & rdr^{-1} \\
                             \end{array}
                           \right).
\]
Finally, for an integral ideal $\mathfrak{c}$ of $\mathfrak{g}$, we
introduce the notations
\[
\Gamma_0(\mathfrak{b},\mathfrak{c}):=G_1 \cap
D[\mathfrak{b}^{-1},\mathfrak{b}\mathfrak{c}],\,\,\,\Gamma_1(\mathfrak{b},\mathfrak{c}):=\left\{\gamma
\in \Gamma_0(\mathfrak{c}) | a_\gamma - 1_n \in
\mathfrak{r}\mathfrak{c} \right\}
\]
and we write
$\Gamma_0(\mathfrak{c}):=\Gamma_0(\mathfrak{g},\mathfrak{c})$.

Now we pick an $n$-dimensional hermitian space $(V,\theta)$ over $K$
with $\theta$ positive definite. Let us write $M$ for the
$\mathfrak{g}$-maximal $\mathfrak{r}$-lattice in $V$ and for an
ideal $\mathfrak{c}$ of $\mathfrak{g}$ we now define a congruence
subgroup $D(\mathfrak{c})^\theta \subset G^\theta_{\mathbf{h}}$. We
first define
\[
C:=\{\alpha \in G^\theta_{\mathbf{h}} \,\,|\,\, M\alpha
=M\},\,\,\,\tilde{M}:=\{x \in V\,\,|\,\, \theta(x,M) \subset
\mathfrak{d}^{-1}_{K/F}\}
\]
and then
\[
D^\theta(\mathfrak{c}):=\{\gamma \in C \,\,|\,\, \tilde{M}_v
(\gamma_v-1) \subset \mathfrak{c}_v M_v,\,\,\forall v |
\mathfrak{c}\}
\]

Following Shimura \cite[page 87]{Shimurabook1} we define an element
$\sigma \in GL_n(K)_{\mathbf{h}}$ such that $M'\sigma=M$ where
$M'=\sum_{i=1}^{n}\mathfrak{r}e_i$ for some fixed basis $\{e_i\}$ of
$V$. Then if we write $\theta':=\sigma \theta \sigma$ then for every
finite place of $F$ we have that
\[
\gamma \in D^\theta(\mathfrak{c})_v \Leftrightarrow
{\theta'}^{-1}_v(\sigma \gamma \sigma^{-1} -1)_v \prec
\mathfrak{c}_v \mathfrak{d}_{K/F,v}
\]

\textbf{Families of polarized abelian varieties over $\mathbb{C}$
(see \cite[page 22]{Shimurabook2}):} We consider the following data
$\mathcal{P}:= \{A,\lambda,\imath,\{t_i\}_{i=1}^r\}$ where
\begin{enumerate}
\item $A$ is a complex abelian variety of dimension $d$.
\item $\lambda: A \rightarrow A^{\vee} $ is a polarization of $A$
\item $\imath : K \hookrightarrow End_{\mathbb{Q}}(A)$ a ring
endomorphism, where $K$ is a CM field such that $\imath{K}$ is
stable under the Rosati involution $\alpha \mapsto \alpha'$ of
$End_{\mathbb{Q}}(A)$ determined by $\lambda$.
\item The $t_i$'s are points of $A$ of finite order.
\end{enumerate}

We fix an analytic coordinate system of $A$, that is we fix an
isomorphism $\xi : \mathbb{C}^d/\Lambda \cong A(\mathbb{C})$, where
$\Lambda$ a lattice in $\mathbb{C}^d$. We define a ring injection
$\Psi: K \hookrightarrow \mathbb{C}^d_d$ such that $\imath(a)\xi(u)
= \xi (\Psi(a)u)$ for $a \in K$ and $u \in \mathbb{C}^d$. Then
$\Psi$ gives the structure of a $K$-vector space to the $\mathbb{Q}$
liner span $\mathbb{Q}\Lambda \subset \mathbb{C}^d$, and hance
$\mathbb{Q}\Lambda$ is isomorphic to $K_r^1$ for some $r$ such that
$2d=r[K:\mathbb{Q}]$. We can find an $\mathbb{R}$-linear isomorphism
$q:(K_{\mathbf{a}})_r^1 \rightarrow \mathbb{C}^d$ such that
$q(ax)=\Psi(a)q(x)$ for $a \in K$ and $x \in K_{r}^1$. We define
$L:=q^{-1}(\Lambda)$.

As it is explained in Shimura \cite{ShimuraCM,Shimurabook2}, if we
write $E(\cdot,\cdot)$ for the Riemann form of $A$ determined by the
polarization $\lambda$, then there is an element $\mathcal{T} \in
GL_r(K)$ such that $\mathcal{T}^* = - \mathcal{T}$ and
\[
E(q(x),q(y)) = Tr_{K/\mathbb{Q}}(x \mathcal{T}y^*),\,\,\,x,y \in
K_r^1
\]
Defining $u_i := q^{-1}(t_i)$ we have constructed the PEL-datum
\[
\Omega:= \left\{K,\Psi,L,\mathcal{T},\{u_i\}_{i=1}^s \right\}
\]

Now we fix a CM-type $\Sigma:=\{\tau_{v}\}_{v \in \mathbf{a}}$ of
$K$ and write $m_v$ resp. $n_v$ for the multiplicity of $\tau_v$
resp $\rho \tau_v$ in $\Psi$. Then we have that $m_v + r_v = r$ for
all $v \in \mathbf{a}$ and we can decompose $\mathbb{C}^d$ into a
direct sum $\oplus_{v \in \mathbf{a}}V_v$ so that each $V_v$ is
isomorphic to $\mathbb{C}^r$ and $\Psi(a)$ acts on $V_v$ as
$diag[\bar{a}_v 1_{m_v},a_v 1_{n_v}]$ for each $a\in K$. With this
definitions of $m_v$ and $n_v$ we have that the hermitian form
$i\mathcal{T}_v$ has signature $(m_v,n_v)$ for every $v \in
\mathbf{a}$.

\textbf{Lattices and polarizations (see \cite{HLS2} and \cite[page 8
]{Hsieh1}):} Even though for this paper we need only to consider the
case of unitary groups isomorphic to $U(n,n)$ we present the more
general case $U(m,n)$ since we will need it in \cite{Bouganis4}. We
fix two nonnegative integers $m \geq n \geq 0$. We consider a $K$
vector space $W$ of dimension $m-n$ endowed with a skew-Hermitian
form $\theta$. We fix a basis $\{w_1,\ldots, w_{m-n}\}$ of $W$ such
that $\theta(w_i,w_j)=a_i\delta_{i,j}$. Moreover we assume that $i
\sigma(a_i)> 0$ for all $\sigma \in \Sigma$ and $\sigma_p(a_i)$ is a
$p$-adic unit for all $\sigma_p \in \Sigma_p$. We let
$I^{X}=\oplus_{i=1}^{n}K x_i$ and $I^{Y}=\oplus_{i=1}^{n}K y_i$ and
we consider the skew-Hermitian space $(V,\theta_{m,n})$ defined by
$V:=I^Y \oplus W \oplus I^X$ and
\[
\theta_{m,n}:=\left(
                \begin{array}{ccc}
                   &  & -1_n \\
                   & \theta &  \\
                  1_n &  &  \\
                \end{array}
              \right)
\]

We now pick some particular lattices in the above defined hermitian
spaces. We recall that we write $\mathfrak{r}$ for the ring of
integers of $K$ and $\mathfrak{g}$ for the ring of integers of $F$.
We define $X:=\mathfrak{r} x_1 \oplus \cdots \oplus \mathfrak{r}
x_n$, an $\mathfrak{r}$ lattice in $I^X$ and
$Y:=\mathfrak{d}^{-1}_{K/F} y_1 \oplus \cdots \oplus
\mathfrak{d}^{-1}_{K/F} y_n$, an $\mathfrak{r}$-lattice in $I^Y$. We
choose a $\mathfrak{r}$-lattice $L$ in $W$ that is
$\mathfrak{g}$-maximal with respect to the Hermitian form $\theta$
(see \cite[page 26]{Shimurabook1} for the definition of maximal
lattices). Then we define the $\mathfrak{r}$-lattice $M$ in $V$ as
$M:=Y \oplus L \oplus X$.

We now let $M_p:=M \otimes_{\mathfrak{r}} \mathfrak{r}_p$. We
consider the following sublattices of $M_p$,
\[
M^{-1}:= Y_{\Sigma_p} \oplus L_{\Sigma_p} \oplus Y_{\Sigma_p^\rho}
\]
and
\[
M^{0}:= X_{\Sigma_p^\rho} \oplus L_{\Sigma_p^\rho} \oplus
X_{\Sigma_p},
\]
where for a set $S$ of places of $K$ and an $\mathfrak{r}$ ideal $L$
we write $L_S:=L \otimes_{\mathfrak{r}} \prod_{v \in S}
\mathfrak{r}_v$.

The sublattices $(M^0,M^{-1})$ have the following properties: (i)
they are maximal isotropic submodules of $M_p$ and they are dual to
each other with respect to the alternating form
$(\cdot,\cdot)_{m,n}$ defined by as
$(v,v')_{m,n}:=Tr_{K/\mathbb{Q}}(\theta_{m,n}(v,v'))$ and (ii) we
have that $rank M^{-1}_{\Sigma_p}= rank M^{0}_{\Sigma_p^\rho}=m$ and
$rank M^{-1}_{\Sigma_p^\rho}= rank M^{0}_{\Sigma_p}=n$. Such a pair
is usually called a polarization of $M_p$. As it is explained in
\cite[page 9]{HLS2} the existence of such a polarization is
equivalent to the ordinary condition that we have imposed on $p$.

\textbf{Shimura varieties for unitary groups (see \cite[pages 8-9
]{Hsieh1} and \cite{Hsieh2}):} Let $G:=GU(\theta_{m,n})/F$ and $M$
be as above our fixed lattice. For a finite place $v \in \mathbf{h}$
we set
\[
D_v:=\left\{g \in G(F_v) : M_v g = M_v \right\}
\]
and $D := \prod_{v \in \mathbf{h}}D_v$. Our ordinary assumption
allow us to identify for every $v$ above $p$
\[
G(F_p) \stackrel{\sim}{\rightarrow} \prod_{v \in \Sigma_p}
GL_{m+n}(F_v) \times F_v^\times
\]
and the maximality assumption of the lattice $M$ gives
\[
G(\mathfrak{g}_p)\stackrel{\sim}{\rightarrow} \prod_{v \in \Sigma_p}
GL_{m+n}(\mathfrak{g}_v) \times \mathfrak{g}_v^\times
\]
That is, for every $v | p$ we have
\[
D_v = GL(M_{\Sigma_{p}}) \times \mathfrak{g}_v^\times
\stackrel{\sim}{\rightarrow} GL_{m+n}(\mathfrak{g}_v) \times
\mathfrak{g}_v^\times.
\]

We fix an integer $N$ relative prime to $p$ and define an open
compact subgroup $K$ with the property
\[
K(N) \subseteq \left\{g \in D : M(g-1) \subseteq N M\right\}.
\]

We define for $r \geq 0$ the groups
\[
K^r(N):=\left\{g \in K(N) : g_v \equiv \left(
                                    \begin{array}{cc}
                                      1_m & * \\
                                      0 & 1_n \\
                                    \end{array}
                                  \right)\mod{p^r},\,\,\forall v | p
\right\}.
\]

Let us write $E$ for the reflex field of our fixed type $(K,\Phi)$.
We write $O_E$ for its ring of integers and we consider the ring
$R:=O_E \otimes \mathbb{Z}_{(p)}$. Let $\mathcal{S}$ denote a finite
set of rational primes. We write $U \subseteq D$ for an open-compact
subgroup of $G(\mathbb{A}_\mathbf{h})$. Let $S$ be a connected,
locally noetherian $R$-scheme and $\bar{s}$ a geometric point of
$S$. An $S$-quadruple
$(A,\bar{\lambda},\imath,\bar{\eta}^{(\mathcal{S})})$ of level $U$
consists of the following data:
\begin{enumerate}
\item $A$ is an abelian scheme of dimension $(m+n)d$ over $S$, where
$d=[K:\mathbb{Q}]$,

\item $\bar{\lambda}$ is a class of polarizations $O_{(\mathcal{S}),+}\lambda$, where $\lambda$ is a prime to $\mathcal{S}$ polarization of $A$ over
$S$,

\item $\imath : \mathfrak{r} \hookrightarrow End_S(A) \otimes_\mathbb{Z}
\mathbb{Z}_{(\mathcal{S})}$ compatible with the Rosati involution
induced by $\lambda$.

\item $\bar{\eta}^{(U)}=U\eta^{(\mathcal{S})}$, with
$\eta^{(\mathcal{S})}: M \otimes \hat{\mathbb{Z}}^{(\mathcal{S})}
\stackrel{\sim}{\rightarrow} T^{(\mathcal{S})}(A_{\bar{s}})$ an
$\mathfrak{r}$-linear $\pi_1(S,\bar{s})$-invariant isomorphism and
$\mathcal{T}^{(\mathcal{S})}(A_{\bar{s}}):=\varprojlim_{(N,\mathcal{S})=1}
A[N](k(s))$

\item We write $V^{(\mathcal{S})}(A_s):=\mathcal{T}(A_s)
\otimes_{\mathbb{Z}} \mathbb{A}^{(\mathcal{S})}$. Then the numerical
structure induces an isomorphism $\eta^{(\mathcal{S})}:M \otimes
\mathbb{A}^{(\mathcal{S})} \stackrel{\sim}{\rightarrow}
V^{(\mathcal{S})}(A_{\bar{s}})$. We obtain a skew-hermitian form
$e^\eta$ on $V^{(\mathcal{S})}(A_{\bar{s}})$ by
$e^\eta(x,x'):=\theta_{m,n}(\eta^{-1}(x),\eta^{-1}(x'))$. Then, if
we write $e^\lambda$ for skew-hermitian on
$V^{(\mathcal{S})}(A_{\bar{s}})$ induced by the polarization, we
require that $e^\lambda=u e^\eta$ for $u \in
\mathbb{A}_{\mathbf{h}}^{(\mathcal{S})}$.

\item We have that
\[
det(X-\imath(b)|Lie(A))=\prod_{\sigma \in \Sigma}(X-(\sigma \circ
c)(b))^m (X-\sigma(b))^n \in O_S[X], \,\,\forall b \in\mathfrak{r}
\]
\end{enumerate}

We will consider mainly two situations for $\mathcal{S}$. Namely,
the case where $\mathcal{S}=\emptyset$ and $\mathcal{S}=\{p\}$. In
the first case, it follows by the theory of Shimura and Deligne that
the functor $\mathcal{F}_U$ from the category of schemes over $E$ to
the category of sets defined as
\[
\mathcal{F}_U
(S)=\left\{\underline{A}=(A,\bar{\lambda},\imath,\bar{\eta})/S
\right\}/\cong
\]
is representable by a quasi-projective scheme $Sh_G(U)$ defined over
$E$. In the other case, it is know by the theory of Kottwitz that if
we pick $U=K(N)$ neat and such that $U_p=D_p$, then the functor
$\mathcal{F}^{(p)}_U$ from the category of schemes over $R$ to sets
\[
\mathcal{F}^{(p)}_{K}
(S)=\left\{\underline{A}=(A,\bar{\lambda},\imath,\bar{\eta}^{(p)})/S
\right\}/\cong
\]
is represented by a quasi-projective scheme $Sh^{(p)}_G(K)/R$.

\textbf{Algebraic hermitian modular forms (see \cite[page
19]{Eischen}\footnote{The paging is from the arXiv version of the
paper.}):} Let $(V,\phi)$ be a hermitian form with $\phi_{\sigma}$
of signature $(m_{\sigma},n_{\sigma})$ for $\sigma \in \Sigma$. We
fix an $\mathfrak{r}$-algebra $R_0$ and we consider an algebraic
representation $(V,\rho)$ of $\prod_{\sigma \in
\Sigma}GL_{m_{\sigma}} \times GL_{n_{\sigma}}$ defined over $R_0$,
that is we have for every $R_0$-algebra $R$ a homomorphism
\[
\rho_{R}: \prod_{\sigma \in \Sigma} GL_{m_{\sigma}}(R) \times
GL_{n_{\sigma}}(R) \rightarrow GL(V_R),\,\,V_R:=V \otimes_{R_0} R
\]
that commutes with extensions of scalars $R \rightarrow R'$ of
$R_0$-algebras.

For each datum $\underline{A}/R=\{A,\lambda,\imath,\alpha\}/R$
defined over $R$, for an $R_0$-algebra $R$ we define modules
\[
\mathcal{E}_{\underline{A}/R}^{+}=\prod_{\sigma \in
\Sigma}Isom_R(R^{m_{\sigma}},e_{\sigma}\underline{\omega}_{A/R}),
\]
\[
\mathcal{E}_{\underline{A}/R}^{-}=\prod_{\sigma \in
\Sigma}Isom_R(R^{n_{\sigma}},e_{\rho\sigma}\underline{\omega}_{A/R}),
\]
and
\[
\mathcal{E}_{\underline{A}/R}=\mathcal{E}_{\underline{A}/R}^{+}
\oplus \mathcal{E}_{\underline{A}/R}^{-}.
\]
Here for $\sigma \in \Sigma \coprod \Sigma^{\rho}$ we write
$e_\sigma \in K \otimes_{\mathbb{Q}} \bar{\mathbb{Q}}$ for the
corresponding orthogonal idempotent related to the decomposition $K
\otimes_{\mathbb{Q}} \bar{\mathbb{Q}}´= \otimes_{\sigma \in \Sigma
\coprod \Sigma^{\rho}}\bar{\mathbb{Q}}$. We note that to give an
element $\varepsilon \in \mathcal{E}_{\underline{A}/R}^{\pm}$ is
equivalent to fixing a basis for
$\underline{\omega}_{A/R}^+:=\prod_{\sigma \in
\Sigma}e_{\sigma}\underline{\omega}_{A/R}$ and
$\underline{\omega}_{A/R}^-:=\prod_{\sigma \in
\Sigma}e_{\rho\sigma}\underline{\omega}_{A/R}$. In particular we
have that the group $\prod_{\sigma \in \Sigma}GL_{m_{\sigma}}(R)$
(resp $\prod_{\sigma \in \Sigma}GL_{n_{\sigma}}(R))$ acts on
$\mathcal{E}_{\underline{A}/R}^{+}$ (resp.
$\mathcal{E}_{\underline{A}/R}^-$) by
\[
\alpha \cdot \varepsilon(v):= \varepsilon(\alpha^t v),\,\,\, \alpha
\in \prod_{\sigma \in \Sigma}GL_{m_{\sigma}}(R), \varepsilon \in
\mathcal{E}_{\underline{A}/R}^{+}, v \in \prod_{\sigma \in
\Sigma}R^{m_\sigma}
\]
and hence the group $\prod_{\sigma \in \Sigma} GL_{m_{\sigma}}(R)
\times GL_{n_{\sigma}}(R)$ on $\mathcal{E}_{\underline{A}/R}$.

\begin{defn} A hermitian modular form of weight $\rho$ and level $\alpha$, defined over
$R_0$ is a function $f$ on the set of pairs
$(\underline{A},\varepsilon)/R$ with values in $V/R$ such that the
following hold:
\begin{enumerate}
\item The element $f(\underline{A},\varepsilon)$ depends only on the
$R$-isomorphism class of $(\underline{A},\varepsilon)$.
\item The function $f$ is compatible with base change $R \rightarrow
R'$ of $R_0$-algebras, that is
\[
f(\underline{A} \times_{R} R',\varepsilon \otimes_{R} R') =
f(\underline{A},\varepsilon) \otimes _{R} 1 \in V_{R'}
\]
\item For each $(\underline{A},\varepsilon)$ over $R$ and $\alpha
\in \prod_{\sigma \in \Sigma} GL_{m_{\sigma}}(R) \times
GL_{n_{\sigma}}(R)$ we have
\[
f(\underline{A},\alpha
\varepsilon)=\rho(\alpha^t)^{-1}f(\underline{A},\varepsilon)
\]
\end{enumerate}
\end{defn}

\textbf{$p$-adic hermitian modular forms (see
\cite{HLS1,HLS2,Hsieh1,Hsieh2}):} We consider the functor
$\mathcal{F}^{(p)}_{K^n}$ from $R$ schemes to sets given by
\[
\mathcal{F}^{(p)}_{K^n} (S)=\left\{(\underline{A},j_n)
=(A,\bar{\lambda},\imath,\bar{\eta}^{(p)},j_n)/S \right\}/\cong,
\]
where $j_n$ an $\mathfrak{r}$-linear embedding
\[
j_n:M^0 \otimes \mu_{p^n} \hookrightarrow A[p^n].
\]
This functor is representable by a scheme $Ig_G(K^n)/R$ (this is
what in \cite[page 26]{HLS2} is denoted by $Ig_{2,n}$). We now
consider the strict ideal class group of $F$, that is $Cl_F^+(K)=F_+
\setminus \mathbb{A}_{F,f}/\nu(K)$, and pick representatives of it
in $\mathbb{A}^{(p)}_{F,f}$. From each such representative $c$ we
consider the functor $\mathcal{F}^{(p)}_{K^n,c}$ from $R$-schemes to
sets
\[
\mathcal{F}^{(p)}_{K^n,c} (S)=\left\{(\underline{A},j_n)
=(A,\lambda,\imath,\bar{\eta}^{(p)},j_n)/S :
\lambda\,\,is\,\,a\,\,c-polarization \right\}/\cong,
\]
This functor is representable by a scheme $Ig_{G_1}(K^n,c)/R$ and we
have that
\[
\coprod_{c} Ig_{G_1}(K^n,c) \cong Ig_G(K^n)
\]

We now take the ring $R$ above to be a $p$-adic ring, that is
$R=\varprojlim_{k}R/p^k R$. We write $R_k:=R/p^kR$. Now we fix a
toroidal compactification $\overline{S_G(K)}/R_k$ of $S_G(K)/R_k$
and write $T_{0,k}:=\overline{S_G(K)}[1/E]/R_k$, for the ordinary
locus of it. Here $E$ is a lift of the Hasse invariant from $R_1$ to
$R_k$ (see \cite[page 30]{HLS2}). For a positive integer $\ell$ we
set $T_{\ell,k}:=Ig_G(K^\ell)/R_k$. There exist finite \'{e}tale
maps $\pi_{\ell',\ell,k}:T_{\ell',k} \rightarrow T_{\ell,k}$ and we
define $T_{\infty,k}:=\varprojlim_\ell
T_{\ell,k}=Ig_G(K^\infty)/R_k$. Then $T_{\infty,k}$ is galois over
$T_{0,k}$ with Galois group isomorphic to
$Aut_{\mathfrak{r}_p}(M^0)$. For $\ell,k \in \mathbb{N}$ we define
the spaces
\[
V_{\ell,k}:=H^0(T_{\ell,k},\mathcal{O}_{T_{\ell,k}}),
\]
and then $V_{\infty,k}:=\varinjlim_\ell V_{\ell,k}$ and
$V:=\varprojlim_k V_{\infty,k}$. We call $V_p(G,K):=V^N$, the space
of $p$-adic modular forms of level $K$. Here $N \leq
GL_{m+n}(\mathfrak{r}_p)$ is the upper-triangular unipotent radical
of $GL_{m+n}(\mathfrak{r}_p)$.

\textbf{Algebraic and $p$-adic hermitian modular forms (see
\cite{HLS1,HLS2,Hsieh1,Hsieh2}):} Now we assume that we are given a
$p^\infty$ arithmetic structure of an abelian variety $A$ with CM by
$\mathfrak{r}$ of type $(m,n)$ defined over a $p$-adic ring $R$.
That is we have compatible $\mathfrak{r}$-linear embeddings
\[
j_n:M^0 \otimes \mu_{p^n} \hookrightarrow A[p^n],
\]
for all $n \geq 0$. That is, we assume an embedding
\[
j_\infty:M^0 \otimes \mu_{p^\infty} \hookrightarrow A[p^\infty].
\]
In turn we get an isomorphism
\[
j:M^0 \otimes \hat{\mathbb{G}}_m \stackrel{\sim}{\rightarrow}
\hat{A}.
\]
Identifying $Lie(A)=Lie(\hat{A})$ we obtain an isomorphism
\[
j:M^0 \otimes_R R \stackrel{\sim}{\rightarrow} Lie(A),
\]
which induces also the isomorphisms
\[
j_{+}:M_{\Sigma_p}^0 \otimes_R R \stackrel{\sim}{\rightarrow}
e_{\Sigma_p}Lie(A),\,\,\,j_{-}:M_{\Sigma_p^c}^0 \otimes_R R
\stackrel{\sim}{\rightarrow} e_{\Sigma_p^c}Lie(A).
\]
As $\underline{\omega}_{A/R}=Hom(Lie(A),R)$ we obtain isomorphisms
\[
\omega(j)_+:M_{\Sigma_p}^0 \otimes_R R \stackrel{\sim}{\rightarrow}
e_{\Sigma_p}\underline{\omega}_{A/R},\,\,\,\omega(j)_-:M_{\Sigma_p}^0
\otimes_R R \stackrel{\sim}{\rightarrow}
e_{\Sigma^c_p}\underline{\omega}_{A/R}
\]
and then
\[
\omega(j):=\omega(j)_+ \oplus \omega(j)_- : M^0 \otimes_R R
\stackrel{\sim}{\rightarrow} \underline{\omega}_{A/R}.
\]
In particular we obtained an element $\omega(j) \in
\mathcal{E}_{\underline{A}/R}$. This construction allows us to
consider every algebraic hermitian modular form also as a $p$-adic
hermitian modular form. Indeed for $f$ a hermitian form we define
its $p$-adic avatar $\hat{f}$ by
\[
\hat{f}(\underline{A},j) := f(\underline{A},\omega(j)).
\]

\textbf{Mumford Objects\footnote{The author has been informed by
Ellen Eischen that this terminology is not standard and the term has
been used for first time in this context in her Ph.D thesis.} for
$GU(n,n)/F$ and $q$-expansions (see \cite[pages 34-38]{Eischen}):}
The familiar $q$-expansion with respect some given cusp of an
elliptic modular forms has an algebraic interpretation as the
evaluation of the modular forms on the so-called Tate curve that
corresponds to the selected cusp. Our next goal is to introduce the
analogues of the Tate curve for the unitary groups $GU(n,n)$. We
start by considering a Hermitian space $(V,\phi)$ over the CM field
$K$ and we assume that
\[
\phi=\left(
       \begin{array}{cc}
         0 & -1_n \\
         1_n & 0 \\
       \end{array}
     \right)
\]
That is, $G^\phi=GU(n,n)$. We now fix maximal isotropic spaces $W$
and $W'$ with $W \cong W' \cong K^n$ of $\phi$ and we have a
decomposition
\[
V = W \oplus W'
\]
We consider the standard $\mathfrak{g}$-maximal lattice of $\phi$ in
$V$ defined as $\Lambda:=\sum_{i=1}^n \mathfrak{r} e_i \oplus
\sum_{i=1}^n \mathfrak{d}^{-1}_{K/F} f_{i}$ for the standard basis
of $(V,\phi)$ i.e. $\phi(e_i,e_j)=\phi(f_i,f_j)=0$ and
$\phi(e_i,f_j)=-\delta_{i,j}$ and $\mathfrak{d}_{K/F}$ the relative
different of $K$ over $F$. We now define the lattices
\[
L:=W \cap \Lambda,\,\,and\,\,L':=W' \cap \Lambda.
\]
Note the choice of the pair $(L,L')$ is equivalent to the choice of
a polarization as explained above. We write $P$ for the stabilizer
of $W'$ in $G^\phi$ and $N_P$ for its unipotent radical. Then $N_P$
consists of matrices of the form
\[
\left(
                                                               \begin{array}{cc}
                                                                 1_n & B \\
                                                                 0 & 1_n \\
                                                               \end{array}
                                                             \right)
                                                             \]
where $B \in S_n$. For a congruence subgroup $\Gamma$ of $G^\phi$ we
define $H:=\Gamma \cap N_P$. Then
\[
H= \left(
                                                               \begin{array}{cc}
                                                                 1_n & M \\
                                                                 0 & 1_n \\
                                                               \end{array}
                                                             \right)
\]
where $M$ is a lattice in $S_n$. Writing $M^{\vee}$ for the dual
lattice of $M$, i.e.
\[
M^{\vee}:=\left\{x \in S_n| Tr_{F/\mathbb{Q}}(tr(xM)) \subset
\mathbb{Z} \right\}
\]
we define $H^{\vee}:=\left(
                       \begin{array}{cc}
                         1_n & M^{\vee} \\
                         0 & 1_n \\
                       \end{array}
                     \right)$.

For a ring $R$ we define the ring of formal power series
\[
R((q,H^{\vee}_{\geq 0})):=\left\{\sum_{h \in H^{\vee}}a_hq^h | a_h
\in R,\,\,a_h =0,\,\,if\,\,h << 0 \right\}
\]

Over the ring $R((q,H^{\vee}_{\geq 0}))$ we now define a
$\mathbb{Z}$-liner morphism $q:L \rightarrow (L') \otimes
\mathbb{G}_m$ as follows.
\[
L \rightarrow Hom_{\mathbb{Z}}(H,L') \cong H^{\vee} \otimes L'
\rightarrow L' \otimes \mathbb{G}_m
\]
where the first map is given by $\ell \mapsto (h \mapsto h(\ell))$
and the last one is given by $h' \mapsto q^{h'} \in
\mathbb{G}_{m}(R((q,H^{\vee}_{\geq 0})))$. The Mumford object
corresponding to the cusp $(L,L')$ is given by the algebraification
of the rigid analytic quotient
\[
Mum_{(L,L')}(q):= (L' \otimes \mathbb{G}_m)/ q(L)
\]
The PEL structure of $Mum_{L,L'}(q)$ is given as follows. We have a
canonical endomorphism $\imath_{can} : \mathfrak{r} \hookrightarrow
End_{R((q,H^{\vee}_{\geq 0}))}(Mum_{L,L'}(q))$ given by $\alpha
\mapsto (\ell \mapsto \alpha \cdot \ell)$ for $\alpha \in
\mathfrak{r}$ and $\ell \in L$.

We now define the canonical polarization $\lambda_{can}$ of
$Mum_{(L,L')}(q)$ as follows. For dual abelian variety
\[
Mum_{(L,L')}(q)^{\vee}=Mum_{({L'}^\vee,{L}^\vee)}=(L^\vee \otimes
\mathbb{G}_m)/q({L'}^\vee)
\]
there exist an isogeny
\[
 \lambda_{can} : Mum_{(L,L')}(q) \rightarrow Mum_{(L,L')}(q)^{\vee}
\]
induced by the isomorphism $L^\vee \otimes_{\mathfrak{r}} K \cong L'
\otimes_{\mathfrak{r}} K$. The level structure is induced by the
embedding $\alpha_{N} : L' \otimes \mu_{N} \hookrightarrow L'
\otimes \mathbb{G}_m$. Finally we have a canonical differential
$\omega_{can}$. This is defined by dualizing the isomorphism
\[
Lie(Mum_{(L,L')})(q) \cong Lie(L' \otimes \mathbb{G}_m = L' \otimes
R((q,H^{\vee}_{\geq 0}))
\]
That is we obtain an isomorphism
\[
\omega_{can}:R((q,H^{\vee}_{\geq 0}))^n \cong {L'}^\vee \otimes
R((q,H^{\vee}_{\geq 0})) \cong
\omega_{Mum_{(L,L')}(q)/R((q,H^{\vee}_{\geq 0}))}
\]

\textbf{Cusps:} Now we study the (0-genus) cusps of the group
$GU(n,n)$ with respect particular congruences subgroups. We will see
that to each of these we can associate an arithmetic data
$(Mum_{L,L'}(q),\lambda_{can},\imath_{can},\omega_{can})$. As above
we write $P$ for the standard parabolic of $G:=GU(n,n)$ and $N_P$
for its unipotent radical. Then a Levi part of $P$ can be identified
with $GL_n(K)$ by the embedding $d \mapsto diag[\hat{d},d]$ for $d
\in GL_n(K)$. Then the set of cusps of $G$ with respect the open
locally compact subgroup $K_0$ is given by
\[
C_0(K):=GL_n(K) \times N_P(F) \setminus
G(\mathbb{A}_{F,\mathbf{h}})/K_0.
\]
It is well-known, see \cite[lemma 9.8]{Shimurabook1} that we can
choose a decomposition
\[
G(\mathbb{A}_{F,\mathbf{h}})=\coprod_{j=1}^m G(F)g_j K_0,
\]
and if we pick $g \in C_0(K)$ and write it as $g=\gamma g_i k$ with
respect to the above decomposition then the Mumford object
associated to the cusp $g$ is given by $Mum_{L_g,L'_g}(q)$ where
$L_g:=L g_i \cap V$ and $L'_g:=L g_i \cap V$.

\textbf{The complex analytic point of view (see \cite[page
37]{Eischen}):} Now we would like to describe the complex points of
the Mumford object $Mum_{L,L'}(q)$. Recall that the associated to
$GU(n,n)$ symmetric space is
$\mathbb{H}_F:=\mathbb{H}_{(n,n)}^{[F:\mathbb{Q}]}$ where
\[
\mathbb{H}_{(n,n)}:=\left\{z \in M_n(\mathbb{C}) | i(z^* - z)
> 0 \right\}.
\]
We note that if we write $S$ for the set of hermitian matrices over
$K$ then $\mathbb{H}_{(n,n)}= S + i S_{+}$ and hence also
$\mathbb{H}_F=S_{\mathbf{a}} + \mathbf{i} S_{\mathbf{a}+}$. Given a
$\tau \in \mathbb{H}_F$ we consider the lattice $L_{\tau} \in
\mathbb{C}^{2n}$ generated by $L' \otimes_{\mathfrak{r}} 1 \in W'
\otimes_{K} \mathbb{C}$ and $\tau L \in W' \otimes_K \mathbb{C}$.
Then using the exponential map $\mathbf{exp}$ we obtain
$\mathbf{exp}(L_{\tau}) \subset W' \otimes_K \mathbb{C}^\times$.
Then we have that if fix the indeterminate parameter $q$ as
$q=exp_{\mathbf{a}}(2\pi i tr(\tau))$ we get
\[
Mum_{L,L'}(q)(\mathbb{C})= W' \otimes_K \mathbb{C}^\times /
\mathbf{exp}(L_{\tau}).
\]

\textbf{Analytic and algebraic $q$-expansions (see
\cite{Shimurabook1}):} Let now $f \in M_k(\Gamma)$ be a hermitian
modular form for a congruences group $\Gamma$ of $G$. As it is
explained in \cite[page 33]{Shimurabook2} we can always find a
$\mathbb{Z}$ lattice $M$ in $S$ such that $\left(
                                                                 \begin{array}{cc}
                                                                   1_n & \sigma \\
                                                                   0 & 1_n \\
                                                                 \end{array}
                                                               \right)
                                                               \in
                                                               \Gamma$,
for all $\sigma \in M$. Then we have $f(z + \sigma)=f(z)$ for all
$\sigma \in M$ and hence the hermitian modular form $f$ has a
Fourier expansion

\begin{equation}\label{analytic q-expansion}
 f(z)=\sum_{h \in
L}c(h)e_{\mathbf{a}}^n(hz),
\end{equation}
where $L:=\left\{h \in S\,\, |\,\, Tr_{F/\mathbb{Q}}(tr(hM)) \subset
\mathbb{Z} \right\}$. In particular, by Shimura \cite[page
147]{Shimurabook1}, for $\Gamma=\Gamma_0(\mathfrak{b},\mathfrak{c})$
we have that $L=\mathfrak{d}^{-1}\mathfrak{b}T$, where $T:=\left\{x
\in S \,\,|\,\, tr(S(\mathfrak{r})x) \subset \mathfrak{g} \right\}.$
Actually if we consider for an element $g \in
GL_n(\mathbb{A}_{K,\mathbf{h}})$ groups of the form
\[
\Gamma_g:=G_1 \cap \left(
                      \begin{array}{cc}
                        \hat{g} & 0 \\
                        0 & g\\
                      \end{array}
                    \right)D[\mathfrak{b},\mathfrak{c}]\left(
                      \begin{array}{cc}
                        \hat{g} & 0 \\
                        0 & g\\
                      \end{array}
                    \right)^{-1}.
                    \]
then we have that the lattice $L$ above is now equal to
$\mathfrak{d}^{-1}\mathfrak{b}gTg^{*}$.

The expression in \ref{analytic q-expansion} has an algebraic
interpretation through the use of the Mumford objects introduced
above. To this end, we consider now an hermitian modular form $f \in
M_k(\Gamma_0(N)$ defined over the ring $R$. Then we may evaluate $f$
at the Mumford data
$(\underline{Mum_{L,L'}(q)},\omega_{can})=(Mum_{L,L'}(q)),\lambda_{can},i_{can},\alpha_N,\omega_{can})$
defined over the ring $R((q,H^{\vee}_{\geq 0}))$ to obtain
\begin{equation}\label{algebraic q-expanion}
f\left(\underline{Mum_{L,L'}(q)},\omega_{can} \right)=\sum_{h \in
H^{\vee}}c(h)q^h.
\end{equation}
When $R=\mathbb{C}$ we may pick $q:=e_{\mathbf{a}}^n(z)$ and then
the expression above is the same as the one in (\ref{analytic
q-expansion}).

Finally we close this section by recalling also the $q$-expansion
for unitary automorphic forms. So we let $\phi \in
\mathcal{M}_k(D)$, with
$D=D[\mathfrak{b}^{-1},\mathfrak{b}\mathfrak{c}]$. Then the
following proposition is taken from \cite[page 148]{Shimurabook1}.
\begin{prop}\label{adelic q-expansion} For every $\sigma \in
S_{\mathbb{A}}$ and $q \in GL_n(\mathbb{A}_K)$ we have
\[
\phi \left(\left(
             \begin{array}{cc}
               q & \sigma\hat{q} \\
               0 & \hat{q} \\
             \end{array}
           \right)
\right)=\sum_{h \in S} c(h,q) e^n_{\mathbb{A}}(tr(h\sigma)),
\]
with the following properties
\begin{enumerate}
\item $det(q)^{-\rho k}c(h,q)$ depends only on $\phi$, $h$,
$q_\mathbf{h}$ and $(qq^*)_{\mathbf{a}}$,
\item $c(h,q)\neq 0$ if and only if $(q^*hq)_v \in
\mathfrak{b}_v\mathfrak{d}^{-1}_v T_v$ for all finite places $v$,
\item $c(h,q)$ may be written as
\[
c(h,q)=det(q)^{k \rho}c_0(h,q)e_{\mathbf{a}}^n(i \cdot tr(q^*hq)),
\]
where $c_0(h,q)$ depends only on $\phi$, $h$ and $q_{\mathbf{h}}$.
\end{enumerate}
\end{prop}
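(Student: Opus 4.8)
The plan is to transport the Fourier expansion $(\ref{analytic q-expansion})$ of classical hermitian modular forms along the dictionary $\mathbf{f}\mapsto(f_q)_q$ between adelic automorphic forms and tuples of classical forms recalled above, and then to read off the three properties from the support of the classical expansion and from the archimedean automorphy factor. Concretely, for fixed $q\in GL_n(\mathbb{A}_K)$ and $\sigma\in S_{\mathbb{A}}$ I would factor
\[
\begin{pmatrix} q & \sigma\hat q \\ 0 & \hat q \end{pmatrix}=\begin{pmatrix} 1_n & \sigma \\ 0 & 1_n \end{pmatrix}\begin{pmatrix} q & 0 \\ 0 & \hat q \end{pmatrix}=:n(\sigma)\,d(q),
\]
split $q$ and $\sigma$ into their finite and archimedean parts $q_{\mathbf{h}},q_{\mathbf{a}},\sigma_{\mathbf{h}},\sigma_{\mathbf{a}}$, and apply the defining relation $\mathbf{f}(\tilde q\,y)=(f_{\tilde q}||_k y)(\mathbf{i})$ with $\tilde q=n(\sigma_{\mathbf{h}})d(q_{\mathbf{h}})\in G^\phi_{\mathbf{h}}$ and $y=n(\sigma_{\mathbf{a}})d(q_{\mathbf{a}})\in G^\phi_{\mathbf{a}}$. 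Using $d(q_{\mathbf{a}})\cdot\mathbf{i}=i\,q_{\mathbf{a}}q_{\mathbf{a}}^{*}$ and $j(y,\mathbf{i})=\det(\hat q_{\mathbf{a}})=\det(q_{\mathbf{a}})^{-\rho}$ this gives
\[
\mathbf{f}\!\left(\begin{pmatrix} q & \sigma\hat q \\ 0 & \hat q \end{pmatrix}\right)=\det(q_{\mathbf{a}})^{\rho k}\,f_{\tilde q}\bigl(\sigma_{\mathbf{a}}+i\,q_{\mathbf{a}}q_{\mathbf{a}}^{*}\bigr),
\]
and substituting the classical expansion $f_{\tilde q}(z)=\sum_h c'(h)e^n_{\mathbf{a}}(hz)$ together with $e^n_{\mathbf{a}}\bigl(h(\sigma_{\mathbf{a}}+i\,q_{\mathbf{a}}q_{\mathbf{a}}^{*})\bigr)=e^n_{\mathbf{a}}(tr(h\sigma_{\mathbf{a}}))\,e^n_{\mathbf{a}}(i\,tr(q_{\mathbf{a}}^{*}hq_{\mathbf{a}}))$ produces the archimedean part of the character and the factors $\det(q)^{\rho k}$ and $e^n_{\mathbf{a}}(i\cdot tr(q^{*}hq))$ of property (iii).

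The finite part of the character is forced by an adelic Fourier expansion: the map $\sigma\mapsto\mathbf{f}(n(\sigma)d(q))$ on $S_{\mathbb{A}}$ is left invariant under $\sigma\mapsto\sigma+s$ for $s\in S_F$ (since $n(s)\in G^\phi(F)$ and $j(n(s),\mathbf{i})=1$) and right invariant under $\sigma_{\mathbf{h}}\mapsto\sigma_{\mathbf{h}}+q_{\mathbf{h}}tq_{\mathbf{h}}^{*}$ for $t$ in the lattice with $n(t)\in D$; being holomorphic in the archimedean variable it admits an expansion $\sum_h c(h,q)e^n_{\mathbb{A}}(tr(h\sigma))$ whose restriction to fixed $\sigma_{\mathbf{h}}$ reproduces what we just computed. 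One then sets $c(h,q)=\det(q)^{\rho k}c_0(h,q)e^n_{\mathbf{a}}(i\cdot tr(q^{*}hq))$, with $c_0(h,q)$ the classical coefficient $c'(h)$ up to normalization, depending only on $\mathbf{f}$, $h$, $q_{\mathbf{h}}$; this is (iii). For (i), the only dependence on $q_{\mathbf{a}}$ that entered was through the base point $i\,q_{\mathbf{a}}q_{\mathbf{a}}^{*}$, hence through $(qq^{*})_{\mathbf{a}}$, and through the explicit factor $\det(q)^{\rho k}$, so $\det(q)^{-\rho k}c(h,q)$ depends only on $\mathbf{f}$, $h$, $q_{\mathbf{h}}$ and $(qq^{*})_{\mathbf{a}}$. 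For (ii), Shimura's description of the support recalled above, namely $L=\mathfrak{d}^{-1}\mathfrak{b}\,r\,T\,r^{*}$ for the group obtained by conjugating $\Gamma_0(\mathfrak{b},\mathfrak{c})$ by $diag[\hat r,r]$, translates via the relation between $q_{\mathbf{h}}$ and the relevant coset representative into the condition $(q^{*}hq)_v\in\mathfrak{b}_v\mathfrak{d}_v^{-1}T_v$ at every finite $v$; the conjugation formula $\begin{pmatrix}\hat r&0\\0&r\end{pmatrix}\begin{pmatrix}a&b\\c&d\end{pmatrix}\begin{pmatrix}\hat r&0\\0&r\end{pmatrix}^{-1}=\begin{pmatrix}\hat r a r^{*}&\hat r b r^{-1}\\ rcr^{*}&rdr^{-1}\end{pmatrix}$ recorded in the excerpt is exactly what governs how the period and support lattices transform.

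The essential --- and only delicate --- work is normalization bookkeeping: matching the automorphy factor $j_\alpha(z)^{-k}$ and the $\nu$-twist distinguishing $|_k$ from $||_k$ with Shimura's conventions, and verifying that the finite-place period lattice of $\sigma\mapsto\mathbf{f}(n(\sigma)d(q))$ dualizes to precisely the lattice appearing in (ii). Since the statement is quoted verbatim from \cite[page 148]{Shimurabook1}, in practice one carries out this reduction only as far as is needed to cite Shimura; nothing beyond the classical Fourier expansion and the choice of coset representatives of the form $diag[\hat r,r]$ is required.
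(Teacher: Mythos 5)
The paper gives no proof of this proposition --- it is quoted directly from \cite[page 148]{Shimurabook1}, exactly as you note at the end of your proposal --- so there is no in-paper argument to compare against. Your reduction (factor as $n(\sigma)d(q)$, translate via $\mathbf{f}\mapsto (f_{\tilde q})$ to the classical expansion, compute the archimedean automorphy factor $\det(\hat q_{\mathbf{a}})^{-k}=\det(q_{\mathbf{a}})^{\rho k}$ and base point $i\,q_{\mathbf{a}}q_{\mathbf{a}}^{*}$, and read off the finite support from the conjugation $d(q)n(t)=n(q t q^{*})d(q)$ together with the identity $L=\mathfrak{d}^{-1}\mathfrak{b}\,g\,T\,g^{*}$ for $\Gamma_g$, here with $g=\hat q$) is correct and is in substance Shimura's own derivation.
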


We now briefly explain the relation of this automorphic
$q$-expansion with the complex analytic one in (\ref{analytic
q-expansion}). We start with the remark that we may write $z \in
\mathbb{H}= S_{\mathbf{a}} + \mathbf{i}S_{\mathbf{a}+}$ as $z=x +
\mathbf{i}y$. We then define $q \in GL_n(\mathbb{A}_K)$ by $q_h=1_n$
and $q_\mathbf{a}=y^{1/2}$ so that $q_\mathbf{a}q_{\mathbf{a}}^*=y$.
Further we pick $\sigma \in S_{\mathbb{A}}$ by $\sigma_\mathbf{h}=1$
and $\sigma_{\mathbf{a}}=x$. With these choices we have that
\[
\phi\left(\left(
            \begin{array}{cc}
              q & \sigma \hat{q} \\
              0 & \hat{q} \\
            \end{array}
          \right)
\right)=det(q)^{\rho k} f_1(z).
\]

\section{The Eisenstein Measure of Harris, Li and Skinner.}

In this section our goal is to present the construction of an
Eisenstein measure due to Harris, Li and Skinner as in
\cite{HLS1,HLS2}. As we will see the key ingredients are (i) the
computation of the Fourier coefficients of Siegel type Eisenstein
series by Shimura (see for example
\cite{Shimurabook1,Shimurabook2}), (ii) the definition of particular
sections at places above $p$ as done by Harris, Li and Skinner (loc.
cit.) and (iii) the definition of sections for other ``bad'' primes
(not including those above $p$) as done by Ming-Lun Hsieh
\cite{Hsieh1,Hsieh2}. Finally we also mention here the recent work
of Eischen \cite{Eischen2} generalizing various aspects of the work
of Harris, Li and Skinner and working the relation of their
Eisenstein measure with the theory of the $p$-adic differential
operators developed in \cite{Eischen}.

\subsection*{\textbf{Siegel Eisenstein series for $U(n,n)$}}

We follow Shimura \cite{Shimurabook1}. We let $(W,\psi)$ be a
Hermitian form that decomposes as $(W,\psi)=(V,\phi) \oplus
(H_m,\eta_m)$. We write $n:=dim(V)+m$ and we define
\[
(X,\omega):= (W,\psi) \oplus (V,-\phi)
\]
a hermitian space of dimension $2n$ over $K$. We consider the
decomposition of $(H_m,\eta_m)$ to its maximal isotropic spaces $I$
and $I'$ i.e. $H_m = I \oplus I'$ and hence we can write
\[
X = W \oplus V = I' \oplus V \oplus I \oplus V
\]
We pick a basis of $W$ so that
\[
\omega = \left(
           \begin{array}{cc}
             \psi & 0 \\
             0 & -\phi \\
           \end{array}
         \right)
\]
and hence obtain an embedding $G^\psi \times G^\phi \hookrightarrow
G^\omega$ by $(\beta,\gamma) \mapsto diag[\beta,\gamma]$. If we
write the elements of $X$ in the form $(i',v,i,u)$ with $i' \in I',
i \in I$ and $u,v \in V$ with respect to the above decomposition of
$X$ we put
\[
U := \left\{(0,v,i,v)| v \in V, i \in I \right\},\,\,\,P_U^\omega:=
\{\gamma \in G^\omega| U\gamma = U\}
\]
Then $U$ is totally $\omega$-isotropic and $P^\omega$ is a parabolic
subgroup of $G^\omega$. From \cite[page 7]{Shimurabook1} we know
that if $U'$ is another totally $\omega$-isotropic subspace of $X$
with $dim(U)=dim(U')$ then there exists $\beta \in G^\omega$ such
that $P_U^\omega=\beta P_{U'}^\omega \beta^{-1}$.

As it is explained in Shimura \cite[page 176]{Shimurabook1}, we have
that $(X,\omega) \cong (H_n,\eta_n)$. In the group $G:=G^{\eta_n}$
we write $P$ for the standard Siegel parabolic given by elements $x=\left(
                                                                      \begin{array}{cc}
                                                                        a_x & b_x \\
                                                                        b_x & d_x \\
                                                                      \end{array}
                                                                    \right)
\in G$ with $c_x=0_n$. Now we are ready to define the classical
Siegel-type Eisenstein series attached to a Hecke character $\chi$.
That is we consider a Hecke character $\chi$ of infinite type
$\{k_v\}_{v \in \mathbf{a}}$ with respect to the fixed CM type
$(K,\Sigma)$ and we write $\mathfrak{c} \subseteq \mathfrak{g}$ for
the conductor of $\chi$. Moreover for our applications we are going
to assume that $\mathfrak{c} \neq \mathfrak{r}$. We now note that
the Siegel parabolic $P$ in $G$ is given by
\[
P \cong \left\{\left(
                    \begin{array}{cc}
                      A & B \\
                      0 & \hat{A} \\
                    \end{array}
                  \right)
: A \in GL_n(K),\,\,B \in S_n \right\},\] where
$\hat{A}=(\transpose{\bar{A}})^{-1}$ and $S_n$ the space of $n
\times n$ Hermitian matrices. Let $v$ be a place of $F$. We define
the modulus character $\delta_{P,v}: P(F_v) \rightarrow
\R_+^{\times}$ as
\[
\delta_{P,v}(g)=|N_{K/F} \circ det(A(g))|_v^{-1},\,\,\, g \in P(F_v)
\]
We write $\delta_{P,\mathbb{A}_F}:=\prod_{v}\delta_{P,v}$ for the
adelic modulus character and for $s \in \mathbb{C}$ and $\chi$ our
Hecke character of $K$ we define
\[
\delta_{P,\mathbb{A}}(g,\chi,s):=\chi(det(A(g))\delta_{P,\mathbb{A}}(g,s)
\]
where
\[
\delta_{P,\mathbb{A}}(g,s):=|N_{K/F} \circ
det(A(g))|_\mathbb{A}^{\frac{n}{2}+s},\,\,\, g \in P(\mathbb{A})
\]
We define $I_{\chi}$ as the parabolic induction
$Ind_{P(\mathbb{A}_F)}^{G(\mathbb{A}_F)}\delta_{P,\mathbb{A}_F}(g,\chi,s)$,
i.e.
\[
I_\chi:= \{\phi:G(\mathbb{A}_F) \rightarrow \mathbb{C}: \phi(pg) =
\delta_{P,\mathbb{A}_F}(p,\chi,s)\phi(g), p \in P(\mathbb{A}_F),
g\in G(\mathbb{A}_F)\}\] where $\phi$ is $D_{\mathbf{a}} \cong
U(n)_{\mathbf{a}} \times U(n)_{\mathbf{a}}$ finite, for some maximal
open compact subgroup $D_{\mathbf{a}}$ of $G_{\mathbf{a}}$. Given
$\phi \in I_{\chi}$ we define
\[
G(g,\phi,\chi,s):= \sum_{\gamma \in P(F) \setminus G(F)}\phi(\gamma
g)
\]
and
\[
E(g,\phi,\chi,s):=\left(\Gamma_n(s+\frac{k}{2})^g\prod_{i=0}^{n-1}L_{\mathfrak{c}}(2s-i,\chi_1
\epsilon^i)\right) G(g,\phi,\chi,s),
\]
where $\Gamma_n(s):=\pi^{n(n-1)/2}\prod_{m=0}^{n-1}\Gamma(s-m)$.
Both of them are automorphic forms of $G$ for $Re(s) \gg 0$.
Actually if we the (automorphic) type of the character is
$k1_{\Sigma}$ (that is, the arithmetic is $-k\Sigma$), then it is
known (see \cite[Theorem 19.3]{Shimurabook1} and \cite[Theorem
17.12]{Shimurabook2}) that
\begin{enumerate}
\item the Eisenstein series $G(g,\phi,\chi,\frac{k}{2})$ is an automorphic form of weight $k$ when $k
\geq n$ except when $F=\mathbb{Q}$, $k=n+1$ and the restriction
$\chi_1$ of $\chi$ to $\mathbb{Q}$ is equal to $\epsilon^{n+1}$
where $\epsilon$ the non-trivial character of $K/\mathbb{Q}$,
\item the normalized Eisenstein series
$E(g,\phi,\chi,\frac{\nu}{2})$ is an automorphic form of weight $k$
for $\nu =2n-k$ except when $0 \leq \nu < n$, $\mathfrak{c}=
\mathfrak{g}$ and $\chi_1=\epsilon^k$. In particular we have that
the normalized Eisenstein series is an automorphic form of weight
$n$ for $s=\frac{n}{2}$ when $k=n$.
\end{enumerate}

\textbf{Fourier expansion of automorphic forms of $G$}: We let
$\psi:\mathbb{A}_F/F \rightarrow \mathbb{C}^{\times}$ be the
non-trivial additive character with the property
$\psi_{\infty}(x)=exp(2\pi i \sum_{\sigma}x_{\sigma})$. Then all the
additive characters of $\mathbb{A}_F/F$ can be obtained as
$\psi_a(x):=\psi(ax)$ with $a \in F^{\times}$. For $\beta$ a
hermitian $n \times n$ matrix we define the character
\[
\psi_{\beta}:U_P(F) \setminus U_P(\mathbb{A}_F) \rightarrow
\mathbb{C},\,\,\,n(b) \mapsto \psi(tr(\beta b))
\]
where we have used the fact that there is an isomorphism
$n:S_n(\mathbb{A}_F) \cong U_P(\mathbb{A}_F)$. Here $U_P$ is the
unipotent radical of the parabolic group $P$ which is given by
\[
U_P(F):= \left\{\left(
               \begin{array}{cc}
                 1_n & X \\
                 0 & 1_n \\
               \end{array}
             \right)| X \in S_n(F)
\right\}
\]

The $\beta$-th Fourier coefficient of $G(g,\phi,\chi,s)$ is given by
\[
G_{\beta}(\phi,\chi,s)(g)=\int_{U_P(F) \setminus
U_P(\mathbb{A}_F)}G(ug,\phi,\chi,s)\psi_{-\beta}(u)du
\]

When $\beta$ is of full rank $n$ the Fourier coefficient equals up
to a normalized factor the Whittaker function
$W_{\beta}(g,\phi,s)=\prod_v W_{\beta,v}(g_v,\phi_v,s)$ with
\[
W_{\beta,v}(g_v,\phi_v,s)=\int_{U_P(F_v)}\phi_v(wn_vg,\chi_v,s)\psi_{-\beta}dn_v
\]
with $w=\left(
          \begin{array}{cc}
            0 & -1_r\\
            1_r & 0\\
          \end{array}
        \right)$.

\textbf{The local sections:} We pick a finite set $S$ of places of
$F$ that include (i) all places $v$ in $F$ above $p$, (ii) all
places of $F$ ramified in $K/F$ (iii) all places of $F$ that are
below the conductor of $\chi$ (iv) all places of $F$ such that after
localization of $(V,\phi)$ at $v$ we have $(V_v,\phi_v) \equiv
(T_v,\theta_v) \oplus (H_{r_v},\eta_{r_v})$ with $t_v:=dim(T_v)=2$.
It is known (see (\cite[Prop. 10.2 (1)]{Shimurabook1} that this set
is finite. (v) Finally $S$ contains all archimedean places of $F$.

\textit{The spherical sections:} We consider the places $v$ of $F$
that are not in $S$. For such a $v$, we pick the section $\phi_v \in
I_v(\chi,s)$ to be the normalized spherical sections for the group
$D_v=D[\mathfrak{g},\mathfrak{g}]_v$, a maximal normal subgroup of
$U(n,n)(F_v)$. Here normalized means that $\phi_v(1,\chi_v,s)=1$. We
moreover define the lattice $T$ in $S:=S_n$ by
\[
T:=\left\{x \in S | tr(S(\mathfrak{r})x) \subseteq \mathfrak{g}
\right\}.
\]
Then Shimura has the local Fourier coefficients for such $v$'s
explicitly computed for $\beta$ of full rank. We summarize his
result in the next proposition.

\begin{prop}[Shimura] Let $\phi_v$ be the spherical local
section above. Let $m=m(A) \in M(F_v)$. Then
$W_{\beta,v}(m,\phi_v,s)=0$ unless $\bar{A}^{t}\beta A \in
{\mathfrak{d}_{F_v/\Q_v}^{-1}} T_v$. In this case we have
\[
W_{\beta,v}(m,\phi_v,s)=|N\circ
detA|^{n/2-s}_v\chi_v(detA)g_{\beta,m,v}(\chi_v(\varpi_v)q_v^{-2s})\prod_{j=1}^{n}L_v(2s+j,\chi_{|F}\epsilon_{K/F}^{n-j})^{-1}
\]
with $g_{\beta,m,v}(X) \in \Z[X]$ with $g_{\beta,m,v}(0)=1$. When
$v$ is unramified in $K$ and $det(a\delta_v\bar{A}^{t}\beta A) \in
\mathfrak{g}_{_v}^{\times}$ then $g_{\beta,m,v}(X)=1$.
\end{prop}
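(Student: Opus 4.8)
This is the explicit local evaluation of the degenerate Whittaker (Siegel-series) integral attached to the $U(n,n)$ Siegel Eisenstein series at a good finite place; the full computation is Shimura's \cite{Shimurabook1,Shimurabook2}, and I only indicate its structure.

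The first step is to reduce to $m=1$. Writing $m(A)=\mathrm{diag}[A,\hat A]$ with $\hat A=(\bar A^{t})^{-1}$ for the Siegel Levi element, the identities $w\,m(A)=m(\hat A)\,w$ and $m(A)^{-1}n(b)\,m(A)=n(A^{-1}b\hat A)$ in $U(n,n)(F_v)$ give
\[
\phi_v\big(w\,n(b)\,m(A),\chi_v,s\big)=\delta_{P,v}\big(m(\hat A),\chi_v,s\big)\,\phi_v\big(w\,n(A^{-1}b\hat A),\chi_v,s\big).
\]
Substituting $b=Ab'\bar A^{t}$ in the defining integral replaces $\psi_{-\beta}$ by $\psi_{-\bar A^{t}\beta A}$, the Jacobian of $b'\mapsto Ab'\bar A^{t}$ on $S_n(F_v)$ being $|N_{K/F}(\det A)|_v^{\,n}$; combined with $\delta_{P,v}(m(\hat A),\chi_v,s)=\chi_v(\det A)\,|N_{K/F}(\det A)|_v^{-n/2-s}$ this yields
\[
W_{\beta,v}(m(A),\phi_v,s)=|N_{K/F}(\det A)|_v^{\,n/2-s}\,\chi_v(\det A)\,W_{\bar A^{t}\beta A,\,v}(1,\phi_v,s),
\]
which produces the stated prefactor and reduces the whole statement — including the vanishing condition $\bar A^{t}\beta A\in\mathfrak{d}_{F_v/\Q_v}^{-1}T_v$ — to the case $m=1$, arbitrary $\beta$.

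The heart of the matter is then $W_{\beta,v}(1,\phi_v,s)=\int_{S_n(F_v)}\phi_v(w\,n(b),\chi_v,s)\,\psi(-\mathrm{tr}(\beta b))\,db$. Since $n(c)\in D_v$ for every $c\in S_n(\mathfrak{r}_v)$ and $\phi_v$ is right $D_v$-invariant, the integrand is $S_n(\mathfrak{r}_v)$-periodic in $b$; unfolding over $S_n(F_v)/S_n(\mathfrak{r}_v)$ gives
\[
W_{\beta,v}(1,\phi_v,s)=\mathrm{vol}(S_n(\mathfrak{r}_v))\sum_{b\in S_n(F_v)/S_n(\mathfrak{r}_v)}\phi_v(w\,n(b),\chi_v,s)\,\psi(-\mathrm{tr}(\beta b)),
\]
which already vanishes unless $\psi_v(-\mathrm{tr}(\beta\,\cdot\,))$ is trivial on $S_n(\mathfrak{r}_v)$, i.e. unless $\beta\in\mathfrak{d}_{F_v/\Q_v}^{-1}T_v$. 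On the support I would organize the (conditionally convergent, for $\mathrm{Re}(s)\gg 0$) sum by the elementary-divisor type of $b$: on each stratum the Iwasawa decomposition $w\,n(b)=p(b)\kappa(b)$, $p(b)\in P(F_v)$, $\kappa(b)\in D_v$, makes $\phi_v(w\,n(b),\chi_v,s)=\delta_{P,v}(p(b),\chi_v,s)$ an explicit monomial in $\chi_v(\varpi_v)$ and $q_v^{-s}$, and the partial sum over the stratum is a finite Gauss sum over $S_n(\mathfrak{r}_v/\mathfrak{p}_v^{k})$, hence a rational integer. Running Shimura's reduction formula in the rank $n$ — it expresses the rank-$n$ sum through the rank-$(n-1)$ one and peels off one factor $L_v(2s+j,\chi_{|F}\epsilon_{K/F}^{n-j})^{-1}$ at step $j$ — produces $\prod_{j=1}^{n}L_v(2s+j,\chi_{|F}\epsilon_{K/F}^{n-j})^{-1}$ times a finite sum, which, rewritten in $X:=\chi_v(\varpi_v)q_v^{-2s}$, is a polynomial $g_{\beta,m,v}(X)\in\Z[X]$ with $g_{\beta,m,v}(0)=1$ (the $b=0$ term contributes $\phi_v(w,\chi_v,s)=\phi_v(1,\chi_v,s)=1$). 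Finally, when $v$ is unramified in $K$ and $a\delta_v\bar A^{t}\beta A$ is unimodular over $\mathfrak{g}_v$, all strata with $b\neq 0$ drop out, so $g_{\beta,m,v}(X)=1$.

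The genuinely laborious point is this middle paragraph: tracking the precise powers of $q_v$ in each Iwasawa decomposition, the shift by $\mathfrak{d}_{F_v/\Q_v}^{-1}$ coming from the conductor of $\psi_v$, and the exact $L$-factors $L_v(2s+j,\chi_{|F}\epsilon_{K/F}^{n-j})$ appearing along the recursion in $n$. This is exactly the content of Shimura's computation of confluent hypergeometric functions on unitary groups in \cite{Shimurabook1,Shimurabook2}, which I would invoke rather than reproduce; in the split case $K_v=F_v\times F_v$ everything reduces to the analogous computation for $GL_{2n}(F_v)$ with its $(n,n)$-parabolic, and the unramified inert case is treated uniformly in the same source.
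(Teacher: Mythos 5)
The paper does not prove this proposition itself; it is stated as a direct quotation from Shimura's book \cite{Shimurabook1}, and the same is true of the reduction formula $W_{\beta,v}(m(A),\phi_v,s)=|N\circ\det A|_v^{n/2-s}\chi_v(\det A)\,W_{\bar A^t\beta A,v}(1,\phi_v,s)$ that you re-derive (it appears, also without proof, as the lemma just below for the $p$-adic section). Your structural outline — the Bruhat/Iwasawa manipulation $w\,n(b)\,m(A)=m(\hat A)\,w\,n(A^{-1}b\hat A)$ producing the prefactor and replacing $\beta$ by $\bar A^t\beta A$, the unfolding over $S_n(\mathfrak{r}_v)$ using right $D_v$-invariance and the resulting integrality condition $\bar A^t\beta A\in\mathfrak{d}_{F_v/\Q_v}^{-1}T_v$, and the stratified recursion that peels off the factors $L_v(2s+j,\chi_{|F}\epsilon_{K/F}^{n-j})^{-1}$ and leaves the polynomial $g_{\beta,m,v}\in\Z[X]$ with $g_{\beta,m,v}(0)=1$ — is a faithful summary of Shimura's computation, so this is essentially the same approach as the paper's (namely, citing Shimura), just fleshed out.
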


\textit{The non-spherical sections:} Now following
\cite{HLS1,HLS2,Hsieh1,Hsieh2} we deal with the non-spherical finite
places, that is the finite places $v$ of $F$ that belong to the set
$S$. We are going to distinguish between those above $p$, we call
this set $S_p$ and the rest.

\textit{Finite places not dividing $p$:} Our choice of the local
section is the one described by Ming-Lun Hsieh \cite{Hsieh1,Hsieh2}.
We first recall the sections that are defined by Shimura \cite[page
149]{Shimurabook1}. We define a function $\tilde{\phi}_v$ on $G_v$
by
\[
\tilde{\phi}_v(x)=0,\,\,\,for\,\,\,x \not \in P_v D_v
\]
and
\[
\tilde{\phi}(pw)=\chi(det(d_p))^{-1}\chi_{\mathfrak{c}}(det(d_w))^{-1}|det
d_p \bar{d}_p|^{-s},
\]
for $p \in P_v$ and $w \in D_v=D[\mathfrak{g},\mathfrak{g}]_v$. Now
we recall that we are considering a form $(W,\psi)=(V,\theta) \oplus
(H_m,\eta_m)$ and we have defined an element
$\Sigma_{\mathbf{h}}:=diag[1_m,\sigma_{\mathbf{h}},1_m,\hat{\sigma}_{\mathbf{h}}]$
with $\sigma_v=1$ for $v \not \in S$. Then we define
\[
\phi_{v}(g):=\tilde{\phi}_{v}(gw'\Sigma^{-1})
\]
where $w':=\left(
             \begin{array}{cccc}
                &  & -1_m &  \\
                &  &  & -\frac{\theta}{2} \\
               1_m &  &  &  \\
                & -\frac{\theta^{-1}}{2} &  &  \\
             \end{array}
           \right)$. We now define $u := \left(
                  \begin{array}{cc}
                    1_m &  \\
                     & \frac{\theta}{2}\sigma^*\\
                  \end{array}
                \right)$ and
the lattice $L_v := Her_n(F_v) \cap (u M_n(2\mathfrak{c}_v)u^*)$.
Then $\phi_v$ is the unique section such that
\[
supp(\phi_v) = P(F_v)wU_P(L_v),\,\,\and\,\,\phi_v(w\left(
                                                       \begin{array}{cc}
                                                         1_n & \ell \\
                                                         0 & 1_n \\
                                                       \end{array}
                                                     \right)=
                                                     \chi^{-1}_v(det
                                                     u)|det(u
                                                     \bar{u})|^{-s},
                                                     \ell \in L_v,
\]
where we recall $w=\left(
                     \begin{array}{cc}
                       0 & -1_n \\
                       1_n & 0 \\
                     \end{array}
                   \right)$. Then as it is explained in \cite{Hsieh1,Hsieh2} we have that
\[
W_{\beta,v}(m(A),\phi_v,s)=\mathbb{I}_{L^{\vee}_v}(\bar{A}^t \beta
A) |detA|_v^{n/2-s}\chi_v(detA) vol(L_v)\chi^{-1}(det u)|det(u
\bar{u})|_v^{-2s}.
\]

\textit{The sections at infinite:} We follow again
\cite{Hsieh1,Hsieh2}. In the Hermitian symmetric space
$\mathbb{H}_{n}^\mathbf{a}$ we pick a CM point $\mathbf{i}$ and
write $D_{\mathbf{a}}:= \left\{g \in G_\mathbf{a}(\mathbb{R}) | g
\mathbf{i} = g \right\} \subseteq G_{\mathbf{a}}$ for the stabilizer
of it. For example we may take
$\mathbf{i}:=(\mathbf{i}_\sigma)_{\sigma \in \Sigma}$ defined by
\[
\mathbf{i}_\sigma = \left(
                      \begin{array}{cc}
                        i 1_{s} & 0 \\
                        0 & -\frac{1}{2}\sigma(\theta) \\
                      \end{array}
                    \right) \in \mathbb{H}_n.
\]
We use the group $D_\mathbf{a}$ to identify
$G_\mathbf{a}(\mathbb{R})/D_{\mathbf{a}}$ with the symmetric space
$\mathbb{H}_n^\mathbf{a}$. That is for every element $z \in
\mathbb{H}_n^\mathbf{a}$ we find $p \in P_\mathbf{a}(\mathbb{R})$
such that $p(\mathbf{i})=z$. For $g \in G(\mathbb{R})$ and $z \in
\mathbb{H}_n^\mathbf{a}$ we define the automorphy factors attached
to $D_{\mathbf{a}}$ by
\[
J(g,z):=det(c_g z +d_g),\,\,and\,\, J'(g,z):= det (\bar{c}_g z +
\bar{d}_g)= det(g)^{-1}J(g,z)\nu(g)^2
\]
Then we have
\[
\delta(g)=|J(g,\mathbf{i})|^{-2}|det(g)|,\,\,\,g\in G(\mathbb{R})
\]
For a given integer $k$ we define the local section at infinity as
\[
\phi_{\infty,s}(g):=J(g,\mathbf{i})^{-k}\delta^{\frac{k}{2}-s}(g)
\]
We set
\[
\Gamma_n(s):=\pi^{\frac{n(n-1)}{2}} \prod_{j=0}^{n-1}\Gamma(s-j)
\]
and
\[
L_{n,\infty}(k,s):=i^{nk}2^{-n(k-n+1)}\pi^{n(s+k)}\Gamma_n(s+k)
\]
Then as it is explained in \cite{Hsieh1,Hsieh2} we have for every
$\sigma \in \Sigma$ and $p_{\sigma} \in P(\mathbb{R})$
\[
J(p_\sigma,\mathbf{i}_\sigma)^k
W_{\beta,\sigma}(p_{\sigma},\phi_{\infty,s})|_{s=\frac{k}{2}}=\left\{
                                                      \begin{array}{ll}
                                                        L_{n,\infty}(k,0)^{-1}det(\sigma(\beta))^{k-n}e^{2\pi i tr(\sigma(\beta)z_{\sigma})}, & \hbox{$\beta \geq 0$;} \\
                                                        0, & \hbox{otherwise.}
                                                      \end{array}
                                                    \right.
\]
where $z_{\sigma}:=p_{\sigma}(\mathbf{i}_{\sigma})$.

\textit{Finite places dividing $p$, the sections of Harris, Li and
Skinner, after \cite{HLS1,HLS2}:} Now we turn to the sections at
finite places above $p$ as defined by Harris, Li and Skinner. The
interested reader should also see the work of Eischen
\cite{Eischen2} for a more detailed study of these sections. We are
assuming that we are given the following data: A character $\chi$ of
$F^\times_v \times F^\times_v$, a partition $n = n_1 + n_2 + \ldots
+n_\ell$ with $\ell \in \mathbb{N}$ and an $\ell$-tuple
$(\nu_1,\ldots,\nu_{\ell})$ of characters $\nu_j$ of $F_v^\times$.
Moreover we assume that ordinary condition that is all the primes
$v$ above $p$ in $F$ split in $K$.

We identify $U(n,n)(F_v)$ with $GL(2n,F_v)$ and the character $\chi$
used in the parabolic induction of the Siegel-Eisenstein series with
the character
\[
\left(
  \begin{array}{cc}
    A & * \\
    0 & B \\
  \end{array}
\right) \mapsto \chi_1(det(B))^{-1}\chi_2(det(A))|det(AB^{-1})|^{s},
\]
where we write $\chi=(\chi_1,\chi_2)$ for $\chi$ as a character on
$F_v \times F_v$.

We are given a partition $n = n_1 + n_2 + \ldots +n_\ell$. We write
$P$ for the standard parabolic subgroup of $GL(n)$ corresponding to
the fixed partition and we let $I$ to denote the corresponding
Iwahori subgroup. That is $I$ consists of matrices $I_{ij}$ written
in blocks with respect the fixed partition of $n$ such that (i)
$Z_{jj} \in GL(n_j, \mathfrak{g}_v)$ for $1 \leq j \leq \ell$, (ii)
$Z_{ij}$ has entries in $\mathfrak{g}_v$ for $1 \leq i < j \leq
\ell$ and (iii) $Z_{ij}$ has entries in $\mathfrak{p}_v$ for $i >
j$.

We consider $\ell$ characters $(\nu_1,\ldots \nu_\ell)$ of
$F^\times_v$ and define the Schwartz function $\Phi_\nu$ on
$M(n,F_v)$ by
\[
\Phi_\nu(Z)= \left\{
             \begin{array}{ll}
               \nu_1(Z_{11})\ldots \nu_\ell(Z_{\ell \ell}), & \hbox{$Z \in I$;} \\
               0, & \hbox{otherwise.}
             \end{array}
           \right.
\]

Now we define the $\ell$-tuple $(\mu_1,\ldots, \mu_{\ell})$ where
$\mu_j:=\nu^{-1}_j \chi_2^{-1}$. Using this $\ell$-tuple we define
$\Phi_{\mu}$ in the same way that we have defined $\Phi_{\nu}$. We
pick an integer $t \in \mathbb{N}$ that is bigger than all the
conductors of the characters $\mu_j$, $j=1,\ldots,\ell$. Writing
$\mathfrak{p}$ for the maximal ideal of $\mathfrak{g}_v$ we define
the group $\Gamma(\mathfrak{p}^t)$ to be the subgroup of
$GL(\mathfrak{g}_v)$ consisting of matrices whose off diagonal
blocks are divisible by $\mathfrak{p}^t$. Then we define the
function
\[
\tilde{\Phi}_{\mu}(x)=\left\{
                     \begin{array}{ll}
                       vol(\Gamma(\mathfrak{p}^t))^{-1} \Phi_{\mu}(x), & \hbox{$x \in \Gamma(\mathfrak{p}^t)$;} \\
                       0, & \hbox{otherwise.}
                     \end{array}
                   \right.,
\]
where $ vol(\Gamma(\mathfrak{p}^t))$ is defined as in \cite{HLS2}
page 59. We now define the Bruhat-Schwartz function on $M(n,2n,F)$
by
\[
\Phi(x,y)=\tilde{\Phi}_{\mu}(x)\widehat{\Phi}_{\nu^{-1}\chi_{1}}(y)
\]
and then the section
\[
\phi_v(h;\chi,s):=f_{\Phi}(h,s):=\chi_2(deth)|deth|^{s}\int_{GL_n(F_v)}\Phi((0,Z)h)\chi_2\chi_1(detZ)|detZ|^{2s}d^\times
Z,
\]
where $\widehat{\Phi}_{\nu^{-1}\chi_{1}}$ is the Fourier transform
of $\Phi_{\nu^{-1}\chi_{1}}$ as for example defined in \cite{HLS2}.

 \textbf{Remark:} This section is slightly different from the
one of Harris, Li and Skinner. This choice will be justified by our
computations in the proof of Lemma \ref{local p integral} in the
next section. We remark here that similar modifications occur in the
works of Eischen \cite{Eischen2} and of Ming-Lun Hsieh
\cite{Hsieh1,Hsieh2}.

We now compute the local Fourier coefficient at $v$. By definition
we have that
\[
W_{\beta,v}(g,\phi_v,s)= \int_{S_n} \phi_v(w_n n(S) g)
\psi_{-\beta}(S) d S,
\]
where $w_n=\left(
             \begin{array}{cc}
               0 & -1_n \\
               1_n & 0 \\
             \end{array}
           \right)$ and $n(S):=\left(
                                 \begin{array}{cc}
                                   1_n & S \\
                                   0 & 1_n \\
                                 \end{array}
                               \right)$. Note that since we are in
the split case we have that $S_n = M_n(F_v)$. Hence putting also the
definition of our section we obtain
\[
W_{\beta,v}(1,\phi_v,s)=\int_{M_n(F_v)} \int_{GL_n(F_v)}
\Phi\left((0,Z) \left(
                  \begin{array}{cc}
                    0 & -1_n \\
                    1_n & X \\
                  \end{array}
                \right)
\right)\chi_2\chi_1(detZ)|detZ|^{2s}  \psi_{-\beta}(X) d^\times Z dX
\]
\[
=\int_{M_n(F_v)} \int_{GL_n(F_v)} \Phi(Z,ZX)
\chi_2\chi_1(detZ)|detZ|^{2s}  \psi_{-\beta}(X) d^\times Z dX
\]
But
$\Phi(Z,ZX)=\tilde{\Phi}_{\mu}(Z)\widehat{\Phi}_{\nu^{-1}\chi_1}(ZX)$
and hence the integral above reads
\[
 \int_{GL_n(F_v)}\tilde{\Phi}_{\mu}(Z)\left(\int_{M_n(F_v)}\widehat{\Phi}_{{\nu}^{-1}\chi_1}(ZX)\psi_{-\beta}(X) dX\right)\chi_2\chi_1(detZ)|detZ|^{2s}d^\times Z
\]
But by the Fourier inversion formula, after setting $X\mapsto
Z^{-1}X$, we have that
\[
\int_{M_n(F_v)}\widehat{\Phi}_{{\nu}^{-1}\chi_1}(ZX)\psi_{-\beta}(X)
dX=|detZ|^{-n} \Phi_{\nu^{-1}\chi_1}(\transpose{Z}^{-1}
\transpose{\beta}),
\]
and hence we have that
\[
W_{\beta,v}(1,\phi_v,s)=
\int_{GL_n(F_v)}\tilde{\Phi}_{\mu}(Z)|detZ|^{-n}
\Phi_{\nu^{-1}\chi_{1}}(\transpose{Z}^{-1}
\transpose{\beta})\chi_2\chi_1(detZ)|detZ|^{2s}d^\times Z
\]
By the definition of $\tilde{\Phi}_{\mu}$ we have that the integral
is over $\Gamma_0(\mathfrak{p}^t)$ and by the support of
$\Phi_{\nu^{-1}\chi_1}$ it is non zero if and only if $det(\beta)
\neq 0$. In this case, after making the change of variables $Z
\mapsto Z \beta$, we get
\[
W_{\beta,v}(1,\phi_v,s)=\chi_2\chi_1(det(\beta))|det(\beta)|^{2s-n}\int_{\Gamma_0(\mathfrak{p}^t)}\tilde{\Phi}_{\mu}(Z\beta)
\Phi_{\nu^{-1}\chi_{1}}(\transpose{Z}^{-1})\chi_2\chi_1(detZ)|detZ|^{2s-n}d^\times
Z.
\]
\[
=\chi_2\chi_1(det(\beta))|det(\beta)|^{2s-n}\Phi_{\mu}(\beta)\int_{\Gamma_0(\mathfrak{p}^t)}\tilde{\Phi}_{\mu}(Z)
\Phi_{\nu^{-1}\chi_{1}}(\transpose{Z}^{-1})\chi_2\chi_1(detZ)d^\times
Z.
\]
\[
=\chi_2\chi_1(det(\beta))|det(\beta)|^{2s-n}\Phi_{\mu}(\beta),
\]
because of the normalization of $\tilde{\Phi}_{\mu}(Z)$. We
summarize the computations in the following lemma
\begin{lem} With the choices as above we have
\[
W_{\beta,v}(1,\phi_v,s)=\chi_2\chi_1(det(\beta))|det(\beta)|^{2s-n}\Phi_{\mu}(\beta).
\]
\end{lem}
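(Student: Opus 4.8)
The plan is to unwind the definition of the local Whittaker integral at $g=1$, substitute the explicit section $\phi_v=f_\Phi$, exploit the product shape $\Phi(x,y)=\tilde{\Phi}_\mu(x)\widehat{\Phi}_{\nu^{-1}\chi_1}(y)$, and collapse the resulting double integral to a single integral over $GL_n(F_v)$ whose value is dictated by the supports and normalizations of $\tilde{\Phi}_\mu$ and $\Phi_{\nu^{-1}\chi_1}$.

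First I would multiply out $w_n n(S)$, whose top row is $(0,-1_n)$ and whose bottom row is $(1_n,S)$, so that $(0,Z)\,w_n n(S)=(Z,ZS)$; since $det(w_n n(S))$ is a unit, the prefactor $\chi_2(det h)|det h|^{s}$ in the definition of $f_\Phi$, evaluated at $h=w_n n(S)$, contributes only a constant that one checks equals $1$. Using $S_n=M_n(F_v)$ in the split case and substituting into $W_{\beta,v}(1,\phi_v,s)=\int_{S_n}\phi_v(w_n n(S))\psi_{-\beta}(S)\,dS$ gives
\[
W_{\beta,v}(1,\phi_v,s)=\int_{M_n(F_v)}\int_{GL_n(F_v)}\Phi(Z,ZX)\,\chi_2\chi_1(detZ)\,|detZ|^{2s}\,\psi_{-\beta}(X)\,d^\times Z\,dX .
\]

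Next, writing $\Phi(Z,ZX)=\tilde{\Phi}_\mu(Z)\widehat{\Phi}_{\nu^{-1}\chi_1}(ZX)$, I would pull $\tilde{\Phi}_\mu(Z)$ out of the $X$-integral and evaluate the inner integral $\int_{M_n(F_v)}\widehat{\Phi}_{\nu^{-1}\chi_1}(ZX)\psi_{-\beta}(X)\,dX$ by the substitution $X\mapsto Z^{-1}X$ (Jacobian $|detZ|^{-n}$) followed by Fourier inversion, obtaining $|detZ|^{-n}\Phi_{\nu^{-1}\chi_1}(\transpose{Z}^{-1}\transpose{\beta})$. This leaves
\[
W_{\beta,v}(1,\phi_v,s)=\int_{GL_n(F_v)}\tilde{\Phi}_\mu(Z)\,\Phi_{\nu^{-1}\chi_1}(\transpose{Z}^{-1}\transpose{\beta})\,\chi_2\chi_1(detZ)\,|detZ|^{2s-n}\,d^\times Z .
\]
Since $\tilde{\Phi}_\mu$ is supported on $\Gamma(\mathfrak{p}^t)$ and $\Phi_{\nu^{-1}\chi_1}$ on the Iwahori subgroup, the integrand can be nonzero at some $Z$ only if $\transpose{Z}^{-1}\transpose{\beta}$ is invertible there, which forces $det(\beta)\neq 0$; hence $W_{\beta,v}(1,\phi_v,s)=0$ when $det(\beta)=0$.

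When $det(\beta)\neq 0$, the substitution $Z\mapsto Z\beta$, which preserves $d^\times Z$, turns $\transpose{Z}^{-1}\transpose{\beta}$ into $\transpose{Z}^{-1}$ and pulls out $\chi_2\chi_1(det\beta)\,|det\beta|^{2s-n}$, while $\tilde{\Phi}_\mu(Z\beta)$ separates on the relevant domain as $\Phi_\mu(\beta)\,\tilde{\Phi}_\mu(Z)$ (using that, once $t$ exceeds all the conductors in play, $\Phi_\mu$ is multiplicative in the diagonal blocks modulo $\mathfrak{p}^t$); this separation, together with the vanishing of $\Phi_\mu$ off the Iwahori, is the only step that genuinely uses the precise Harris--Li--Skinner Schwartz data, and it is the main point to get right. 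One is then left with
\[
W_{\beta,v}(1,\phi_v,s)=\chi_2\chi_1(det\beta)\,|det\beta|^{2s-n}\,\Phi_\mu(\beta)\int_{\Gamma(\mathfrak{p}^t)}\tilde{\Phi}_\mu(Z)\,\Phi_{\nu^{-1}\chi_1}(\transpose{Z}^{-1})\,\chi_2\chi_1(detZ)\,d^\times Z ,
\]
and by the choice $\mu_j=\nu_j^{-1}\chi_2^{-1}$ and $t$ larger than all relevant conductors the characters cancel in pairs on $\Gamma(\mathfrak{p}^t)$, so the integrand is constantly $vol(\Gamma(\mathfrak{p}^t))^{-1}$ and the integral equals $1$. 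This yields the stated formula. Beyond the one delicate separation step, everything is routine tracking of transposes, Jacobians, and the normalizing volume.
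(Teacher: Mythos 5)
Your argument reproduces the paper's computation essentially verbatim: write $(0,Z)\,w_n n(X)=(Z,ZX)$, use the product form $\Phi(Z,ZX)=\tilde{\Phi}_\mu(Z)\widehat{\Phi}_{\nu^{-1}\chi_1}(ZX)$, Fourier-invert the inner $X$-integral after the substitution $X\mapsto Z^{-1}X$, change variables $Z\mapsto Z\beta$, and invoke the support of $\Phi_{\nu^{-1}\chi_1}$ together with the normalization of $\tilde{\Phi}_\mu$. The only divergence is that you spell out the character cancellation $(\mu_j\,\nu_j\chi_1^{-1}\,\chi_2\chi_1)=1$ that forces the residual integral to equal $1$, whereas the paper compresses this into ``because of the normalization of $\tilde{\Phi}_\mu(Z)$.''
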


We note also the following lemma, a proof of which can be found in
\cite[page 52]{HLS2}.
\begin{lem} Let $m(A)=\left(
                        \begin{array}{cc}
                          A & 0 \\
                          0 & \transpose{\bar{A}}^{-1} \\
                        \end{array}
                      \right) \in U(n,n)(F)$ for $A \in GL_n(K)$. Then
\[
W_{\beta,v}(m(A)g_v,\phi_v,s) |N \circ
det(A)|^{\frac{n}{2}-s}\chi_v(det(A)) W_{\transpose{\bar{A}}\beta
A,v}(g_v,\phi_v,s).
\]
\end{lem}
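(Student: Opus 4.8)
The assertion is the standard equivariance of the local Whittaker integral under the Siegel Levi, and I would prove it by a direct change of variables in the defining integral $W_{\beta,v}(g,\phi_v,s)=\int_{S_n}\phi_v(w_n n(S)g)\,\psi_{-\beta}(S)\,dS$, where $n(S)=\left(\begin{smallmatrix}1_n&S\\0&1_n\end{smallmatrix}\right)$, $w_n=\left(\begin{smallmatrix}0&-1_n\\1_n&0\end{smallmatrix}\right)$ and $S$ runs over the $F_v$-space $S_n$ of hermitian matrices. First I would record the matrix identities in $U(n,n)$: writing $m(A)=\mathrm{diag}[A,\transpose{\bar A}^{-1}]$, block multiplication gives $n(S)\,m(A)=m(A)\,n(A^{-1}S\transpose{\bar A}^{-1})$ and $w_n\,m(A)=m(\transpose{\bar A}^{-1})\,w_n$, hence $w_n\,n(S)\,m(A)=m(\transpose{\bar A}^{-1})\,w_n\,n(A^{-1}S\transpose{\bar A}^{-1})$ (one also checks $m(A)\in U(n,n)(F)$ for every $A\in GL_n(K)$, so the statement makes sense).

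Substituting this into the integral for $W_{\beta,v}(m(A)g_v,\phi_v,s)$ and using that $\phi_v\in I_v(\chi,s)$ transforms on the left by the inducing character, $\phi_v(m(B)h)=\chi_v(\det B)\,|N_{K/F}(\det B)|_v^{\,n/2+s}\phi_v(h)$, pulls the scalar $\chi_v(\overline{\det A}^{-1})\,|N_{K/F}(\det A)|_v^{\,-n/2-s}$ out of the integral. Then I would change variables $S=AS'\transpose{\bar A}$: this linear automorphism of $S_n$ has Jacobian $|N_{K/F}(\det A)|_v^{\,n}$ (check it on the scalars $A=\lambda 1_n$, or note it is an $F$-rational character of $\mathrm{Res}_{K/F}\mathbb{G}_m$ precomposed with $\det$, hence a power of $N_{K/F}\circ\det$), while cyclicity of the trace gives $\psi_{-\beta}(AS'\transpose{\bar A})=\psi(-\mathrm{tr}(\transpose{\bar A}\beta A\,S'))=\psi_{-\transpose{\bar A}\beta A}(S')$. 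The surviving integral is exactly $W_{\transpose{\bar A}\beta A,v}(g_v,\phi_v,s)$, so that
\[
W_{\beta,v}(m(A)g_v,\phi_v,s)=\chi_v(\overline{\det A}^{-1})\,|N_{K/F}(\det A)|_v^{\,n/2-s}\,W_{\transpose{\bar A}\beta A,v}(g_v,\phi_v,s).
\]

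The remaining point — and the one that is not formal — is to identify $\chi_v(\overline{\det A}^{-1})$ with $\chi_v(\det A)$. This comes down to the central-character constraint on the Siegel Eisenstein series: $\chi|_{\mathbb{A}_F^\times}$ is a power of $\epsilon_{K/F}$, so $\chi\,\chi^{\rho}=(\chi|_{\mathbb{A}_F^\times})\circ N_{K/F}=1$ on $\mathbb{A}_K^\times$ by class field theory, whence $\chi_v(\bar x)=\chi_v(x)^{-1}$; in the split case $v\mid p$ the same is visible directly from the explicit inducing character $\left(\begin{smallmatrix}A&*\\0&B\end{smallmatrix}\right)\mapsto\chi_1(\det B)^{-1}\chi_2(\det A)|\det(AB^{-1})|^{s}$ together with $\chi=(\chi_1,\chi_2)$. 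I expect this bookkeeping — keeping the normalization of $\chi$ on the Levi consistent across the spherical, ramified, archimedean and $p$-adic sections — to be the only real obstacle; everything else is the change of variables above, and the result matches the computation in \cite[page~52]{HLS2}.
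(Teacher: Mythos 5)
The paper does not actually prove this lemma — it is stated with a pointer to \cite[page 52]{HLS2} — so there is no internal proof to compare against, and your proposal supplies the missing argument.

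Your computation is the correct and standard one. The matrix identities $n(S)m(A)=m(A)n(A^{-1}S\transpose{\bar A}^{-1})$ and $w_n m(A)=m(\transpose{\bar A}^{-1})w_n$ are right (one checks $m(\transpose{\bar A}^{-1})=\mathrm{diag}[\transpose{\bar A}^{-1},A]$ directly). Pulling the Levi factor $m(\transpose{\bar A}^{-1})$ out of the section gives $\chi_v(\overline{\det A}^{-1})\,|N(\det A)|_v^{-n/2-s}$, the change of variables $S=AS'\transpose{\bar A}$ preserves hermitianness, has modulus $|N_{K/F}(\det A)|_v^{\,n}$ by the character argument you give, and $\mathrm{tr}(\beta A S'\transpose{\bar A})=\mathrm{tr}(\transpose{\bar A}\beta A S')$ converts $\psi_{-\beta}$ to $\psi_{-\transpose{\bar A}\beta A}$. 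The exponent $n-(n/2+s)=n/2-s$ matches.

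You also correctly flag the one step that is not formal: your computation yields $\chi_v(\overline{\det A}^{-1})$, and the displayed formula has $\chi_v(\det A)$. Your resolution — invoking $\chi\chi^\rho=1$, i.e.\ $\chi|_{\mathbb{A}_F^\times}$ a power of $\epsilon_{K/F}$ — is one way to reconcile them, but it imports a constraint on $\chi$ that the present paper does not state (the $L$-factors $L_S(\cdot,\chi_1\epsilon^j)$ appear with $\chi_1:=\chi|_{\mathbb{A}_F^\times}$ unrestricted). A cleaner reconciliation, and the one consistent with the paper simply transcribing HLS2's formula, is that the inducing character on the Levi is really $m(A)\mapsto \chi(\overline{\det A})^{-1}\,|N(\det A)|^{n/2+s}$ rather than the $\chi(\det(A(g)))$ written a few lines earlier in the paper; with that normalization the factor $m(\transpose{\bar A}^{-1})$ pops out directly as $\chi(\overline{\det(\transpose{\bar A}^{-1})})^{-1}=\chi(\det A)$, and the lemma holds for all $\chi$ with no auxiliary hypothesis. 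In short: your proof is right, and the discrepancy you isolate is a normalization ambiguity in the paper's own definition of $\delta_{P}(g,\chi,s)$, not a gap in your argument.
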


 \textbf{Normalization:} We now normalize the Eisenstein
series $G(g,\phi,\chi,s)$. We introduce the quantity
\[
c(n,F,K):=2^{n(n-1)[F:\mathbb{Q}/2]}|\delta(F)|^{-n/2}|\delta(K)|^{-n(n-1)/4}
\]
Then as it is explained in \cite{Shimurabook1} page 153 we have $dx
= c(n,F,K) \prod_{v}dx_v$. Here $dx$ is the Haar measure on
$S_{\mathbb{A}}$ normalized such that $\int_{S_{\mathbb{A}}/S}dx=1$.
For $v \in \mathbf{h}$ the $dx_v$ are measures on $S_v$ that are
normalized to give volume 1 to the maximal compact subgroup
$\L_v:=S(O_F)_v$. For $v \in \mathbf{a}$ we refer to Shimura. Then
we define
\[
C^S(n,K,s):=c(n,K^+,K)\prod_{j=0}^{n-1}L^{S}(n+j+s,\chi\epsilon^j)^{-1}L_{n,\infty}(k,s)^{-1}
\]
and normalize
\[
E(g,\phi,\chi,s):=C^{S}(n,K,s)^{-1}G(g,\phi,\chi,s)
\]
We now pick $g_f:=m(A) \in M(\mathbb{A}_f)$, $A \in
GL_n(\mathbb{A}_{f,K})$ and consider the hermitian form
\[
E(z,m(A),\phi,\chi):=J_k(g_\infty,\mathbf{i})E(g,\phi,\chi,s)_{|s=0}
\]
where $g=g_f g_\infty$ with $g_\infty \mathbf{i}=z \in
\mathbb{H}_{n}^\mathbf{a}$. As we have seen the definition of the
section $\phi$ depends on the characters $\chi$ and $\nu$. Hence we
may sometimes write $E(z,m(A),\nu,\chi)$ instead of
$E(z,m(A),\phi,\chi)$.

As in \cite{HLS2} we write $m(A)=m(A^{(p)}\prod_{v|p}h_v$ where we
identify $h_v$ with elements in $GL_n(F_v)$ and in particular write
$h_v=diag[A(h_v),B(h_v)]$. Moreover we recall that $\chi_v
=(\chi_{1,v},\chi_{2,v})$. From the remarks above we have that the
global Fourier coefficient of $E(z,m(A),\phi,\chi)$ at $\beta \in S$
is given by
\[
E_\beta(m(A),\phi,\chi)=T^0(\beta,m(A))N_{F/\mathbb{Q}}(det(\beta))^{(k-n)}\chi(det(A))|det(A)|^n
\times
\]
\[ \prod_{v \in
\Sigma_p}\chi^{-1}_{1,v}(det(B(h_v)))\chi_{2,v}(det(A(h_v)))\Phi_{\mu}(\transpose{A(h_v)}\beta
B(h_v)^{-1}) \times \]
\[
\prod_{v \in S^{(p)}}\mathbb{I}_{L^{\vee}_v}(\transpose{\bar{A}}
\beta A)
 vol(L_v)\chi_v^{-1}(det u)
\]
where
\[
T^0(\beta,m(A)):=\prod_{v \not  \in
S}\mathbb{I}_{\mathfrak{d}^{-1}_{F_v/\mathbb{Q}_v}T_v}(\transpose{\bar{A}}
\beta A)g_{\beta,m(A),v}(\chi_v(\varpi_v))
\]
We define
\[
Q(\beta,A,k,\nu):=N_{F/\mathbb{Q}}(det(\beta))^{(k-n)}|det(A)|^n\prod_{v
\not \in
S}\mathbb{I}_{\mathfrak{d}^{-1}_{F_v/\mathbb{Q}_v}T_v}(\transpose{\bar{A}}
\beta A)\prod_{v \in
S^{(p)}}\mathbb{I}_{L^{\vee}_v}(\transpose{\bar{A}} \beta A)
 vol(L_v)
\]
\[
P_{\beta,m(A)}:=\prod_{v \not  \in
S}g_{\beta,m(A),v}(\varpi_v)=\sum_{(\mathfrak{a},S)=1}n_{\mathfrak{a}}(\beta,m(A))\mathfrak{a},\,\,\,n_\mathfrak{a}(\beta,m(A))
\in \mathbb{Z},
\]
where $n_{\mathfrak{a}}(\beta,m(A))=0$ for almost all $\mathfrak{a}$
and
$P^{(S)}_{\beta,m(A)}(\chi):=\chi(P^{(S)}_{\beta,m(A)}):=\sum_{(\mathfrak{a},S)=1}n_{\mathfrak{a}}(\beta)\chi^{(S)}(\mathfrak{a})$.
Then
\[
E_\beta(m(A),\phi,\chi)=Q(\beta,A,k,\nu)\chi(det(A))\left(\prod_{v
\in
\Sigma_p}\chi_{1,v}^{-1}(det(B(h_v)))\chi_{2,v}(det(A(h_v)))\Phi_{\mu}(A(h_v)^t\beta
B(h_v)^{-1})\right)\times
\]
\[
\left(\prod_{v \in S^{(p)}}\chi_v(det
u^{-1})\right)P^{(S)}_{\beta,m(A)}(\chi)=
\]
\[
Q(\beta,A,k,\nu)\sum_{(\mathfrak{a},S)=1}n_{\mathfrak{a}}(\beta,m(A))\chi(det(A))\left(\prod_{v
\in
\Sigma_p}\chi^{-1}_{1,v}(det(B(h_v)))\chi_{2,v}(det(A(h_v)))\Phi_{\mu}(\transpose{A(h_v)}\beta
B(h_v)^{-1})\right)\times
\]
\[
\left(\prod_{v \in S^{(p)}}\chi_v(det
u^{-1})\right)\chi^{(S)}(\mathfrak{a})
\]

Let now $\epsilon:=\sum_{j}c_j\chi_j$ be a locally constant function
of $G^{ab}_K$ written in a unique way as a finite sum of finite
characters. Moreover we let $\psi$ be a character of infinite type
$k\Sigma$ as above. Then we define
\[
E(z,m(A),\phi,\epsilon\psi):=\sum_j c_j E(z,m(A),\phi,\chi_j\psi)
\]
and then
\[
E_\beta(m(A),\phi,\epsilon\psi)=Q(\beta,A,k)
\sum_{(\mathfrak{a},S)=1}n_{\mathfrak{a}}(\beta,m(A)) \sum_j c_j
\chi_j\psi(det(A)) \times
\]
\[
\left(\prod_{v \in
\Sigma_p}\chi_{1,v,j}\psi_{1,v}^{-1}(det(B(h_v)))\chi_{2,v,j}\psi_{2,v}(det(A(h_v)))\Phi_{\mu}(\transpose{A(h_v)}\beta
B(h_v)^{-1})\right)\times
\]
\[
\left(\prod_{v \in
S^{(p)}}\chi_{v,j}\psi_v(det
u^{-1})\right)\chi_j^{(S)}(\mathfrak{a}).
\]

Moreover it follows easily from the above description that
$E_\beta(m(A),\phi,\epsilon\psi) \in \mathbb{Z}[\epsilon\psi,\nu]$.
Indeed one needs to observe that the values of $\Phi_\mu$ are also
given by the local characters at $v |p$.

\textbf{Notation:} We will write $E^\nu_{\epsilon\psi}(\cdot)$ for
the algebraic counterpart of the Eisenstein series
$E(z,m(A),\phi,\epsilon\psi)$ as well as for its $p$-adic avatar.

\textbf{The Eisenstein measure of Harris, Li and Skinner:} In the
definition of the $p$-adic sections we have fixed an integer $\ell
\in \mathbb{N}$. We consider the torus
\[
T(\ell):= T(\ell)^0 \times
Res_{\mathfrak{r}/\mathbb{Z}}\mathbb{G}_{m}/\mathfrak{r}
\]
where
$T(\ell)^0:=(Res_{\mathfrak{g}/\mathbb{Z}}\mathbb{G}_{m}/\mathfrak{g})^\ell$.
Note that the set of $\mathbb{Z}_p$ points of $T(\ell)^0$ is
canonically isomorphic to $\prod_{v|p}(\mathfrak{g}_v^\times)^\ell$
which the ordinary assumption allows us to identify with $\prod_{v
\in\Sigma_p}(\mathfrak{r}_v^\times)^\ell$. We write
$X_{fin}(T(\ell))$ for the set of finite characters of $T(\ell)$.
This set can be parameterized by the $(\ell +1)-$tuples
$(\nu_1,\ldots,\nu_\ell,\chi)$ where $\nu_j$ are characters of
$\prod_{v|p}\mathfrak{g}_v^\times$ and $\chi$ is a character of
$\prod_{v | p}(\mathfrak{r}_v^\times)$. Note that the decomposition
\[
\prod_{v | p}\mathfrak{r}_v^\times=\prod_{v
\in\Sigma_p}\mathfrak{r}_v^\times \times \prod_{v \in
c\Sigma_p}\mathfrak{r}_v^\times
\]
allow us to decompose $\chi$ as a pair $(\chi_1,\chi_2)$. Recall
that to the tuple $(\nu_1,\ldots,\nu_\ell,\chi)$ we have attached
another tuple $(\mu_1,\ldots,\mu_\ell)$. Then Harris, Li and Skinner
in \cite[page 67]{HLS2} have obtained:
\begin{thm} There is a measure $\mu^{HLS}_{Eis}$ on $T(\ell)$ with
the property that, for any $(\ell+1)$-tuple
$(\nu_1,\ldots,\nu_\ell,\chi)$ we have
\[
\int_{T(\ell)} (\nu_1,\ldots,\nu_\ell,\chi) d\mu^{HLS}_{Eis}=
E(\cdot,\phi,\chi),
\]
where $\phi$ is the section described above.
\end{thm}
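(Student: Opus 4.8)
The plan is to deduce the existence of the measure $\mu^{HLS}_{Eis}$ from the explicit formula for the $q$-expansion coefficients $E_\beta(m(A),\phi,\chi)$ established above, combined with the $q$-expansion principle for $p$-adic hermitian modular forms on $GU(n,n)/F$. Recall that a measure on the profinite abelian group $T(\ell)(\mathbb{Z}_p)$ with values in a $p$-adic Banach space is the same datum as a bounded linear functional on continuous functions; since the finite characters $(\nu_1,\ldots,\nu_\ell,\chi)$ span a uniformly dense $\overline{\mathbb{Q}}_p$-subspace of $C(T(\ell)(\mathbb{Z}_p),\overline{\mathbb{Q}}_p)$ and are linearly independent, it suffices to: (i) prescribe the value $\int_{T(\ell)}(\nu_1,\ldots,\nu_\ell,\chi)\,d\mu^{HLS}_{Eis} := E(\cdot,\phi,\chi)$ at each finite character; and (ii) check that this assignment, viewed with values in the space $V_p(G,K)$ of $p$-adic modular forms, is bounded uniformly in the character.

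First I would fix a cusp label $m(A)$ (with $A$ of the form considered above) and a $\beta$ in the relevant lattice $L\subseteq S$, and regard the $\beta$-th $q$-expansion coefficient as a function of $(\nu_1,\ldots,\nu_\ell,\chi)$. By the closed expression for $E_\beta(m(A),\phi,\chi)$ derived above together with the Lemma computing $W_{\beta,v}(1,\phi_v,s)=\chi_2\chi_1(\det\beta)|\det\beta|^{2s-n}\Phi_\mu(\beta)$ at the places above $p$, this coefficient is $Q(\beta,A,k,\nu)$ times a \emph{finite} $\mathbb{Z}$-linear combination (the sum over ideals $\mathfrak{a}$ prime to $S$ is finite since $n_{\mathfrak{a}}(\beta,m(A))=0$ for almost all $\mathfrak{a}$) of products of values of the characters $\nu_j$, $\chi_1$, $\chi_2$, hence of $\mu_j=\nu_j^{-1}\chi_2^{-1}$, evaluated at fixed elements of $T(\ell)(\mathbb{Z}_p)$ determined by $A$, the $h_v$ and $\beta$. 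The decisive point — and the reason the normalized sections $\tilde\Phi_\mu$ were introduced with the factor $vol(\Gamma(\mathfrak{p}^t))^{-1}$ — is that this formula carries \emph{no residual conductor-dependent constant}: the dependence on the varying character is purely through evaluation at fixed group elements and through the finitely many integers $n_{\mathfrak{a}}$. Consequently $\chi\mapsto E_\beta(m(A),\phi,\chi)$ is literally the image of a fixed element of the Iwasawa algebra $\mathbb{Z}_p[[T(\ell)(\mathbb{Z}_p)]]$, and this assignment is bounded with a bound independent of $\beta$ and of the cusp; the stated integrality $E_\beta(m(A),\phi,\epsilon\psi)\in\mathbb{Z}[\epsilon\psi,\nu]$ makes the uniform $p$-integral bound explicit.

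Next I would assemble these coefficient-measures, over all $\beta$ and all cusps, into a single bounded $V_p(G,K)$-valued measure. For each \emph{individual} finite character $\chi$ the resulting system of $q$-expansions at the various cusps is that of the genuine normalized Eisenstein series $E(\cdot,\phi,\chi)$, hence of an honest algebraic (so $p$-adic) modular form; thus the $T(\ell)(\mathbb{Z}_p)$-indexed family of $q$-expansions is compatible. The $q$-expansion principle for $GU(n,n)/F$ — available here precisely because $p$ is assumed unramified in $F$ — then says that a bounded, compatible system of cusp $q$-expansions with coefficients in a fixed $\mathbb{Z}_p$-lattice defines a unique element of $V_p(G,K)$; applying this uniformly produces $\mu^{HLS}_{Eis}$ as a bounded measure on $T(\ell)(\mathbb{Z}_p)$ valued in $V_p(G,K)$. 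The interpolation property $\int_{T(\ell)}(\nu_1,\ldots,\nu_\ell,\chi)\,d\mu^{HLS}_{Eis}=E(\cdot,\phi,\chi)$ holds by construction, since for each $\beta$ and each cusp both sides have $\beta$-th $q$-expansion coefficient equal to $E_\beta(m(A),\phi,\chi)$.

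\textbf{Main obstacle.} The hard part is not the gluing, which is formal, but verifying the input: one must be certain that the local computation at the primes above $p$ — and hence the global formula for $E_\beta(m(A),\phi,\chi)$ — genuinely expresses the coefficient as a product of character values \emph{with no leftover conductor-dependent normalization}, so that the induced distribution on $T(\ell)(\mathbb{Z}_p)$ is bounded. This is exactly the content of the careful choice of the $p$-adic Bruhat--Schwartz datum $\Phi=\tilde\Phi_\mu\cdot\widehat\Phi_{\nu^{-1}\chi_1}$ with the normalization $vol(\Gamma(\mathfrak{p}^t))^{-1}$, checked via the Fourier-inversion argument recorded in the Lemma above; once that is in hand, boundedness, $p$-integrality, the appeal to the $q$-expansion principle, and the final interpolation statement all follow formally.
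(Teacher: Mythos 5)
The paper does not actually prove this theorem: it is quoted verbatim from Harris, Li and Skinner \cite[page~67]{HLS2}, and the surrounding material supplies the ingredients (the explicit formula for $E_\beta(m(A),\phi,\chi)$, the $p$-adic sections $\Phi=\tilde\Phi_\mu\cdot\widehat\Phi_{\nu^{-1}\chi_1}$, the remark that $E_\beta(m(A),\phi,\epsilon\psi)\in\mathbb{Z}[\epsilon\psi,\nu]$, and the $q$-expansion principle for $GU(n,n)/F$ under the hypothesis that $p$ is unramified in $F$) without assembling them into a proof. Your sketch is therefore not comparable to a proof in this paper, but it is a faithful reconstruction of the argument that these ingredients are designed to support, and the key observation you isolate --- that the normalization $vol(\Gamma(\mathfrak{p}^t))^{-1}$ in $\tilde\Phi_\mu$ is chosen exactly so that the local Whittaker value collapses to $\chi_2\chi_1(\det\beta)|\det\beta|^{2s-n}\Phi_\mu(\beta)$ with no residual conductor-dependent constant, so that each $q$-expansion coefficient is the image of a fixed element of $\mathbb{Z}_p[[T(\ell)(\mathbb{Z}_p)]]$ --- is indeed the crux of the construction.

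Two places where your sketch is thinner than it should be. First, in the coefficient $N_{F/\mathbb{Q}}(\det\beta)^{k-n}$, the weight $k$ is part of the varying datum; for this to be a character value on $T(\ell)(\mathbb{Z}_p)$ evaluated at the fixed element $\det\beta$, one must observe that $\Phi_\mu(\transpose{A(h_v)}\beta B(h_v)^{-1})$ vanishes unless its argument lies in the Iwahori, and, since $A$ is taken in $GL_n(\mathbb{A}^{(p)}_K)\times GL_n(O_K\otimes\mathbb{Z}_p)$, that this forces $\det\beta$ to be a $p$-adic unit; otherwise the factor $N_{F/\mathbb{Q}}(\det\beta)^{k-n}$ is not a value of a continuous character and the "Iwasawa-algebra element" description breaks down. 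You should say this explicitly rather than folding it into "evaluation at fixed group elements." Second, your invocation of the $q$-expansion principle conflates two statements: the principle itself gives injectivity of the $q$-expansion map and control of $p$-adic norms, but the fact that the compatible bounded system of $q$-expansions actually arises from a $V_p(G,K)$-valued measure additionally uses that each $E(\cdot,\phi,\chi)$ for a finite character $\chi$ is an algebraic hermitian form whose $p$-adic avatar lies in $V_p(G,K)$ (Shimura's arithmeticity plus the passage algebraic $\to$ $p$-adic via $\omega(j)$), together with $p$-adic completeness of $V_p(G,K)$. As written, the leap from "bounded compatible $q$-expansions" to "bounded $V_p(G,K)$-valued measure" elides the step that the approximating objects are themselves in $V_p(G,K)$.
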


For the applications that we have in mind, we are going to keep the
tuple $(\nu_1,\ldots,\nu_\ell)$ fixed and vary $\chi$, which in
particular is going to be the $p$-part of a Hecke character with
fixed infinite type of the form $k\Sigma$, for $\Sigma$ a selected
CM-type. Let us explain now how we are going to fix the $\ell$-tuple
$(\nu_1,\ldots,\nu_\ell)$ for the applications that we have in mind.

\begin{defn} (see \cite[pages 14-15]{HLS1} Let $\pi$ be an irreducible cuspidal automorphic representation of
$U(V)(\mathbb{A}_F)$ with $dim_K(V)=n$. Let $v$ be place of $F$ that
splits in $K$. Then $\pi$ is called of type $\nu
=(\nu_1,\ldots,\nu_\ell)$ at $v$ if $\pi_v$ is a principal series of
$U(V)(F_v)\cong GL_n(F_v)$ and if it is an eigenvector for
$P(\mathfrak{g}_v) \subset GL_n(F_v)$ with eigenvalues given by the
$\ell$-tuple $(\nu_1,\ldots,\nu_\ell)$. Of course here $P$ is the
parabolic of $GL_n$ corresponding to the fixed partition
$n=\sum_{j=1}^\ell n_j$.
\end{defn}

For the construction of the Eisenstein measure we need that the
representation $\pi_v$ needs to be $P$-ordinary at $v$. Following
\cite[page 14]{HLS1}, we explain briefly what that means. Let us
write $B$ for the standard Borel subgroup of $GL_n$. We assume that
$\pi_p$ is induced from the character $\alpha=\alpha_1 \otimes
\ldots \otimes \alpha_n :B \rightarrow \bar{\mathbb{Q}}^\times$
where $\alpha_j:F_v^\times \rightarrow \bar{\mathbb{Q}}^\times$ are
finite order characters. If $\pi_v$ is of type
$(\nu_1,\ldots,\nu_\ell)$ then we can order the characters
$\alpha_j$ in such a way so that their restriction
$\tilde{\alpha}_j$ to $\mathfrak{g}_v^\times$ satisfy
\[
\tilde{\alpha}_j=\nu_j,\,\,\sum_{l=1}^{i-1}n_l < j \leq
\sum_{l=1}^{i}n_l
\]
Fixing a $p$-adic valuation $|\cdot|_p$ of $\bar{\mathbb{Q}}$ we
define $m_j:=|\alpha_j(v)|_p$. Then $P$-ordinary means that
\[
\sum_{j=1}^{n_1 + \ldots + n_i} m_j = \sum_{j=1}^{n_1+ \ldots +
n_i}(j-1+k),\,\,\,i=1,\ldots\ell
\]
for some fixed constant $k$.

\section{Construction of the $p$-adic measure
$\mu^{HLS}_{\pi,\chi}$.}

In this section we are going to use the Eisenstein measure of
Harris, Li and Skinner to obtain a measure that interpolates
critical values (and their twists) of the $L$-functions that we are
interested in. The path is well known, we will evaluate the
Eisenstein measure at CM points and then use the doubling method (in
this setting the analogue of Damerell's formula) to prove the
interpolation properties. We will need to compute some zeta
integrals in order to prove that our measure has the right
interpolation properties. As we mentioned in the introduction a more
general study of such measures is the subject of the work in
preparation of Eischen, Harris, Li and Skinner \cite{EHLS}. Here we
are going to restrict ourselves in the cases that we need for our
work. We consider a motive $M(\pi)/F \otimes M(\chi)/F$ as in the
introduction (here we write $\chi$ instead of $\psi$ there). The
first goal is to prove the following theorem.

\begin{thm}\label{interpolation properties} There exists a measure $\mu^{HLS}_{\pi,\chi}$ on
$G_{\mathfrak{c}}:=Gal(K(p^\infty\mathfrak{c})/K)$ such that for
every finite character $\psi$ of $G_{\mathfrak{c}}$ we have
\[
\frac{1}{\Omega_p(Y,\Sigma)}\int_{G_{\mathfrak{c}}}\psi
d\mu^{HLS}_{\pi,\chi} =
\frac{L_{S}(BC(\pi),\chi\psi,0)}{\Omega_\infty(Y,\Sigma)}\prod_{v |
p} Z_v(\pi,\check{\pi},\chi\psi,f_{\Phi}),
\]
where $BC(\pi)$ is the base changed automorphic representation to
$GL_n(K)$ and $\Omega_\infty(Y,\Sigma)$ $($resp.
$\Omega_p(Y,\Sigma)$$)$ is the archimedean $($resp. $p$-adic$)$
period corresponding to the CM pair $(Y,\Sigma)$ for a CM-algebra
$Y$ and they will be defined below. Moreover we have the following
explicit description of the factors
$Z_v(s,\pi,\check{\pi},\chi\psi,f_{\Phi})$ for the places of $v$
above $p$.
\begin{enumerate}
\item If $n=1$ and we take $\pi$ the trivial representation then for
the place $\mathfrak{p} \in \Sigma_p$ above $v$ we have
\[
Z_v(\pi,\check{\pi},\chi\psi,f_{\Phi}) = \alpha(\chi,\psi) \times
\frac{L_{\mathfrak{p}}(0,\chi\psi)}{e_{\mathfrak{p}}(0,\chi\psi)L_{\mathfrak{p}}(1,\chi^{-1}\psi^{-1})},
\]
for $\alpha(\chi,\psi)$ a factor that we make explicit in lemma
\ref{local p integral}.

\item If $n=2$ and after the identification $U(F_v) \cong GL_2(F_v)$ for all $v|p$ we have $\pi_v=\pi(\nu_1,\nu_2)$ with
$\nu_1,\nu_2$ unramified then
\[
Z_v(\pi,\check{\pi},\chi\psi,f_{\Phi})= \alpha(\chi,\psi) \times
\frac{L_{v}(0,\nu^{-1}_1\phi_1)}{e_{v}(0,\nu^{-1}_1\phi_1)L_{v}(1,\nu_1\phi_1^{-1})}\frac{L_{v}(1,\nu^{-1}_2\phi_1)}{e_{v}(1,\nu^{-1}_2\phi_1)L_{v}(0,\nu_2\phi_1^{-1})},
\]
where $\phi:=\chi\psi$ and $\phi=(\phi_1,\phi_2)$ corresponding to
the split $v\mathfrak{r} = \mathfrak{p} \bar{\mathfrak{p}}$ with
$\mathfrak{p} \in \Sigma_p$. Here, as above, the factor
$\alpha(\chi,\psi)$ will be made explicit in Lemma \ref{local p
integral}.
\end{enumerate}
\end{thm}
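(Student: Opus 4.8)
The plan is to construct the measure $\mu^{HLS}_{\pi,\chi}$ as a pushforward of the Harris--Li--Skinner Eisenstein measure $\mu^{HLS}_{Eis}$ under the doubling embedding, evaluated at CM points, and then prove the interpolation formula by unwinding the doubling integral. Concretely, I would start from the Siegel-type Eisenstein series $E(g,\phi,\chi\psi,s)$ on $U(n,n)$ appearing in Theorem 3.x (the Eisenstein measure), restrict it via the embedding $U(V)\times U(-V)\hookrightarrow U(n,n)$ to the product of unitary groups, and pair it against a fixed cusp form $\varphi$ in the space of $\pi$ together with its contragredient $\check\varphi$. The basic identity of the doubling method (Piatetski-Shapiro--Rallis, as packaged by Shimura/Garrett) says that this pairing factors as a product over all places of local zeta integrals $Z_v(\pi_v,\check\pi_v,\chi\psi_v,f_{\Phi,v})$ times the partial $L$-function $L_S(BC(\pi),\chi\psi,0)$, since $L(BC(\pi),s) = L(\pi\times\pi^\vee,\ldots)$ up to normalization. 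This produces a $\overline{\mathbb{Q}}$-valued (in fact, after comparison with periods, $\overline{\mathbb{Q}}_p$-valued) quantity interpolated $p$-adically by the measure; the CM points enter because the value of the Eisenstein series at a CM point of $U(n,n)$, together with the CM periods of the abelian variety with CM by the CM-algebra $Y$ attached to $(K,\Sigma)$, is what makes the expression algebraic.

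First I would set up the CM point: using the decomposition $V\oplus(-V)\cong H_n$ from the section on Siegel Eisenstein series, the diagonal copy of $U(V)$ lands in $U(n,n)$ and a point fixed by the maximal compact is a CM point in $\mathbb{H}_n$; evaluating $E$ there gives, via the algebraic $q$-expansion / Mumford-object formalism of Section 2, an algebraic hermitian modular form value, and one divides by $\Omega_\infty(Y,\Sigma)$ (the archimedean CM period) to land in $\overline{\mathbb{Q}}$. The $p$-adic avatar of the same algebraic form, divided by $\Omega_p(Y,\Sigma)$, gives the value of the measure; the compatibility of these two normalizations with the period ratio is exactly Katz-type CM theory for unitary Shimura varieties (the comparison $\omega_{can}\leftrightarrow\omega(j)$ in Section 2). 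Then I would push the measure $\mu^{HLS}_{Eis}$ on $T(\ell)$ forward to $G_{\mathfrak c}=\mathrm{Gal}(K(p^\infty\mathfrak c)/K)$ along the map induced by the section's dependence on the character $\chi$ (keeping $\nu=(\nu_1,\ldots,\nu_\ell)$ fixed, determined by the $P$-ordinary type of $\pi$ at $p$ as in the last definition of Section 3), and identify the resulting integral of $\psi$ with the above doubling expression.

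Next I would handle the non-$p$ places: at finite $v\notin S$ the local zeta integral is the standard unramified computation giving the local Euler factor of $L(BC(\pi),\chi\psi)$, which is why only $L_S$ (the partial $L$-function away from $S$) survives, while at the ``bad'' places in $S\setminus S_p$ Hsieh's choice of section makes $Z_v$ a nonzero constant absorbed into the normalization, and at the archimedean places the choice $\phi_{\infty,s}$ forces holomorphy and contributes the expected $\Gamma$-factors and powers of $i$ and $2$ already folded into $L_{n,\infty}(k,s)$ and the normalizing constant $C^S(n,K,s)$. The genuinely new local computation is at $v\mid p$: I would plug the explicit section $f_{\Phi}$ of Harris--Li--Skinner (our slightly modified version) into the local doubling integral $Z_v(\pi_v,\check\pi_v,\chi\psi_v,f_{\Phi,v})$ and carry out the integral over $GL_n(F_v)$, using that $\pi_v$ is a principal series and that $\Phi_\mu,\widehat{\Phi}_{\nu^{-1}\chi_1}$ are supported on an Iwahori and given by products of the characters $\mu_j=\nu_j^{-1}\chi_2^{-1}$; this yields, after the Fourier-inversion manipulations analogous to those already performed for $W_{\beta,v}(1,\phi_v,s)$ in Lemma 3.x, the ratio of abelian $L$-factors $L_{\mathfrak p}/(e_{\mathfrak p}\,L_{\bar{\mathfrak p}})$ displayed in cases $(i)$ and $(ii)$, with the prefactor $\alpha(\chi,\psi)$ being the volume/Gauss-sum bookkeeping deferred to Lemma \ref{local p integral}.

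\textbf{The hard part} will be the $p$-adic local zeta integral computation at $v\mid p$ and, intertwined with it, pinning down the precise normalization so that the $\Gamma$-factors, the periods $\Omega_\infty$ versus $\Omega_p$, and the volume constants all cancel to leave exactly the stated ratio of Hecke $L$-values with the $e$-factor (the interpolation/removed-Euler-factor term) in the denominator; getting the modified section of Section 3 to produce the ``correct'' $e_{\mathfrak p}$-normalized Euler factor — rather than a spurious extra factor — is the subtle point, and is precisely why the section was modified from the original HLS one, as flagged in the remark before Lemma 3.x. A secondary difficulty is the algebraicity/rationality statement $E_\beta(m(A),\phi,\epsilon\psi)\in\mathbb{Z}[\epsilon\psi,\nu]$ feeding into the claim that the measure takes values in the right integral ring, but that is essentially already recorded in Section 3 and only needs to be propagated through the CM-point evaluation. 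For $n=1$ and $n=2$ the unitary groups $U(V)$ for definite $V$ are compact mod center, so the global doubling integral is a finite sum and convergence is a non-issue; the representation-theoretic input ($\pi$ a principal series at $p$, $\nu_i$ unramified in the $n=2$ case) is exactly what makes the local $p$-adic integral tractable.
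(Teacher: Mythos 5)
Your proposal follows essentially the same route as the paper: evaluate the HLS Eisenstein measure on CM points coming from the doubling embedding $U(V)\times U(-V)\hookrightarrow U(n,n)$, pair against $\varphi\otimes\check\varphi$, invoke the Piatetski-Shapiro--Rallis--Shimura doubling identity and Li's spherical computation to extract $L_S(BC(\pi),\chi\psi,0)\prod_v Z_v$, use Katz-style CM theory to reconcile $\omega_{can}$ and $\omega_{trans}$ (hence the period ratio $\Omega_p/\Omega_\infty$), and compute the local zeta integrals at $v\mid p$ with the modified HLS sections $f_\Phi$. You also correctly isolate the hard part (the $p$-adic local integral and its normalization) and correctly observe that definiteness of $U(V)$ makes the global doubling integral a finite sum.

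One step the paper makes explicit that you leave implicit is the bridge from the \emph{discrete} CM-point sum defining the measure to the \emph{adelic} doubling integral: the paper uses Shimura's observation (Shimura, \emph{Euler Products and Eisenstein Series}, pp.~88, 186) that the inner sum
$g(a)=\sum_{b\in\mathcal{B}}\chi\psi(\det b)\,E_{\chi\psi}(\underline{A_a}\times\underline{A_b},\cdot)\,f(b)$
is the value at $a$ of $\mathbf{f}|\mathfrak{T}$ for an explicit Hecke operator $\mathfrak{T}$, and then invokes the eigenform property of $\mathbf{f}$ to replace $\mathbf{f}|\mathfrak{T}_v$ by $Z_v(\cdots)\mathbf{f}_v$ at each $v\in S$. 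Your proposal instead asserts directly that the pairing factors as $L_S\times\prod Z_v$; that is the right conclusion, but to reach it from the concrete finite sum one either needs the Hecke-operator interpretation or a careful unfolding of $G_{\theta,\theta}(F)\setminus G_{\theta,\theta}(\mathbb{A}_F)/(D^\theta(\mathfrak c)\times D^\theta(\mathfrak c))$ against the $\chi^{-1}(\det g')$ twist, as the paper does. This is a gap in exposition rather than a wrong step, and it is where the normalization constant $C(s)$ and the factor $(f,\check f)$ actually emerge. A minor inaccuracy: your parenthetical ``$L(BC(\pi),s)=L(\pi\times\pi^\vee,\ldots)$'' is off --- the doubling integral computes the \emph{standard} $L$-function of $BC(\pi)\otimes\chi\psi$ on $GL_n(K)$, not a Rankin--Selberg $L$-function of $\pi\times\pi^\vee$; it does not affect your argument since you correctly write $L_S(BC(\pi),\chi\psi,0)$ everywhere else.
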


\textbf{Remark:} Here we should remark that according to
\cite[Proposition 4.3]{HLS1} one has a similar form for the local
integrals $Z_v(\pi,\check{\pi},\chi\psi,f_{\Phi})$ for any $n$. We
will compute these local integrals in the cases $n=1,2$ that we need
in our applications. Moreover, our theorem should be a special case
of the theorem 4.4 announced in \cite{HLS1} whose proof should
appear in \cite{EHLS}.

\textbf{The doubling method of Shimura, Piateski-Shapiro and
Rallis.} We start with an exposition to the doubling method as was
developed by Shimura, Piateski-Shapiro and Rallis. Our references
are \cite{Harris1,HLS1,HLS2}and \cite{GPR}. We write $(V,\theta)$
for an $n$-dimensional hermitian vector space over $K$ and we write
$G^\theta$ for the corresponding unitary group. As before we define
the hermitian space $(V,-\theta)$ and the $2n$-dimensional hemritian
space $(2W = W \oplus -W, \theta \oplus -\theta)$. Then we have
$U(\theta \oplus -\theta)=U(2W) \cong U(n,n)$. Later we will discuss
this isomorphism a little bit more explicit (see also \cite[page
176]{Shimurabook1}. Fixing such an isomorphism, we have an embedding
\[
G:=G^\theta \times G^\theta \hookrightarrow U(n,n)
\]
We pick a Haar measure $dg = \otimes_v dg_v$ on $G(\mathbb{A}_F)$
such that for almost places $v$ of $F$ we have $dg_v$ assigns volume
1 to a fixed hyperspecial maximal compact subgroup $K_v  \subset
G(E_v)$. We consider an irreducible cuspidal automorphic
representation $(V_\pi,\pi)$ of $G^\theta$ and we write $(
V_{\check{\pi}},\check{\pi})$ for its dual representation. We fix
now decompositions
\[
\pi \cong \bigotimes_v \pi_v,\,\,\,\,\check{\pi} \cong \bigotimes_v
\check{\pi}_v
\]
We now pick $\phi \in V_{\pi}$ and $\check{\phi} \in
V_{\check{\pi}}$ such that
\begin{enumerate}
\item The vectors $\phi$ and $\check{\phi}$ are pure tensors. That
is $\phi=\otimes_{v}\phi_v$ and $\check{\phi}=\otimes_v
\check{\phi}_v$ with $\phi_v \in V_{\pi_v}$ and $\check{\phi}_v \in
V_{\check{\pi}_v}$

\item They are normalized, that is for all $v$'s that $\pi_v$ is
spherical we pick these so that $(\phi_v,\check{\phi}_v)=1$.

\end{enumerate}

Let $\chi$ be a Hecke character of $K $ and cosnider a section
$\mathcal{F} \in I_{\chi}$. We consider the integral
\[
Z(s,\phi,\check{\phi},\chi,\mathcal{F})= \int_{G(F)\setminus
G(\mathbb{A}_F)}E((g,g'),\mathcal{F},\chi,s)\phi(g)\check{\phi}(g')\chi^{-1}(detg')dgdg'
\]
Let us now write $\mathcal{F} = \bigotimes_v \mathcal{F}_v$ with
respect to the decompositions $I_{\chi} \cong \bigotimes_v
I_{\chi,v}$. We now state the following important results:

\begin{thm}[Key Identity of Piatetski-Shapiro, Rallis and Shimura]

\[
Z(s,\phi,\check{\phi},\chi,\mathcal{F}) =
\int_{G^\theta(\mathbb{A}_F)} \mathcal{F}((g,1))
(\pi(g)\phi,\check{\phi})dg
\]
\end{thm}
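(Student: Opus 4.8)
The statement is the classical ``basic identity'' of the doubling method, and the strategy is the standard unfolding argument going back to Piatetski-Shapiro--Rallis and, in the unitary setting, to Shimura. The point is that the Siegel--Eisenstein series $E((g,g'),\mathcal F,\chi,s)$ on $U(n,n)$ is a sum over $P(F)\backslash G^{\eta_n}(F)$, and when restricted to the image of the doubling embedding $G^\theta\times G^\theta\hookrightarrow U(n,n)$ this coset space has a single ``big cell'' orbit whose stabilizer is (the diagonal copy of) $G^\theta$, all other orbits contributing zero after integration against the cusp forms $\phi,\check\phi$. So I would proceed in three steps.

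First, I would describe the orbit decomposition of $P(F)\backslash G^{\eta_n}(F)$ under the right action of $G^\theta(F)\times G^\theta(F)$. Writing $X=2W=W\oplus(-W)$ with its split hermitian form $\omega\cong\eta_n$, the cosets $P(F)\backslash G^{\eta_n}(F)$ are in bijection with the maximal totally $\omega$-isotropic $K$-subspaces of $X$ that lie in the appropriate $G^{\eta_n}(F)$-orbit of $U$ (as recalled in the excerpt from \cite[page 7]{Shimurabook1}). The key linear-algebra fact is that there is a unique \emph{open} orbit, represented by the ``anti-diagonal'' isotropic subspace $\{(v,v):v\in W\}$ (or $\{(v,-v)\}$, depending on conventions), whose stabilizer in $G^\theta(F)\times G^\theta(F)$ is exactly the diagonally embedded $G^\theta(F)$; and all other orbits are associated to proper isotropic subspaces of $W$, whose stabilizers contain unipotent radicals of proper parabolics of $G^\theta$.

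Second, I would carry out the unfolding. Substituting the orbit decomposition into
\[
Z(s,\phi,\check\phi,\chi,\mathcal F)=\int_{G(F)\backslash G(\mathbb A_F)}\sum_{\gamma\in P(F)\backslash G^{\eta_n}(F)}\mathcal F(\gamma(g,g'))\,\phi(g)\overline{\check\phi(g')}\chi^{-1}(\det g')\,dg\,dg'
\]
and collapsing the sum over each orbit against the integration domain, the contribution of every non-open orbit is an inner integral of $\phi$ (or of $\check\phi$) over the unipotent radical of a proper parabolic of $G^\theta$, hence vanishes by the cuspidality of $\pi$ (and $\check\pi$). The open orbit contributes a single term: its stabilizer being the diagonal $G^\theta(F)$, the two copies of $G^\theta(F)\backslash G^\theta(\mathbb A_F)$ fuse, one of them unfolds against the sum to give an integral over all of $G^\theta(\mathbb A_F)$, and we are left with
\[
\int_{G^\theta(\mathbb A_F)}\mathcal F\big((g,1)\big)\,\big(\pi(g)\phi,\check\phi\big)\,dg,
\]
after using the defining transformation property of $\mathcal F\in I_\chi$ under $P(\mathbb A_F)$ to absorb the factors $\chi^{-1}(\det g')$ and the modulus character, and rewriting $\int_{G^\theta(F)\backslash G^\theta(\mathbb A_F)}\phi(gh)\overline{\check\phi(h)}\,dh$ as the matrix coefficient $(\pi(g)\phi,\check\phi)$.

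Third, I would address convergence: the manipulations are justified for $\mathrm{Re}(s)\gg0$, where the Eisenstein series converges absolutely and the rearrangement of the (now absolutely convergent) double sum/integral is legitimate; the identity for other $s$ then follows by meromorphic continuation, both sides being meromorphic in $s$. \textbf{The main obstacle} I anticipate is precisely the orbit analysis and the bookkeeping of stabilizers in step one --- identifying that there is a unique open orbit with stabilizer the diagonal $G^\theta$, and checking carefully that every other orbit genuinely produces a period of $\phi$ or $\check\phi$ along the unipotent radical of a \emph{proper} parabolic (so that cuspidality kills it); the modulus-character and measure-normalization bookkeeping in step two is routine but must be done consistently with the normalization of $I_\chi$ fixed earlier in the paper. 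Since this is a well-documented computation, I would mostly cite \cite{GPR} and \cite[\S on the doubling method]{Shimurabook1}, \cite{HLS1,HLS2} for the details and only spell out the features specific to the present normalization of sections.
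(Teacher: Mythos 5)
The paper states this theorem without proof, treating it as the known basic identity of the doubling method and citing \cite{GPR}, \cite{Harris1}, \cite{HLS1,HLS2} and Shimura's books; your sketch is the standard unfolding argument found in those references (orbit analysis of $P(F)\backslash U(n,n)(F)$ under $G^\theta\times G^\theta$, vanishing of non-open orbits by cuspidality, and unfolding of the open orbit whose stabilizer is the diagonal $G^\theta$), and it is correct. The only minor notational point is that the paper pairs $\phi$ against $\check\phi\in V_{\check\pi}$ via a bilinear (not conjugate-linear) pairing, consistent with the bilinear matrix coefficient $(\pi(g)\phi,\check\phi)$ on the right-hand side, so no complex conjugation of $\check\phi(g')$ should appear in your integrand.
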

This formula implies by computations of Li in \cite{Li} the formula,

\begin{thm}[Li's computations of the spherical integrals]\label{key identity}
\[
d_n(s,\chi)Z(s,\phi,\check{\phi},\chi,\mathcal{F})=
(\phi,\check{\phi}) \left(\prod_{v \in
S}Z_v(s,\phi,\check{\phi},\chi,\mathcal{F})\right)L_S(BC(\pi),\chi,s)
\]
\end{thm}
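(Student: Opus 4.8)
The plan is to start from the ``Key Identity'' of the preceding theorem, which rewrites the global doubling integral as $Z(s,\phi,\check{\phi},\chi,\mathcal{F})=\int_{G^\theta(\mathbb{A}_F)}\mathcal{F}((g,1))(\pi(g)\phi,\check{\phi})\,dg$, and then to convert the right-hand side into an Euler product. Using the chosen factorizations $\pi\cong\bigotimes_v\pi_v$, $\check{\pi}\cong\bigotimes_v\check{\pi}_v$, $\mathcal{F}=\bigotimes_v\mathcal{F}_v$, $dg=\bigotimes_v dg_v$, and the fact that the matrix coefficient factors as $(\pi(g)\phi,\check{\phi})=\prod_v(\pi_v(g_v)\phi_v,\check{\phi}_v)$, the integral splits, for $\mathrm{Re}(s)$ large where everything converges absolutely, into a product of local zeta integrals $Z_v^{\mathrm{raw}}(s):=\int_{G^\theta(F_v)}\mathcal{F}_v((g,1))(\pi_v(g)\phi_v,\check{\phi}_v)\,dg_v$. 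One must verify absolute convergence of the resulting product, which follows from the standard estimates on the unramified factors once the unramified computation below is available.

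\textbf{The unramified places.} For $v\notin S$ all data are unramified: $\pi_v$ is an unramified principal series of $G^\theta(F_v)\cong GL_n(F_v)$, the vectors $\phi_v,\check{\phi}_v$ are the normalized spherical vectors with $(\phi_v,\check{\phi}_v)=1$, $\chi_v$ is unramified, and $\mathcal{F}_v$ is the normalized spherical section. Here I would invoke Li's local unramified computation from \cite{Li}, which evaluates the spherical doubling integral as $Z_v^{\mathrm{raw}}(s)=d_{n,v}(s,\chi_v)^{-1}L_v(BC(\pi_v),\chi_v,s)$, where $d_{n,v}(s,\chi_v)$ is the local factor of the product of abelian $L$-functions normalizing the Siegel--Eisenstein series (so that $d_n(s,\chi)=\prod_v d_{n,v}(s,\chi_v)$) and $L_v(BC(\pi_v),\chi_v,s)$ is the standard degree-$n$ $L$-factor of the local base change twisted by $\chi_v$. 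Taking the product over $v\notin S$ gives $\prod_{v\notin S}Z_v^{\mathrm{raw}}(s)=d_n^S(s,\chi)^{-1}L_S(BC(\pi),\chi,s)$ with $d_n^S:=\prod_{v\notin S}d_{n,v}$, and since $(\phi_v,\check{\phi}_v)=1$ for these $v$ one has $(\phi,\check{\phi})=\prod_{v\in S}(\phi_v,\check{\phi}_v)$.

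\textbf{Assembling and continuation.} Combining the two previous steps, $Z(s,\phi,\check{\phi},\chi,\mathcal{F})=\big(\prod_{v\in S}Z_v^{\mathrm{raw}}(s)\big)\,d_n^S(s,\chi)^{-1}L_S(BC(\pi),\chi,s)$. Multiplying through by $d_n(s,\chi)=d_n^S(s,\chi)\prod_{v\in S}d_{n,v}(s,\chi_v)$ and recording the definition of the normalized local factor at the bad places, namely $Z_v(s,\phi,\check{\phi},\chi,\mathcal{F}):=(\phi_v,\check{\phi}_v)^{-1}d_{n,v}(s,\chi_v)\,Z_v^{\mathrm{raw}}(s)$, yields exactly $d_n(s,\chi)Z(s,\phi,\check{\phi},\chi,\mathcal{F})=(\phi,\check{\phi})\big(\prod_{v\in S}Z_v(s,\phi,\check{\phi},\chi,\mathcal{F})\big)L_S(BC(\pi),\chi,s)$, first in the region $\mathrm{Re}(s)\gg 0$. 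Finally, both sides extend to meromorphic functions of $s$ --- the left side because the Siegel--Eisenstein series $E(g,\phi,\chi,s)$ has meromorphic continuation (Shimura), the right side because $L_S(BC(\pi),\chi,s)$ does and the bad local factors are meromorphic --- so the identity holds identically in $s$ by analytic continuation.

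\textbf{Main obstacle.} The substantive input is Li's unramified local computation identifying the spherical doubling integral with the local standard $L$-factor $L_v(BC(\pi_v),\chi_v,s)$, up to the abelian normalizing factor $d_{n,v}$; this is where the degree-$n$ $L$-function of the base change genuinely enters, and it is the content borrowed from \cite{Li}. Beyond that, the only delicate point is the bookkeeping of normalizations: matching the global pairing $(\phi,\check{\phi})$ with the product of local pairings, tracking the Haar measure decomposition $dg=\bigotimes_v dg_v$ and the relation $d_n=\prod_v d_{n,v}$, and confirming absolute convergence of the Euler product in a right half-plane before appealing to meromorphic continuation.
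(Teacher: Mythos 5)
The paper supplies no proof of this theorem at all — it is cited as an immediate consequence of the Key Identity together with Li's unramified local computation in \cite{Li} — so your route (Key Identity, factor the global integral into an Euler product, evaluate the spherical places via Li, assemble, and continue meromorphically) is precisely the one the paper has in mind, and your final formula is correct. However, two steps in your bookkeeping are wrong, and the fact that they happen to cancel is not a feature: they reflect a misunderstanding of where the factor $(\phi,\check{\phi})$ comes from and of what $d_n(s,\chi)$ is.

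First, the matrix coefficient does \emph{not} factor as $(\pi(g)\phi,\check{\phi})=\prod_v(\pi_v(g_v)\phi_v,\check{\phi}_v)$, as you assert. The Petersson pairing on $\pi\otimes\check{\pi}$ is a $G^\theta(\mathbb{A}_F)$-invariant bilinear form, and by multiplicity one such a form is unique up to scalar; so with local invariant pairings $\langle\,\cdot\,,\,\cdot\,\rangle_v$ chosen so that $\langle\phi_v,\check{\phi}_v\rangle_v=1$ for every $v$, one has
\[
(\pi(g)\phi,\check{\phi}) \;=\; (\phi,\check{\phi})\cdot\prod_v\langle\pi_v(g_v)\phi_v,\check{\phi}_v\rangle_v .
\]
The constant $(\phi,\check{\phi})$ is exactly this proportionality factor; it is \emph{not} the product $\prod_{v\in S}(\phi_v,\check{\phi}_v)$ of local pairings, as you claim in the last line of your unramified paragraph (for any reasonable normalization the right-hand side of that claim is $1$, not the global Petersson integral). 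Once this is corrected, the whole doubling integral splits as $Z=(\phi,\check{\phi})\prod_v Z_v^{\mathrm{raw}}$ with $Z_v^{\mathrm{raw}}$ the paper's raw local integral. Second, the paper's $d_n(s,\chi)$ is already an $S$-truncated product — it is defined as $\prod_{r=0}^{n-1}L_S(2s+n-r,\chi_1\epsilon_K^{n-1+r})$ — so $d_n(s,\chi)=d_n^S(s,\chi)$ in your notation, \emph{not} $d_n^S\cdot\prod_{v\in S}d_{n,v}$. Correspondingly, the local factors $Z_v$ for $v\in S$ in the statement are just the raw integrals $Z_v^{\mathrm{raw}}$ (this is the paper's displayed definition); your ad hoc renormalization $Z_v:=(\phi_v,\check{\phi}_v)^{-1}d_{n,v}Z_v^{\mathrm{raw}}$ is introduced only to compensate for the erroneous extra $d_{n,v}$'s in your $d_n$. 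With the matrix-coefficient factorization corrected and $d_n=d_n^S$, the chain $Z=(\phi,\check{\phi})\prod_{v\in S}Z_v\cdot d_n(s,\chi)^{-1}L_S(BC(\pi),\chi,s)$ yields the theorem by a single multiplication, with no renormalization of the bad local factors needed.
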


Let us explain the various notations arising in the formula
\begin{enumerate}
\item  $S$ is a finite set of places including the archimedean ones, the
places where $\pi$ and $\chi$ are ramified and the places of $F$
ramified at $K$. The local factors for $v \in S$ are given by
\[
Z_v(s,\phi,\check{\phi},\chi,\mathcal{F}):=
\int_{G^\theta(F_v)}\mathcal{F}_v((g_v,1);\chi,s)(\pi_v(g_v)\phi_v,\check{\phi}_v)
dg_v
\]
\item The inner product $(\phi,\check{\phi})$ is defined as
\[
(\phi,\check{\phi})=\int_{G^\theta(F)\setminus
G^\theta(\mathbb{A}_F)}\phi(g)\check{\phi}(g)dg
\]
\item The factor $d_n(s,\chi)$ is a product of Dirichlet $L$ functions
\[
d_n(s,\chi):= \prod_{r=0}^{n-1}L_S(2s+n-r,\chi_1
\epsilon_K^{n-1+r}),
\]
where $\chi_1$ is the restriction of the Hecke character from
$\mathbb{A}_K^{\times}$ to $\mathbb{A}_F^\times$.

\item Finally the $L$-function $L_S(BC(\pi),\chi,s))$ is the standard $L$ function associated to the automorphic
representation $BC(\pi) \otimes \chi$ of $GL_n(K)$, where $BC(\pi)$
is the base change from $G^\theta$ to $Res_{K/F}G^\theta=GL_n/K$. As
usual the $S$ subscript indicate that we have removed the Euler
factors at the places that are in $S$.
\end{enumerate}

\textbf{The complex analytic point of view:} Before we start with
the proof of the theorem we need to recall a few things that we have
already seen. We consider a hermitian space $(V,\theta)$ over $K$
with $\theta$ positive definite and we have fixed an embedding
\[
\gamma_n : G_{\theta,\theta} \hookrightarrow
G:=U(n,n),\,\,\,diag(a,b) \mapsto S^{-1}diag(a,b)S,
\]
where $S=\left(
     \begin{array}{cc}
       1_n & -\lambda \\
       -1_n & \lambda^* \\
     \end{array}
   \right).$ Let us write $M$ for the $\mathfrak{g}$-maximal $\mathfrak{r}$-lattice in $V$ used to define
the congruences subgroup $D^\theta \subset G^\theta_{\mathbf{h}}$.
That is, for an ideal $\mathfrak{c}$ of $\mathfrak{r}$ we define
\[
C:=\{\alpha \in G^\theta_{\mathbf{h}} \,\,|\,\, M\alpha
=M\},\,\,\,\tilde{M}:=\{x \in V\,\,|\,\, \theta(x,M) \subset
\mathfrak{d}^{-1}_{K/F}\}
\]
and then
\[
D^\theta(\mathfrak{c}):=\{\gamma \in C \,\,|\,\, \tilde{M}_v
(\gamma_v-1) \subset \mathfrak{c}_v M_v,\,\,\forall v |
\mathfrak{c}\}
\]

Following Shimura \cite[page 87]{Shimurabook1} we have defined an
element $\sigma \in GL_n(K)_{\mathbf{h}}$ such that $M'\sigma=M$
where $M'=\sum_{i=1}^{n}\mathfrak{r}e_i$ for some fixed basis
$\{e_i\}$ of $V$. We define the element $\Sigma_{\mathbf{h}} \in
G_\mathbf{h}$ by $\Sigma_{\mathbf{h}} :=diag[\sigma,\hat{\sigma}]$.

Let now $D$ be an open compact subgroup of $G_{\mathbf{h}}$ we fix a
set $\mathcal{C}$ of representatives of the double coset $G=G(F)
\setminus G(\mathbb{A}_{F})/D D_\infty$, where $K_\infty \cong
U(n)(F_\mathbf{a}) \times U(n)(F_\mathbf{a})$, the standard compact
subgroup in $GU(n,n)$. It is known \cite[page 73]{Shimurabook1} that
we can pick the elements in $\mathcal{C}$ in the form
$diag(r,\hat{r})$ with $r \in GL(K)_\mathbf{h}$ and $r_v=1$ for
every $v$ in a selected finite set of places $v$ of $F$. We have
already seen that an automorphic form $\phi$ of $G$ with respect to
$D$ is equivalent to tuple of hermitian modular forms $(f_r)_{r \in
\mathcal{C}}$, where we have abused the notation and wrote $r$ for
$diag(r,\hat{r})$. As it is explained in \cite[page
181]{Shimurabook1} there exist an element $U \in G_{\mathbf{a}}$
such that if we consider $2^{-1}i\theta \in
\mathbf{i}S_{\mathbf{a},+} \subset \mathbb{H}_\mathbf{a}$ and define
$z_{CM,\theta}:=U^{-1} \cdot (2^{-1} i \theta) \in
\mathbb{H}_\mathbf{a}$ then we have that
\[
\gamma_n(a,b)(z_{CM,\theta}) = z_{CM,\theta},\,\,\,(a,b) \in
G_{\theta,\theta}.
\]

We now fix a set $\mathcal{B}$ of $G^\theta(F) \setminus
G^\theta_{\mathbf{h}}/ D^\theta(\mathfrak{c})$ such that for all $b
\in \mathcal{B}$ we have $b_v = 1$ for all $v$ in the set $S$. For
an element $(b_1,b_2) \in G_{\theta,\theta,\mathbf{h}}$, where
$b_1,b_2 \in \mathcal{B}$ we write
\[
\gamma_n(b_1,b_2)=\alpha(b_1,b_2) \, r(b_1,b_2)\, k(b_1,b_2)
\]
where $\alpha(b_1,b_2) \in G$, $ r(b_1,b_2) \in \mathcal{C}$ and
$k(b_1,b_2) \in D[\mathfrak{g},\mathfrak{c}]$. By the key identity
of the doubling method we have that
\[
\int_{G_{\theta,\theta}(F) \setminus
G_{\theta,\theta}(\mathbb{A}_F)}E(\gamma_n(g,g'),\mathcal{F},\chi)\phi(g)\check{\phi}(g')\chi^{-1}(detg')dgdg'
=(\phi,\check{\phi}) \left(\prod_{v \in
S}Z_v(s,\phi,\check{\phi},\chi,\mathcal{F})\right)L_S(BC(\pi),\chi,0)
\]
If we write $z_{b_1,b_2}:=\alpha(b_1,b_2)\cdot z_{CM,\theta} \in
\mathbb{H}_{\mathbf{a}}$, then the integral on the left hand side
can be rewritten as
\[
\int_{G_{\theta,\theta}(F) \setminus
G_{\theta,\theta}(\mathbb{A}_F)}E(\gamma_n(g,g'),\mathcal{F},\chi,s)\phi(g)\check{\phi}(g')\chi^{-1}(detg')dgdg'=
\]
\[
C(s) \int_{G_{\theta,\theta}(F) \setminus
G_{\theta,\theta}(\mathbb{A}_F)/(D^\theta(\mathfrak{c}) \times
D^\theta(\mathfrak{c}))}E(\gamma_n(g,g'),\mathcal{F},\chi,s)\phi(g)\check{\phi}(g')\chi^{-1}(detg')dgdg'=
\]
\[
C(s) \sum_{b_1,b_2 \in
\mathcal{B}}E(\gamma(b_1,b_2),\mathcal{F},\chi,s)\chi^{-1}(det(b_2))\phi(b_1)\check{\phi}(b_2),
\]
where
\[
C(s):=\chi_{\mathbf{h}}(det(\sigma^*))^{-1}|det(\sigma_{\mathbf{h}})|^{-s}|2^{-n}
det(\theta)^n det(\sigma_{\mathbf{a}})^{-2}|^{s-k/2}
vol(D^\theta(\mathfrak{c}) \times D^\theta(\mathfrak{c})).
\]
Note that this plus our considerations over the doubling method are
equivalent to the formula of Shimura \cite[equation
22.11.3]{Shimurabook1} after one also multiplies the formula by
$\check{f}_b$ and takes an extra summation over $b$ (with the
notation as in Shimura formulas).

Now we explain the first equation, the second being trivial. We have
to study the integrand
\[
E(\gamma_n(g,g'),\mathcal{F},\chi,s)\phi(g)\check{\phi}(g')\chi^{-1}(detg')
\]
with respect to the left translation by elements of
$\gamma_n(D^\theta(\mathfrak{c}) \times D^\theta(\mathfrak{c}))$.
The first remark is that for $k \in
D^\theta(\mathfrak{c})_\mathbf{h}$ we have that $det(k) \in 1 +
\mathfrak{c}$ and hence $\chi^{-1}(k)=1$ as the conductor of $\chi$
divides $\mathfrak{c}$. Indeed from \cite[page 88]{Shimurabook1} we
have for every finite place $v$ of $F$ that
\[
k_v \in D^\theta(\mathfrak{c})_v \Leftrightarrow {\theta'}_v^{-1}
(\sigma k_v \sigma^{-1}-1)_v \prec \mathfrak{c}_v
\mathfrak{d}_{K/F,v}.
\]
In particular since $\theta' \prec \mathfrak{d}^{-1}_{K/F}$ we
conclude that for $k \in D^\theta(\mathfrak{c})$ we have that
$det(k)-1=det(\sigma k \sigma^{-1})-1 \prec \mathfrak{c}$ which
concludes our claim. Now we state the following facts that are taken
from \cite[pages 178, 179]{Shimurabook1}.

\begin{lem} Set $\mathfrak{b}:=\kappa^{-1}\mathfrak{d}_{K/F} \cap F$. Let $\epsilon := \Sigma_{\mathbf{h}} S^{-1}diag[1,\gamma]S
\Sigma_{\mathbf{h}}^{-1}$ with $\gamma \in D^\theta(\mathfrak{c})$.
Then $\epsilon \in D[\mathfrak{b}^{-1},\mathfrak{bc}]$ and
$d_{\epsilon}-1 \prec \mathfrak{c}$ for $\epsilon=\left(
                              \begin{array}{cc}
                                a_{\epsilon} & b_{\epsilon} \\
                                c_{\epsilon} & d_{\epsilon} \\
                              \end{array}
                            \right)$.
\end{lem}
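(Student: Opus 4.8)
The statement is essentially the matrix computation carried out by Shimura in \cite[pp.~178--179]{Shimurabook1}, and the plan is to reproduce its main steps, making the parabolic membership and the congruence on $d_{\epsilon}$ explicit. The first step is to invert $S$ in block form: writing $\kappa$ for the invertible skew-Hermitian matrix $\lambda^{*}-\lambda$ (the element entering the definition of $\mathfrak{b}=\kappa^{-1}\mathfrak{d}_{K/F}\cap F$), a direct check gives
\[
S^{-1}=\left(\begin{array}{cc} 1_{n}+\lambda\kappa^{-1} & \lambda\kappa^{-1}\\ \kappa^{-1} & \kappa^{-1}\end{array}\right).
\]
Since $\gamma\in D^{\theta}(\mathfrak{c})\subseteq C$, I would write $\gamma=1_{n}+\delta$, so that $\mathrm{diag}[1_{n},\gamma]=1_{2n}+\mathrm{diag}[0,\delta]$ and, by linearity of conjugation,
\[
\epsilon=1_{2n}+\Sigma_{\mathbf{h}}\,S^{-1}\left(\begin{array}{cc} 0 & 0\\ 0 & \delta\end{array}\right)S\,\Sigma_{\mathbf{h}}^{-1}.
\]
Multiplying out, with $\Sigma_{\mathbf{h}}=\mathrm{diag}[\sigma,\hat{\sigma}]$ and $\hat{\sigma}=(\transpose{\bar{\sigma}})^{-1}$, one obtains the four blocks in closed form,
\[
a_{\epsilon}-1_{n}=-\sigma\lambda\kappa^{-1}\delta\sigma^{-1},\qquad b_{\epsilon}=\sigma\lambda\kappa^{-1}\delta\lambda^{*}\hat{\sigma}^{-1},
\]
\[
c_{\epsilon}=-\hat{\sigma}\kappa^{-1}\delta\sigma^{-1},\qquad d_{\epsilon}-1_{n}=\hat{\sigma}\kappa^{-1}\delta\lambda^{*}\hat{\sigma}^{-1},
\]
so the whole statement is reduced to estimating $\kappa^{-1}\delta$ and its $\sigma,\lambda,\hat\sigma$-conjugates place by place.

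For a finite place $v\nmid\mathfrak{c}$ the only input is $\gamma_{v}\in C_{v}$, i.e. $M_{v}\gamma_{v}=M_{v}$; combined with the integrality of $\sigma_{v}$ (resp. $\sigma_{v}^{-1}$, up to $\mathfrak{d}_{K/F}$), of $\lambda_{v}$, and of $\kappa_{v}^{-1}$ relative to $\mathfrak{d}_{K/F}$ that is built into the choice of $M$ and of $S$, this forces $a_{\epsilon,v},d_{\epsilon,v}\in GL_{n}(\mathfrak{r}_{v})$, $b_{\epsilon,v}\prec\mathfrak{b}^{-1}\mathfrak{r}_{v}$ and $c_{\epsilon,v}\prec\mathfrak{b}\mathfrak{c}\mathfrak{r}_{v}$; this is exactly the shape of $D[\mathfrak{b}^{-1},\mathfrak{b}\mathfrak{c}]$, and it is where the ideal $\mathfrak{b}$ enters. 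For $v\mid\mathfrak{c}$ one feeds in instead the sharper congruence defining $D^{\theta}(\mathfrak{c})_{v}$, namely ${\theta'}_{v}^{-1}(\sigma\gamma_{v}\sigma^{-1}-1_{n})\prec\mathfrak{c}_{v}\mathfrak{d}_{K/F,v}$, equivalently $\sigma\delta_{v}\sigma^{-1}\prec\theta'_{v}\mathfrak{c}_{v}\mathfrak{d}_{K/F,v}$; substituting into the formula for $d_{\epsilon}-1_{n}$ and using $\theta'\prec\mathfrak{d}_{K/F}^{-1}$ together with the fact that $\kappa$ agrees with $\theta'$ up to a factor in $GL_{n}(\mathfrak{r}_{v})$ yields $d_{\epsilon,v}-1_{n}\prec\mathfrak{c}_{v}$. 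Assembling over all $v$ then gives $\epsilon\in D[\mathfrak{b}^{-1},\mathfrak{b}\mathfrak{c}]$ and $d_{\epsilon}-1_{n}\prec\mathfrak{c}$.

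The only genuinely delicate point --- and the main obstacle --- is the ideal bookkeeping in the second paragraph: one has to check that the various occurrences of $\kappa^{\pm1}$, of $\mathfrak{d}_{K/F}^{\pm1}$, and of $\sigma,\hat{\sigma}$ conspire to produce \emph{exactly} $\mathfrak{b}^{-1}$ in the $(1,2)$ block and \emph{exactly} $\mathfrak{b}\mathfrak{c}$ (neither the coarser $\mathfrak{c}$ nor something strictly larger) in the $(2,1)$ block, and that true integrality --- not merely boundedness --- holds at the places $v\nmid\mathfrak{c}$ where no congruence is available. Since this is precisely the computation performed by Shimura on pp.~178--179 of \cite{Shimurabook1}, I would carry it out by quoting his estimates rather than redoing them from scratch.
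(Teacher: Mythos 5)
The paper offers no proof for this lemma; it is stated verbatim as a fact from Shimura \cite[pp.~178--179]{Shimurabook1}, which is also what you ultimately propose to do, so at that level the two approaches coincide.

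That said, your expository sketch contains a concrete error that would propagate straight into the ideal bookkeeping. You take $\kappa$ to be the matrix $\lambda^{*}-\lambda$, but in this paper (following Shimura) $\kappa$ is an \emph{element of $K^{\times}$}, i.e.\ a scalar, with $\kappa^{\rho}=-\kappa$ and $\kappa\theta=\lambda^{*}-\lambda$. These agree only in the special case $\theta=1_{n}$. Two consequences. First, the ideal $\mathfrak{b}=\kappa^{-1}\mathfrak{d}_{K/F}\cap F$ in the lemma is defined with the scalar $\kappa$; under your identification $\kappa=\lambda^{*}-\lambda$ the expression $\kappa^{-1}\mathfrak{d}_{K/F}\cap F$ is not even well posed. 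Second, your closed form for $S^{-1}$ is off by a factor of $\theta^{-1}$. The paper itself records the correct version later (in the computation of the local zeta integral at $p$):
\[
S^{-1}=\left(\begin{array}{cc} \lambda^{*} & \lambda\\ 1_{n} & 1_{n}\end{array}\right)\left(\begin{array}{cc} \kappa^{-1}\theta^{-1} & 0\\ 0 & \kappa^{-1}\theta^{-1}\end{array}\right),
\]
so each of your four block entries of $\epsilon-1_{2n}$ should carry $\kappa^{-1}\theta^{-1}\delta$ where you wrote $\kappa^{-1}\delta$. That missing $\theta^{-1}$ is not cosmetic: it is precisely what lets the defining congruence ${\theta'}_{v}^{-1}(\sigma\gamma_{v}\sigma^{-1}-1_{n})\prec\mathfrak{c}_{v}\mathfrak{d}_{K/F,v}$ for $D^{\theta}(\mathfrak{c})_{v}$ be absorbed into $d_{\epsilon}-1_{n}$, and it is what makes $\kappa^{-1}\mathfrak{d}_{K/F}$ (hence $\mathfrak{b}$) appear in the off-diagonal blocks rather than $\mathfrak{d}_{K/F}$ alone. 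With $\kappa$ read as a scalar and the $\theta^{-1}$ restored, the rest of your outline is consistent with Shimura's argument, and the final appeal to his estimates mirrors what the paper does.
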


Our choice of $\mathfrak{c}$ (i.e. $\mathfrak{d}_{K/F,v} \neq
\mathfrak{r}_v$ implies $v | \mathfrak{c}$) we have from \cite[page
177, lemma 21.4(iii)]{Shimurabook1} that for $\alpha \in
D^\theta(\mathfrak{c})$ we have that $x:=\Sigma_{\mathbf{h}}
\gamma_n(\alpha,1)\Sigma_{\mathbf{h}}^{-1} \in
D[\mathfrak{b}^{-1},\mathfrak{bc}]$. Moreover by \cite[equation
21.6.3, page 179]{Shimurabook1} for all finite places $v$ of $F$
with $v | \mathfrak{c}$ we have that
\[
(det(d_x)^{-1}det(\sigma^*))_v \in 1 + \mathfrak{c}_v
\]

These remarks and the modular properties of the Eisenstein series
and the automorphic forms $\phi$ and $\hat{\phi}$ allow us to
conclude that for $k,k' \in D^\theta(\mathfrak{c})_{\mathbf{h}}$ we
have
\[
E(\gamma_n(gk,g'k'),\mathcal{F},\chi,s)\phi(gk)\check{\phi}(g'k')\chi^{-1}(detg'k')=
\]
\[
\chi_{\mathbf{h}}(det(\sigma^*))^{-1}
E(\gamma_n(g,g'),\mathcal{F},\chi,s)\phi(g)\check{\phi}(g')\chi^{-1}(detg')
\].

Before we proceed to the proof of Theorem \ref{interpolation
properties} we need to define the $p$-adic and archimedean periods.
We do that next.

\textbf{The archimedean and $p$-adic periods
$\Omega_\infty(Y,\Sigma)$ and $\Omega_p(Y,\Sigma)$.} Now we need to
explain how we pick the complex and $p$-adic periods that will
appear in the interpolation formula. Our definition of these periods
is the natural extensions of that of Katz \cite[page 268]{Katz2} in
our setting.

In general we start with $(W,\theta)$, a positive definite Hermitian
space of dimension $n$ over a CM field $K$ with $d:=[F:\mathbb{Q}]$,
where $F:=K^+$. We write $G$ for $G^\theta$ and fix a maximal open
compact subgroup of $G(\mathbb{A}_{F,f})$. We note that the Shimura
variety
\[
Sh_G(U):= G(F) \setminus G(\mathbb{A}_{F,f})/U
\]
is zero dimensional and parameterizes abelian varieties with CM by
the CM-algebra $Y:=\bigoplus_{j=1}^n K$ and additional additive
structure determined by the open compact subgroup $U$. Indeed if $U$
is defined as the open compact subgroup of $G_f$ that fixes an
$\mathfrak{r}$ lattice $L$ in $W \cong K^n$, then $Sh_G(U)$ is
simply the set of classes of $L$ contained in the genus of $L$ (see
\cite[page 62]{Shimurabook1}). For our considerations we assume that
we may pick $L=\sum_{i=1}^n\mathfrak{r}e_i$ with respect to the
standard basis of $W$ over $K$.

We may now pick (see \cite[page 65]{Shimurabook1}) representatives
$\{L_i\}^h_{i=1}$ of the classes of $L$ such that $L_i=L \cdot
g_{i}$ with $g_i \in G(\mathbb{A}_{F,f})$ such that the ideal of $K$
corresponding the the idele $det(g_i)$ is relative prime to $p$. We
write $X(L_i)$ for the abelian variety corresponding to the lattice
$L_i$. We define $A:=\{a \in \overline{\mathbb{Q}} \,\,|\,\,
incl_p(a) \in D_p\}$, where $D_p$ is the ring of integers of
$\mathbb{C}_p$ and $incl_p :\overline{\mathbb{Q}} \hookrightarrow
\overline{\mathbb{Q}}_p \hookrightarrow \mathbb{C}_p$ the fixed
$p$-adic embedding. As in Katz (loc. cit.), we have that
$Lie(X(L_i)) = Lie(X(L))$ for all $1 \leq i \leq h$, where the
equality is in $L \otimes_{\mathfrak{r}} A$.

Now we let $\omega(L)$ be a nowhere vanishing differential of $X(L)$
over $A$, that is it induces through diality an isomorphism
\[
\omega(L): Lie(X(L)) \stackrel{\sim}{\rightarrow}
\mathfrak{d}_K^{-1} \otimes A.
\]
From the fixed isomorphisms $Lie(X(L_i)) = Lie(X(L))$ we obtain for
each $X(L_i)$ a nowhere vanishing differential $\omega(L_i)$ by the
composition of this isomorphism with $\omega(L)$, that is
\[
\omega(L_i):Lie(X(L_i)) = Lie(X(L))\stackrel{\omega(L)}{\rightarrow}
\mathfrak{d}_K^{-1} \otimes A.
\]

Now we write $\omega_{trans}(L_i)$ for the nowhere vanishing
differential on the complex analytic abelian variety
$X(L_i)/\mathbb{C}:=X(L_i) \times_{A} \mathbb{C}$, obtained after a
fixed embedding $incl_{\infty}:A \hookrightarrow \mathbb{C}$,
corresponding to the lattice $L_i \subset
\mathbb{C}^{n[F:\mathbb{Q}]}$. Then as in Katz \cite[page
269]{Katz2} from the very definition of the $\omega(L_i)$'s we
obtain the following lemma.

\begin{lem}\label{definition of algebraic periods} There exists an element
$\Omega_\infty(Y,\Sigma)=(\ldots,\Omega_\infty(i,\sigma),\ldots)_{i=1,\ldots,n,
\sigma \in \Sigma}$ in $(\mathbb{C}^\times)^{n|\Sigma|}$ such that
for all selected $L_i$ as above we have
\[
\omega(L_i) = \Omega_\infty(Y,\Sigma) \cdot \omega_{trans}(L_i).
\]
\end{lem}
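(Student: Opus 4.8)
The plan is to follow Katz \cite[p.\ 269]{Katz2} essentially verbatim; the only new bookkeeping is caused by the CM-\emph{algebra} $Y=\bigoplus_{j=1}^{n}K$ replacing a single CM field.

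\textbf{Step 1 (existence of the ratio for a fixed $i$).} Fix an index $i$. Both $\omega(L_i)$, base changed to $\mathbb{C}$ along $incl_\infty$, and $\omega_{\mathrm{trans}}(L_i)$ are nowhere vanishing global differentials on $X(L_i)/\mathbb{C}$, i.e.\ bases of $\underline{\omega}_{X(L_i)/\mathbb{C}}=H^0(X(L_i)/\mathbb{C},\Omega^1)$, a locally free module acted on by $Y\otimes_{\mathbb{Q}}\mathbb{C}$. Since $\theta$ is positive definite, $X(L_i)$ has CM of type $\Sigma$ in each of the $n$ factors of $Y$, so by the determinant condition in the moduli problem (item (6) above) the idempotents $e_\sigma$ ($\sigma\in\Sigma$) together with the decomposition $Y=\bigoplus_{j=1}^n K$ split this $nd$-dimensional space ($d=[F:\mathbb{Q}]=|\Sigma|$) as a direct sum of $n|\Sigma|$ lines $\bigoplus_{1\le j\le n,\ \sigma\in\Sigma}\mathbb{C}_{j,\sigma}$, with index set independent of $i$. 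Hence there is a unique tuple $c_i=(c_i(j,\sigma))_{j,\sigma}\in(\mathbb{C}^\times)^{n|\Sigma|}$ with $\omega(L_i)=c_i\cdot\omega_{\mathrm{trans}}(L_i)$, the product taken componentwise with respect to this decomposition; the components lie in $\mathbb{C}^\times$ precisely because both differentials are nowhere vanishing.

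\textbf{Step 2 (independence of $i$).} This is the substance of the lemma, and it is where the choice of the $g_i$ with $(\det g_i,p)=1$ enters. For each $i$ the idele $g_i$ determines a prime-to-$p$ quasi-isogeny $\varphi_i\colon X(L)\dashrightarrow X(L_i)$ which, via the fixed $\mathbb{R}$-linear uniformization $q\colon K_{\mathbf a}^n\xrightarrow{\sim}\mathbb{C}^{nd}$, is induced by the identity of $\mathbb{C}^{nd}$ on universal covers. On the one hand, being prime to $p$, $\varphi_i$ induces an isomorphism on Lie algebras over $A$, and the equality $Lie(X(L_i))=Lie(X(L))$ used to define $\omega(L_i)$ is, by construction (following Katz), precisely this isomorphism; thus $\varphi_i^{*}\omega(L_i)=\omega(L)$ by the very definition of $\omega(L_i)$. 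On the other hand, over $\mathbb{C}$ the map $\varphi_i$ is induced by $\mathrm{id}_{\mathbb{C}^{nd}}$, so it carries the standard differential on $\mathbb{C}^{nd}/q(L_i)$ to the standard differential on $\mathbb{C}^{nd}/q(L)$, i.e.\ $\varphi_i^{*}\omega_{\mathrm{trans}}(L_i)=\omega_{\mathrm{trans}}(L)$. Applying $\varphi_i^{*}$ to $\omega(L_i)=c_i\cdot\omega_{\mathrm{trans}}(L_i)$, and using that $\varphi_i^{*}$ is $Y\otimes\mathbb{C}$-equivariant (hence respects the componentwise decomposition), yields $\omega(L)=c_i\cdot\omega_{\mathrm{trans}}(L)$, so $c_i=c_L$ for every $i$. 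Setting $\Omega_\infty(Y,\Sigma):=c_L$ completes the proof.

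\textbf{Main obstacle.} The delicate point is the double compatibility used in Step 2: one must know that the \emph{same} prime-to-$p$ quasi-isogeny $\varphi_i$ simultaneously realizes the algebraic identification $Lie(X(L_i))=Lie(X(L))$ over $A$ that defines $\omega(L_i)$ \emph{and} is induced by the identity of $\mathbb{C}^{nd}$ on the analytic side (so that it transports $\omega_{\mathrm{trans}}(L)$ to $\omega_{\mathrm{trans}}(L_i)$). Both assertions hinge on $(\det g_i,p)=1$ and on the compatibility of Shimura's CM theory with the analytic and $p$-adic realizations of $X(L)$; they are exactly what is verified in Katz \cite{Katz2}, and no further ingredient is needed here.
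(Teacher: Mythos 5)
Your proof is correct and follows the same route the paper relies on: the paper simply cites Katz \cite[p.\ 269]{Katz2} and says the lemma follows ``from the very definition of the $\omega(L_i)$'s,'' and your Steps 1--2 are precisely the Katz argument spelled out, with the prime-to-$p$ quasi-isogeny $\varphi_i$ realizing both the algebraic identification $Lie(X(L_i))=Lie(X(L))$ over $A$ and the analytic one over $\mathbb{C}$.
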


Now we can also define in the same way as in Katz the $p$-adic
periods. Our first step is to explain how we can give a
$p^\infty$-structure to the abelian varieties $X(L_i)$. We recall
that the ordinary condition implies that all the primes above $p$ in
$F$ split in $K$. Then if we consider $L_p:= L \otimes \mathbb{Z}_p$
the splitting condition implies a splitting $L_p= L_p(\Sigma) \oplus
L_p(\rho\Sigma)$ with $L_p(\Sigma) \cong L_p(\rho\Sigma) \cong
\sum_{i=1}^n \mathfrak{g} \otimes \mathbb{Z}_p$. Then the embedding
$\sum_{i=1}^n \mathfrak{g} \otimes \mathbb{Z}_p \hookrightarrow L_p$
to the first component provides the needed $p^\infty$-structure.
Through the isomorphisms $X(L) \cong X(L_i)$ we define the
$p^\infty$-structure to the rest of the varieties.

Hence, after extension of scalars $incl_p : A \hookrightarrow D_p$,
we may consider the canonical differential $\omega_{can}(L_i)$
associated to the $p^\infty$ arithmetic structure of $X(L_i)$. Then
as in Katz (loc. cit.) we have the following lemma,

\begin{lem} There exists a unit $\Omega_p(Y,\Sigma) =
(\ldots,\Omega_p(i,\sigma),\ldots) \in (D_p^\times)^{n|\Sigma|}$
such that for the selected $L_i$'s above we have
\[
\omega(L_i)=\Omega_p(Y,\Sigma) \cdot \omega_{can}(L_i).
\]
\end{lem}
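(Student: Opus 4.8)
The plan is to mimic Katz's construction of $p$-adic periods, translating his argument for CM abelian varieties attached to a CM field into our slightly more general setting where the CM algebra is $Y=\bigoplus_{j=1}^n K$. The key point is that everything in sight comes from the single lattice $L=\sum_{i=1}^n\mathfrak{r}e_i$ via the identifications $X(L)\cong X(L_i)$, so that the ``period'' measuring the discrepancy between an algebraic and a canonical differential only has to be defined once and then propagates.

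\textbf{Step 1 (the arithmetic $p^\infty$-structure).} First I would make precise the $p^\infty$-structure on $X(L)$ that was sketched just before the statement: using the ordinary hypothesis (all primes above $p$ in $F$ split in $K$) we get the splitting $L_p=L_p(\Sigma)\oplus L_p(\rho\Sigma)$ with both summands free of rank $n$ over $\mathfrak{g}\otimes\mathbb{Z}_p$, and the inclusion of the first summand gives an $\mathfrak{r}$-linear embedding $M^0\otimes\mu_{p^\infty}\hookrightarrow X(L)[p^\infty]$ of the type appearing in the definition of $p$-adic hermitian modular forms. Transporting along $X(L)\cong X(L_i)$ equips each $X(L_i)$ with such a structure. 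Having this, the theory recalled in the ``Algebraic and $p$-adic hermitian modular forms'' paragraph produces, after base change $incl_p:A\hookrightarrow D_p$, the canonical differential $\omega_{can}(L_i)$, i.e. the isomorphism $\omega(j):M^0\otimes_R R\stackrel{\sim}{\to}\underline{\omega}_{X(L_i)/R}$ coming from $\hat{A}\cong M^0\otimes\hat{\mathbb{G}}_m$.

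\textbf{Step 2 (comparison and descent of the period).} Both $\omega(L_i)$ (the algebraic differential chosen in Lemma \ref{definition of algebraic periods}, viewed now over $D_p$ via $incl_p$) and $\omega_{can}(L_i)$ are nowhere vanishing differentials on $X(L_i)/D_p$, hence generate the same invertible $D_p$-module $\underline{\omega}_{X(L_i)/D_p}$; therefore they differ by a unit in $(D_p^\times)^{n|\Sigma|}$ — componentwise over the $n$ copies of $K$ in $Y$ and over the archimedean places $\sigma\in\Sigma$, exactly as in the archimedean case. Call this unit $\Omega_p(i)$. The content of the lemma is that $\Omega_p(i)$ is independent of $i$. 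This follows because, by construction, $\omega(L_i)$ is obtained from $\omega(L)$ through the fixed identification $Lie(X(L_i))=Lie(X(L))$ (equality inside $L\otimes_{\mathfrak{r}}A$), and the $p^\infty$-structure on $X(L_i)$ — hence $\omega_{can}(L_i)$ — is defined by transporting the one on $X(L)$ through the same identification $X(L)\cong X(L_i)$; since $Lie$ and formal completion are functorial, the isomorphism $X(L)\cong X(L_i)$ carries $\omega_{can}(L)$ to $\omega_{can}(L_i)$ and $\omega(L)$ to $\omega(L_i)$, so the ratio is unchanged. Setting $\Omega_p(Y,\Sigma):=\Omega_p(L)$ gives the claimed element with $\omega(L_i)=\Omega_p(Y,\Sigma)\cdot\omega_{can}(L_i)$ for every $i$.

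\textbf{The main obstacle} I expect is bookkeeping rather than conceptual: one must check that the identifications $Lie(X(L_i))=Lie(X(L))$ used in Lemma \ref{definition of algebraic periods} to define $\omega(L_i)$ are \emph{the same} identifications (now read $p$-adically after $incl_p$) as those used to transport the $p^\infty$-arithmetic structure, so that the two comparisons are genuinely compatible and the ratio really is locally constant in $i$; this is precisely the point where Katz's argument (\cite[page 269]{Katz2}) has to be invoked, and one should note that the choice of $g_i$ with $\det(g_i)$ prime to $p$ is what guarantees the $p$-adic arithmetic structures on the $X(L_i)$ are related by isomorphisms (not just isogenies) over $D_p$, so that $\Omega_p(Y,\Sigma)$ is a $p$-adic \emph{unit}. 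Granting the standard CM theory and the functoriality of the canonical differential recalled earlier in the section, the remaining verifications are formal.
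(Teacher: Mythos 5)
Your proposal matches the paper's argument: the paper's proof is a one-line reduction to Katz's Lemma 5.1.47, citing precisely the observation that over $A$ the isomorphism $X(L_i)[p^\infty]\cong X(L)[p^\infty]$ comes from the identification $L\otimes_{\mathfrak{r}}A=L_i\otimes_{\mathfrak{r}}A$, and your Steps 1--2 are exactly the unfolding of that remark. Your closing note about $\det(g_i)$ being prime to $p$ correctly pinpoints why the comparison is by isomorphisms rather than isogenies, which is the reason the period lands in $(D_p^\times)^{n|\Sigma|}$.
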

\begin{proof} The proof is exactly as in Katz \cite[lemma
5.1.47]{Katz2}; one has only to remark that over $A$ we have an
isomorphism $X(L_i)[p^\infty] \cong X(L)[p^\infty]$ induced from the
identification $L \otimes_\mathfrak{r} A = L_i
\otimes_{\mathfrak{r}} A$.
\end{proof}

Our next goal is to relate the periods that we have associated to
the abelian varieties  of the definite unitary groups $U(n)/F$ to
abelian varieties of the definite unitary group $U(1)/F$. We start
by recalling the following theorem of Shimura \cite[page
164]{ShimuraCM}.

\begin{prop}Let $(A,\imath)$ be an abelian variety of type
$(K,\Psi)$ with a CM-field $K$ and $\Psi$ of the form
$\Psi_v(a)=diag[a_v 1_{r_v}, \bar{a}_v 1_{s_v}]$ (Thus
$dim(A)=d=m[K^+:\mathbb{Q}]$ and $r_v + s_v = m$). If $\sum_{v \in
\mathbf{a}}r_vs_v=0$, then $A$ is isogenous to the product of $m$
copies of an abelian variety belonging to a CM-type $(K,\Phi)$ with
$\Phi$ such that $\Psi$ is equivalent to the sum of $m$ copies of
$\Phi$.
\end{prop}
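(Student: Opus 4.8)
The statement is a classical result of Shimura on CM abelian varieties, and the plan is to recall the structure of the argument rather than to reprove it from scratch; since the excerpt permits us to cite Shimura \cite{ShimuraCM} for this proposition, the main content is to organize the reduction. First I would recall that an abelian variety $(A,\imath)$ of type $(K,\Psi)$ is, up to isogeny, determined by the $K \otimes_{\mathbb{Q}} \mathbb{R}$-module $\mathbb{C}^d$ together with the lattice $\mathbb{Q}\Lambda \subset \mathbb{C}^d$, which by the construction recalled earlier in the excerpt carries the structure of a $K$-vector space isomorphic to $K_m^1$ (here $2d = m[K:\mathbb{Q}]$, with the normalization $r = m$ in the notation of the ``Families of polarized abelian varieties'' paragraph). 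The hypothesis $\Psi_v(a) = \mathrm{diag}[a_v 1_{r_v}, \bar a_v 1_{s_v}]$ with $r_v + s_v = m$ for each $v \in \mathbf{a}$ is precisely the statement that the multiplicities $(r_v, s_v)$ are constant in $m$ along the factor $K_m^1$; the condition $\sum_{v \in \mathbf{a}} r_v s_v = 0$ forces, for each archimedean place $v$, either $r_v = 0$ or $s_v = 0$, i.e. exactly one of $\tau_v$, $\rho\tau_v$ occurs in $\Psi$ (with multiplicity $m$). This is exactly the condition that $\Psi$ be ``induced'' from a genuine CM-type $\Phi$ of $K$, where $\Phi := \{ v \in \mathbf{a} : s_v = 0\}$ (chosen so that $i\mathcal{T}$ is positive on the relevant pieces).

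Next I would make the $K$-linear comparison explicit. Writing $\Phi$ for the CM-type just produced, there is a one-dimensional $K \otimes \mathbb{R}$-module $U_\Phi \cong \mathbb{C}^{[K^+:\mathbb{Q}]}$ on which $K$ acts through $\Phi$, and the hypothesis says that as a $K \otimes \mathbb{R}$-module with its complex structure, $\mathbb{C}^d \cong U_\Phi^{\oplus m}$. Under this identification the lattice $\mathbb{Q}\Lambda \cong K_m^1$ is a $K$-lattice in $U_\Phi^{\oplus m}$, hence commensurable with $(\mathfrak{r} \text{-lattice in } U_\Phi)^{\oplus m}$; therefore $A$ is isogenous over $\mathbb{C}$ to $B^m$ where $B$ is the abelian variety attached to the CM-pair $(K,\Phi)$ with an appropriate $\mathfrak{r}$-lattice. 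One must also check the polarization is compatible: the Riemann form on $\mathbb{C}^d$ is $E(q(x),q(y)) = \mathrm{Tr}_{K/\mathbb{Q}}(x\mathcal{T}y^*)$ for a skew-Hermitian $\mathcal{T} \in GL_m(K)$, and since $i\mathcal{T}_v$ has signature $(r_v, s_v) \in \{(m,0),(0,m)\}$ for every $v$, a standard diagonalization of skew-Hermitian forms over $K$ (using that $K/K^+$ is a CM extension) lets one bring $\mathcal{T}$ into a form $\delta \cdot 1_m$ with $\delta \in K^\times$, $\delta^* = -\delta$, and $i\sigma(\delta) > 0$ for all $\sigma \in \Phi$; this exhibits the polarized form as the orthogonal sum of $m$ copies of the corresponding rank-one form, hence the isogeny can be taken to respect polarizations up to the usual scalar.

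The step I expect to be the genuine obstacle — or at least the one requiring care rather than bookkeeping — is the passage from ``$K$-linear isomorphism of rational Hodge structures'' to ``isogeny of abelian varieties respecting the CM action $\imath$'': one needs the classical fact (Shimura–Taniyama theory) that the category of abelian varieties up to isogeny with $K$-action of a fixed type is equivalent to the category of $K$-vector spaces with the compatible complex structure, so that the decomposition $K_m^1 \cong (K_1^1)^{\oplus m}$ as $K$-modules with complex structure genuinely descends to an isogeny decomposition $A \sim B^m$ compatible with $\imath$. Granting that equivalence (which is exactly what \cite{ShimuraCM} provides), the remaining points are the diagonalization of the skew-Hermitian form $\mathcal{T}$ over $K$ and the observation that constancy of the signature data $(r_v,s_v)$ plus $\sum r_v s_v = 0$ is equivalent to $\Psi$ being the $m$-fold sum of a CM-type $\Phi$, both of which are routine linear algebra over $K$.
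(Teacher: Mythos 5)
The paper does not prove this proposition; it is quoted verbatim from Shimura's book on complex multiplication \cite[page 164]{ShimuraCM}, so there is no internal proof to compare against. Your reconstruction is sound and follows the standard Shimura--Taniyama argument: the hypothesis $\sum_v r_v s_v = 0$ forces the complex structure $J$ on $(K\otimes\mathbb{R})^m$ (pulled back via $q$) to act by a \emph{scalar} $\pm i$ on each $v$-component, hence $J$ lies in $K\otimes\mathbb{R}$ itself rather than merely in its commutant $M_m(K\otimes\mathbb{R})$; consequently $J$ preserves any $K$-line in $K^m$, the lattice $\Lambda$ is commensurable with $\bigoplus_i (\Lambda\cap Ke_i)$, and each factor is a complex torus of CM-type $(K,\Phi)$, all mutually isogenous. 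The polarization discussion (diagonalizing $\mathcal{T}$) is harmless but strictly unnecessary, since the statement concerns only $(A,\imath)$ and not $(A,\lambda,\imath)$; and the point you flag as the "genuine obstacle" — descending a $K$-linear decomposition of the rational Hodge structure to an isogeny of CM abelian varieties — is indeed exactly the content of Shimura's equivalence of categories, which your argument correctly invokes rather than reproves.
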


We now write $B$ for an abelian variety with complex multiplication
by the CM field $K$ and type $\Sigma$. As in Katz \cite{Katz2} or
deShalit \cite{deShalit} we fix a pair
$(\Omega_{\infty}(\Sigma),\Omega_{p}(\Sigma)) \in
(\mathbb{C}^\Sigma,D_p^\Sigma)^{\times}$ of a complex and $p$-adic
period. As with the pair
$(\Omega_\infty(Y,\Sigma),\Omega_p(Y,\Sigma)$ the definition is
again independent of the particular $B$ and depends only on the
$(K,\Sigma)$-type. Then we have

\begin{lem} We may pick the pairs $(\Omega_\infty(Y,\Sigma),\Omega_p(Y,\Sigma)$ and $(\Omega_{\infty}(\Sigma),\Omega_{p}(\Sigma))$ so that
\[
(\Omega_\infty(Y,\Sigma),\Omega_p(Y,\Sigma)=(\Omega_{\infty}(\Sigma)^n,\Omega_{p}(\Sigma)^n)
\]
\end{lem}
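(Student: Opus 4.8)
The plan is to produce, on the two sides of the claimed equality, compatible representatives — an abelian variety together with a chosen differential — and then to read off the equality of periods directly from their defining relations. The starting observation is that $Y=\bigoplus_{j=1}^n K$ and that the abelian varieties $X(L_i)$ entering the definition of $(\Omega_\infty(Y,\Sigma),\Omega_p(Y,\Sigma))$ carry CM by $\mathfrak{r}$ with multiplicity $n$; since $\theta$ is positive definite, the associated type $\Psi$ (in the notation of Shimura's Proposition quoted above) has $r_v=n$ and $s_v=0$ for every $v\in\mathbf{a}$, so $\sum_{v\in\mathbf{a}}r_vs_v=0$. Hence that Proposition (\cite[page 164]{ShimuraCM}) applies and each $X(L_i)$ is isogenous to the $n$-th power of an abelian variety of CM type $(K,\Sigma)$. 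First I would use the freedom in the choice of the base lattice $L=\sum_{i=1}^n\mathfrak{r}e_i$, of its polarization and of its level structure to arrange that the point of $Sh_G(U)$ attached to $L$ is represented by $B^n$, with $B$ as in the statement, equipped with the product polarization, the diagonal $\mathfrak{r}^n$-structure and the product level structure; this is possible because the PEL moduli problem for $U(n)/F$ attached to the standard lattice decomposes as the $n$-fold product of the one for $U(1)/F$.

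Next I would fix, once and for all, a nowhere vanishing differential $\omega(B)$ of $B$ over $A$, together with the associated pair $(\Omega_\infty(\Sigma),\Omega_p(\Sigma))$, so that $\omega(B)=\Omega_\infty(\Sigma)\cdot\omega_{trans}(B)$ and $\omega(B)=\Omega_p(\Sigma)\cdot\omega_{can}(B)$. On $X(L)=B^n$ I would take for $\omega(L)$ the differential whose restriction to the $j$-th factor equals $\omega(B)$ for each $j=1,\dots,n$; it induces $Lie(X(L))=Lie(B)^n\xrightarrow{\ \sim\ }(\mathfrak{d}_K^{-1}\otimes A)^n$, exactly as required of a differential attached to $Y=K^n$. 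Since the complex uniformization of $B^n$ is the product of that of $B$, one gets $\omega_{trans}(L)=\omega_{trans}(B)$ on each factor; and since the ordinarity (split) hypothesis makes the polarization datum $M^0$ for $U(n,0)$ the $n$-fold direct sum of the one for $U(1,0)$, the $p^\infty$-structure on $X(L)=B^n$ is the diagonal one and $\omega_{can}(L)=\omega_{can}(B)$ on each factor.

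From these two factorizations I would conclude $\omega(L)=\Omega_\infty(\Sigma)^n\cdot\omega_{trans}(L)$ and $\omega(L)=\Omega_p(\Sigma)^n\cdot\omega_{can}(L)$, where $\Omega_\bullet(\Sigma)^n$ denotes the element of $(\mathbb{C}^\times)^{n|\Sigma|}$ (respectively $(D_p^\times)^{n|\Sigma|}$) obtained by repeating the $\Sigma$-tuple $\Omega_\bullet(\Sigma)$ in each of the $n$ blocks. Comparing with Lemma \ref{definition of algebraic periods} and with the subsequent lemma defining $\Omega_p(Y,\Sigma)$ — which pin down $\Omega_\infty(Y,\Sigma)$ and $\Omega_p(Y,\Sigma)$ by precisely the relations $\omega(L_i)=\Omega_\infty(Y,\Sigma)\cdot\omega_{trans}(L_i)$ and $\omega(L_i)=\Omega_p(Y,\Sigma)\cdot\omega_{can}(L_i)$ — this gives $(\Omega_\infty(Y,\Sigma),\Omega_p(Y,\Sigma))=(\Omega_\infty(\Sigma)^n,\Omega_p(\Sigma)^n)$ for the present choice; and since, as already noted, both pairs depend only on the CM data $(K,\Sigma)$ and not on the auxiliary abelian varieties, the lemma follows.

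The hard part will be the first paragraph: one must check that the isogeny furnished by Shimura's Proposition can actually be promoted to an isomorphism of the full PEL datum — matching the class of the polarization, the $\mathfrak{r}^n$-stability of the Rosati involution, and the prime-to-$p$ level structure — and that the $p^\infty$-structure obtained from the decomposition of $M^0$ into $n$ copies of the $U(1,0)$ datum coincides with the diagonal $p^\infty$-structure on $B^n$. Once that is granted, everything else is the bookkeeping sketched above, carried out exactly as in Katz \cite{Katz2}.
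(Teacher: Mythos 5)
Your proposal lands on the same core observation as the paper: everything reduces to identifying $X(L)$ with the $n$-fold product of the $U(1)$-type CM abelian variety attached to $(K,\Sigma)$, and then reading off both the transcendental and the canonical differential factor-by-factor. The second and third paragraphs are exactly the bookkeeping the paper leaves implicit.

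Where you diverge is in how you justify the product decomposition. You open with Shimura's isogeny proposition (that an abelian variety of signature $(n,0)$ at every archimedean place is isogenous to the $n$-th power of a CM variety of type $(K,\Sigma)$) and then flag as ``the hard part'' the task of promoting this isogeny to an isomorphism of PEL data. This detour is unnecessary: since $L=\sum_{i=1}^n\mathfrak{r}e_i$ is, on the nose, the orthogonal direct sum of $n$ copies of $\mathfrak{r}$, the associated complex torus (resp.\ abelian scheme over $A$) is literally the $n$-fold product $X(\mathfrak{r})^n$ with its product polarization, diagonal $\mathfrak{r}$-action and product $p^\infty$-structure --- no isogeny theorem and no upgrade argument are needed. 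This is the one-line remark the paper's proof consists of. So what you have is correct, but the step you regard as the crux dissolves once you note the direct decomposition of the lattice rather than invoking Shimura's isogeny result.
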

\begin{proof} The only thing that we need to remark, is that with
notation as above we have that $X(L) = \oplus_{i=1}^n
X(\mathfrak{r})$, where $X(\mathfrak{r})$ is the abelian variety
with CM of type $(K,\Sigma)$ associated to the lattice
$\Sigma(\mathfrak{r}) \subset \mathbb{C}^{[F:\mathbb{Q}]}$.
\end{proof}

\textbf{Proof of Theorem \ref{interpolation properties}:} Now we
ready to proceed to the proof of Theorem \ref{interpolation
properties}.

\begin{proof}(of theorem \ref{interpolation properties}) We recall
that we have defined the sets of representatives $\mathcal{B}$ and
$\mathcal{C}$. We change now our notation and write
$\mathbf{f}=\otimes_v \mathbf{f}_v$ for the normalized automorphic
form associated to $\pi$. Similarly we write $\check{\mathbf{f}}$
for the associated to $\check{\pi}$. Moreover it is well know that
$\mathbf{f}$ is determined by a set of data $(f(a))_{a \in
\mathcal{B}}$ where $f(a):=\mathbf{f}(a)$. Moreover we have defined
an embedding
\[
\gamma_n:G^\theta \times G^\theta \hookrightarrow G.
\]
For $a,b \in \mathcal{B}$ we write $r_{a,b}$ for the fixed
representative in $\mathcal{C}$ of the element $\gamma_n(a,b)$. We
introduce the following notation, given an $a \in \mathcal{B}$, an
$r \in \mathcal{C}$ and a $r$-polarized Eisenstein series
$E_{\chi}(\cdot)$ we denote by $E_\chi^{(a)}(\cdot)$ the Eisenstein
series
\[
E_\chi^{(a)}(\cdot):=E_{\chi}(\cdot)\chi\left(\frac{det(r)}{det(a)}\right)
\]
For every finite character $\psi$ of $G_{\mathfrak{c}}$ and $a \in
\mathcal{B}$ we define the measures
\[
\mu_{a,\chi}: \psi \mapsto \sum_{b \in \mathcal{B}}
E^{(a)}_{\psi\chi}(\underline{A_a} \times \underline{A_{b}},j_1
\times j_2)f(b),
\]
and then the measure
\[
\int_{G_{\mathfrak{c}}}\psi d\mu^{HLS}_{\pi,\chi} :=
\frac{1}{(f,\check{f})}\sum_{a \in
\mathcal{B}}\mu_{a,\chi}(\psi)\check{f}(a)=
\]
\[
\frac{1}{(f,\check{f})}\sum_{a,b \in \mathcal{B}} \chi\psi(det(b))
E_{\chi\psi}(\underline{A_a} \times \underline{A_{b}},j_1 \times
j_2) f(b)\check{f}(a),
\]
The last equality follows from the fact that
$\chi\psi\left(\frac{det(r_{a,b})}{det(a)}\right)=\chi\psi(det(b))$.
Indeed we can write $\gamma_n(a,b) = \gamma r_{a,b} k$ with $\gamma
\in G(F)$ and $k \in D(\mathfrak{c})$. In particular we have that
\[
det(a)det(b)=det(\gamma_n(a,b))=det(\gamma) det(r_{a,b}) det(k)
\]
and of course $\chi\psi (det(\gamma)det(r_{a,b})
det(k))=\chi\psi(det(r_{a,b}))$. Moreover we can assume (see
\cite[page 65]{Shimurabook1}) that $b_v=1$ for all finite places $v$
of $F$ where the representation $\pi_v$ is not spherical or $v \in
S$. Then we claim that this measure has the property:
\[
\frac{1}{\Omega_p(Y,\Sigma)}\int_{G_{\mathfrak{c}}}\psi
d\mu^{HLS}_{\pi,\chi} =
\frac{L_{S}(\pi,\chi\psi,1)}{\Omega_\infty(Y,\Sigma)}\prod_{v |
p}Z^{(a)}_v(\pi,\chi\psi)Z^{(b)}_v(\pi,\chi\psi)
\]
and $(f,\check{f})\mu^{HLS}_{\pi,\chi}$ takes integral values. We
use the doubling method as developed above. We start by observing
that,
\[
\frac{1}{\Omega_p(Y,\Sigma)}\sum_{a,b \in \mathcal{B}}
\chi\psi(det(b)) E_{\chi\psi}(\underline{A_a} \times
\underline{A_{b}},j_1 \times j_2) f(b)\check{f}(a)=\]
\[
\frac{1}{\Omega_\infty(Y,\Sigma)}\sum_{a,b \in \mathcal{B}}
\chi\psi(det(b)) E_{\chi\psi}(\underline{A_a} \times
\underline{A_{b}},\omega_\infty(A_a)\times \omega_\infty(A_b))
f(b)\check{f}(a)=
\]
\begin{equation}\label{equation1}
=\frac{1}{\Omega_\infty(Y,\Sigma)}\sum_{a\in
\mathcal{B}}\left(\sum_{b \in \mathcal{B}} \chi\psi(det(b))
E_{\chi\psi}(\underline{A_a} \times
\underline{A_{b}},\omega_\infty(A_a)\times \omega_\infty(A_b))
f(b)\right)\check{f}(a).
\end{equation}
Now we use an observation of Shimura in \cite[page 88 and
186]{Shimurabook1}. Namely, the inner summation
\[
g(a):=\sum_{b \in \mathcal{B}} \chi\psi(det(b))
E_{\chi\psi}(\underline{A_a} \times
\underline{A_{b}},\omega_\infty(A_a)\times \omega_\infty(A_b)) f(b)
\]
corresponds to the value at $a \in \mathcal{B}$ of the adelic
automorphic form $\mathbf{f}| \mathfrak{T}$ where $\mathbf{f}$ the
adelic form corresponding to $(f(a))_{a \in \mathcal{B}}$ and
$\mathfrak{T}:=\prod_{v}\mathfrak{T}_v=\prod_{v \in S}\mathfrak{T}_v
\times \prod_{v \not \in S}\mathfrak{T}_v$, where $\mathfrak{T}_v$
for $v \not \in S$ is given as in Shimura (with notation as therein
but for us here normalized as for example explained in page 168 of
\cite{Shimurabook1})
\[
\mathbf{f}_v | \mathfrak{T}_v =
\prod_{r=1}^{n}L_{v}(n-r,\chi_1\psi_1\epsilon^{r})\sum_{\tau \in D_v
\setminus \mathfrak{X}_v /D_v} (\mathbf{f}_v | D_v\tau
D_v)\chi\psi(\nu^\sigma(\tau))N(\nu^\sigma(\tau)))^{-k}
\]
and for $v \in S$ we have that
\[
\mathbf{f}_v | \mathfrak{T}_v =
\prod_{r=1}^{n}L_{v}(n-r,\chi_1\psi_1\epsilon^{r})\sum_{\tau \in D_v
\setminus \mathfrak{X}_v /D_v} (\mathbf{f}_v | D_v\tau
D_v)f_{\Phi_{\chi\psi}}((\tau,1))N(\nu^\sigma(\tau)))^{-k}
\]
But we are taking $\mathbf{f}$ an eigenform and normalized. In
particular $g(b)$ is equal to $\alpha f(b)$ where $\alpha$ is the
eigenvalue of $\mathbf{f}$ with respect to the operator
$\mathfrak{T}$. Then for all $v \in S$ we have
\[
(\mathbf{f}_v | \mathfrak{T}_v,\mathbf{f}_v)_v =
Z_v(\mathbf{f}_v,\mathbf{f}_v,\chi\psi,f_{\Phi_{\chi\psi}})
(\mathbf{f}_v,\mathbf{f}_v)_v
\]
or equivalently
\[
\mathbf{f}_v |
\mathfrak{T}_v=Z_v(\mathbf{f}_v,\mathbf{f}_v,\chi\psi,f_{\Phi_{\chi\psi}})
\mathbf{f}_v.
\]
Indeed the last equations follow directly from the definition of the
integrals
\[
Z_v(\mathbf{f}_v,\mathbf{f}_v,\chi\psi,f_{\Phi_{\chi\psi}})=\int_{G(F_v)}f_{\Phi_{\chi\psi}}((g_v,1)(\pi(g_v)\mathbf{f}_v,\mathbf{f}_v)_vdg_v
\]
and the $p$-adic Cartan decomposition of $D_v \setminus G(F_v)
/D_v$. Putting the last considerations together we get that the
equation \ref{equation1} reads
\[
\frac{1}{\Omega_\infty(Y,\Sigma)}\left(\prod_{v \in
S}Z_v(1,f,\check{f},\chi\psi,\phi)\right)\int_{(G \times
G)(F)\setminus (G \times G)(\mathbb{A}_F)/D^\theta(\mathfrak{c})
\times
D^\theta(\mathfrak{c}))}E((g,g'),\phi,\chi\psi)f(g)\check{f}(g')(\psi\chi)^{-1}(detg')dgdg'=
\]
where $E(x,\phi,\chi\psi)$ is the standard normalized Eisenstein
series appearing in Shimura \cite{Shimurabook1} that is by picking
the trivial section at the places $v \in S$ and hence also removing
the Euler factors there. Then we have that the last expression is
equal to
\[
(f,\check{f}) \left(\prod_{v \in
S}Z_v(1,f,\check{f},\chi\psi,\phi)\right)\frac{L_S(BC(\pi),\chi\psi,0)}{\Omega_{\infty}(Y,\Sigma)},
\]
where we have used the fact that $\mathcal{B}= G(F) \setminus
G(\mathbb{A}_F)/K(\mathfrak{c})$ and
$G(F_\infty)=K_\infty(\mathfrak{c})$. The factor
$\prod_{v|p}Z_v(1,f,\check{f},\chi\psi,\phi)$ will be computed
below.
\end{proof}

From its very definition it is easily seen that the measure
$(f,\check{f})\mu^{HLS}_{\pi,\chi}$ is integral valued. Hence we
could for example establish that the measure $\mu^{HLS}_{\pi,\chi}$
is integral valued if we knew that the quantity $(f,\check{f})$ is a
$p$-adic unit. It is well-known (see \cite[page 2]{HLS1} that the
$p$-divisibility of this quantity corresponds to congruences modulo
$p$ between forms in $\pi$ and other cuspidal automorphic
representations $\pi'$ of $U(n)$. Hence if we assume that there are
no congruences between forms of $U(n)$ we can conclude that the
measure $\mu^{HLS}_{\pi,\chi}$ is $p$-adic integral

\textbf{Computation of the local integrals for $v$ above $p$:} We
now compute the integrals
\[
Z_p(s,\phi,\check{\phi},\chi,\mathcal{F})=\prod_{v | p
}Z_v(s,\phi_v,\check{\phi}_v,\chi_v,\mathcal{F}_v)
\]
in the special case where $n=1$ or $2$. We start with some general
remarks with respect the Fourier transform over $GL_1$ and then we
generalize to $GL_n$. Our main references are \cite{Hsieh1,Hsieh2}.
We let $F$ be a local field with ring of integers $\mathfrak{g}$ and
we fix a uniformizer $\varpi$ of $\mathfrak{g}$ and write
$\mathfrak{p}=(\varpi)$ . For a complex character $\chi:F^\times
\rightarrow \mathbb{C}^\times$ we define the Bruhat-Schwartz
function $\Phi_\chi(x)=\chi(x)I_{\mathfrak{g}^\times}(x)$. If we
define the quantity
\[
E_v(s,\chi):=\frac{L(s,\chi)}{e(s,\chi)L(1-s,\chi^{-1})},
\]
where $L(s,\chi)$ the standard $L$ factor of $\chi$ and $e(s,\chi)$
the epsilon factors of $\chi$, then it is well known that
\[
Z(s,\chi,\Phi_{\chi^{-1}}):=\int_{F}\chi(x) \Phi_{\chi^{-1}}(x)
|x|^s d^\times x = vol(\mathfrak{g})
\]
and
\[
Z(s,\chi,\widehat{\Phi}_{\chi}):= \int_F \chi(x) |x|^s
\widehat{\Phi}_{\chi}(x) d^\times x = E_v(s,\chi),
\]
where $\widehat{\Phi}_{\chi}$ the Fourier transform of $\Phi_\chi$
defined as $\widehat{\Phi}_{\chi}(x) = \int_F \Phi_\chi(y)\psi(yx)
dy$ for an additive character $\psi : F \rightarrow
\mathbb{C}^\times$. Moreover it is well known that if we write
$c(\chi)$ for the valuation of the conductor of $\chi$ and $d$ for
the valuation of the different of $F$ over $\mathbb{Q}_p$ then
\[
\widehat{\Phi}_\chi(x)=\left\{
                         \begin{array}{ll}
                           \chi^{-1}(x)I_{c^{-1}\mathfrak{g}^\times}(x)\tau(\chi), & \hbox{$c(\chi) \neq 0$;} \\
                           I_{\mathfrak{g}}(x)-|\varpi|I_{\mathfrak{p}^{-1}}(x), & \hbox{$c(\chi)=0$.}
                         \end{array}
                       \right.
\]
where $c:=\varpi^{c(\chi) + d}$ and
$\tau(\chi)=\int_{\mathfrak{g}^\times}
\chi(\frac{x}{c})\psi(\frac{x}{c})d^\times x$, a Gauss sum related
to the local-epsilon factor by $e(\chi,s)=|c|^s \tau(\chi)^{-1}$.

We now generalize these considerations to the case of $GL_n$. For a
Bruhat-Schwartz function $\Phi$ of $M_n(F)$ we define its
Fourier-transform $\widehat{\Phi}(X):=\int_{M_n(F)}\Phi(Y)
\psi(tr(\transpose{Y}X))dY$. For a partition $n:=n_1 + \ldots +
n_\ell$ of $n$ and a set of characters
$\nu:=(\nu_1,\ldots,\nu_\ell)$ of $F^\times$ we have defined the
Bruhat-Schwartz function
\[
\Phi_\nu(X):=\left\{
               \begin{array}{ll}
                 \nu_1(det(X_{11}))\ldots\nu_\ell(det(X_{\ell \ell})), & \hbox{$X \in \Gamma_0(\mathfrak{p}^t)$;} \\
                 0, & \hbox{otherwise.}
               \end{array}
             \right.
\]

Now we recall the Godement-Jacquet zeta functions as introduced in
\cite{GJ}. So we consider an automorphic representation
$(\pi,V_{\pi})$ of $GL(n)(F)$, which always we take to be a
principal series of the form $\pi=\pi(\nu_1,\ldots,\nu_\ell)$. We
write $\omega(g):= <\pi(g)v,\tilde{v}>$ for the matrix coefficient
where $v \in V_{\pi}$ and $\tilde{v} \in \tilde{V}$ in the space of
the contragredient representation $\tilde{\pi}$ and $<\cdot,\cdot>$
properly normalized so that $<v,\tilde{v}>=1$. For a Bruhat-Schwartz
function $\Phi$ of $M_n(F)$ and a character $\chi$ of
$GL_1(F)=F^\times$ we define the integrals
\[
Z(s,\Phi,\omega,\chi):=\int_{GL_n(F)}\Phi(x)\,\chi(det(x))\,\omega(x)\,|det(x)|^s\,
d^\times x.
\]
These integrals generalize the theory of Tate (in the case $n=1$) as
it is proven in \cite{GJ}. It is known  \cite[page 80]{GJ} that if
$\pi$ and $\chi$ are spherical and we pick
$\Phi:=\mathbf{1}_{M_n(\mathfrak{g})}$ then we have that
\[
Z(s,\Phi,\omega,\chi)= L(s,\pi,\chi),
\]
that is the $L-$factor of $\pi$ twisted by $\chi$. Now we take $n=2$
and $\ell=2$. Then we have the following lemmas that generalize the
case of $GL_1$.

\begin{lem}\label{euler factor I} Consider the principal series representation $\pi = \pi(\nu_1,\nu_{2})$, where $\nu_2$ is an unramified character and define
$\omega(x)=<\pi(x)v,\tilde{v}>$ with $v \in V_{\pi}$ so that
$\pi(x)v=\nu_1(a)v$ for $x = \left(
                               \begin{array}{cc}
                                 a & b \\
                                 c & d \\
                               \end{array}
                             \right) \in \Gamma_0(\mathfrak{p}^t)$. Then the Godement-Jacquet
integral $Z(s,\Phi,\pi,\chi)$ with $\Phi:=\Phi_{\chi^{-1}\nu^{-1}}$
is equal to $vol(\Gamma_0(\mathfrak{p}^t)) <v,\tilde{v}>$.
\end{lem}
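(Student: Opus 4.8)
The plan is to unwind the Godement–Jacquet integral $Z(s,\Phi,\pi,\chi)$ for $n=2$, $\ell=2$ directly from the definition, using the specific shape of the Bruhat–Schwartz function $\Phi=\Phi_{\chi^{-1}\nu^{-1}}$, which by construction is supported on $\Gamma_0(\mathfrak{p}^t)$. Thus the integral
\[
Z(s,\Phi,\pi,\chi)=\int_{GL_2(F)}\Phi_{\chi^{-1}\nu^{-1}}(x)\,\chi(\det x)\,\omega(x)\,|\det x|^s\,d^\times x
\]
collapses to an integral over $\Gamma_0(\mathfrak{p}^t)$. On this set, the matrix coefficient $\omega(x)=\langle\pi(x)v,\tilde v\rangle$ is by hypothesis $\nu_1(a)\langle v,\tilde v\rangle$, since $\pi(x)v=\nu_1(a)v$ for $x=\left(\begin{smallmatrix} a&b\\ c&d\end{smallmatrix}\right)\in\Gamma_0(\mathfrak{p}^t)$; here it is important that $\nu_2$ is unramified so that the eigenvector property on the Iwahori reduces to the single character $\nu_1$ acting through the $(1,1)$-entry (in the $2=1+1$ partition this is exactly the statement that $\Gamma_0$ is the relevant Iwahori and $v$ is the Iwahori eigenvector).

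The key computation is then to substitute $\Phi_{\chi^{-1}\nu^{-1}}(x)=\chi^{-1}\nu_1^{-1}(a)\,\nu_2^{-1}(d)$ for $x\in\Gamma_0(\mathfrak{p}^t)$ (recalling the definition $\Phi_\nu(X)=\nu_1(\det X_{11})\cdots\nu_\ell(\det X_{\ell\ell})$ applied to the tuple $\chi^{-1}\nu^{-1}=(\chi^{-1}\nu_1^{-1},\chi^{-1}\nu_2^{-1})$; one must keep careful track of whether the twist by $\chi$ sits in the first or both slots, matching the normalization used earlier in the construction of $\Phi_\mu$). After this substitution the integrand becomes
\[
\chi^{-1}\nu_1^{-1}(a)\,(\text{slot-2 factor})\cdot\chi(\det x)\,\nu_1(a)\,\langle v,\tilde v\rangle\,|\det x|^s,
\]
and on $\Gamma_0(\mathfrak{p}^t)$ one has $a,d\in\mathfrak{g}^\times$ and $|\det x|=1$, so $\chi(\det x)=\chi(ad)$. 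The hope is that the $\chi$- and $\nu_1$-factors in $a$ cancel against $\chi^{-1}\nu_1^{-1}(a)$, and that the remaining factor in $d$ is $\chi(d)\cdot(\text{slot-2 factor involving }\chi^{-1}\nu_2^{-1}(d))$; since $\nu_2$ is unramified and $d\in\mathfrak{g}^\times$, $\nu_2(d)=1$, so this too collapses to $1$. Hence the integrand is the constant $\langle v,\tilde v\rangle$ on $\Gamma_0(\mathfrak{p}^t)$ and zero elsewhere, giving $Z(s,\Phi,\pi,\chi)=\mathrm{vol}(\Gamma_0(\mathfrak{p}^t))\langle v,\tilde v\rangle$.

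The main obstacle I anticipate is purely bookkeeping: getting the precise definition of $\Phi_{\chi^{-1}\nu^{-1}}$ right — in particular which of the two diagonal blocks carries the $\chi^{-1}$ twist and whether it is $\chi^{-1}\nu_j^{-1}$ or $\nu_j^{-1}\chi^{-1}$ in each slot — so that all ramified contributions genuinely cancel and only the unramified slot (where $\nu_2$ trivializes on units) survives. One also needs to confirm that the Haar measure $d^\times x$ on $GL_2(F)$ restricts to the measure giving $\Gamma_0(\mathfrak{p}^t)$ the stated volume, and that no Jacobian factor appears from the Iwasawa-type decomposition implicit in identifying $\pi(x)v$ with $\nu_1(a)v$; but these are normalization matters already fixed in the earlier part of the section. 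There is no analytic difficulty and no use of the functional equation here — unlike the $\widehat\Phi$ case over $GL_1$ — so once the combinatorics of the characters is pinned down the result is immediate.
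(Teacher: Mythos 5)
Your proof is correct and follows essentially the same route as the paper's: restrict to $\Gamma_0(\mathfrak{p}^t)$ by the support of $\Phi$, use $\omega(x)=\nu_1(a)\langle v,\tilde v\rangle$ and $\chi(\det x)=\chi(ad)$ (valid since $bc\in\mathfrak p^t$ and $t$ exceeds the conductors), and observe that $\Phi_{\chi^{-1}\nu^{-1}}(x)=\nu_1^{-1}(a)\nu_2^{-1}(d)\chi^{-1}(ad)$ makes the integrand collapse to $\nu_2^{-1}(d)\langle v,\tilde v\rangle=\langle v,\tilde v\rangle$ by unramifiedness of $\nu_2$. The bookkeeping concern you raise about which slot carries the $\chi^{-1}$-twist resolves exactly as you anticipate (both slots do, and the two factors combine to $\chi^{-1}(ad)$), so there is no gap.
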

\begin{proof} By definition we have that
\[
Z(s,\Phi,\pi,\chi)= \int_{GL_2(F)} \Phi(x) \,\chi(det(x))
\,\omega(x) |det(x)|^s \, d^\times x =
\int_{\Gamma_0(\mathfrak{p}^t)} \Phi(x) \,\chi(det(x))
\omega(x)\,|det(x)|^s d^\times(x).
\]
But for $x=\left(
             \begin{array}{cc}
               a & b \\
               c & d \\
             \end{array}
           \right)
 \in \Gamma_0(\mathfrak{p}^t)$ we have that $\omega(x)=
<\pi(x)v,\tilde{v}>=<\nu_1(a)v,\tilde{v}>=\nu_1(a)<v,\tilde{v}>$.
That means we have,
\[
Z(s,\Phi,\pi,\chi)=<v,\tilde{v}> \int_{\Gamma_0(\mathfrak{p}^t)}
\Phi(x) \,\chi(det(x)) \nu_1(a) |det(x)|^s d^\times(x),\,\,x=\left(
                                                  \begin{array}{cc}
                                                    a & b \\
                                                    c & d \\
                                                  \end{array}
                                                \right)
.
\]
But by the definition of $\Phi$ we have that
$\Phi(x)=\nu^{-1}_1(a)\nu_2^{-1}(d)\chi^{-1}(ad)$ and we notice that
by the choice of $t$, i.e. bigger than the conductors of $\nu_i$ and
$\chi$, we have that
$\chi(det(x))=\chi(ad-bc)=\chi(ad)\chi(1+\mathfrak{p}^t)=\chi(ad)$
as $a,d \in \mathfrak{g}^\times$ by the definition of
$\Gamma_0(\mathfrak{p}^t)$. Putting these considerations together we
have
\[
Z(s,\Phi,\pi,\chi)=<v,\tilde{v}>\int_{\Gamma_0(\mathfrak{p}^t)}\nu^{-1}_1(a)\nu_2^{-1}(d)\chi^{-1}(ad)
\,\chi(ad) \nu_1(a) |det(x)|^s d^\times(x),\,\,x=\left(
                                                  \begin{array}{cc}
                                                    a & b \\
                                                    c & d \\
                                                  \end{array}
                                                \right)
.
\]
Since $\nu_2$ is not ramified we have that
$\nu_2(\mathfrak{g}^\times)=1$ which concludes the proof of the
lemma.
\end{proof}

We now compute the integrals $Z(s,\widehat{\Phi},\pi,\chi)$. As we
mentioned above these integrals should be computed in full
generality in \cite{EHLS}. In the following lemma we will compute
them only in the case of interest, namely for $n=2$. We note also
here that a similar integral has been computed in \cite[Lemma
6.7]{Hsieh2}.

\begin{lem}\label{euler factor II}With the setting as in lemma \ref{euler factor I} but with $\pi=\pi(\nu_1,\nu_2)$ unramified and for $\Phi:=\Phi_{\nu\chi}$ we have that the Godement-Jacquet
integral $Z(s,\widehat{\Phi},\pi,\chi)$ is given by
\[
Z(s,\widehat{\Phi},\pi,\chi)= |\mathfrak{d}_F| \times
E_v(s-1,\nu_1\chi)E_v(s,\nu_2\chi),
\]
where $\mathfrak{d}_F$ is the different of $F$.
\end{lem}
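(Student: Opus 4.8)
The plan is to reduce the two‑dimensional Godement--Jacquet integral of the Fourier transform $\widehat{\Phi}$, where $\Phi=\Phi_{\nu\chi}$, to a product of the one‑dimensional integrals $Z(s,\eta,\widehat{\Phi}_{\eta})=E_v(s,\eta)$ recalled just before Lemma \ref{euler factor I}. First I would compute $\widehat{\Phi}$ explicitly. Writing a generic matrix as $X=\left(\begin{smallmatrix} a & b\\ c & d\end{smallmatrix}\right)$ and $Y=\left(\begin{smallmatrix} w & x\\ y & z\end{smallmatrix}\right)$, so that $\mathrm{tr}(\transpose{Y}X)=wa+zd+xb+yc$, the function $\Phi_{\nu\chi}$ factors, under the entrywise identification $M_2(F)\cong F^4$, as the external product of $\Phi_{\nu_1\chi}$ on the $(1,1)$‑coordinate, $\Phi_{\nu_2\chi}$ on the $(2,2)$‑coordinate, $\mathbf{1}_{\mathfrak{g}}$ on the $(1,2)$‑coordinate and $\mathbf{1}_{\mathfrak{p}^{t}}$ on the $(2,1)$‑coordinate (this is exactly the definition of $\Gamma_0(\mathfrak{p}^{t})$ and of $\Phi_\nu$). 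Since the Fourier transform is multiplicative for such products, $\widehat{\Phi}$ is the product of $\widehat{\Phi}_{\nu_1\chi}$ in the $(1,1)$‑entry, $\widehat{\Phi}_{\nu_2\chi}$ in the $(2,2)$‑entry, and of $\widehat{\mathbf{1}}_{\mathfrak{g}}$, $\widehat{\mathbf{1}}_{\mathfrak{p}^{t}}$ (each a scaled indicator of a fractional ideal) in the two off‑diagonal entries; the one‑dimensional transforms $\widehat{\Phi}_{\nu_i\chi}$ are given by the formula recalled in the text. The off‑diagonal transforms contribute only volume constants, which after normalising $dx$ to be self‑dual is where the factor $|\mathfrak{d}_F|$ enters.

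Second I would insert this into
\[
Z(s,\widehat{\Phi},\pi,\chi)=\int_{GL_2(F)}\widehat{\Phi}(g)\,\chi(\det g)\,\omega(g)\,|\det g|^{s}\,d^{\times}g,
\]
and use the Iwasawa decomposition $g=\left(\begin{smallmatrix} t_1 & t_1x\\ 0 & t_2\end{smallmatrix}\right)k$ with $k\in GL_2(\mathfrak{g})$ and $d^{\times}g=|t_1/t_2|^{-1}\,d^{\times}t_1\,d^{\times}t_2\,dx\,dk$. The matrix coefficient $\omega$ of the Iwahori vector $v$ of the (now unramified) $\pi=\pi(\nu_1,\nu_2)$ is, on its support, left‑equivariant for the Borel by $(\nu_1,\nu_2)$ together with a modulus twist, so on the relevant cell $\omega(g)$ equals $\nu_1(t_1)\nu_2(t_2)$ times a power of $|t_1/t_2|$, while $\chi(\det g)|\det g|^{s}=\chi(t_1)\chi(t_2)|t_1|^{s}|t_2|^{s}$; the $k$‑integral is harmless and the $x$‑integral, constrained by the off‑diagonal support of $\widehat{\Phi}$, produces an extra power $|t_1|^{-1}$. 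Collecting the $t_1$‑ and $t_2$‑integrals separately one is left precisely with $\int (\nu_1\chi)(t_1)\,\widehat{\Phi}_{\nu_1\chi}(t_1)\,|t_1|^{s-1}\,d^{\times}t_1$ and $\int (\nu_2\chi)(t_2)\,\widehat{\Phi}_{\nu_2\chi}(t_2)\,|t_2|^{s}\,d^{\times}t_2$, which by the $GL_1$ computation recalled in the text equal $E_v(s-1,\nu_1\chi)$ and $E_v(s,\nu_2\chi)$ respectively, the remaining constant being $|\mathfrak{d}_F|$.

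An alternative I would keep in reserve is to invoke the local functional equation of the Godement--Jacquet integrals (\cite{GJ}) directly: since $\widehat{\widehat{\Phi}}_{\nu\chi}$ is, up to the constant $(\nu_1\nu_2\chi^{2})(-1)$, again $\Phi_{\nu\chi}=\Phi_{(\nu^{-1})^{-1}(\chi^{-1})^{-1}}$, the functional equation for $\pi\otimes\chi$ rewrites $Z(s,\widehat{\Phi},\pi,\chi)$ as a $\gamma$‑factor times a Godement--Jacquet integral of $\Phi_{\nu\chi}$ against a matrix coefficient of $\check\pi\otimes\chi^{-1}$, and the latter is the constant $\mathrm{vol}(\Gamma_0(\mathfrak{p}^{t}))$ by Lemma \ref{euler factor I} applied to the unramified representation $\check\pi=\pi(\nu_1^{-1},\nu_2^{-1})$ and the character $\chi^{-1}$; one then identifies the $\gamma$‑factor of the principal series with $E_v(s-1,\nu_1\chi)E_v(s,\nu_2\chi)$ using $E_v(u,\eta)=\gamma(u,\eta,\psi)^{-1}$. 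The main obstacle, in either route, will be the precise bookkeeping of normalisations: the self‑dual additive measure and the Fourier transforms of $\mathbf{1}_{\mathfrak{g}}$ and $\mathbf{1}_{\mathfrak{p}^{t}}$ must be tracked so as to land on exactly the constant $|\mathfrak{d}_F|$ (with the powers of $|\mathfrak{p}^{t}|$ cancelling against $\mathrm{vol}(\Gamma_0(\mathfrak{p}^{t}))$), and the asymmetric shift $s-1$ versus $s$ must be produced correctly. In the direct approach this shift comes from the extra $|t_1|^{-1}$ in the Iwasawa measure together with the exact equivariance of the chosen Iwahori vector $v$ (the one supported on $B\cdot\Gamma_0(\mathfrak{p}^{t})$), so one must realise $\omega$ in the induced model with the right modulus twist; in the functional‑equation approach the same subtlety reappears as the need to identify the contragredient of $v$ with the appropriate Iwahori vector of $\check\pi$.
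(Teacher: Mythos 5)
Your plan is essentially the paper's proof: compute $\widehat{\Phi}$ entrywise as a product of one-dimensional Fourier transforms, pass to the Iwasawa decomposition $GL_2(F)=B(F)K$, and reduce the Godement--Jacquet integral to a product of two $GL_1$ Tate integrals $\int(\nu_i\chi)\,\widehat{\Phi}_{\nu_i\chi}\,|\cdot|^{\ast}\,d^{\times}$ together with an off-diagonal volume contributing $|\mathfrak{d}_F|$. The bookkeeping you flag as the main obstacle is handled in the paper by two observations you leave implicit: first, $\widehat{\Phi}(xk)=\widehat{\Phi}(x)\Phi(k^{-1})$ and $\Phi(k^{-1})=\chi^{-1}(\det k)$ (using that $\nu_1,\nu_2$ are unramified), so the $k$-integral gives exactly $\mathrm{vol}(K)$; and second, the paper takes the matrix coefficient to satisfy $\omega\!\left(\left(\begin{smallmatrix} a & y\\ 0 & b\end{smallmatrix}\right)k\right)=\nu_1(a)\nu_2(b)$ with \emph{no} modulus twist, so that the asymmetric shift $s-1$ versus $s$ comes entirely from the Jacobian $d^{\times}g=|a|^{-1}\,dy\,d^{\times}a\,d^{\times}b\,dk$ rather than from a $\delta_B^{1/2}$ factor (your inclusion of both a modulus twist and the full Jacobian would overcorrect); the functional-equation alternative you mention in reserve is not pursued in the paper.
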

\begin{proof} We start by exploring the support of $\widehat{\Phi}$.
By definition we have that $\widehat{\Phi}(X)=\int_{M_2(F)} \Phi(Y)
\psi(tr(\transpose{Y}X))$. Writing $X=\left(
                                        \begin{array}{cc}
                                          x_1 & x_2 \\
                                          x_3 & x_4 \\
                                        \end{array}
                                      \right)$ and $Y=\left(
                                                        \begin{array}{cc}
                                                          y_1 & y_2 \\
                                                          y_3 & y_4 \\
                                                        \end{array}
                                                      \right)$ we
have that
\[
\widehat{\Phi}(X)=\int_{M_2(F)} \Phi\left(\left(
                                                        \begin{array}{cc}
                                                          y_1 & y_2 \\
                                                          y_3 & y_4 \\
                                                        \end{array}
                                                      \right)\right)
\psi(x_1y_1)\psi(x_2y_2)\psi(x_3 y_3)\psi(x_4y_4)dy_1dy_2dy_3dy_4
\]
By the definition of $\Phi=\Phi_{\nu\chi}$ we have that
\[
\widehat{\Phi}(X)=\int_{\mathcal{I}} \nu_1\chi(x_1) \nu_2\chi(x_4)
\psi(x_1y_1)\psi(x_2y_2)\psi(x_3 y_3)\psi(x_4y_4)dy_1dy_2dy_3dy_4,
\]
where $\mathcal{I} \subset M_2(F)$ the support of $\Phi$. The above
integral we can write as
\[
\left(\int_{I_1} \nu_1\chi(x_1) \psi(x_1y_1) dy_1\right)
\left(\int_{I_4}\nu_2\chi(x_4) \psi(x_4y_4) dy_4\right)\left(
\int_{I_2} \psi(x_2y_2)dy_2\right)\left( \int_{I_3}\psi(x_3
y_3)dy_3\right),
\]
where we have written $\mathcal{I}=\left(
                                     \begin{array}{cc}
                                       I_1 & I_2 \\
                                       I_3 & I_4 \\
                                     \end{array}
                                   \right)$. by definition we have
that $I_1=I_4 = \mathfrak{g}^\times$ and hence we have that
\[
\int_{I_1} \nu_1\chi(x_1) \psi(x_1y_1)
dy_1=\widehat{\Phi}_{\nu_1\chi}(y_1),\,\,\,\int_{I_4}\nu_2\chi(x_4)
\psi(x_4y_4) dy_4=\widehat{\Phi}_{\nu_2\chi}(y_4).
\]
For the other two integrals we have that
\[
\int_{I_2} \psi(x_2y_2)dy_2=\left\{
                              \begin{array}{ll}
                                vol(I_2), & \hbox{$x_2 I_2 \in \mathfrak{d}^{-1}_F$;} \\
                                0, & \hbox{otherwise.}
                              \end{array}
                            \right.
\]
and similarly for $\int_{I_3}\psi(x_3 y_3)dy_3$. Now we turn to the
integral $Z(s,\widehat{\Phi},\pi,\chi)$. By definition we have
\[
Z(s,\widehat{\Phi},\pi,\chi)=\int_{GL_2(F)} \widehat{\Phi}(x)
\,\chi(det(x)) \,\omega(x) |det(x)|^s \, d^\times x.
\]
By the Iwasawa decomposition $GL_2(F)=B(F)K$ with
$K=GL_2(\mathfrak{g})$. Hence if we write $x=b_Fk$ and $b_F=\left(
                                                          \begin{array}{cc}
                                                            a & y \\
                                                            0 & b \\
                                                          \end{array}
                                                        \right)$ and
observe that $d^\times x= |a|^{-1}dyd^\times a d^\times b dk$ then
the integral above reads
\[
\int_{F^\times}\int_{F^\times}\int_{F}\int_{K}
\widehat{\Phi}\left(\left(
                                                          \begin{array}{cc}
                                                            a & y \\
                                                            0 & b \\
                                                          \end{array}
                                                        \right)k\right) \,\chi(abdet(k)) \,\omega\left(\left(
                                                          \begin{array}{cc}
                                                            a & y \\
                                                            0 & b \\
                                                          \end{array}
                                                        \right)k\right) |ab|^s \,|a|^{-1}dyd^\times a d^\times b dk
\]
By definition we have that $\omega(x)=<\pi(x)v,\tilde{v}>$ with $v$
a normalized spherical vector. That means, as $\pi=\pi(\nu_1,\nu_2)$
that we have $\omega\left(\left(
                                                          \begin{array}{cc}
                                                            a & y \\
                                                            0 & b \\
                                                          \end{array}
                                                        \right)k\right)=
\nu_1(a)\nu_2(b)<v,\tilde{v}>=\nu_1(a)\nu_{2}(b)$. Moreover we note
that by definition
$\widehat{\Phi}(xk)=\widehat{\Phi}(x)\Phi(k^{-1})$. Indeed from the
definition of the Fourier transform after the change of variable
$y\mapsto y\transpose{k}^{-1}$ and noticing that
$tr(\transpose{y}xk)=tr(k\transpose{y}x)$ we have that
\[
\widehat{\Phi}(xk)=\int_{M_2(F)}\Phi(y)\psi(tr(\transpose{y}xk))=\int_{M_2(F)}\Phi(y\transpose{k}^{-1})\psi(tr(\transpose{y}x)).
\]
But now we note that by the very definition of $\Phi$ that
$\Phi(y\transpose{k}^{-1})=\Phi(y)\Phi(\transpose{k}^{-1})=\Phi(y)\Phi(k^{-1})$
which proves our claim. Our next observation is that
$\Phi(k^{-1})=\chi^{-1}(det(k))$ since $\nu_1$ and $\nu_2$ are
unramified characters. These considerations together give us that
\[
Z(s,\widehat{\Phi},\pi,\chi)=vol(K)
\left(\int_{F^\times}\widehat{\Phi}_{\nu_1\chi}(a)\nu_1\chi(a)|a|^{s-1}
d^\times a
\right)\left(\int_{F^\times}\widehat{\Phi}_{\nu_2\chi}(b)\nu_2\chi(b)|b|^s
d^\times b \right)\int_{F} \int_{I_2}\psi(yx)dydx
\]
But we have that $\int_{F} \int_{I_2}\psi(yx)dydx=
vol(I_2)\int_{\mathfrak{d}_F^{-1}I_2^{-1}}dy=vol(I_2)vol(\mathfrak{d}_F^{-1}I_2^{-1})=|\mathfrak{d}_F|$.
\end{proof}

Let $(V,\theta)$ be an $n$ dimensional hermitian space over $K$ so
that for all archimedean places $v \in \Sigma$ we have $\theta_v$ is
positive definite. By Shimura \cite[page 171]{Shimurabook1} there
exists a matrix $\lambda \in K_n^n$ and an element $\kappa \in
K^\times$ such that (i) $\kappa^\rho = -\kappa$, (ii) $i\kappa_v
\theta_v$ has signature $(n,0)$ for all $v \in \Sigma$ and (iii)
$\kappa \,\theta = \lambda^* - \lambda$. We now define the matrix
\[
S:=\left(
     \begin{array}{cc}
       1_n & -\lambda \\
       -1_n & \lambda^* \\
     \end{array}
   \right).
\]
We define the matrix $\omega:=\left(
                                \begin{array}{cc}
                                  \theta & 0 \\
                                  0 & -\theta \\
                                \end{array}
                              \right)$ and write $G^{\omega}$ for
the corresponding unitary group. That is, $G^\omega$ corresponds to
the hermitian space $(2V:=V \oplus V, \theta \oplus (-\theta))$. As
always we write $\eta_{n}$ for the matrix $\left(
                                                                     \begin{array}{cc}
                                                                       0 & -1_n \\
                                                                       1_n & 0 \\
                                                                     \end{array}
                                                                   \right)$.
Then as it is explained in Shimura \cite[page 176]{Shimurabook1}we
have $S^{-1}G^{\omega} S=G^{\eta_n}$ and if we define $P^\omega:=
{\gamma \in G^\omega : U \gamma = U}$ with $U:=\{(v,v) \in 2V, v \in
V \}$ then $S^{-1} P^{\omega} S=P^{\eta_n}$ with $P^{\eta_n}$ the
standard Siegel parabolic $\{x \in G^{\eta_n}: c_x = 0\}$. We write
$\gamma_n : G^\omega \stackrel{\sim}{\rightarrow} G^{\eta_n},\,g
\mapsto S^{-1}\, g\, S$. Now we note that if we define the group
$G_{\theta,\theta}:=  G^\theta \times G^\theta $ then we have a
canonical embedding $G_{\theta,\theta} \hookrightarrow G^\omega$
given by $(g,g')\mapsto diag(g,g')$. Then we remark that
\[
\gamma_n^{-1}(P^{\eta_n}) \cap G_{\theta,\theta} = \{(g,g) | g \in
G^{\theta}\}.
\]

\begin{lem}\label{local p integral} Let $v$ be a place over $p$ and
let $\Phi:=\Phi_v$ be the Bruhat-Schwartz selected above. Then if we
write $f^{(S)}_\Phi$ for the corresponding local section we have
\[
Z_v(s,\pi,\tilde{\pi},f^{(S)}_{\Phi}))=\int_{GL_n(F_v)}
f^{(S)}_{\Phi}(\gamma_n(g,1)) \omega(g) d^\times g=
\]
\[
=\left\{
   \begin{array}{ll}
     \alpha(\chi,\psi,s) \times \frac{L_{\mathfrak{p}}(s-1, \chi\psi)}{e_{\mathfrak{p}}(s-1, \chi\psi)L_{\mathfrak{p}}(2-s, \chi^{-1}\psi^{-1})}, & \hbox{if $n=1$;} \\
     \alpha(\chi,\psi,s) \times \frac{L_{v}(s-1, \nu^{-1}_1\phi_1)}{e_{v}(s-1, \nu^{-1}_1\phi_1)L_{v}(2-s,\nu_1\phi_1^{-1})}\frac{L_{v}(s, \nu^{-1}_2\phi_1)}{e_{v}(s, \nu^{-1}_2\phi_1)L_{v}(1-s, \nu_2\phi_1^{-1})},
 & \hbox{if $n=2$.}
   \end{array}
 \right.
\]
where the notation is as in Theorem \ref{interpolation properties}.
\end{lem}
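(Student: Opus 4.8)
The plan is to unfold the definition of the local section $f^{(S)}_\Phi$ inside the doubling integral $Z_v(s,\pi,\tilde\pi,f^{(S)}_\Phi)=\int_{GL_n(F_v)}f^{(S)}_\Phi(\gamma_n(g,1))\,\omega(g)\,d^\times g$ and to reduce it to the Godement--Jacquet integrals already computed in Lemmas \ref{euler factor I} and \ref{euler factor II} (for $n=2$) and to the Tate-type integrals recalled just before them (for $n=1$). The first step is explicit: from $\gamma_n(g,1)=S^{-1}\,\mathrm{diag}(g,1_n)\,S$ with $S=\bigl(\begin{smallmatrix}1_n & -\lambda\\ -1_n & \lambda^*\end{smallmatrix}\bigr)$ one reads off $S^{-1}$ from $S\bigl(\begin{smallmatrix}\lambda^* & \lambda\\ 1_n & 1_n\end{smallmatrix}\bigr)=\mathrm{diag}(\kappa\theta,\kappa\theta)$, and finds that the row vector $(0,Z)S^{-1}$ has both $M_n$-blocks equal. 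Feeding this through $\gamma_n(g,1)$ and then through $\Phi$, and using the product structure $\Phi(x,y)=\tilde\Phi_\mu(x)\,\widehat\Phi_{\nu^{-1}\chi_1}(y)$ together with the modified choice of section $f^{(S)}_\Phi$ made above (this is exactly where that modification is used), one rewrites $f^{(S)}_\Phi(\gamma_n(g,1);\chi,s)$ as a constant — depending on $\kappa\theta$ and $s$ — times a single $GL_n(F_v)$-integral in which the factors $\tilde\Phi_\mu$ and $\widehat\Phi_{\nu^{-1}\chi_1}$ appear separately.

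Next I would substitute this into $Z_v$, interchange the two $GL_n(F_v)$-integrations (legitimate for $\mathrm{Re}(s)\gg0$, then by meromorphic continuation), and carry out the change of variables — of the same kind used in the earlier computation of the Fourier coefficient $W_{\beta,v}(1,\phi_v,s)$, where $\int\widehat\Phi_{\nu^{-1}\chi_1}(ZX)\psi_{-\beta}(X)\,dX$ was handled by Fourier inversion — that decouples the two halves. The upshot is that $Z_v(s,\pi,\tilde\pi,f^{(S)}_\Phi)$ factors as a product of two Godement--Jacquet integrals: one of the form $Z(\,\cdot\,,\tilde\Phi_\mu,\omega,\chi\psi)$ against the matrix coefficient $\omega(g)=\langle\pi(g)v,\tilde v\rangle$ of $\pi$, and one of the form $Z(\,\cdot\,,\widehat\Phi_{\nu^{-1}\chi_1},\mathbf 1,\cdot)$ against the trivial coefficient, with the spectral parameters shifted by the $|\det|^s$ of the section and by the modulus-character normalization. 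For $n=2$ with $\pi_v=\pi(\nu_1,\nu_2)$ and $\nu_2$ unramified, Lemma \ref{euler factor I} shows that the first factor contributes only a volume constant and no $L$-factor (the ordinarity phenomenon: $\omega$ is supported on the Iwahori and the normalization built into $\tilde\Phi_\mu$ cancels $vol(\Gamma_0(\mathfrak p^t))$), while Lemma \ref{euler factor II} evaluates the second factor; matching the character data $\nu^{-1}\chi_1$ with $\phi=\chi\psi=(\phi_1,\phi_2)$ gives the product $E_v(s-1,\nu_1^{-1}\phi_1)\,E_v(s,\nu_2^{-1}\phi_1)$, and unwinding $E_v(t,\eta)=\dfrac{L(t,\eta)}{e(t,\eta)\,L(1-t,\eta^{-1})}$ yields the two ratios in the statement. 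For $n=1$ with $\pi$ trivial, $\omega\equiv1$, the matrix-coefficient factor collapses to $Z(s,\chi,\Phi_{\chi^{-1}})=vol(\mathfrak g)$ and only $Z(s,\chi,\widehat\Phi_\chi)=E_v(s,\chi)$ survives, producing the single ratio $\dfrac{L_{\mathfrak p}(s-1,\chi\psi)}{e_{\mathfrak p}(s-1,\chi\psi)\,L_{\mathfrak p}(2-s,\chi^{-1}\psi^{-1})}$.

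Finally I would collect the remaining constants — the $\kappa\theta$-factor, the volumes $vol(\Gamma_0(\mathfrak p^t))$, $vol(\mathfrak g)$, $vol(K)$, $|\mathfrak d_F|$, the normalized pairing $\langle v,\tilde v\rangle$, and the Gauss sums $\tau(\cdot)$ hidden in the Fourier transforms — into the single constant $\alpha(\chi,\psi,s)$, and check its $p$-integrality. The main obstacle, and the step where the special choice of section really does its work, is the first one: carrying out the $S$-conjugation at a split place $v\mathfrak r=\mathfrak p\bar{\mathfrak p}$ — where $K_v=F_v\times F_v$ and $\lambda,\theta,\kappa$ all split into pairs — and verifying that the resulting double integral genuinely separates into the two Godement--Jacquet integrals above with exactly the claimed shifts of the spectral parameters. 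Once that identification is secured, the rest is an assembly of the already-established lemmas.
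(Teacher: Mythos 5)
Your proposal is essentially the paper's proof: unfold $f^{(S)}_\Phi(\gamma_n(g,1))=f_\Phi(S^{-1}\mathrm{diag}(g,1))$, compute $(0,Z)S^{-1}\mathrm{diag}(g,1)=(Z\kappa^{-1}\theta^{-1}g,\,Z\kappa^{-1}\theta^{-1})$, substitute $Z_1=Z\kappa^{-1}\theta^{-1}g$ and $Z_2=Z\kappa^{-1}\theta^{-1}$ so that $g=Z_2^{-1}Z_1$, factor the resulting double integral into two Godement--Jacquet integrals, evaluate these via Lemmas~\ref{euler factor I} and~\ref{euler factor II}, and sweep the remaining constants into $\alpha(\chi,\psi,s)$. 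One step is described imprecisely: the decoupling of the double integral is not a Fourier-inversion step but the matrix-coefficient identity from \cite[p.~36]{GPR} that lets one replace $\omega(Z_2^{-1}Z_1)$ by $\omega(Z_2^{-1})\omega(Z_1)$ under the integral; accordingly the second Godement--Jacquet factor is taken against the spherical matrix coefficient $\omega(Z_2^{-1})$ (nontrivial when $n=2$), not against the trivial coefficient --- this is precisely what Lemma~\ref{euler factor II} exploits to produce the two ratios $E_v(s-1,\nu_1^{-1}\phi_1)E_v(s,\nu_2^{-1}\phi_1)$.
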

\begin{proof} In this proof we will write $F$ for $F_v$. Moreover we write $(\chi_1,\chi_2)$ for the pairs $(\chi_{\mathfrak{p}}\psi_{\mathfrak{p}},\chi_{\bar{\mathfrak{p}}}\psi_{\bar{\mathfrak{p}}})$ in case $n=1$ and
$(\phi_1,\phi_2)$ in case $n=2$. By definition we have
$f^{(S)}_{\Phi}(x)= f_{\Phi}(xS^{-1})$ with
\[
f_\Phi(h):=\chi_2(deth)|det(h)|^s
\int_{GL_n(F)}\Phi((0,Z)h)\chi_2\chi_1(detZ) |det(Z)|^{2s} d^\times
Z.
\]
That is the integral $Z_p(s,\pi,\tilde{\pi},f^{(S)}_{\Phi}))$ is
equal to
\[
I(s):=\chi_2^{-1}(det(S))|det(S)|^{-s} \times
\]
\[\int_{GL_n(F)}
\int_{GL_n(F)}\Phi((0,Z)\gamma_n(diag(g,1))S^{-1})
\chi_2\chi_1(det(Z))|det(Z)|^{2s} \chi_2(det(g))|det(g)|^s \omega(g)
d^\times Z d^\times g
\]
By the definition of $\gamma_n$ we have that
\[
\Phi((0,Z)\gamma_n(g,1)S^{-1})=\Phi((0,Z)S^{-1}diag(g,1)).
\]
We have that
\[
S^{-1}=\left(
         \begin{array}{cc}
           \lambda^* & \lambda \\
           1_n & 1_n \\
         \end{array}
       \right) \left(
                 \begin{array}{cc}
                   \kappa^{-1} \theta^{-1} & 0 \\
                   0 & \kappa^{-1} \theta^{-1} \\
                 \end{array}
               \right).
\]
After doing the algebra we obtain
\[
S^{-1}\left(
        \begin{array}{cc}
          g & 0 \\
          0 & 1_n \\
        \end{array}
      \right) = \left(
         \begin{array}{cc}
           \lambda^* & \lambda \\
           1_n & 1_n \\
         \end{array}
       \right)\left(
        \begin{array}{cc}
          \kappa^{-1} \theta^{-1} g & 0 \\
          0 & \kappa^{-1} \theta^{-1} \\
        \end{array}
      \right) =\left(
                 \begin{array}{cc}
                   \lambda^*\kappa^{-1} \theta^{-1} g & \lambda \kappa^{-1} \theta^{-1} \\
                   \kappa^{-1} \theta^{-1} g & \kappa^{-1} \theta^{-1}  \\
                 \end{array}
               \right).
\]
In particular
\[
\Phi\left((0,Z) \left(
                 \begin{array}{cc}
                   \lambda^*\kappa^{-1} \theta^{-1} g & \lambda \kappa^{-1} \theta^{-1} \\
                   \kappa^{-1} \theta^{-1} g & \kappa^{-1} \theta^{-1}  \\
                 \end{array}
               \right)\right)= \Phi(Z \kappa^{-1} \theta^{-1} g, Z\kappa^{-1}
\theta^{-1}).
\]
The integral now reads
\[
I(s) =\chi_2^{-1}(det(S))|det(S)|^{-s} \times
\]
\[\int_{GL_n(F)}
\int_{GL_n(F)} \Phi(Z \kappa^{-1} \theta^{-1} g, Z\kappa^{-1}
\theta^{-1}) \chi_2\chi_1(det(Z))|det(Z)|^{2s}
\chi_2(detg)|det(g)|^s \omega(g) d^\times Z  d^\times g.
\]
We make the change of variables $Z \kappa^{-1} \theta^{-1} \mapsto
Z_2$ and $Z \kappa^{-1} \theta^{-1} g \mapsto Z_1$. That is
$Z_2^{-1} Z_1 = g$ and by the translation invariance (recall that
$GL_n(F)$ is unimodular i.e. left and right Haar measures coincide)
of the measures we have
\[
I(s)=\chi_2^{-1}(det(S))|det(S)|^{-s}\times
\]
\[
\int_{GL_n(F)} \int_{GL_n(F)} \Phi(Z_1,Z_2)\chi_2\chi_1(det(\kappa
\theta))|det(\kappa \theta)|^{2s}
\chi_2\chi_1(det(Z_2))\chi_2(det(Z_2)^{-1})\]
\[
\chi_2(det(Z_1))|det(Z_2)|^{s}|det(Z_1)|^{s}\omega(Z_2^{-1} Z_1)
d^\times Z_1 d^\times Z_2.
\]
As it is explained in \cite[page 36]{GPR} this integral is equal to
\[
I(s)=\chi_2^{-1}(det(S))|det(S)|^{-s}  \chi_2\chi_1(det(\kappa
\theta))|det(\kappa \theta)|^{2s} \times
\]
\[
\int_{GL_n(F)} \int_{GL_n(F)}
\Phi(Z_1,Z_2)\chi_2(det(Z_1))\chi_1(det(Z_2))
|det(Z_2)|^{s}|det(Z_1)|^{s}\omega(Z_2^{-1}) \omega(Z_1) d^\times
Z_1 d^\times Z_2
\]
which in turn is equal to
\[
\chi_2^{-1}(det(S))|det(S)|^{-s}\chi_2\chi_1(det(\kappa
\theta))|det(\kappa \theta)|^{2s} \times
\]
\[
\int_{GL_n(F)} \Phi_1(Z_1)\chi_2(det(Z_1))\omega(Z_1) |det(Z_1)|^s
d^\times Z_1 \int_{GL_n(F)}
\Phi_2(Z_2)\chi_1(det(Z_2))\omega(Z^{-1}_2) |det(Z_2)|^s d^\times
Z_2.
\]

But $\Phi_1 = \tilde{\Phi}_{\nu^{-1}\chi_2^{-1}}$ and $\Phi_2=
\widehat{\Phi}_{\nu^{-1}\chi_1}$. These integrals we have already
computed. Finally we set
\[
\alpha(\chi,\psi,s):= |\mathfrak{d}_F| \times
\chi_2^{-1}(det(S))|det(S)|^{-s}\chi_2\chi_1(det(\kappa
\theta))|det(\kappa \theta)|^{2s}
\]
and $\alpha(\chi,\psi):=\alpha(\chi,\psi,1)$.
\end{proof}

\section{Congruences between Eisenstein Series}

We fix the following setting. We consider a CM field $K$ and a CM
extension $K'$ of degree $p$ with totally real field $F=K^+$ and
$F'=K'^{+}$. We also fix CM types $(K,\Sigma)$ and $(K',\Sigma')$
with $\Sigma'$ be the lift of $\Sigma$. We moreover make the
following assumption: The primes that ramify in $F'/F$ are
unramified in $K/F$. Our aim now is to study the natural embedding
We now wish to study the embedding
\[
U(n,n)_{/F}(F) \hookrightarrow Res_{F'/F}U(n,n)_{/F'}(F).
\]

\textbf{The diagonal embedding; algebraically and analytically:} We
start by first observing that the compatibility of the CM-types
induces an embedding of the corresponding symmetric spaces. That is,
we have
\[
\Delta : \mathbb{H}_F \hookrightarrow \mathbb{H}_{F'},
\,\,\,(z_{\sigma})_{\sigma \in \mathbf{a}} \mapsto
(z_{\sigma'})_{\sigma' \in \mathbf{a}'},
\]
with $z_{\sigma'}:=z_{\sigma}$ for $\sigma'|_{K}=\sigma$. We now
consider the congruence subgroup
$\Gamma_0(\mathfrak{b},\mathfrak{c})$ of $U(n,n)_{/F}$ for an
integral ideal $\mathfrak{c}$ of $\mathfrak{g}$ and a fractional
ideal $\mathfrak{b}$ of $F$. We moreover consider the congruences
subgroup
$\Gamma_0(\mathfrak{b}\mathfrak{g}',\mathfrak{c}\mathfrak{g}')$ of
$Res_{F'/F}U(n,n)_{/F'}$. Then we have that the embedding
$U(n,n)_{/F} \hookrightarrow Res_{F'/F}U(n,n)_{/F'}$ induces an
embedding $\Gamma_0(\mathfrak{b},\mathfrak{c}) \hookrightarrow
\Gamma_0(\mathfrak{b}\mathfrak{g}',\mathfrak{c}\mathfrak{g}')$. We
simplify our notation by setting
$\Gamma:=\Gamma_0(\mathfrak{b},\mathfrak{c})$ and
$\Gamma':=\Gamma_0(\mathfrak{b}\mathfrak{g}',\mathfrak{c}\mathfrak{g}')$.
Now we easily observe that $\Delta$ induces, by pull-back, a map
\[
\Delta^*:M_k(\Gamma') \rightarrow M_{pk}(\Gamma),\,\,\,f \mapsto f
\circ \Delta.
\]
Next we study the effect of this map on the $q$-expansion of the
hermitian modular form. Namely, if we assume that $f \in
M_k(\Gamma')$ has a $q$-expansion of the form
\[
f(z') = \sum_{h' \in L'}c(h') e^n_{\mathbf{a}'}(h'z'),
\]
where we recall that $L'={\mathfrak{d}'}^{-1}T'$, then the following
lemma provides us the $q$-expansion of the form $\Delta^*f$.
\begin{lem}\label{analytic q-expansion of pull-back } For $f \in M_k(\Gamma')$ as above we have
\[
\Delta^*f(z)=\sum_{h \in L} \left(\sum_{h' \in L', Tr_{K'/K}(h')=h}
c(h')\right)e^n_{\mathbf{a}}(hz).
\]
\end{lem}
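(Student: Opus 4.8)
The proof is a direct computation with $q$-expansions, tracking how the diagonal embedding $\Delta$ acts on the exponential. The key observation is that for $z \in \mathbb{H}_F$ with image $z' = \Delta(z) \in \mathbb{H}_{F'}$, the entries satisfy $z'_{\sigma'} = z_{\sigma}$ whenever $\sigma'|_K = \sigma$, so each archimedean place $\sigma$ of $F$ in $\mathbf{a}$ "sees" the collection of places $\sigma'$ of $F'$ lying above it. Consequently, for $h' \in S_n(K')$ and writing $e^n_{\mathbf{a}'}(h'z') = \exp\!\big(2\pi i \sum_{\sigma' \in \mathbf{a}'} \mathrm{tr}(\sigma'(h')z'_{\sigma'})\big)$, one groups the sum over $\sigma'$ according to the place $\sigma$ below it. I would first record this grouping explicitly: for fixed $\sigma \in \mathbf{a}$,
\[
\sum_{\sigma' \mid \sigma} \mathrm{tr}\big(\sigma'(h')z'_{\sigma'}\big) = \sum_{\sigma' \mid \sigma} \mathrm{tr}\big(\sigma'(h')z_{\sigma}\big) = \mathrm{tr}\Big(\Big(\sum_{\sigma' \mid \sigma}\sigma'(h')\Big)z_{\sigma}\Big) = \mathrm{tr}\big(\sigma(\mathrm{Tr}_{K'/K}(h'))\,z_{\sigma}\big),
\]
using that $\sum_{\sigma' \mid \sigma}\sigma'$ restricted to $K'$ is precisely $\sigma \circ \mathrm{Tr}_{K'/K}$ (here $\mathrm{Tr}_{K'/K}$ is applied entrywise to the hermitian matrix $h'$, and the result is a hermitian matrix over $K$). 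This is the heart of the matter.

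Given this, I would substitute into the $q$-expansion of $f$:
\[
\Delta^*f(z) = f(\Delta z) = \sum_{h' \in L'} c(h')\, e^n_{\mathbf{a}'}(h'\,\Delta z) = \sum_{h' \in L'} c(h')\, e^n_{\mathbf{a}}\big(\mathrm{Tr}_{K'/K}(h')\,z\big),
\]
and then reorganize the sum by collecting all $h' \in L'$ with a given trace $h := \mathrm{Tr}_{K'/K}(h') \in S_n(K)$. To make this rigorous one needs that $\mathrm{Tr}_{K'/K}(L') \subseteq L$, i.e. that the trace of $h' \in {\mathfrak{d}'}^{-1}T'$ lands in $\mathfrak{d}^{-1}T$; this is the one place where the arithmetic of the lattices enters and where I expect a small amount of care is required. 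It should follow from the transitivity of the different, ${\mathfrak{d}_{K'/\mathbb{Q}}} = {\mathfrak{d}_{K'/K}}\,{\mathfrak{d}_{K/\mathbb{Q}}}\mathfrak{r}'$ combined with ${\mathfrak{d}_{K'/K}} = \mathfrak{r}'$ under our hypothesis that primes ramifying in $F'/F$ are unramified in $K/F$ (so $K'/K$ is unramified), together with the compatibility of the trace pairing defining $T$ and $T'$ with $\mathrm{Tr}_{F'/F}$.

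After that, the result is immediate by interchanging the order of summation:
\[
\Delta^*f(z) = \sum_{h \in L}\Big(\sum_{\substack{h' \in L' \\ \mathrm{Tr}_{K'/K}(h') = h}} c(h')\Big) e^n_{\mathbf{a}}(hz),
\]
which is exactly the claimed formula. One should note in passing that for this inner sum to be well-defined (finite) one uses that $f$ being holomorphic at the cusps forces $c(h') = 0$ unless $h' \geq 0$, and there are only finitely many $h' \geq 0$ with prescribed trace $h$ (a positive-semidefinite hermitian matrix with bounded trace has bounded entries). I would remark on this finiteness to justify the rearrangement. The only genuinely non-formal point, as flagged, is the lattice inclusion $\mathrm{Tr}_{K'/K}(L') \subseteq L$; the rest is bookkeeping with the additive character.
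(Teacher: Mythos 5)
Your proof takes the same route as the paper's: substitute $z'=\Delta(z)$ into the Fourier expansion, regroup the archimedean places of $F'$ over those of $F$ to obtain $e^n_{\mathbf{a}'}(h'\,\Delta z)=e^n_{\mathbf{a}}(\mathrm{Tr}_{K'/K}(h')\,z)$, and reorganise the resulting sum by fibres of $\mathrm{Tr}_{K'/K}$. You also correctly isolate the lattice inclusion $\mathrm{Tr}_{K'/K}(L')\subseteq L$ as the one non-formal step. However, your sketched justification for it is off: $K'/K$ is \emph{not} unramified in general. The extension $F'/F$ is allowed to ramify above $p$, and since the primes of $F$ that ramify in $F'/F$ are (by hypothesis) merely unramified in $K/F$, that ramification is carried up to $K'/K$; so $\mathfrak{d}_{K'/K}=\mathfrak{r}'$ fails at those primes. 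Fortunately unramifiedness is not needed. Writing $\mathfrak{d}',\mathfrak{d}$ for the absolute differents, the inclusion $\mathrm{Tr}_{F'/F}(\mathfrak{d}'^{-1})\subseteq\mathfrak{d}^{-1}$ holds for \emph{any} extension by transitivity of the trace: for $x\in\mathfrak{d}'^{-1}$ and $a\in\mathfrak{g}\subset\mathfrak{g}'$ one has $\mathrm{Tr}_{F/\mathbb{Q}}(a\,\mathrm{Tr}_{F'/F}(x))=\mathrm{Tr}_{F'/\mathbb{Q}}(ax)\in\mathbb{Z}$. The paper combines this with the elementary observation that $S(\mathfrak{r})\subseteq S(\mathfrak{r}')$ forces $\mathrm{tr}(yt')\in\mathfrak{g}'$ for $t'\in T'$, which yields $\mathfrak{d}\,\mathrm{Tr}_{K'/K}(L')\subseteq T$ directly. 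With that repair, your argument --- including the finiteness remark needed to justify the rearrangement, which the paper leaves implicit --- agrees with the paper's proof.
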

\begin{proof} We first observe that $Tr_{F'/F} : L' \rightarrow L$,
where $Tr_{F'/F}(h'):=\sum_{\sigma \in Gal(K'/K)} (h')^\sigma$.
Indeed, we recall that $L'=\mathfrak{d'}^{-1}T'$ and
$L=\mathfrak{d}^{-1}T$. To see this, consider an element $d't' \in
L'$ with $d' \in \mathfrak{d'}^{-1}$ and $t' \in T'$. For an element
$y \in S(\mathfrak{r})$ we compute
\[
tr(yTr_{K'/K}(d't')) = tr(y \sum_{\sigma}(d')^\sigma
(t')^\sigma))=\sum_{\sigma}(d'tr(y t'))^\sigma.
\]
But $S(\mathfrak{r}) \subset S(\mathfrak{r}')$ hence we have that
$g' :=tr(yt') \in \mathfrak{g}'$. That is we have shown
$tr(yTr_{K'/K}(d't'))=\sum_{\sigma}(d'g')^\sigma$ and $d'g' \in
{\mathfrak{d}'}^{-1}$. But $Tr_{K'/K}({\mathfrak{d}'}^{-1}) \subset
\mathfrak{d}^{-1}$ hence we have shown that
$tr(S(\mathfrak{r})Tr_{K'/K}(L')) \subseteq \mathfrak{d}^{-1}$ or
equivalently $\mathfrak{d} Tr_{K'/K}(L') \subset T$ which concludes
our claim.

Now we consider what happens to the ${h'}^{th}$ component of $f$
after setting $t':=\Delta(z)$. We have
\[
e^n_{\mathbf{a}'}(h' \Delta(z))= e^{2\pi i \sum_{\sigma' \in
\Sigma'}(\sum_{i,j}h'_{i,j}z_{j,i})^{\sigma'}}=e^{2\pi i
\sum_{\sigma \in \Sigma'}\sum_{i,j}{h'_{i,j}}^{\sigma'}
z_{j,i}^{\sigma'}}=
\]
\[
e^{2\pi i \sum_{\sigma \in \Sigma}\sum_{\sigma' \in
\Sigma',\sigma'_{|K}=\sigma}\sum_{i,j}{h'_{i,j}}^{\sigma'}
z_{j,i}^{\sigma'}}=e^{2\pi i \sum_{i,j} \sum_{\sigma \in
\Sigma}z_{j,i}^{\sigma}\sum_{\sigma' \in
\Sigma',\sigma'_{|K}=\sigma}{h'_{i,j}}^{\sigma'} }=
\]
\[
e^{2\pi i \sum_{i,j} \sum_{\sigma \in
\Sigma}z_{j,i}^{\sigma}(Tr_{K'/K}({h'_{i,j}})^{\sigma}}=e^{2\pi i
\sum_{\sigma \in \Sigma} (\sum_{i,j}
Tr_{K'/K}(h'_{i,j})z_{j,i})^{\sigma}}
=e^n_{\mathbf{a}}(Tr_{K'/K}(h')z).
\]
These calculations allow us to conclude the proof of the lemma.
\end{proof}

It is easily seen that the above considerations can be generalized
to more general congruent subgroups. Namely, for an element $g \in
GL_n(\mathbb{A}_{K,\mathbf{h}})$ we consider groups of the form
\[
\Gamma_g:=G_1 \cap \left(
                      \begin{array}{cc}
                        \hat{g} & 0 \\
                        0 & g\\
                      \end{array}
                    \right)D[\mathfrak{b}^{-1},\mathfrak{b}\mathfrak{c}]\left(
                      \begin{array}{cc}
                        \hat{g} & 0 \\
                        0 & g\\
                      \end{array}
                    \right)^{-1}.
                    \]
We note that for $g=1_n$ we have
$\Gamma_{1_n}=\Gamma_{0}(\mathfrak{b},\mathfrak{c})$. Similarly for
the same $g \in GL_n(\mathbb{A}_{K,f}) \subset
GL_n(\mathbb{A}_{K',f})$ we define
\[
\Gamma'_g:=G'_1 \cap \left(
                      \begin{array}{cc}
                        \hat{g} & 0 \\
                        0 & g\\
                      \end{array}
                    \right)D[\mathfrak{b}^{-1}\mathfrak{g}',\mathfrak{b}\mathfrak{c}\mathfrak{g}']\left(
                      \begin{array}{cc}
                        \hat{g} & 0 \\
                        0 & g\\
                      \end{array}
                    \right)^{-1}.
                    \]

We now observe that the embedding $U(n,n)_{F}(\mathbb{A}_F)
\hookrightarrow Res_{F'/F}U(n,n)_{/F'}(\mathbb{A}_{F})$ induces the
embeddings $D[\mathfrak{b}^{-1},\mathfrak{b}\mathfrak{c}]
\hookrightarrow
D[\mathfrak{b}^{-1}\mathfrak{g}',\mathfrak{b}\mathfrak{c}\mathfrak{g}']$
and $\Gamma_{g} \hookrightarrow \Gamma'_g$. In particular, as
before, we have that the map $\Delta$ induces as before a map
$M_{k}(\Gamma'_g) \rightarrow M_{pk}(\Gamma_g)$.

Our next goal is to understand the above analytic considerations
algebraically. We start by recalling the moduli interpretation of
the Shimura varieties $\Gamma_g \setminus \mathbb{H}$. We fix a
$K$-basis $\{e_{i}\}_{i=1}^{2n}$ of $V$ so that the group $U(n,n)/F$
is represented as
\[
U(n,n)_{/F}(F):=\left\{\alpha \in GL_{2n}(K)\,\, |\,\, \alpha^*
\eta_n \alpha =  \eta_n \right\},
\]
with $\eta_n:=\left(
                \begin{array}{cc}
                  0 & -1_n \\
                  1_n & 0 \\
                \end{array}
              \right)$. We consider the $\mathfrak{g}$-maximal $\mathfrak{r}$-lattice
              $L:=\sum_{i=1}^{n} \mathfrak{r}e_i \oplus \sum_{j=n+1}^{2n}\mathfrak{d}^{-1}_{K/F}e_j \subset V$.
Then we note that for all $\gamma \in G_1 \cap
D[\mathfrak{b}^{-1},\mathfrak{b}]$ with
$\mathfrak{b}:=\mathfrak{d}^{-1}_{K/F} \cap F$ we have $L\gamma =
L$. Moreover, for $g \in G(\mathbb{A}_{F,f})$ and $L_g:= (L
\otimes_{\mathfrak{r}} \hat{\mathfrak{r}})g^{-1} \cap V$ we have
$L_g \gamma = L_g$ for all $\gamma \in G_1 \cap g
D[\mathfrak{b}^{-1},\mathfrak{b}]g^{-1}$. In particular the groups
$\Gamma_g$ above respect the lattices $L_g$. Following now Shimura
\cite[page 26]{Shimurabook2} we recall that the space $\Gamma_g
\setminus \mathbb{H}_F$ parameterizes for every $z \in \mathbb{H}_F$
families of polarized abelian varieties
\[
\mathcal{P}_z = (A_z,\mathcal{C}_z,i_z,\alpha_{\mathfrak{c}}),
\]
where $A_z := (\mathbb{C}^{2n})^{\mathbf{a}}/p_{z}(L_g)$ with $p_z$
defined by
\[
p_z:(K_{\mathbf{a}})^1_{2n} \rightarrow
(\mathbb{C}^{2n})^{\mathbf{a}},\,\,x \mapsto ([z_v \,\, 1_n]\cdot
x_v^*,[\transpose{z}_v\,\, 1_n]\cdot \transpose{x}_v)_{v \in
\mathbf{a}}.
\]
Moreover $\mathcal{C}_z$ is the polarization of $A_z$ defined by the
Riemann form $E_z(p_z(x),p_z(y)):=Tr_{K_{\mathbf{a}}/\mathbb{R}}(x
\eta_n y^*)$. The map $i_z : K \hookrightarrow
End_{\mathbb{Q}}(A_z)$ is by the map $\Psi:K \rightarrow
End((\mathbb{C}^{2n})^\mathbf{a})$ defined for $a\in K$ as
$\Psi(a):=diag[\Psi_v(a)]_{v \in \mathbf{a}}$ and
$\Psi_v(a):=diag[\overline{a_v}1_n,\,a_v1_n]$. Finally the
arithmetic structure $\alpha_{c}$ is induced from the embedding
$\mathfrak{c}^{-1}L \hookrightarrow K^{2n}$. As is explained in
Shimura (loc. cit. page 26) we have that two such data
$\mathcal{P}_z$ and $\mathcal{P}_w$ with $z,w \in \mathbb{H}_F$ are
isomorphic if and only if there exists $\gamma \in \Gamma_g$ so that
$w=\gamma z$.

Now we observe that the diagonal map $\Delta: \mathbb{H}_F
\hookrightarrow \mathbb{H}_{F'}$ introduced above induces a map
$\Gamma_{g} \setminus \mathbb{H}_F \rightarrow \Gamma'_{g} \setminus
\mathbb{H}_{F'}$ by $\mathcal{P}_z \mapsto \mathcal{P}'_{\Delta(z)}$
where the structures for the group $GU(n,n)/F'$ are with respect to
the $\mathfrak{r}'$-lattice $L'_g:= \mathfrak{r}'
\otimes_{\mathfrak{r}} L_g$. We note here the crucial assumption
that the ramification of $F/K$ and $F'/F$ is disjoint. In particular
we have that $\mathfrak{d}_{K'/F'}=\mathfrak{d}_{K/F}\mathfrak{r}'$.

Now we extend the above analytic considerations to their algebraic
counterparts. We consider the scheme $\mathcal{M}(\Gamma_g)/R$ over
some ring $R$, associated to the congruence subgroup $\Gamma_g$,
that represents the functor $S \mapsto
(A,\lambda,i,\alpha_{\mathfrak{c}})/S$ discussed in the
introduction. Then the algebraic counterpart of the map above is a
map $\mathcal{M}(\Gamma_g)/R \rightarrow \mathcal{M}'(\Gamma'_g)/R$
given by $(A,\lambda,i,\alpha_{\mathfrak{c}}) \mapsto
(A,\lambda,i,\alpha_{\mathfrak{c}}) \otimes_{\mathfrak{r}}
\mathfrak{r}'$. When $R=\mathbb{C}$ this map is the previously
defined map. In particular we see that we can define the map
$\Delta^*:M_{k}(\Gamma'_g) \rightarrow M_{pk}(\Gamma_g)$
algebraically by $\Delta^{*}f(\underline{A},\omega):=f(\underline{A}
\otimes_{\mathfrak{r}} \mathfrak{r}',\omega \otimes_{\mathfrak{r}}
\mathfrak{r}')$.

Before going further and providing also the algebraic counterpart of
lemma \ref{analytic q-expansion of pull-back } we remark that if
write $A':=A \otimes_{\mathfrak{r}} \mathfrak{r}'$, the image of the
abelian variety $A$, then this is isogenous to $[K':K]$ many copies
of $A$. Indeed, this follows by writing the $\mathfrak{r}$-module
$\mathfrak{r}'$ as a direct sum $\mathfrak{b} \oplus
\bigoplus_{i=1}^{[K':K]-1}\mathfrak{r}$ for some ideal
$\mathfrak{b}$ of $\mathfrak{r}$.

We now consider the Mumford object associated to the standard
$0$-genus cusp associated to the group $\Gamma_g$. We decompose the
lattice $L=\sum_{i=1}^{n}\mathfrak{r}e_i +
\sum_{j=n+1}^{2n}\mathfrak{d}^{-1}_{K/F}e_j$ to $L^1:=\sum_{i=1}^n
\mathfrak{r} e_i$ and
$L^2:=\sum_{j=n+1}^{2n}\mathfrak{d}^{-1}_{K/F}e_j$. Then we see that
since we consider elements of the form $\left(
                                               \begin{array}{cc}
                                                \hat{g} \ & 0 \\
                                                 0 & g \\
                                               \end{array}
                                             \right)$ for the
components of $GU(n,n)$, we have that $L_g= L_g^1 \oplus L_g^2$ with
$L^1_{g}:=L^1 g^*$ and $L^2:=L^2g^{-1}$. The Mumford data associated
to the standard $0$-cusp of the $g$'s component is then
$(Mum_{L^1_g,L^2_g}(q),\lambda_{can},i_{can},\alpha^{can}_{\mathfrak{c}})$,
defined over the ring $R((q,H^{\vee}))$, where
$H^{\vee}=\frak{d}^{-1}gTg^* \subset S$ with $T:=\{ x \in S\,\,
|\,\,tr(S(\mathfrak{r})x) \subset \mathfrak{g} \}$. We now consider
the ring homomorphism $Tr:R((q',{H'}^{\vee})) \rightarrow
R((q,{H}^{\vee}))$ defined by ${q'}^{h'} \mapsto q^{Tr_{K'/K}(h')}$.
Then it can be shown that
\[
(Mum_{L^1_g,L^2_g}(q),\lambda_{can},i_{can},\alpha^{can}_{\mathfrak{c}})
\otimes_{\mathfrak{r}} \mathfrak{r}' =
\]
\[
(Mum_{{L'}^1_g,{L'}^2_g}(q'),\lambda'_{can},i'_{can},{\alpha'}^{can}_{\mathfrak{c}})
\otimes_{R((q',{H'}^{\vee})),Tr} R((q,{H}^{\vee}))
\]
In particular if $f \in M_k(\Gamma'_g)$ is an algebraic hermitian
form with $q$-expansion given by
\[
f(Mum_{{L'}^1_g,{L'}^2_g}(q'),\lambda'_{can},i'_{can},{\alpha'}^{can}_{\mathfrak{c}},\omega'_{can})=\sum_{h'\in
{H'}^{\vee}}c(h')q^{h'}
\]
then its pull-back form $g:=\Delta^*(f)$ has $q$-expansion
\[
g\left(Mum_{L^1_g,L^2_g}(q),\lambda_{can},i_{can},\alpha^{can}_{\mathfrak{c}},\omega_{can}\right)=\]
\[
f\left((Mum_{L^1_g,L^2_g}(q),\lambda_{can},i_{can},\alpha^{can}_{\mathfrak{c}},\omega_{can})\otimes_{\mathfrak{r}}
\mathfrak{r}'\right)
\]
\[
=f\left((Mum_{{L'}^1_g,{L'}^2_g}(q'),\lambda'_{can},i'_{can},{\alpha'}^{can}_{\mathfrak{c}},\omega'_{can})
\otimes_{R((q',{H'}^{\vee})),Tr} R((q,{H}^{\vee}))\right)
\]
\[
f\left((Mum_{{L'}^1_g,{L'}^2_g}(q'),\lambda'_{can},i'_{can},{\alpha'}^{can}_{\mathfrak{c}},\omega'_{can}\right)
\otimes_{R((q',{H'}^{\vee})),Tr} R((q,{H}^{\vee})).
\]
Again with $R=\mathbb{C}$ we have the algebraic counterpart of lemma
\ref{analytic q-expansion of pull-back }. We summarize this
discussion in the following lemma.
\begin{lem}\label{algebraic q-expansion of pull-back } Let $f\in
M_k(\Gamma'_g,R)$ be an algebraic hermitian modular form defined
over $R$. Then the $q$-expansion of $g:=\Delta^*(f) \in
M_{pk}(\Gamma_g,R)$ is given by
\[
g(q):=\sum_{h} \left(\sum_{h' \in L', Tr_{K'/K}(h')=h}
c(h')\right)q^h,
\]
when the $q$-expansion of $f$ is given by
$f(q)=\sum_{h'}c(h')q^{h'}$.
\end{lem}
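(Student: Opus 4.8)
The plan is to deduce Lemma~\ref{algebraic q-expansion of pull-back } as the algebraic shadow of the identity of Mumford objects stated just before it, which itself is the algebraic incarnation of Lemma~\ref{analytic q-expansion of pull-back }. Concretely, the map $\Delta^*$ was \emph{defined} algebraically by $\Delta^*f(\underline{A},\omega):=f(\underline{A}\otimes_{\mathfrak{r}}\mathfrak{r}',\omega\otimes_{\mathfrak{r}}\mathfrak{r}')$, so the entire content is to compute the right-hand side when $\underline{A}$ is the Mumford datum $(Mum_{L^1_g,L^2_g}(q),\lambda_{can},i_{can},\alpha^{can}_{\mathfrak{c}},\omega_{can})$ attached to the standard $0$-genus cusp of $\Gamma_g$, and to read off the $q$-expansion.

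First I would verify the asserted compatibility of Mumford objects under $\otimes_{\mathfrak{r}}\mathfrak{r}'$, namely that
\[
(Mum_{L^1_g,L^2_g}(q),\lambda_{can},i_{can},\alpha^{can}_{\mathfrak{c}})\otimes_{\mathfrak{r}}\mathfrak{r}'
=(Mum_{{L'}^1_g,{L'}^2_g}(q'),\lambda'_{can},i'_{can},{\alpha'}^{can}_{\mathfrak{c}})\otimes_{R((q',{H'}^{\vee})),Tr}R((q,H^{\vee})).
\]
Here the key point is the identification of the ``$q$-monomial'' exponent lattices: since $L'_g=\mathfrak{r}'\otimes_{\mathfrak{r}}L_g$ and (using the disjointness of the ramification of $K/F$ and $F'/F$, hence $\mathfrak{d}_{K'/F'}=\mathfrak{d}_{K/F}\mathfrak{r}'$) one has $H^{\vee}=\mathfrak{d}^{-1}gTg^*$ while ${H'}^{\vee}=\mathfrak{d}^{-1}\mathfrak{r}'\,gT'g^*$, the trace map $Tr_{K'/K}$ carries ${H'}^{\vee}$ onto (a sublattice adapted to) $H^{\vee}$, exactly as in the proof of Lemma~\ref{analytic q-expansion of pull-back }. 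Granting this, the ring homomorphism $Tr:R((q',{H'}^{\vee}))\to R((q,H^{\vee}))$, ${q'}^{h'}\mapsto q^{Tr_{K'/K}(h')}$, is well-defined, and the Mumford construction (the rigid-analytic quotient $(L'\otimes\mathbb{G}_m)/q(L)$ together with $\imath_{can},\lambda_{can},\alpha_N,\omega_{can}$) commutes with this base change because all of its constituents are defined functorially over the relevant power-series ring.

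Then the computation of the $q$-expansion is the chain of equalities already displayed in the text: evaluating $g=\Delta^*f$ at the Mumford datum over $R((q,H^{\vee}))$ equals evaluating $f$ at $(Mum_{L^1_g,L^2_g}(q),\ldots)\otimes_{\mathfrak{r}}\mathfrak{r}'$, which by the compatibility equals $f$ evaluated at $(Mum_{{L'}^1_g,{L'}^2_g}(q'),\ldots,\omega'_{can})$ and then pushed forward along $Tr$. Since $f(Mum_{{L'}^1_g,{L'}^2_g}(q'),\ldots,\omega'_{can})=\sum_{h'\in{H'}^{\vee}}c(h'){q'}^{h'}$ by hypothesis, applying $Tr$ sends ${q'}^{h'}\mapsto q^{Tr_{K'/K}(h')}$, and collecting the monomials with a fixed value $h=Tr_{K'/K}(h')$ yields $g(q)=\sum_{h}\bigl(\sum_{h'\in L',\,Tr_{K'/K}(h')=h}c(h')\bigr)q^{h}$, which is the claim. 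One should note that the inner sum is finite: the support condition ($a_{h'}=0$ for $h'\ll 0$) together with the positivity of the form guarantees only finitely many $h'$ in a fixed trace-fiber contribute.

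The main obstacle is the Mumford-object compatibility statement, i.e.\ checking that the base change $\otimes_{\mathfrak{r}}\mathfrak{r}'$ of the degenerating abelian scheme attached to the cusp of $\Gamma_g$ coincides with the degenerating scheme attached to the cusp of $\Gamma'_g$ \emph{after} the change of power-series rings along $Tr$. This requires (i) the lattice bookkeeping $L'_g=L'^1_g\oplus L'^2_g$ with $L'^i_g=\mathfrak{r}'\otimes_{\mathfrak{r}}L^i_g$, which uses the block-diagonal shape $\mathrm{diag}[\hat g,g]$ of the chosen cusp representatives; (ii) the identity $\mathfrak{d}_{K'/F'}=\mathfrak{d}_{K/F}\mathfrak{r}'$, which is precisely where the disjoint-ramification hypothesis on $K/F$ and $F'/F$ enters; and (iii) matching the PEL structures ($\lambda_{can},i_{can},\alpha^{can}_{\mathfrak{c}},\omega_{can}$) under the two operations, which is formal once the underlying lattices and exponent monoids are identified. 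Everything else is the routine translation, exactly parallel to the analytic Lemma~\ref{analytic q-expansion of pull-back }, of ``pull-back along $\Delta$'' into ``trace on Fourier coefficients''.
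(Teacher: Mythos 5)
Your proposal follows exactly the route the paper takes: evaluate $\Delta^*f$ at the Mumford datum attached to the standard $0$-cusp of $\Gamma_g$, invoke the compatibility $(\underline{Mum_{L^1_g,L^2_g}(q)})\otimes_{\mathfrak{r}}\mathfrak{r}'\cong(\underline{Mum_{{L'}^1_g,{L'}^2_g}(q')})\otimes_{R((q',{H'}^{\vee})),Tr}R((q,H^{\vee}))$, and read off the $q$-expansion by applying $Tr$ and collecting monomials in a trace-fiber. The paper records the same chain of identities in the paragraphs immediately preceding the lemma (likewise asserting the Mumford-object compatibility without a detailed proof), so your argument matches the paper's in both structure and the points flagged as needing verification.
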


For a function $\epsilon:=\sum_{j}c_j\chi_j$ of $G^{ab}_{K'}$ where
$\chi_j$ are characters of the form and $\chi'_j\psi$, where $\psi$
a fixed Hecke character of infinite type $k\Sigma$ and $\chi'_j$
finite order characters we have
\[
E_\beta(m(A),\phi,\epsilon)=Q(\beta,A,k)
\sum_{(\mathfrak{a},S)=1}n_{\mathfrak{a}}(\beta,m(A)) \sum_j c_j
\chi_j(det(A))
\]
\[
\left(\prod_{v \in
\Sigma_p}\chi^{-1}_{1,v,j}(det(B(h_v)))\chi_{2,v,j}(det(A(h_v)))\Phi_{\mu}(\transpose{A(h_v)}\beta
B(h_v)^{-1})\right)\left(\prod_{v \in S^{(p)}}\chi_{v,j}(det
u^{-1})\right)\chi_j^{(S)}(\mathfrak{a}).
\]
Now we assume that $\epsilon^\gamma=\epsilon$ for all $\gamma \in
\Gamma$. Since we assume that $m(A) \in GL_n(\mathbb{A}_K)$ we have
that
\[
\chi_j(det(A))\left(\prod_{v \in
\Sigma_p}\chi^{-1}_{1,v,j}(det(B(h_v)))\chi_{2,v,j}(det(A(h_v)))\Phi_{\mu}(\transpose{A(h_v)}\beta
B(h_v)^{-1})\right)\left(\prod_{v \in S^{(p)}}\chi_{v,j}^{-1}(det
u)\right)=\]
\[
\chi^\gamma_j(det(A))\left(\prod_{v \in
\Sigma_p}\chi^{-\gamma}_{1,v,j}(det(B(h_v)))\chi^{\gamma}_{2,v,j}(det(A(h_v)))\Phi_{\mu}(\transpose{A(h_v)}\beta
B(h_v)^{-1})\right)\left(\prod_{v \in
S^{(p)}}\chi_{v,j}^{-\gamma}(det u)\right)
\]
\textbf{Claim:} For the function on $G_{K'}(\mathfrak{c}p^\infty)
\times Her_n(F)$
\[
\epsilon^{(S)}(\mathfrak{a},\beta):=\sum_j c_j
\chi_j(det(A))\left(\prod_{v \in
\Sigma_p}\chi^{-1}_{1,v,j}(det(B(h_v)))\chi_{2,v,j}(det(A(h_v)))\Phi_{\mu}(\transpose{A(h_v)}\beta
B(h_v)^{-1})\right)\times
\]
\[
\left(\prod_{v \in S^{(p)}}\chi_{v,j}^{-1}(det
u)\right)\chi_j^{(S)}(\mathfrak{a})
\]
we have
\[
\epsilon^{(S)}(\mathfrak{a},\beta)=\epsilon^{(S)}(\mathfrak{a}^\gamma,\beta^\gamma)
\]

Indeed, we notice first that $\epsilon^\gamma=\epsilon$ if an only
if $c_j=c_{\gamma(j)}$ where $c_{\gamma(j)}$ denotes the coefficient
of the character $\chi_j^\gamma=:\chi_{\gamma(j)}$ in the sum
$\sum_j c_j \chi_j$. In particular that means that we may decompose
the locally constant function $\epsilon$ as follows
\[
\epsilon= \sum_{i}c_i \chi_i + \sum_{k}c_k \sum_{\gamma \in
\Gamma}\chi_k^\gamma,
\]
where for the characters $\chi_i$ that appear in the first sum we
have that $\chi_i^\gamma =\chi_i$ for all $\gamma \in \Gamma$. Then
the claim follows from the observation above, the definition of
$\Phi_\mu$ and the fact that $\nu^\gamma=\nu$.

For the applications that we have in mind we need to understand how
the polynomials $g_{\beta,m,v}$ depend on $\beta,m$ and $v$. We
explain this by following closely Shimura's book
\cite{Shimurabook1}. We start with some definitions.

First of all we need to introduce the notion of the denominator of a
matrix as is defined in \cite{Shimurabook1} p18. Let $\mathfrak{r}$
be a principal ideal domain and let $K$ denote its field of
quotients. We assume that $K \neq \mathfrak{r}$. We set
$E=E^{n}=GL_n(\mathfrak{r})$. Given any matrix $X \in K_n^m$ of rank
$r$, there exist $A\in E^m$ and $B \in E^n$ and elements
$e_1,\ldots,e_r \in K^\times$ such that $e_{i+1}/e_i \in
\mathfrak{r}$ for all $i < r$ and
\[
AXB = \left(
        \begin{array}{cc}
          D & 0^r_{n-r} \\
          0_r^{m-r} & 0^{m-r}_{n-r} \\
        \end{array}
      \right),\,\,\,D=diag[e_1,\ldots,e_r]
\]
The ideals $e_i\mathfrak{r}$ are uniquely determined by $X$ and we
call them the elementary divisors of $X$. We call an element $X \in
\mathfrak{r}^{m}_{n}$ primitive if $rank(X) = min(m,n)$ and the
elementary divisors are all equal to $\mathfrak{r}$. Shimura shows
that for any given $x \in K_n^{m}$ there exist $c \in
\mathfrak{r}^{m}_n$ and $d \in \mathfrak{r}^{m}_{m}\cap GL_m(K)$
such that the matrix $[c\,\,d]$ is primitive and $x=d^{-1}c$ and the
integral ideal $\nu_0(x):=det(d)\mathfrak{r}$ is well-defined and
called the denominator ideal of $x$.

Now we fix a local field $F$, a finite extention of $\Q_p$ for a
prime $p$. We write $\mathfrak{g}$ for its ring of integers and
$\mathfrak{p}$ for its maximal ideal. We put
$q:=[\mathfrak{g}:\mathfrak{p}]$. We pick an additive character
$\chi:F \rightarrow S^{^1}$ such that
\[
\mathfrak{g}=\{a\in F : \chi(a\mathfrak{g})=1\}
\]

Following Shimura as in \cite{Shimurabook1} (only for the case that
we will consider) we fix symbols
$K$,$\mathfrak{r}$,$\mathfrak{q}$,$\mathfrak{d}$,$\delta$,$\rho$ and
$\epsilon$ as follows
\begin{enumerate}
\item if $K$ is a quadratic extension of $F$ then $\mathfrak{r}$ is the integral closure of $\mathfrak{g}$ in $K$,$\mathfrak{q}$ its maximal ideal,$\mathfrak{d}$ the different of $K$
relative to $F$,$\delta \in K$ that generates the different,$\rho$
the non-trivial element of $Gal(K/F)$ and $\epsilon=1$,
\item If $K=F \times F$, then $\mathfrak{r}=\mathfrak{d}=\mathfrak{g} \times \mathfrak{g}$,$\mathfrak{q}=\mathfrak{p} \times \mathfrak{p}$,,$\delta=1$,
$\rho$ is the automorphism of $K$ defined as $(x,y)^{\rho}=(y,x)$
and $\epsilon=1$.
\end{enumerate}

We introduce the following notations
\[S=S^n(\epsilon)=\{h \in K_n^n | h^*=\epsilon h
\},\,\,\,S(\mathfrak{a}):=S \cap (\mathfrak{r}\mathfrak{a})_n^n
\]
where $\mathfrak{a}$ is an $\mathfrak{r}$ or $\mathfrak{g}$ ideal.
Also we introduce the set of matrices
\[
T=T^n(\epsilon)=\{ x \in S^n(\epsilon)| tr(S(\mathfrak{r})x) \subset
\mathfrak{g})\}
\]

Now we extend the definition of the denominator of a matrix $x \in
K_n^n$ defined above for the case where $K$ is not a field as
follows. If $K=F \times F$ then for $x =(y,z) \in F_n^n$ we define
$\nu_0(x)= \nu_0(y)e + \nu_0(z)e'$ where $e=(1,0)$ and $e'=(0,1)$.
Then $\nu_0(x)$ is an $\mathfrak{r}$-ideal. Shimura shows in
\cite{Shimurabook1} that in both cases ($K$ being a field or not) we
have that $\nu_0(\sigma) = (\mathfrak{g} \cap
\nu_0(\sigma))\mathfrak{r}$ for $ \sigma \in S$. We then define for
$\sigma \in S$ the quantity $\nu[\sigma]:= N(\mathfrak{g} \cap
\nu_0(\sigma))$. Given an element $\zeta \in T^n$ we consider the
formal Dirichlet series
\[
\alpha_\zeta(s):= \sum_{\sigma \in S/S(\mathfrak{r})}\chi(tr(\zeta
\sigma))\nu[\sigma]^{-s}
\]
For $\sigma \in S$ we define the non-negative integer $k(\sigma)$ by
$\nu[\sigma]=q^{k(\sigma)}$ and introducing the indeterminant $t$ we
consider the formal series
\[
A_{\zeta}(t)=\sum_{\sigma \in S/S(\mathfrak{r})}\chi(tr(\zeta
\sigma))t^{k(\sigma)}
\]
such that $A_{\zeta}(q^{-s})=\alpha_{\zeta}(s)$. As Shimura explains
we have for $\gamma \in GL_n(\mathfrak{r})$ that
\[
A_{(\gamma\zeta \gamma^*)}(t) = A_{\zeta}(t)
\]
hence we may assume that $\zeta$ is equal to $0$ or equal to $diag
[\xi\,\,\, 0]$ for $\xi \in T^r \cap GL_r(K)$ where $r$ the rank of
$\zeta$. The following theorem is proved in \cite[page
104]{Shimurabook1},

\begin{thm} Let $\zeta \in T^n$ and let $r$ be the rank of
$\zeta$. Suppose that $\zeta =0$ or $\zeta = diag[\xi\,\,0]$ with
$\xi \in T^r \cap GL_r(K)$. Then $A_{\zeta}(t)=f_\zeta(t)
g_\zeta(t)$ where $g_\zeta \in \Z[t]$ with $g_\zeta(0)=1$ and
$f_\zeta$ a rational function given as follows:
\[
f_{\zeta}(t)=\frac{\prod_{i=1}^n(1-\tau^{i-1}q^{i-1}t)}{\prod_{i=1}^{n-r}(1-\tau^{n+1}q^{n+i-1}t)}
\]
where
\[
\tau^i:=\left\{
          \begin{array}{ll}
            1, & \hbox{if $i$ is even or $K=F \times F$;} \\
            -1, & \hbox{if $i$ is odd, $\mathfrak{d}=\mathfrak{r}$, and $K \neq F \times F$;} \\
            0, & \hbox{if $i$ is odd and $\mathfrak{d}\neq \mathfrak{r}$.}
          \end{array}
        \right.
\]
\end{thm}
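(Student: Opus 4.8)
The plan is to carry out the standard ``confluent hypergeometric series'' computation of $A_\zeta(t)$ by induction on the size $n$, reducing at each step to an additive-character (Gauss) sum whose value is exactly what the symbol $\tau^i$ encodes. First, the invariance $A_{(\gamma\zeta\gamma^*)}(t)=A_\zeta(t)$ for $\gamma\in GL_n(\mathfrak r)$ noted just above the statement lets us assume from the outset that $\zeta=0$ or $\zeta=\mathrm{diag}[\xi,0_{n-r}]$ with $\xi\in T^r\cap GL_r(K)$. Since the term $\sigma=0$ contributes $1$ we have $A_\zeta(0)=1$, and $f_\zeta(0)=1$ as well; so once we show $g_\zeta:=A_\zeta/f_\zeta$ is a polynomial in $t$, the normalization $g_\zeta(0)=1$ is automatic. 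Thus the substance is: (i) $A_\zeta$ is rational in $t$ with denominator dividing $\prod_{i=1}^{n}(1-\tau^{i-1}q^{i-1}t)$; (ii) after clearing that denominator, the numerator is divisible by $\prod_{i=1}^{n-r}(1-\tau^{n+1}q^{n+i-1}t)$; and (iii) the resulting quotient has integer coefficients.

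I would first dispose of the rank-zero input: for $\zeta=0$ one has $A_0^{(n)}(t)=\sum_{\sigma\in S^n/S^n(\mathfrak r)}t^{k(\sigma)}$, the local zeta function of the lattice of Hermitian matrices, which one evaluates by stratifying $\sigma$ according to its elementary divisors (equivalently, the structure of the denominator ideal $\nu_0(\sigma)$) and counting lattice points level by level; this yields a closed product of the predicted form. For $\zeta=\mathrm{diag}[\xi,0_{n-r}]$ with $r<n$ I would peel off one zero index: writing $\sigma\in S^n/S^n(\mathfrak r)$ in block form with a distinguished last row and column, one has $\mathrm{tr}(\zeta\sigma)=\mathrm{tr}(\zeta'\sigma')$ where $\sigma'$ is the upper-left $(n-1)\times(n-1)$ corner and $\zeta'=\mathrm{diag}[\xi,0_{n-1-r}]$, so the sum over the last off-diagonal block is a sum of $\chi$ over a lattice (an ideal indicator times a power of $q$) and the sum over the last diagonal entry is a Gauss sum over $\mathfrak r/\mathfrak q$. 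That Gauss sum is an elementary geometric sum when $K=F\times F$, a quadratic Gauss sum contributing the sign $-1$ at the ``odd'' steps when $K/F$ is inert and unramified, and identically zero when $K/F$ is ramified --- which is precisely the definition of $\tau^i$. Packaging these evaluations gives a recursion expressing $A_\zeta^{(n)}(t)$ in terms of $A_{\zeta'}^{(n-1)}$ (with $t$ rescaled by a power of $q$), one geometric factor for the step, and a polynomial correction, together with a separate recursion handling the full-rank base cases $n=r$.

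Iterating then finishes the proof: the $n-r$ rank-zero steps contribute the denominator $\prod_{i=1}^{n-r}(1-\tau^{n+1}q^{n+i-1}t)$, and the remaining $r$ steps down to the full-rank base case, combined with the rank-zero count, contribute the numerator $\prod_{i=1}^{n}(1-\tau^{i-1}q^{i-1}t)$ times a polynomial assembled from the $g_\xi$ of the base cases and from the corrections collected along the way; that polynomial is $g_\zeta$. The step I expect to be the main obstacle is the recursion itself: tracking the exact $q$-exponents and the placement of the $\tau$-factors uniformly across the split, inert-unramified, and ramified cases, verifying that in the ramified case the vanishing Gauss sums genuinely remove the offending denominator factors rather than merely displacing them, and proving that the coefficients of $A_\zeta$ --- a priori only algebraic integers, being sums of values of the additive character $\chi$ --- are in fact rational integers, so that $g_\zeta\in\mathbb Z[t]$. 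The most transparent route to that last point is to rewrite each coefficient $\sum_{k(\sigma)=m}\chi(\mathrm{tr}(\zeta\sigma))$ as the count of solutions of a congruence, which makes integrality evident and is compatible with the recursion.
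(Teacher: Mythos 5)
The paper does not give its own proof of this theorem: it is quoted directly from Shimura's book (the paper points to \cite[page 104]{Shimurabook1}), so there is no in-paper argument against which to measure your sketch. What you propose is an outline in the right general spirit (reduce to block-diagonal $\zeta$ by $GL_n(\mathfrak r)$-invariance, handle $\zeta=0$ by a lattice-point count, then induct on $n$), but it is not a proof, and one step of it is misdescribed in a way that matters.

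When $\zeta=\mathrm{diag}[\xi,0_{n-r}]$ with $r<n$, the quantity $\mathrm{tr}(\zeta\sigma)$ depends only on the upper-left $r\times r$ block of $\sigma$. Consequently the additive character $\chi$ does \emph{not} see the last row, last column, or last diagonal entry being peeled off; those entries contribute to $A_\zeta$ only through the exponent $k(\sigma)$, i.e.\ through the denominator ideal $\nu_0(\sigma)$. Your claims that "the sum over the last off-diagonal block is a sum of $\chi$ over a lattice" and that "the sum over the last diagonal entry is a Gauss sum over $\mathfrak r/\mathfrak q$" are therefore incorrect as stated: no Gauss sum in $\chi$ appears there. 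The $\tau^i$'s (in particular the $-1$ in the inert unramified case and the $0$ in the ramified case) come from the combinatorics of how $k(\sigma)$ depends on the peeled row/column and the diagonal entry, i.e.\ from counting Hermitian cosets with prescribed elementary divisors, not from a character-sum cancellation in the inner sum. Moreover $k(\sigma)$ does not decompose additively over the block structure, which is exactly what makes the recursion you defer to "the main obstacle" nontrivial. Since that recursion (and the full-rank base case) is precisely the content of the theorem, and since you both leave it as a black box and attribute the $\tau$-factors to the wrong mechanism, the proposal falls short of a proof; the honest assessment is that it identifies a plausible strategy but does not establish the factorization $A_\zeta=f_\zeta g_\zeta$, the explicit form of $f_\zeta$, or the integrality and normalization of $g_\zeta$.
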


\begin{prop}We consider the polynomial
$g_{\beta,m(A),v}(t) \in \mathbb{Z}[t]$ in the $K'$-setting i.e.
$\beta \in S^n(K')$, $A \in GL_n(\mathbb{A}_{f,K'})$ and $v$ a
finite place of $K'$. Let now $\gamma \in \Gamma = Gal(K'/K)$. Then
we have
$g_{\beta,m(A),v}(t)=g_{\beta^{\gamma},m(A^\gamma),v^{\gamma}}(t)$.
\end{prop}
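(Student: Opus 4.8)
The plan is to trace the polynomial $g_{\beta,m(A),v}$ back to Shimura's formal power series $A_{\zeta}(t)$ and to the factorization $A_{\zeta}(t)=f_{\zeta}(t)g_{\zeta}(t)$ recorded in the theorem above, and then to observe that the automorphism $\gamma\in\Gamma=\mathrm{Gal}(K'/K)$ transports every piece of the local data entering that construction compatibly. Concretely, I would first recall how $g_{\beta,m(A),v}$ is assembled. By Shimura's formula for the spherical local Fourier coefficient (the Proposition attributed to Shimura above), together with the transformation rule $W_{\beta,v}(m(A)g_v,\phi_v,s)=|N\circ\det A|_v^{n/2-s}\chi_v(\det A)\,W_{\transpose{\bar{A}}\beta A,v}(g_v,\phi_v,s)$, the polynomial $g_{\beta,m(A),v}(t)$ depends only on the local Hermitian matrix $\zeta_v$ attached to $\transpose{\bar{A}}\beta A$ at $v$ in Shimura's normalization (a $\mathfrak{g}_v^{\times}$-multiple of $\delta_v\transpose{\bar{A}}\beta A$, with $\delta_v$ a generator of the local different), and in fact $g_{\beta,m(A),v}(t)=g_{\zeta_v}(t)$ in the notation of the theorem, after using the invariance $A_{u\zeta u^{*}}(t)=A_{\zeta}(t)$ for $u\in GL_n(\mathfrak{r}'_v)$ to bring $\zeta_v$ into the shape $0$ or $\mathrm{diag}[\xi,0]$. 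So it suffices to prove $g_{\zeta_v}(t)=g_{\zeta^{\gamma}_{v^{\gamma}}}(t)$, where $\zeta^{\gamma}_{v^{\gamma}}$ is the matrix attached in the same way to $\transpose{\overline{A^{\gamma}}}\beta^{\gamma}A^{\gamma}$ at $v^{\gamma}$.

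Next, $\gamma$ induces an isomorphism of completions $\gamma:K'_v\xrightarrow{\ \sim\ }K'_{v^{\gamma}}$, carrying $\mathfrak{r}'_v$ onto $\mathfrak{r}'_{v^{\gamma}}$, the different generator $\delta_v$ onto a different generator $\delta_{v^{\gamma}}$, the self-dual additive character of $K'_v$ onto that of $K'_{v^{\gamma}}$, the lattice $S^n(\mathfrak{r}'_v)$ onto $S^n(\mathfrak{r}'_{v^{\gamma}})$, and $\zeta_v$ onto $\zeta^{\gamma}_{v^{\gamma}}$; being an isomorphism it also preserves the residue cardinality $q$ and the invariant $\nu[\,\cdot\,]$, hence the exponents $k(\sigma)$. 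Comparing the defining sums term by term therefore gives an equality of formal series $A_{\zeta_v}(t)=A_{\zeta^{\gamma}_{v^{\gamma}}}(t)$ in $\Z[[t]]$.

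It then remains to check that $f_{\zeta_v}(t)=f_{\zeta^{\gamma}_{v^{\gamma}}}(t)$ as rational functions. By the formula in the theorem, $f_{\zeta}$ depends only on $q$, on the rank $r$ of $\zeta$, and on the symbol $\tau$, which itself depends only on whether the quadratic \'etale algebra $K'_v/F'_{v_0}$ (with $v_0:=v|_{F'}$) is split and whether it is unramified. The rank is preserved since $\gamma$ is a ring isomorphism. For $\tau$, the point is that $\gamma$ commutes with complex conjugation $\rho$: since $[F':F]=p$ is odd and $[K:F]=2$, the extensions $F'/F$ and $K/F$ are linearly disjoint over $F$ and $\mathrm{Gal}(K'/F)\cong\Gamma\times\langle\rho\rangle$, so $\gamma\rho=\rho\gamma$; consequently $\gamma(F')=F'$, $\gamma$ sends $v_0$ to $v_0^{\gamma}$, and it restricts to an isomorphism $K'_{v_0}\xrightarrow{\sim}K'_{v_0^{\gamma}}$ compatible with the involutions. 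Hence $v_0$ is split (resp.\ unramified) in $K'/F'$ if and only if $v_0^{\gamma}$ is, and $\tau$ is unchanged. Dividing the two equal $A$'s by the two equal $f$'s and invoking the uniqueness part of the theorem ($g_{\zeta}\in\Z[t]$ with $g_{\zeta}(0)=1$) yields $g_{\zeta_v}(t)=g_{\zeta^{\gamma}_{v^{\gamma}}}(t)$, which is the assertion.

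I expect the only genuinely delicate point to be the bookkeeping in the first two steps: fixing precisely how the auxiliary unit and the different generator $\delta_v$ enter $\zeta_v$ in Shimura's normalization, and verifying that the $GL_n(\mathfrak{r}'_v)$-reduction of $\zeta_v$ to $\mathrm{diag}[\xi,0]$ is $\gamma$-equivariant. All of this is formal once one notes that $\gamma$ fixes $K$, hence $F$ and $\mathfrak{g}$, and merely permutes the places of $K'$ lying above a given place of $F$.
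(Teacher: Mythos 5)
Your proof is correct and takes essentially the same route as the paper: reduce $g_{\beta,m(A),v}$ to Shimura's $g_\zeta$ via $\zeta_v:=\omega_v A_v^*\beta A_v$, show $A_{\zeta_v}(t)=A_{\zeta^{\gamma}_{v^{\gamma}}}(t)$ by transporting the local data under $\gamma$, match the $f$-factors (rank, $q$, and $\tau$), and divide. The paper presents this via a case split on whether $\gamma$ fixes $v$, and in the fixed case it is explicit about the one point you leave implicit: after transport the additive character $\mathbf{e}_v\circ\gamma^{-1}$ (equivalently, the generator $\omega_v^\gamma$ of the local different) need not coincide with the original one, and one must invoke Shimura's observation that $A_\zeta(t)$ is independent of this choice; spelling that out would close the small elision in your term-by-term comparison. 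Your additional check that $\gamma$ commutes with $\rho$ (hence preserves the symbol $\tau$) is a welcome detail the paper leaves to the reader.
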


\begin{proof}We first note that it follows from \cite[page
156]{Shimurabook1} that
\[
g_{\beta,m(A),v}(t)=g_{\zeta}(t)
\]
with $\zeta:=\omega_v A^*_v \beta A_v$ with $\omega_v$ a generator
of $\mathfrak{d}(F'/\mathbb{Q})_v$ and $g_\zeta(t)$ is defined as
above for $K'_v$. We consider the following two cases,
\begin{enumerate}
\item \textit{The element $\gamma$ fixes $v$:} In this case we have to show that
$g_{\zeta^\gamma}(t)=g_{\zeta}(t)$. Since the ranks of $\zeta$ and
$\zeta^\gamma$ are the same we have that
$f_{\zeta}(t)=f_{\zeta^\gamma}(t)$. So in order to conclude our
claim it is enough to show that $A_{\zeta}(t)=A_{\zeta^\gamma}(t)$.
By definition
\[
A_{\zeta^\gamma}(t)=\sum_{\sigma \in
S/S(\mathfrak{r})}\mathbf{e}_v(d^{-1}_{F_v}tr(\zeta^\gamma
\sigma))t^{k(\sigma)}
\]
That implies
\[
A_{\zeta^\gamma}(t)=\sum_{\sigma \in
S/S(\mathfrak{r})}\mathbf{e}_v((d^{-1}_{F_v})^\gamma tr(\zeta
\sigma^{\gamma^{-1}}))t^{k(\sigma)}
\]
But we have $k(\sigma)=k(\sigma^{\gamma^{-1}})$ as
$\nu[\sigma^{\gamma^{-1}}]=N(\mathfrak{g} \cap
\nu_0(\sigma^{-1}))=N(\mathfrak{g} \cap \nu_0(\sigma))=\nu[\sigma]$.
That means
\[
A_{\zeta^\gamma}(t)=\sum_{\sigma \in
S/S(\mathfrak{r})}\mathbf{e}_v((d^{-1}_{F_v})^\gamma tr(\zeta
\sigma^{\gamma^{-1}}))t^{k(\sigma^{\gamma^{-1}})}=A_{\zeta}(t)
\]
which allows us to conclude the proof in this case as $\gamma^{-1}$
permutes the set $S/S(\mathfrak{r})$ and $A_{\zeta}(t)$ is
independent of the additive character $\textbf{e}_v$ picked (i.e.
makes no difference whether we pick $\textbf{e}_v(d^{-1}_F \cdot)$
or $\textbf{e}_v((d^{-1}_{F_v})^\gamma \cdot)$.

\item \textit{The element $\gamma$ does not fix $v$:} We fix an identification of $K'_v$ and $K'_{v^\gamma}$ and write $x^\gamma$ for the image in
$K'_{v^\gamma}$ of an element $x\in K'_v$ with respect to this
identification. We write
\[
g_{\beta,m(A),v}(t)=g_{\zeta_v,v}(t)
\]
with $\zeta_v:=\omega_v A^*_v \beta A_v$ with $\omega_v$ a generator
of $\mathfrak{d}(F'/\mathbb{Q})_v$ and $g_{\zeta_v,v}(t)$ is defined
for $K'_v$. Similarly we have
\[
g_{\beta^\gamma,m(A^\gamma),v^\gamma}(t)=g_{\zeta_{v^\gamma,v^\gamma}}(t)
\]
with $\zeta_{v^\gamma}:=\omega_{v^\gamma} {A_v^\gamma}^*
\beta^\gamma A^\gamma_v$ with $\omega_{v^\gamma}$ a generator of
$\mathfrak{d}(F'/\mathbb{Q})_{v^\gamma}$ and
$g_{\zeta_{v^\gamma,v^\gamma}}(t)$ is defined for $K'_{v^\gamma}.$
We need to show that
\[
g_{\zeta_v,v}(t)=g_{\zeta_{v^\gamma,v^\gamma}}(t)
\]
But the rank of $\zeta_v$ is equal to the rank of $\zeta_{v^\gamma}$
and hence $f_{\zeta_v,v}(t)=f_{\zeta_{v^\gamma},v^\gamma}(t)$. So it
is enough to show that $A_{\zeta_v}(t)=A_{\zeta_{v^\gamma}}(t)$,
which follows from the identification of $K_v'$ with
$K'_{v^\gamma}$.
\end{enumerate}
\end{proof}

Now we can prove the following lemma.

\begin{prop}\label{coefficients2}Let
$m=m(A)$ with $A$ in $GL_n(\mathbb{A}^{(p)}_{K'}) \times GL_n(O_{K'}
\otimes \Z_p)$ be an element in the Levi component of $P$. Then for
all $\gamma \in \Gamma$ we have,
\[
n_{\mathfrak{a}^\gamma}(\beta^\gamma,m(A^\gamma))=
n_{\mathfrak{a}}(\beta,m(A))
\]
\end{prop}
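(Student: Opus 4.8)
The plan is to deduce this from the immediately preceding proposition on the polynomials $g_{\beta,m(A),v}$, everything else being bookkeeping about how the integers $n_{\mathfrak{a}}$ are assembled from local data.

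First I would make the definition of $n_{\mathfrak{a}}(\beta,m(A))$ explicit. By construction $P_{\beta,m(A)}=\prod_{v\notin S}g_{\beta,m(A),v}(\varpi_v)$, where for a finite place $v\notin S$ we write $g_{\beta,m(A),v}(t)=\sum_{k\geq 0}a_{k,v}(\beta,m(A))\,t^{k}\in\mathbb{Z}[t]$ and read $\varpi_v$ as the prime ideal $\mathfrak{p}_v$. By Shimura's theorem quoted above, together with the identity $g_{\beta,m(A),v}(t)=g_{\zeta}(t)$ for $\zeta=\omega_v A_v^{*}\beta A_v$ used in the proof of the preceding proposition, one has $a_{0,v}(\beta,m(A))=g_{\beta,m(A),v}(0)=1$; hence the product over $v$ is locally finite, and collecting the contribution of an ideal $\mathfrak{a}=\prod_v\mathfrak{p}_v^{k_v}$ prime to $S$ gives
\[
n_{\mathfrak{a}}(\beta,m(A))=\prod_{v\notin S}a_{\mathrm{ord}_v(\mathfrak{a}),\,v}(\beta,m(A)),
\]
a product all but finitely many of whose factors equal $1$. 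So it is enough to compare the coefficient sequences of the relevant polynomials place by place.

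The substantive input is the preceding proposition, which gives the equality of polynomials $g_{\beta,m(A),v}(t)=g_{\beta^{\gamma},m(A^{\gamma}),v^{\gamma}}(t)$ for every $\gamma\in\Gamma=\mathrm{Gal}(K'/K)$ and every finite place $v$; comparing coefficients of $t^{k}$ yields $a_{k,v}(\beta,m(A))=a_{k,v^{\gamma}}(\beta^{\gamma},m(A^{\gamma}))$ for all $k$. I would also record that the finite set $S$ is $\Gamma$-stable: each of its defining conditions singles out the set of all places lying over a $\Gamma$-invariant set of places of the base field (the places over $p$, the archimedean places, the places below the conductor of the relevant Galois-invariant Hecke character, the ``$t_v=2$'' places of $V\otimes_K K'$, and — using the hypothesis that $F'/F$ and $K/F$ have disjoint ramification, so that $\mathfrak{d}_{K'/F'}=\mathfrak{d}_{K/F}\mathfrak{r}'$ — the places ramifying in $K'/F'$). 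Consequently $v\mapsto v^{\gamma}$ is a bijection of the set of finite places $v\notin S$.

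Finally I would reindex. Since $\gamma$ carries $\mathfrak{p}_v$ to $\mathfrak{p}_{v^{\gamma}}$ we have $\mathrm{ord}_{v^{\gamma}}(\mathfrak{a}^{\gamma})=\mathrm{ord}_v(\mathfrak{a})$, so, substituting $w=v^{\gamma}$,
\[
n_{\mathfrak{a}^{\gamma}}(\beta^{\gamma},m(A^{\gamma}))
=\prod_{w\notin S}a_{\mathrm{ord}_w(\mathfrak{a}^{\gamma}),\,w}(\beta^{\gamma},m(A^{\gamma}))
=\prod_{v\notin S}a_{\mathrm{ord}_v(\mathfrak{a}),\,v^{\gamma}}(\beta^{\gamma},m(A^{\gamma}))
=\prod_{v\notin S}a_{\mathrm{ord}_v(\mathfrak{a}),\,v}(\beta,m(A)),
\]
and the last product equals $n_{\mathfrak{a}}(\beta,m(A))$. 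I do not expect a real obstacle here: the only genuine content is the preceding proposition on $g_{\beta,m(A),v}$, and what remains is organisational — pinning down the identity $n_{\mathfrak{a}}=\prod_v a_{\mathrm{ord}_v(\mathfrak{a}),v}$ and checking the $\Gamma$-stability of $S$ that makes the reindexing $v\mapsto v^{\gamma}$ legitimate.
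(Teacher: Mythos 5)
Your proposal is correct and takes essentially the same route as the paper: the paper's proof also unwinds $n_{\mathfrak{a}}(\beta,m(A))=\prod_j n_{\mathfrak{q}_j^{m_j}}(\beta,m(A))$ into local contributions, reduces to a single prime power, and then invokes the preceding proposition on the polynomials $g_{\beta,m(A),v}$. Your explicit check that $S$ is $\Gamma$-stable, so that $v\mapsto v^\gamma$ permutes the places outside $S$, is a detail the paper leaves implicit but is needed to make the reindexing legitimate.
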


\begin{proof}
Let as write $\mathfrak{q}$ for the prime ideal of $K'$ that
corresponds to the place $v$ of $K'$. Then we note that
$n_{\mathfrak{q}^m}(\beta,m(A))$ is the $m$-th power coefficient of
the polynomial $g_{\zeta,v}(t)$ with $\zeta:=\omega_v A^*_v \beta
A_v$ with $\omega_v$ a generator of $\mathfrak{d}(F'/\mathbb{Q})_v$.
Moreover from its very definition we have that
$n_{\mathfrak{a}}(\beta,m(A))=\prod_{j}n_{\mathfrak{q}_j^{m_j}}(\beta,m(A))$
for $\mathfrak{a}=\prod_j\mathfrak{q}_j^{m_j}$. But that means that
we need to prove the statement for $\mathfrak{a}$ a powers of a
prime ideal $\mathfrak{q}$, that is to show
$n_{\mathfrak{q}^m}(\beta,m(A))=n_{{\mathfrak{q}^\gamma}^m}(\beta^\gamma,m(A^\gamma))$.
But this follows directly from the previous proposition.
\end{proof}

We need in addition to understand how the coefficients of the
polynomials $g_{\beta,m(A),v}$ behave with respect to $K'$ and $K$.
We have the following proposition

\begin{prop}\label{coefficients} Let $\beta \in S_K$ be positive definite and $A \in GL_n(\mathbb{A}_K)$.
Then we have the congruences
\[
n_{\mathfrak{q}^j}(\beta,m(A)) \equiv
n'_{{\mathfrak{q}'}^j}(\beta,m(A)) \mod{p}
\]
where $\mathfrak{q}':=\mathfrak{q}O_{F'}$ for every prime ideal
$\mathfrak{q}$ of $F$ that is not in $S$.
\end{prop}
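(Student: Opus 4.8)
The plan is to reduce everything to Shimura's local polynomials $g_{\beta,m(A),v}(t)$ and then to split according to the decomposition of $\mathfrak{q}$ in $F'$. First I would observe that, $P_{\beta,m(A)}=\prod_{v\notin S}g_{\beta,m(A),v}(\varpi_v)$ being an Euler product, $\mathfrak{a}\mapsto n_{\mathfrak{a}}(\beta,m(A))$ is multiplicative and $n_{\mathfrak{q}^{j}}(\beta,m(A))$ is the coefficient of $t^{j}$ in $g_{\beta,m(A),\mathfrak{q}}(t)$, and likewise over $F'$; so it is enough to compare these polynomials. Since $F'/F$ is unramified outside $p$ and $S\supseteq\{v\mid p\}$, a prime $\mathfrak{q}\notin S$ is unramified in $F'/F$, hence ($[F':F]=p$ being prime) either splits completely or is inert in $F'$; moreover such a $\mathfrak{q}$ is unramified in $K/F$ and of residue characteristic $\ell\neq p$, two facts I will need. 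Writing $g_{\beta,m(A),v}(t)=g_{\zeta_v}(t)$ with $\zeta_v=\omega_v A_v^{*}\beta A_v$ and $\omega_v$ a generator of $\mathfrak{d}(F'/\mathbb{Q})_v$, as recalled above, and using that $A,\beta$ come from $K$, that $\mathfrak{d}(F'/F)$ is prime to $\mathfrak{q}$, and that $A_{\zeta}$ (hence $g_{\zeta}$) is insensitive to scaling $\zeta$ by a unit and to the choice of additive character of conductor $\mathfrak{g}$, I would check that at each place $v\mid\mathfrak{q}$ of $F'$ the matrix $\zeta_v$ may be taken equal to the matrix $\zeta_{\mathfrak{q}}$ attached to $\mathfrak{q}$ over $F$.

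In the split case $\mathfrak{q}O_{F'}=v_1\cdots v_p$ with each $F'_{v_i}$ canonically $F_{\mathfrak{q}}$, so by the previous paragraph $g_{\beta,m(A),v_i}(t)=g_{\beta,m(A),\mathfrak{q}}(t)$, whence
\[
n'_{(\mathfrak{q}O_{F'})^{j}}(\beta,m(A))=\prod_{i=1}^{p}n'_{v_i^{j}}(\beta,m(A))=\big(n_{\mathfrak{q}^{j}}(\beta,m(A))\big)^{p}\equiv n_{\mathfrak{q}^{j}}(\beta,m(A))\pmod p
\]
by Fermat's little theorem. This disposes of the split case immediately.

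The inert case is where the work is. Here $\mathfrak{q}O_{F'}=\mathfrak{q}'$ is prime, $F'_{\mathfrak{q}'}/F_{\mathfrak{q}}$ is unramified cyclic of degree $p$, and I would take $\zeta:=\zeta_{\mathfrak{q}}=\zeta_{\mathfrak{q}'}$ as above, which is fixed by a generator $\phi$ of the Galois group. Since $\mathfrak{q}$ is unramified in $K$, the local type of $K$ at $\mathfrak{q}$ (split, or unramified field extension) is inherited by $K'$ at $\mathfrak{q}'$, so in Shimura's factorisation $A_{\zeta}(t)=f_{\zeta}(t)g_{\zeta}(t)$ the rank of $\zeta$ and the signs $\tau^{i}$ are unchanged; the only difference is that the residue cardinality $q=[\mathfrak{g}_{\mathfrak{q}}:\mathfrak{p}_{\mathfrak{q}}]$ is replaced by $q^{p}$. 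As $q^{pm}\equiv q^{m}\pmod p$ for all $m$, the explicit formula for $f_{\zeta}$ gives $f_{\zeta}^{F'}(t)\equiv f_{\zeta}^{F}(t)\pmod p$ in $\mathbb{Z}[[t]]$. Since $f_{\zeta}^{F}(0)=1$, it then suffices to prove $A_{\zeta}^{F'}(t)\equiv A_{\zeta}^{F}(t)\pmod p$, after which dividing yields $g_{\zeta}^{F'}\equiv g_{\zeta}^{F}\pmod p$, i.e. $n'_{{\mathfrak{q}'}^{j}}(\beta,m(A))\equiv n_{\mathfrak{q}^{j}}(\beta,m(A))\pmod p$ for all $j$.

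To compare the Siegel series I would write $A_{\zeta}^{F'}(t)=\sum_{\sigma'\in S'/S'(\mathfrak{r}')}\chi'(tr(\zeta\sigma'))t^{k(\sigma')}$ (the $S'$, $S'(\mathfrak{r}')$ being the $F'$-analogues of Shimura's $S$, $S(\mathfrak{r})$), choosing $\chi':=\chi\circ Tr_{F'_{\mathfrak{q}'}/F_{\mathfrak{q}}}$, of conductor $\mathfrak{g}'$ as the extension is unramified. As $\zeta$ is $\phi$-fixed and the trace is $\phi$-invariant, $\phi$ permutes the index set preserving $\sigma'\mapsto\chi'(tr(\zeta\sigma'))$ and $\sigma'\mapsto k(\sigma')$; its orbits have length $1$ or $p$, and each orbit of length $p$ contributes a multiple of $p$, so $A_{\zeta}^{F'}(t)\equiv\sum_{\sigma'}\chi'(tr(\zeta\sigma'))t^{k(\sigma')}\pmod p$ with the sum over $\phi$-fixed classes. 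Because $p$ is a unit at $\mathfrak{q}$, averaging over an orbit shows every $\phi$-fixed class has a $\phi$-fixed representative, so these classes are exactly $S/S(\mathfrak{r})$ for $F_{\mathfrak{q}}$; a short computation with denominator ideals (again using that $\mathfrak{q}$ is unramified in $F'$) gives $k(\sigma)=k_{F}(\sigma)$ there, while on $F_{\mathfrak{q}}$ one has $\chi'(x)=\chi(px)$ and $A_{\zeta}^{F}$ is unchanged if $\chi$ is replaced by the conductor-$\mathfrak{g}$ character $\chi(p\,\cdot)$; hence the remaining sum is precisely $A_{\zeta}^{F}(t)$. The hard part, and the only real obstacle, is exactly this inert-case comparison of Siegel series over the degree-$p$ unramified extension with the one over the base: it goes through only because $\mathfrak{q}\nmid p$, which makes $p$ invertible on the $\ell$-adic modules in play and so permits the Galois-descent (orbit/averaging) step.
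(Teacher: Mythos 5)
Your argument is correct and follows essentially the same path as the paper: reduce to the local Siegel series $g_{\beta,m(A),v}(t)$, handle the split case by Fermat's little theorem, and handle the inert case by comparing $f_\zeta$ (via $q'=q^p$) and then $A_\zeta$ by grouping the index set $S'/S'(\mathfrak{r}')$ into Galois orbits, discarding the orbits of length $p$, and identifying the fixed classes with $S/S(\mathfrak{r})$ using that $p$ is invertible away from $S$. The only difference is that you are slightly more explicit than the paper in two places — the averaging argument showing that every $\phi$-fixed class has a $\phi$-fixed representative, and the check that $k_{F'}(\sigma)=k_F(\sigma)$ on such $\sigma$ — both of which the paper's proof uses implicitly.
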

\begin{proof}We consider the case where $\mathfrak{q}$ splits in $F'$ and where
it inerts.

\textbf{$\mathfrak{q}$ splits in $F'$:} We start with the splitting
case. We write $\mathfrak{q}'=\prod_{i=1}^p\mathfrak{q}_i$ for the
ideals above $\mathfrak{q}$ and $v'_i$ for the corresponding places.
Then by the considerations above (as $\beta,A$ are coming from $K$)
we have that $g_{\beta,m(A),v'_i}(t)=g_{\beta,m(A),v}(t)$. Hence in
particular we conclude that $n_{\mathfrak{q}^j}(\beta,m(A)) =
n_{\mathfrak{q}_i^j}(\beta,m(A))$ for all $i$. But then
\[
n_{{\mathfrak{q}'}^j}(\beta,m(A))=\prod_{i}n_{\mathfrak{q}_i^j}(\beta,m(A))=n_{\mathfrak{q}_i^j}(\beta,m(A))^p
\]
\[
=n_{\mathfrak{q}^j}(\beta,m(A))^p\equiv
n_{\mathfrak{q}^j}(\beta,m(A)) \mod{p}
\]
Hence we conclude the congruences in this case.

\textbf{$\mathfrak{q}$ inerts in $F'$:} As before we write $v$ for
the place of $F$ that corresponds to $\mathfrak{q}$ and $v'$ for the
one that corresponds to $\mathfrak{q}'$, Moreover only for this
proof we set $F:=F_v$ and $F':=F'_{v'}$ and hence $[F':F]=p$ and it
is an unramified extension, since we assume that $v$ is not a bad
place. We first show that
\[
f_\zeta(t) \equiv f'_{\zeta}(t) \mod{p},
\]
for $\zeta \in S_K \subset S_{K'}$ and of full rank $n$. We note
that for the cases that we consider this is always the case as
$\beta$ is always a positive definite hermitian matrix. Indeed in
this case we have that
\[
f'_{\zeta}(t)=\prod_{i=1}^n(1-\tau^{i-1}{q'}^{i-1}t)=\prod_{i=1}^n(1-\tau^{i-1}{q}^{p(i-1)}t)\equiv
\prod_{i=1}^n(1-\tau^{i-1}{q}^{i-1}t)=f_\zeta(t) \mod{p}
\]
as $q'=q^p$. That is $\tilde{f}_\zeta(t)=\tilde{f}'_\zeta(t) \in
\mathbb{F}_p[t]$ where tilde indicates reduction modulo $p$. Now we
claim that also
\[
A_\zeta(t)\equiv A'_\zeta(t) \mod{p}
\]
We note that in the case that we consider with $\zeta$ of full rank
$n$ these are polynomials in $\mathbb{Z}[t]$. Recalling the
definitions we have
\[
A'_{\zeta}(t)=\sum_{\sigma' \in S'/S'(\mathfrak{r}')}
\mathbf{e}_{v'}(d^{-1}_{F'}tr(\zeta\sigma'))t^{k[\sigma']}
\]
But then as $\zeta \in S_{K}$ and since we may pick $d_{F'}=d_{F}
\in F \subset F'$ we have that
\[
\mathbf{e}_{v'}(d^{-1}_{F'}tr(\zeta\sigma'))=\mathbf{e}_{v'}(d^{-1}_{F'}\sum_{i,j}(\zeta_{ij}\sigma'_{ji}))=\mathbf{e}_{v}(d^{-1}_{F}\sum_{i,j}(\zeta_{ij}Tr_{K'/K}(\sigma'_{ji})))=
\mathbf{e}_{v}(d^{-1}_F tr(\zeta Tr_{K'/K}(\sigma')))
\]
Hence we have
\[
A'_{\zeta}(t)=\sum_{\sigma' \in S'/S'(\mathfrak{r}')}
\mathbf{e}_{v}(d^{-1}_F tr(\zeta Tr_{K'/K}(\sigma')))t^{k[\sigma']}
\]
But then since $Tr_{K'/K}(\sigma') = Tr_{K'/K}({\sigma'}^\gamma)$
and $k[\sigma']=k[{\sigma'}^\gamma]$ for $\gamma \in Gal(K'/K)$ we
have that
\[
A'_{\zeta}(t) \equiv \sum_{\sigma \in S/S(\mathfrak{r})}
\mathbf{e}_{v}(d^{-1}_F tr(\zeta Tr_{K'/K}(\sigma)))t^{k[\sigma]}
\mod{p}
\]
after collecting the $\gamma$ orbits of order $p$. The last sum is
equal now to
\[
\sum_{\sigma \in S/S(\mathfrak{r})} \mathbf{e}_{v}(p d^{-1}_F
tr(\zeta \sigma))t^{k[\sigma]}=A_{\zeta}(t)
\]
where the last equality follows from the fact that $p$ is a unit in
$\mathfrak{r}$ (recall that we consider places not in $S$ and $p$ is
in $S$) and moreover the $A_{\zeta}(t)$ is independent of the
character $\mathbf{e}_v$ used \cite[page 104]{Shimurabook1}. That is
we conclude that $\tilde{A}'_\zeta(t)=\tilde{A}_\zeta(t)$ as
polynomials in $\mathbb{F}_p[t]$. Hence we obtain also that
\[
g_{\zeta}(t)\equiv g'_{\zeta}(t) \mod{p}
\]
which concludes the proposition also in that case.
\end{proof}

\begin{thm}[Congruences of Eisenstein series of $U(n,n)$]Let
$m=m(A)$ with $A$ in $GL_n(\mathbb{A}^{(p)}_K) \times GL_n(O_K
\otimes \Z_p)$ be an element in the Levi component of $P$. Let
$\epsilon$ be a locally constant $\mathbb{Z}_p$-valued function on
$G_{K'}(\mathfrak{c}p^\infty)$ with $\epsilon^\gamma=\epsilon$ for
all $\gamma \in \Gamma$. Let also $\psi$ be a Hecke character of $K$
of infinite type $k\Sigma$ and let $\psi':=\psi \circ N_{K'/K}$.
Then we have the congruences of Eisenstein series:
\[
Res^{K'}_K(E'(z,m(A),\epsilon\psi',\nu')) \equiv
Frob_p(E(z,m(A),\epsilon \circ ver \psi^p,\nu^p)) \mod{p}
\]
\end{thm}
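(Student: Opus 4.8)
The plan is to turn the asserted congruence into a congruence between $q$-expansions and then to compare Fourier coefficients. For this I would first invoke the $q$-expansion principle for $U(n,n)/F$ in its $p$-integral form, which is exactly what the hypothesis ``$p$ unramified in $F$'' buys us. Both sides are $p$-adic hermitian modular forms for $\Gamma_0(\mathfrak{b},\mathfrak{c})$ of weight $pk$: the left-hand side because $\Delta^{*}$ sends the weight $k$ form $E'$ on $Res_{F'/F}U(n,n)_{/F'}$ to weight $pk$ on $U(n,n)_{/F}$ (the algebraic incarnation of Lemma \ref{analytic q-expansion of pull-back }), and the right-hand side because $\psi^{p}$ has infinite type $pk\Sigma$, and $Frob_p$ preserves the weight. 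Since the $m(A)$ run over a set of representatives for the relevant cusps, it then suffices to check, for each such $A$ (taken, as in the statement, from $GL_n(\mathbb{A}^{(p)}_K)\times GL_n(O_K\otimes\Z_p)$) and each positive-definite Hermitian $\beta$, that the $\beta$-th Fourier coefficient of $Res^{K'}_K(E'(z,m(A),\epsilon\psi',\nu'))$ is congruent mod $p$ to that of $Frob_p(E(z,m(A),\epsilon\circ ver\,\psi^{p},\nu^{p}))$.

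For the left-hand side I would apply Lemma \ref{algebraic q-expansion of pull-back }: its $\beta$-th coefficient is $\sum_{\beta'\in L',\,Tr_{K'/K}(\beta')=\beta}E'_{\beta'}(m(A),\phi',\epsilon\psi',\nu')$, with $E'_{\beta'}$ given by the explicit formula of Section 3 over $K'$. The group $\Gamma=\mathrm{Gal}(K'/K)$ permutes the index set $\{\beta'\in L':Tr_{K'/K}(\beta')=\beta\}$, and I would split the sum into $\Gamma$-orbits. Because $A$ comes from $\mathbb{A}_K$ we have $A^{\gamma}=A$, and then Proposition \ref{coefficients2} (for the prime-to-$S$ polynomial factors $n_{\mathfrak{a}}$), the Claim on the $\Gamma$-invariance of $\epsilon^{(S)}$ (for the $\epsilon$- and $\nu$-parts), and the fact that the normalization $Q'(\beta',A,k)$ is a product over all places of $F'$, which $\gamma$ merely permutes, together show that $\beta'\mapsto E'_{\beta'}(m(A),\phi',\epsilon\psi',\nu')$ is constant on $\Gamma$-orbits. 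Hence every orbit of length $p$ contributes $p$ times a $\Z_p$-integer and dies mod $p$, while the fixed points are precisely the $\beta'\in S_K\cap L'$, for which $Tr_{K'/K}(\beta')=p\beta'$. Therefore, modulo $p$, the left-hand side equals $\sum_{\beta'\in S_K\cap L'}E'_{\beta'}(m(A),\phi',\epsilon\psi',\nu')\,q^{p\beta'}$.

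It remains to identify this with the right-hand side. Interpreting $Frob_p$ as the operator $q\mapsto q^{p}$ on $q$-expansions (pull-back along the Frobenius isogeny of the Mumford object), the coefficient of $q^{p\beta'}$ in $Frob_p(E(z,m(A),\epsilon\circ ver\,\psi^{p},\nu^{p}))$ is $E_{\beta'}(m(A),\phi,\epsilon\circ ver\,\psi^{p},\nu^{p})$, so everything reduces to the single congruence
\[
E'_{\beta'}(m(A),\phi',\epsilon\psi',\nu')\equiv E_{\beta'}(m(A),\phi,\epsilon\circ ver\,\psi^{p},\nu^{p})\pmod p,\qquad \beta'\in S_K,
\]
which I would verify factor by factor in the explicit formula. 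For the normalization: since $\beta'$ is defined over $F$ and $[F':F]=p$ one has $N_{F'/\mathbb{Q}}(\det\beta')=N_{F/\mathbb{Q}}(\det\beta')^{p}$, and together with Fermat's little theorem, the disjointness identity $\mathfrak{d}_{K'/F'}=\mathfrak{d}_{K/F}\mathfrak{r}'$ (so $T'\cap S_K$ and $T$, and $L_v$, $L'_{v'}$, match up) and $k\geq n$, this gives $Q'(\beta',A,k)\equiv Q(\beta',A,pk)\pmod p$. For the prime-to-$S$ Dirichlet series: by multiplicativity reduce to prime powers and apply Proposition \ref{coefficients}, $n'_{\mathfrak{q}'^{j}}(\beta',m(A))\equiv n_{\mathfrak{q}^{j}}(\beta',m(A))\pmod p$, while $\psi'=\psi\circ N_{K'/K}$ together with the class-field-theoretic fact that the transfer $ver$ corresponds to the inclusion of idele class groups $C_K\hookrightarrow C_{K'}$ gives $\epsilon\psi'(\mathfrak{q}\mathfrak{r}')=(\epsilon\circ ver)(\mathfrak{q})\cdot\psi^{p}(\mathfrak{q})$, so the two twisted series agree termwise. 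For the $p$-part and the remaining finite ``bad'' places one uses the elementary identity $N_{L/K}(x)=x^{[L:K]}$ for $x$ in the base field $K$ — valid at split, inert and ramified primes alike — which converts $\psi\circ N_{K'/K}$, $\nu'$, and the characters $\chi_j$ viewed over $K'$, evaluated on the $F$-valued arguments $\transpose{A(h_v)}\beta' B(h_v)^{-1}$ occurring in the $\Sigma_p$- and $S^{(p)}$-factors, into $\psi^{p}$, $\nu^{p}$, and $\epsilon\circ ver$.

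The step that will require the most care — and the main obstacle — is this last one: checking uniformly over all primes of $F$ above $p$, including those that ramify in $F'$ (the case the theorem explicitly allows), that the pair $(\psi',\nu')$ attached to $K'$, once restricted to the arguments that actually appear in the Fourier coefficients at $m(A)$ with $A$ defined over $K$, is precisely the $p$-th-power/transfer twist $(\psi^{p},\nu^{p},\epsilon\circ ver)$ of the data on the right. Carrying this out means unwinding the definition of the Harris--Li--Skinner local section $f_{\Phi}$ at $p$ (in particular the support $\Gamma_0(\mathfrak{p}^{t})$ and the normalization of $\tilde{\Phi}_{\mu}$, $\widehat{\Phi}_{\nu^{-1}\chi_1}$) and of the polarizations $M^{0},M^{-1}$ over $K$ and $K'$, and checking their compatibility under $-\otimes_{\mathfrak{r}}\mathfrak{r}'$; this is essentially the content encoded in the appendix's reformulation of Ritter--Weiss. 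A secondary, purely technical, point is to make sure the appeal to the $q$-expansion principle is legitimate, i.e. that both sides genuinely define $p$-adically integral hermitian modular forms of the same weight and level — but this follows from the integrality $E_\beta(m(A),\phi,\epsilon\psi)\in\Z[\epsilon\psi,\nu]$ noted in Section 3 and from Lemma \ref{algebraic q-expansion of pull-back }.
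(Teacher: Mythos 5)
Your argument is essentially the paper's: pass to $q$-expansions via Lemma \ref{algebraic q-expansion of pull-back }, kill the length-$p$ $\Gamma$-orbits using Proposition \ref{coefficients2}, the $\Gamma$-invariance of $\epsilon^{(S)}$, and the invariance of $Q$, and then match the $\Gamma$-fixed coefficients against those of $Frob_p(E)$ by means of Proposition \ref{coefficients}, $\psi'\circ ver=\psi^p$, and the $Q$-normalization comparison. The only organizational difference is that the paper runs the orbit argument on pairs $(\beta',\mathfrak{a})$ simultaneously, whereas you run it on $\beta'$ alone and then absorb the $\mathfrak{a}$-reduction into the termwise comparison — both work. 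One small correction: your closing remark attributes the compatibility of the Harris--Li--Skinner $p$-adic sections and polarizations under $\otimes_{\mathfrak{r}}\mathfrak{r}'$ to ``the appendix's reformulation of Ritter--Weiss''; the appendix deals only with the abstract deduction of the torsion congruences from congruences of integrals against $\Gamma$-invariant locally constant functions, and has nothing to say about the local sections at $p$ — that compatibility is handled inside the Claim of Section 5 (via $\nu^\gamma=\nu$ and the explicit form of $\Phi_\mu$), not in the appendix.
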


\begin{proof} If we write the Fourier expansion of $E_{k}'(z,m(A),\psi,\nu')$
as
\[
E(z,m(A),\epsilon\psi',\nu')=\sum_{\beta' \in
S_+'(A)}E'_{k,\beta'}(m(A),\epsilon\psi',\nu')q^{\beta'}
\]
then we have seen that
\[
Res^{K'}_K(E'(z,m(A),\epsilon\psi',\nu'))=\sum_{\beta \in
S(A)}\left(\sum_{Tr_{K'/K}(\beta')=\beta}E'_{\beta'}(m(A),\epsilon\psi',\nu')\right)q^\beta
\]
The group $\Gamma =Gal(K'/K)$ operates on the inner sum. In
particular we recall that if the write the function $\epsilon\psi'$
as a finite sum of characters $\epsilon\psi'=\sum_jc_j\chi_j$ then
\[
E'_{\beta'}(m(A),\epsilon\psi',\nu')=Q(\beta',A,k,\nu')
\sum_{(\mathfrak{a},S)=1}n'_{\mathfrak{a}}(\beta',m(A))\epsilon{\psi'}^{(S)}(\mathfrak{a},\beta')
\] with
\[\epsilon{\psi'}^{(S)}(\mathfrak{a},\beta')= \sum_j c_j
\chi_j(det(A))\left(\prod_{v \in
\Sigma_p}\chi_{1,v,j}(det(B(h_v)))\chi^{-1}_{2,v,j}(det(A(h_v)))\Phi_{\mu}(\transpose{A(h_v)}\beta
B(h_v)^{-1})\right)\times
\]
\[
\left(\prod_{v \in S^{(p)}}\chi_{v,j}^{-1}(det
u)\right)\chi_j^{(S)}(\mathfrak{a})
\]
The group $\Gamma$ operates on the pairs $(\beta',\mathfrak{a})$.
From Proposition \ref{coefficients} we have that
$n'_{\mathfrak{a}}(\beta',m(A))=n'_{\mathfrak{a}^\gamma}({\beta'}^\gamma,m(A))$.
In particular since we assume that $\psi^\gamma=\psi$ we have seen
that it implies that
$\epsilon{\psi'}^{(S)}(\mathfrak{a}^\gamma,{\beta'}^\gamma)=\epsilon{\psi'}^{(S)}(\mathfrak{a},\beta')$
and as it easily seen that
$Q({\beta'}^\gamma,A,k,\nu')=Q({\beta'},A,k,\nu')$ we have that if
the pair $(\beta',\mathfrak{a})$ is not fixed by $\gamma \in \Gamma$
then
\[
\sum_{\gamma \in
\Gamma}Q({\beta'}^\gamma,A,k,\nu')n'_{\mathfrak{a}^\gamma}({\beta'}^\gamma,m(A))\epsilon{\psi'}^{(S)}(\mathfrak{a}^\gamma,{\beta'}^\gamma)\equiv
0 \mod{p}
\]
In particular that means that
\[
Res^{K'}_K(E'(z,m(A),\epsilon\psi',\nu'))\equiv \sum_{\beta \in
S_+(A)}\left(Q(\beta,A,k,\nu') \sum_{(\mathfrak{a},S)=1, \mathfrak
{a} \subset
K}n'_{\mathfrak{a}}(\beta,m(A))\epsilon{\psi'}^{(S)}(\mathfrak{a},\beta)\right)q^{p\beta}
\mod{p}
\]
On the other hand we have
\[
Frob_p(E(z,m(A),\epsilon \circ ver \psi^p,\nu^p))=\sum_{\beta \in
S_+(A)}E_{pk,\beta}(m(A),\epsilon \circ ver \psi^p,\nu^p)q^{p\beta}
\]
Hence to conclude the congruences we have to show that
\[
Q(\beta,A,k,\nu') \sum_{(\mathfrak{a},S)=1, \mathfrak {a} \subset
K}n'_{\mathfrak{a}}(\beta,m(A))\epsilon{\psi'}^{(S)}(\mathfrak{a},\beta)
\equiv E_{\beta}(m(A),\epsilon \circ ver \psi^p,\nu^p) \mod{p}
\]
We recall that (note that $\psi^p=\psi' \circ ver$),
\[
E_{\beta}(m(A),\epsilon \circ ver \psi^p,\nu^p)=Q(\beta,A,pk,\nu^p)
\sum_{(\mathfrak{a},S)=1}n_{\mathfrak{a}}(\beta,m(A))\epsilon{\psi'}^{(S)}(
ver(\mathfrak{a}),\beta)
\]
But $\epsilon {\psi'}^{(S)}(ver(\mathfrak{a}),\beta) = \epsilon
{\psi'}^{(S)}(\mathfrak{a}O_{K'},\beta)$ and by Proposition
\ref{coefficients2} we have that
$n_{\mathfrak{a}}(\beta,m(A))=n'_{\mathfrak{a}}(\beta,m(A))$ for
$\mathfrak{a}$ an ideal of $K$. Finally we observe that
\[
Q(\beta,A,pk,\nu^p) \equiv Q(\beta,A,k,\nu') \mod{p}
\]
which allows us to conclude the proof of the theorem.
\end{proof}

\begin{cor} With the assumptions and notations as in the theorem
above we have for every $a \in G^\theta(\mathbb{A}_{F,\mathbf{h}})$
that
\[
Res^{K'}_K({E'}^{(a')}(z,m(A),\epsilon\psi',\nu')) \equiv
Frob_p(E^{(a)}(z,m(A),\epsilon \circ ver \psi^p,\nu^p)) \mod{p}
\]
where $a':=\imath(a)$ under the natural embedding
$\imath:G^{\theta}(\mathbb{A}_{F,\mathbf{h}}) \hookrightarrow
G^{\theta}(\mathbb{A}_{F',\mathbf{h}})$.
\end{cor}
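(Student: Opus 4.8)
The plan is to deduce the corollary directly from the ``Congruences of Eisenstein series of $U(n,n)$'' theorem by bookkeeping the extra scalar twist that distinguishes $E^{(a)}$ from the untwisted $E$. First I would recall that, by the definition of the superscript $(a)$ introduced before Theorem \ref{interpolation properties}, the series $E^{(a)}(z,m(A),\epsilon\circ ver\,\psi^p,\nu^p)$ is obtained from $E(z,m(A),\epsilon\circ ver\,\psi^p,\nu^p)$ by multiplication by the constant scalar $(\epsilon\circ ver\,\psi^p)\bigl(\det(r)/\det(a)\bigr)$, where $r$ is the fixed coset representative in $\mathcal{C}$ attached to $a$; and similarly ${E'}^{(a')}$ equals the $K'$-Eisenstein series $E'$ times $(\epsilon\psi')\bigl(\det(r')/\det(a')\bigr)$ for the corresponding representative $r'$. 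Using automorphy one reduces to $a$ lying in the fixed set $\mathcal{B}$ of representatives. Since $Res^{K'}_K$ is the pull-back map $\Delta^*$ of Lemma \ref{algebraic q-expansion of pull-back } and $Frob_p$ acts on $q$-expansions by $q^{\beta}\mapsto q^{p\beta}$ (as recalled inside the proof of that theorem), both operations are linear with respect to such constant scalars. Hence, granting the theorem applied to the untwisted $E$ and $E'$, it suffices to check that the two twisting scalars coincide.

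Next I would arrange the coset representatives compatibly. Since the representatives in $\mathcal{C}$ and $\mathcal{B}$ can be chosen with trivial components outside a fixed finite set of places, this property is preserved by the embedding $\imath$, so I would fix the $K'$-side representatives $\mathcal{C}'$, $\mathcal{B}'$ so that $r'=\imath(r)$ and $a'=\imath(a)$. Then $\det(r')/\det(a')$ is the image of $x:=\det(r)/\det(a)\in\mathbb{A}_{K,\mathbf{h}}^{\times}$ under the natural inclusion $\mathbb{A}_{K,\mathbf{h}}^{\times}\hookrightarrow\mathbb{A}_{K',\mathbf{h}}^{\times}$. Now I would evaluate the characters: because $[K':K]=p$ and $x$ sits diagonally, $N_{K'/K}(\imath(x))=x^{p}$, so $\psi'(\imath(x))=\psi\bigl(N_{K'/K}(\imath(x))\bigr)=\psi(x)^{p}=\psi^{p}(x)$, which is precisely the identification $\psi^{p}=\psi'\circ ver$ together with the class-field-theoretic fact (valid under our ramification hypothesis) that the transfer $ver$ on idele class groups is induced by $\imath$; and since $\epsilon$ is $\Gamma$-invariant and regarded as a function on $G_{K'}(\mathfrak{c}p^{\infty})$, one also has $(\epsilon\circ ver)(x)=\epsilon(\imath(x))$. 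Combining, $(\epsilon\circ ver\,\psi^{p})(x)=\epsilon(\imath(x))\,\psi'(\imath(x))=(\epsilon\psi')\bigl(\det(r')/\det(a')\bigr)$, so the twisting scalars agree on the nose.

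Finally, multiplying both sides of the congruence of the previous theorem by this common scalar yields the asserted congruence between $Res^{K'}_K({E'}^{(a')}(z,m(A),\epsilon\psi',\nu'))$ and $Frob_p(E^{(a)}(z,m(A),\epsilon\circ ver\,\psi^{p},\nu^{p}))$; here the scalar is $p$-integral (indeed the twisted Eisenstein coefficients lie in $\mathbb{Z}[\epsilon\psi,\nu]$ by the integrality established earlier, the representatives being chosen prime to $p$), so multiplication preserves the congruence modulo $p$. I expect the only genuinely delicate point to be the compatible choice of coset representatives under $\imath$ and the correct reading of the superscript-$(a)$ twist in the general $m(A)$ setting (as opposed to the CM-point setting where it simplifies), together with the class-field-theory identification $ver\leftrightarrow\imath$; everything else is formal.
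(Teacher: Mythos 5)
Your proof is correct and takes essentially the same approach as the paper's, which consists of the single sentence ``the corollary follows directly from the theorem and the definition of the twisted Eisenstein series.'' You have simply unpacked what ``directly'' means: the superscript $(a)$ is a scalar twist, $Res^{K'}_K$ and $Frob_p$ are linear over such scalars, and the two twisting scalars agree because $ver$ corresponds to the idele inclusion under Artin reciprocity together with $\psi^p=\psi'\circ ver$.
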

\begin{proof}The corollary follows directly from the theorem and the
definition of the twisted Eisenstein series.
\end{proof}

\section{The Theory of Complex Multiplication.}

\subsection{The formalism of CM points for unitary groups
and the reciprocity law:}

We start by recalling the notion of CM points on the symmetric space
associated to the unitary group $G:=U(n,n)/F$. We will follow the
books of Shimura \cite{Shimurabook2,ShimuraCM}. Let us write
$r:=2n$. We consider the CM algebra $Y:=K_1 \oplus \ldots \oplus
K_t$ with CM fields $K_i$ such that $K \subseteq K_i$ (later we just
pick $K_i=K)$. Let us denote by $F_i$ the maximal real subfield of
$K_i$ and by $\rho$ the automorphism of $Y$ which induces the
non-trivial element of $Gal(K_i/F_i)$ for every $i$. Let us assume
that we can find a $K$-linear ring injection $h:Y \rightarrow K_r^r$
such that,
\[
h(a^\rho)= \eta_n h(a)^\star \eta_n^{-1},\,\,\,a\in Y,
\]
where we recall $\eta_n=\left(
                          \begin{array}{cc}
                            0 & -1_n \\
                            1_n & 0 \\
                          \end{array}
                        \right)$. We put $Y^u=\{a \in Y | aa^\rho =1\}$. Then
we have $h(Y^u) \subset G(F)$. But $Y^u$ is contained in a compact
subgroup of $(Y \otimes_\Q \mathbb{R})^\times$ hence the projection
of $h(Y^u)$ to $G_\textbf{a}$ is contained in a compact subgroup of
$G_\textbf{a}$ hence $h(Y^u)$ has a common fixed point in
$\mathbb{H}_n$, and it can be shown that actually there is a unique
one. A point of $\mathbb{H}_n$ obtained in this way is called a CM
point. The case that we are mostly interested in is when $Y = K
\oplus \ldots \oplus K$, $r$ copies of $K$. The CM points obtained
from this CM algebra correspond to an abelian varieties with
multiplication by $Y$ and of dimension exactly $[Y:\Q]$. We note
also here that if $(A,\imath)$ is an abelian variety $A$ with
multiplication by $Y$ i.e. $i : Y \hookrightarrow End(A)_\Q$ and
$2dim A = [Y:\Q]= r [K : \Q]$, then $A$ is isogenous to a product
$A_1 \times \ldots \times A_r$ with $\imath_i : K \hookrightarrow
End(A_i)_\Q$ and $[K:\Q]= 2 dim A_i$.

\textbf{Shimura's Reciprocity Law (for CM algebras):} We consider
the CM-algebra  $Y=K_1 \oplus \cdots \oplus K_t$, where the $K_i$'s
are CM fields. We consider an abelian variety $(A,\lambda)$ with CM
by $Y$. As it is explained in Shimura \cite[page 129]{ShimuraCM} we
have that $A$ is isogenous to $A_1 \times \ldots \times A_t$ where
$A_i$ is an abelian variety with CM by $K_i$ and $2dim(A_i)=[K_i :
\mathbb{Q}]$. Let us write the type of the $A_i$ variety as
$(K_i,\Sigma_i)$. Then we have that the type $\Psi$ of $A$ is the
direct sum of the $\Phi_i$'s in the way explained in Shimura (loc.
cit.). Let $(K^*_i,\Phi^*_i)$ be the reflex field of $(K_i,\Phi_i)$
and let $K^*$ be the composite of the $K_i^*$'s. As is explained in
Shimura (loc. cit.) we have a map
\[
g:  (K^*)^\times \rightarrow Y^\times,
\]
which extends to a map
\[
g:(K^*)_{\mathbb{A}}^\times \rightarrow Y_{\mathbb{A}}^\times
\]
In Shimura \cite[page 125 and page 130]{ShimuraCM} the following
theorem is proved.

\begin{thm} Let $\mathcal{P}=(A,\lambda,\imath)$ be a structure of
type $\Omega=(Y,\Psi,\mathfrak{a},\zeta)$ and let $K^*$ as above.
Further let $\sigma$ be an element of $Aut(\mathbb{C}/K^*)$, and $s$
an element of $(K^*)_{\mathbb{A}}^\times$ such that
$\sigma_{|_{K^*_{ab}}}=[s,K^*]$ Then there exists an exact sequence
\[
0 \rightarrow q(g(s)^{-1}\mathfrak{a}) \rightarrow \mathbb{C}^n
\stackrel{\xi'}{\rightarrow} A^{\sigma}\rightarrow 0
\]
with the following properties
\begin{enumerate}
\item $\mathcal{P}^\sigma$ is of type
$(Y,\Psi,g(s)^{-1}\mathfrak{a},\zeta')$ with
$\zeta'=N(s\mathfrak{r})\zeta$ with respect to $\xi'$, where
$\mathfrak{r}$ is the maximal order of $K^*$.
\item $\xi(q(w))^\sigma = \xi' (q(g(s)^{-1}w))$, where $\xi$ is such that
\[
0 \rightarrow q(\mathfrak{a}) \rightarrow \mathbb{C}^n
\stackrel{\xi}{\rightarrow} A \rightarrow 0
\].
\end{enumerate}
\end{thm}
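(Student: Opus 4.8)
\textbf{The plan} is to deduce this from Shimura's Main Theorem of Complex Multiplication for a single CM field --- the version recalled in \cite{ShimuraCM} for a structure of type $(K_i,\Phi_i,\mathfrak{a}_i,\zeta_i)$ --- by splitting the abelian variety along the idempotents of the CM algebra $Y$ and then reassembling the pieces. So throughout, $n=\sum_i n_i$ with $n_i$ the dimension of the factor attached to $K_i$.

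First I would use the orthogonal idempotents $e_1,\dots,e_t$ of $Y=K_1\oplus\cdots\oplus K_t$ to produce an isogeny $A\sim A_1\times\cdots\times A_t$, with $A_i:=\imath(e_i)A$ carrying a CM structure $\imath_i:K_i\hookrightarrow \mathrm{End}_{\mathbb{Q}}(A_i)$ of type $(K_i,\Phi_i)$, where $\Psi=\bigoplus_i\Phi_i$ exactly as recalled above. Under this isogeny the polarization $\lambda$, the numerical/level datum $\zeta$, and the $Y$-lattice $\mathfrak{a}=\bigoplus_i\mathfrak{a}_i$ all decompose componentwise; choosing the analytic coordinates $\xi$ and the map $q$ on $\mathbb{C}^n=\prod_i\mathbb{C}^{n_i}$ componentwise identifies $0\to q(\mathfrak{a})\to\mathbb{C}^n\stackrel{\xi}{\to}A\to 0$ with the product of the corresponding sequences for the $A_i$. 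This first reduction is essentially the isogeny decomposition already quoted in the excerpt; I would just need to check it is compatible with the chosen uniformizations.

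Next I would handle the Galois descent. Since $K^*$ is the compositum of the reflex fields $K_i^*$ and the reflex map $g$ is, componentwise, $g=\bigoplus_i\bigl(g_i\circ N_{K^*/K_i^*}\bigr)$ with $g_i$ the reflex-norm map of $(K_i,\Phi_i)$, functoriality of the Artin reciprocity map gives $\sigma|_{(K_i^*)_{ab}}=[\,N_{K^*/K_i^*}(s),\,K_i^*\,]$ for each $i$. Applying the known theorem to each $(A_i,\lambda_i,\imath_i)$ with the idele $s_i:=N_{K^*/K_i^*}(s)$ yields exact sequences $0\to q_i(g_i(s_i)^{-1}\mathfrak{a}_i)\to\mathbb{C}^{n_i}\stackrel{\xi_i'}{\to}A_i^{\sigma}\to 0$ together with properties (i) and (ii) on each factor. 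Because $A^{\sigma}\sim\prod_i A_i^{\sigma}$ and $g(s)^{-1}\mathfrak{a}=\bigoplus_i g_i(s_i)^{-1}\mathfrak{a}_i$, taking the direct sum of these sequences and of the $\xi_i'$ produces the asserted sequence for $A$, and the type statement $(Y,\Psi,g(s)^{-1}\mathfrak{a},\zeta')$ as well as $\xi(q(w))^{\sigma}=\xi'(q(g(s)^{-1}w))$ follow factor by factor.

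\textbf{The main obstacle} will be bookkeeping of normalizations rather than any new idea: one must verify that the level datum transforms by $\zeta'=N(s\mathfrak{r})\zeta$ with $\mathfrak{r}$ the maximal order of the \emph{full} reflex field $K^*$, which requires matching $N_{K^*/\mathbb{Q}}(s\mathfrak{r})$ against $\prod_i N_{K_i^*/\mathbb{Q}}(s_i\mathfrak{r}_i)$ via $N_{K^*/\mathbb{Q}}=N_{K_i^*/\mathbb{Q}}\circ N_{K^*/K_i^*}$, and that the decomposition $g=\bigoplus_i(g_i\circ N_{K^*/K_i^*})$ is the correct one so that the componentwise choices of $q$ and $\xi'$ genuinely glue to the maps in the statement. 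All of this is precisely the passage from the CM-field case to the CM-algebra case carried out in \cite{ShimuraCM}, which is what I would cite in the final write-up; the paragraphs above record how one unwinds it.
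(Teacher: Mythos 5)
The paper does not give a proof of this theorem at all: it is stated verbatim as a recalled result, with the proof deferred to Shimura (the sentence preceding the theorem reads ``In Shimura [ShimuraCM, page 125 and page 130] the following theorem is proved''). So there is no in-paper argument to compare your proposal against. Your sketch --- split $A$ by the orthogonal idempotents of $Y$ into factors $A_i$ with CM by the fields $K_i$, note that the reflex norm of the CM algebra is $g=\bigoplus_i (g_i\circ N_{K^*/K_i^*})$, apply the CM-field version of the main theorem to each factor with $s_i:=N_{K^*/K_i^*}(s)$, and reassemble --- is exactly the route Shimura himself follows when extending the Main Theorem from CM fields to CM algebras, so your plan is sound and faithful to the source. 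Your flagged ``main obstacle'' is also the right one to worry about: the consistency of $\zeta'=N(s\mathfrak{r})\zeta$ across factors comes down to $N_{K_i^*/\mathbb{Q}}\circ N_{K^*/K_i^*}=N_{K^*/\mathbb{Q}}$, so that $N(s_i\mathfrak{r}_i)$ is the same rational quantity for every $i$; once you make that explicit, the componentwise sequences and the $\xi_i'$ glue to the global statement. In short, your proposal would correctly reconstruct the proof the paper merely cites, and you are right that the honest write-up should simply point to [ShimuraCM].
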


\textbf{Using the Theory of Complex Multiplication:} Now we explain
how we can use the theory of complex multiplication to understand
how Frobenious operates on values of Eisenstein series at CM points.
In this section we prove the following proposition, which is just a
reformulation of what is done in \cite{Katz1} (page 539) in the case
of quadratic imaginary fields and the group $GL_2$. This proposition
has also been proved by Ellen Eischen in \cite[section
5.2]{Eischen}.

We first recall some of the assumptions that we have made. Recall
that we consider a CM type $(K,\Sigma)$ such that (i)  $p$ is
unramified in $F$, where $F$ the totally real field $K^+$, (ii) the
ordinary condition is that all primes above $p$ in $F$ are split in
$K$ and (iii) that for $\mathfrak{p}$ in $K^*$ above $p$ we have
that $N\mathfrak{p}=p$. We write $\Phi_{\mathfrak{p}}$ for the
Frobenious element in $Gal(K_{ab}^*/K^*)$ corresponding to the prime
ideal $\mathfrak{p}$ of $K^*$ through Artin's reciprocity law. We

\begin{prop}(Reciprocity law on CM points)\label{reciprocityP} Consider the $\mathfrak{g}$-lattice $\mathfrak{U}$ of the CM algebra
$Y$ and the tuple
$(\underline{X(\mathfrak{U})},\omega(\mathfrak{U}))$ defined over
$K_{ab}^*$. Let $E$ be a hermitian form defined over
$\mathbb{Q}_{ab}$. Then we have the reciprocity law:
\begin{equation}
Frob_{p}(E)(\underline{X(\mathfrak{U})},\omega(\mathfrak{U})) =
(E^{\Phi_{\mathfrak{p}}^{-1}}(\underline{X(\mathfrak{U})},\omega(\mathfrak{U})))^{\Phi_{\mathfrak{p}}}.
\end{equation}
In particular if $E$ is a hermitian form defined over $K^*$ then we
have
\begin{equation}
Frob_{p}(E)(\underline{X(\mathfrak{U})},\omega(\mathfrak{U})) =
(E(\underline{X(\mathfrak{U})},\omega(\mathfrak{U})))^{\Phi_{\mathfrak{p}}}.
\end{equation}
\end{prop}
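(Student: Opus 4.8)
The plan is to reduce the statement to Shimura's reciprocity law for abelian varieties with CM by a CM algebra, as recalled in the theorem immediately preceding the proposition. First I would recall how a hermitian modular form $E$, regarded as an algebraic object (a function on tuples $(\underline{A},\omega)$), transforms under an automorphism $\sigma\in\mathrm{Aut}(\mathbb C/\mathbb Q)$: since $E$ is defined over $\mathbb Q_{ab}$, one has $E^\sigma(\underline{A}^\sigma,\omega^\sigma)=\big(E(\underline{A},\omega)\big)^\sigma$ by the very definition of the Galois action on algebraic modular forms (the values lie in the field of definition of the datum, and $\sigma$ acts compatibly on both sides). The point $\mathrm{Frob}_p$ here is the $p$-adic Frobenius, i.e.\ the automorphism of $\mathbb Q_{ab}$ (transported via $incl_p$) that raises $p$-power roots of unity to their $p$-th power; under our running hypothesis that the reflex field $K^*$ satisfies $K^*_{\mathfrak p}=\mathbb Q_p$ (equivalently $N\mathfrak p=p$), this Frobenius restricted to $K^*_{ab}$ is exactly $\Phi_{\mathfrak p}=[s,K^*]$ for a suitable idele $s$ with $s_{\mathfrak p}$ a uniformizer and $s$ trivial elsewhere (this is the standard identification, cf.\ Katz \cite{Katz1}).

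The key step is then to apply Shimura's theorem to the CM datum $\mathcal P=(X(\mathfrak U),\lambda,\imath)$ of type $\Omega=(Y,\Psi,\mathfrak U,\zeta)$ attached to the $\mathfrak g$-lattice $\mathfrak U$ in the CM algebra $Y$, with $\sigma=\Phi_{\mathfrak p}$. Shimura's theorem gives an explicit description of $\mathcal P^{\Phi_{\mathfrak p}}$: it is of type $(Y,\Psi,g(s)^{-1}\mathfrak U,\zeta')$, and more importantly, via the compatibility $\xi(q(w))^\sigma=\xi'(q(g(s)^{-1}w))$, it identifies $\underline{X(\mathfrak U)}^{\Phi_{\mathfrak p}}$ with the datum $\underline{X(g(s)^{-1}\mathfrak U)}$ together with the canonical differential. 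Now, because $s$ is a unit away from $\mathfrak p$ and $\mathfrak p$ lies above $p$, the idele $g(s)^{-1}$ acts on $\mathfrak U$ only at places above $p$; hence the prime-to-$p$ structure and, crucially, the $p$-adic (canonical) differential $\omega_{can}(\mathfrak U)$ are carried to $\omega_{can}(g(s)^{-1}\mathfrak U)=\omega(g(s)^{-1}\mathfrak U)$ up to the $p$-adic period comparison — but since we are using $\omega(\mathfrak U)$, which is the chosen algebraic differential (a unit times $\omega_{can}$), these are matched on the nose in the way set up in Lemma \ref{definition of algebraic periods} and the following lemmas. In other words, $(\underline{X(\mathfrak U)},\omega(\mathfrak U))^{\Phi_{\mathfrak p}}\cong(\underline{X(\mathfrak U)},\omega(\mathfrak U))$ as data, once one checks that the ambiguity introduced by $g(s)^{-1}$ at $p$ does not affect the pair $(\underline{X(\mathfrak U)},\omega(\mathfrak U))$ we evaluate at. Granting this, $\big(E^{\Phi_{\mathfrak p}^{-1}}(\underline{X(\mathfrak U)},\omega(\mathfrak U))\big)^{\Phi_{\mathfrak p}}=E^{\Phi_{\mathfrak p}^{-1}}\big((\underline{X(\mathfrak U)},\omega(\mathfrak U))^{\Phi_{\mathfrak p}}\big)^{\text{(correction)}}$, and unwinding the definition of $\mathrm{Frob}_p$ acting on $q$-expansions (it is the operator sending $\sum c_h q^h\mapsto\sum c_h^{\Phi_p}q^{ph}$, equivalently $E\mapsto E^{\Phi_p}$ composed with the $p$-th power substitution, which on CM points is just $E\mapsto E^{\Phi_p}$ evaluated at the same CM point, by the compatibility of Frobenius with the Serre–Tate / Mumford coordinate $q$) gives the first displayed equation. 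The second displayed equation is then immediate: if $E$ is already defined over $K^*$, then $E^{\Phi_{\mathfrak p}^{-1}}=E$ (as an algebraic form; here one uses that $\Phi_{\mathfrak p}$ fixes $K^*$ pointwise on the coefficients, wait — rather $\Phi_{\mathfrak p}\in\mathrm{Gal}(K^*_{ab}/K^*)$ so it fixes $K^*$), hence $\mathrm{Frob}_p(E)(\cdot)=E(\cdot)^{\Phi_{\mathfrak p}}$.

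The main obstacle I expect is the careful bookkeeping of \emph{which} differential transforms \emph{how}: matching Shimura's analytic uniformization $\xi$ (which naturally produces $\omega_{trans}$) against the algebraic/$p$-adic differential $\omega_{can}$, and verifying that the adjustment idele $g(s)^{-1}$ — supported at $p$ — acts trivially on the pair $(\underline{X(\mathfrak U)},\omega(\mathfrak U))$ that appears on both sides, so that no spurious period factor intervenes. This is exactly the point where the ordinarity hypothesis (all primes of $F$ above $p$ split in $K$, so that $M_p$ admits the polarization $(M^0,M^{-1})$ and the $p^\infty$-structure is canonical) and the hypothesis $N\mathfrak p=p$ enter decisively; the argument is, as the text notes, a transcription of Katz \cite[p.\ 539]{Katz1} from the imaginary quadratic / $GL_2$ setting to the present unitary one, and has also been carried out by Eischen \cite[\S5.2]{Eischen}. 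Everything else is formal manipulation of the Galois action on algebraic modular forms and their $q$-expansions.
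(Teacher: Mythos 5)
Your overall strategy --- reduce to Shimura's CM reciprocity, then read off the effect of Frobenius via the Mumford/Serre--Tate $q$-coordinate --- is the same as the paper's, and the references you cite (Katz, Eischen) are exactly the ones the paper points to. However, there is a genuine error in the middle step. You assert that
\[
(\underline{X(\mathfrak U)},\omega(\mathfrak U))^{\Phi_{\mathfrak p}}\cong(\underline{X(\mathfrak U)},\omega(\mathfrak U))
\]
``as data, once one checks that the ambiguity introduced by $g(s)^{-1}$ at $p$ does not affect the pair.'' This is false, and if it were true the conclusion would degenerate to the trivial statement $\mathrm{Frob}_p(E)(\cdot)=E(\cdot)^{\Phi_{\mathfrak p}}=E^{\Phi_{\mathfrak p}}(\cdot)$ with no $q\mapsto q^p$ appearing. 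The whole content of the proposition is precisely that the Frobenius twist of the CM datum is \emph{not} isomorphic to the original: rather, as the paper records,
\[
X(g(\mathfrak p)^{-1}\mathfrak U)=X(\mathfrak U)/H_{can},\qquad H_{can}=\imath(M^0\otimes\mu_p),
\]
the quotient by the canonical subgroup coming from the $p$-numerical structure. The two abelian schemes are $p$-isogenous, not isomorphic; the supporting PEL data and the canonical differential transform along this nontrivial isogeny.

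The correct chain, which the paper uses, is: by Galois equivariance of algebraic modular forms,
\[
\bigl(E^{\Phi_{\mathfrak p}^{-1}}(\underline{X(\mathfrak U)},\omega(\mathfrak U))\bigr)^{\Phi_{\mathfrak p}}
=E\bigl(\underline{X(\mathfrak U)}^{\Phi_{\mathfrak p}},\omega(\mathfrak U)^{\Phi_{\mathfrak p}}\bigr);
\]
by Shimura's CM reciprocity plus the hypothesis $N\mathfrak p=p$, the twisted tuple is the $g(\mathfrak p)$-transform, identified with $(\underline{X(\mathfrak U)/H_{can}},\omega'_{can})$; and on Mumford objects this quotient corresponds exactly to the substitution $q\mapsto q^p$, which is the definition of the operator $\mathrm{Frob}_p$ on $q$-expansions, giving $E(\underline{X}^{\Phi_{\mathfrak p}},\omega^{\Phi_{\mathfrak p}})=\mathrm{Frob}_p(E)(\underline X,\omega)$. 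You do gesture at the $q\mapsto q^p$ mechanism in your parenthetical, and that is the right idea, but it contradicts the isomorphism you claim two sentences earlier, and the unexplained ``(correction)'' tag in your display signals that you sensed but did not resolve the tension. The fix is simply to drop the isomorphism claim and identify $\underline X^{\Phi_{\mathfrak p}}$ with the Frobenius-isogeny quotient $\underline X/H_{can}$ before invoking the Mumford-coordinate computation.
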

\begin{proof}
From the compatibility of hermitian modular forms with base
extensions we have that
\begin{equation}
(E(\underline{X(\mathfrak{U})},\omega(\mathfrak{U})))^{\Phi_{\mathfrak{p}}}=
E^{\Phi_{\mathfrak{p}}}((\underline{X(\mathfrak{U})},\omega(\mathfrak{U}))
\otimes_{K^*_{ab},\Phi_{\mathfrak{p}}} K^*_{ab})
\end{equation}
where the tensor product is with respect to the map
$\Phi_{\mathfrak{p}}: K^*_{ab} \rightarrow K^*_{ab}$, i.e. the base
change of the tuple
$(\underline{X(\mathfrak{U})},\omega(\mathfrak{U}))$ with respect to
the Frobenious map. But then, from the theory of complex
multiplication explained above and our assumptions on $\mathfrak{p}$
we have that
\begin{equation}
(\underline{X(\mathfrak{U})},\omega(\mathfrak{U}))
\otimes_{K^*_{ab},\Phi_{\mathfrak{p}}} K^*_{ab} \cong
(\underline{X'(\mathfrak{U})},\omega'(\mathfrak{U})),
\end{equation}
where $(\underline{X'(\mathfrak{U})},\omega'(\mathfrak{U}))$ is the
$g(\mathfrak{p})$-transform of
$(\underline{X(\mathfrak{U})},\omega(\mathfrak{U}))$. We notice that
$X'(\mathfrak{U})=
X(g(\mathfrak{p})^{-1}\mathfrak{U})=X(\mathfrak{U})/H_{can}$, where
$H_{can}:=i(M^0 \otimes \mathbf{\mu}_{p})$ and $i$ the $p$-numerical
structure (see also \cite[page 45]{Eischen} or \cite[page
222]{Katz2}). Moreover, we have that the Mumford object
$(\underline{Mum(q)},\omega'_{can})$ is obtained from
$(\underline{Mum(q)},\omega_{can})$ by the map $q \mapsto q^p$ (see
\cite[pages 46-47]{Eischen}, from which we conclude the proposition.
\end{proof}

\subsection{The relation of CM points with respect to the diagonal
map:} For this section we write $G$ for the unitary group
$U^\theta$, with $\theta$ a definite hermitian form and $G'$ for
$Res_{F'/F}U^\theta/F'$. Then for an integral ideal $\mathfrak{c}$
of $\mathfrak{g}$ we have defined the open compact subgroup
$D(\mathfrak{c}) \subset G(\mathbb{A}_{F,f})$ and the open compact
subgroup $D(\mathfrak{c}') \subset
G(\mathbb{A}_{F',f}=G'(\mathbb{A}_{F,f})$, where
$\mathfrak{c}':=\mathfrak{c}\mathfrak{g}$. Then we have defined the
finite sets $\mathcal{B}_K:=G(F) \setminus G(\mathbb{A}_{F,f})/
D(\mathfrak{c})$ and $\mathcal{B}_{K'}:=G(F') \setminus
G(\mathbb{A}_{F',f})/ D(\mathfrak{c}')$. We write $\Gamma$ for
$Gal(F'/F)$ and consider its action on $G(\mathbb{A}_{F',f})$. Then
we note that $D(\mathfrak{c}')^\Gamma = D(\mathfrak{c})$ and also
that this action induces an action of $\Gamma$ on
$\mathcal{B}_{K'}$. Moreover the natural inclusion $F\hookrightarrow
F'$ induces a map $\imath: \mathcal{B}_K \rightarrow
\mathcal{B}_{K'}$. We now examine the conditions under which the map
$\imath: \mathcal{B}_K \rightarrow \mathcal{B}_{K'}^\Gamma$ is a
bijection. The proof of the following proposition was inspired from
a similar proof of Hida in \cite{Hida3}.

\begin{prop}\label{bijection of CM points}Assume that there exist a prime ideal $\mathfrak{q}$ of $F$ such
that
\begin{enumerate}
\item If we write $q:=\mathfrak{q} \cap \mathbb{Q}$ for the prime
below $\mathfrak{q}$ in $q$ and $e$ for the ramification index of
$\mathfrak{q}$ over $q$, then $\mathfrak{q}^\nu | \mathfrak{c}$ for
some $\nu \geq (e+1)/(q-1)$,
\item The extension $F'/F$ is not ramified at $\mathfrak{q}$.
\end{enumerate}
Then the canonical map $\imath: \mathcal{B}_K \rightarrow
\mathcal{B}_{K'}^\Gamma$ is a bijection.
\end{prop}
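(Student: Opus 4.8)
The plan is to prove separately that $\imath$ is injective and that its image is all of $\mathcal{B}_{K'}^{\Gamma}$, both statements being reduced to one structural input, a \emph{rigidity lemma}: for every $g\in G(\mathbb{A}_{F',f})$ the arithmetic group $\Gamma'_g:=G(F')\cap g\,D(\mathfrak{c}')\,g^{-1}$ is trivial, and likewise $\Gamma_g:=G(F)\cap g\,D(\mathfrak{c})\,g^{-1}=\{1\}$ over $F$. To prove the lemma, note first that since $\theta$ is totally definite over $K$ it remains totally definite over $K'$, so $G(F')=G'(F)$ is a discrete subgroup of the \emph{compact} group $G(F'\otimes_{\mathbb{Q}}\mathbb{R})$; being in addition contained in an open compact subgroup of $G(\mathbb{A}_{F',f})$, the group $\Gamma'_g$ is finite. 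Second, by hypothesis (i) every $x\in\Gamma'_g$ is $\equiv 1$ modulo $(\mathfrak{q}\mathfrak{g}')^{\nu}$, and by hypothesis (ii) the prime $\mathfrak{q}\mathfrak{g}'$ of $F'$ still has residue characteristic $q$ and ramification index $e$ over $q$; since $\nu(q-1)\ge e+1>e$, Serre's lemma on torsion in congruence subgroups (a form of Minkowski's theorem) shows $x$ cannot have nontrivial finite order. A finite torsion-free group is trivial, whence the lemma. (One can also read $\mathcal{B}_K,\mathcal{B}_{K'}$ as sets of CM abelian varieties with level-$\mathfrak{c}$, resp.\ level-$\mathfrak{c}'$, structure, $\imath$ as extension of the CM order, and $\Gamma'_g$ as the automorphism group of such an object; the proof below is then Galois descent made rigid by the vanishing of these automorphism groups.)

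\textbf{Injectivity.} Suppose $g_a,g_b\in G(\mathbb{A}_{F,f})$ with $G(F')g_bD(\mathfrak{c}')=G(F')g_aD(\mathfrak{c}')$, say $g_b=\gamma g_a k$ with $\gamma\in G(F')$, $k\in D(\mathfrak{c}')$. For $\sigma\in\Gamma$, using that $g_a,g_b$ are $\Gamma$-fixed and that $D(\mathfrak{c}')$ is $\Gamma$-stable, one computes $\gamma^{-1}\gamma^{\sigma}=g_a\,k(k^{\sigma})^{-1}\,g_a^{-1}\in\Gamma'_{g_a}=\{1\}$, so $\gamma\in G(F')^{\Gamma}=G(F)$ (Galois descent for the affine $F$-group $G$ along $F'/F$); then $k=g_a^{-1}\gamma^{-1}g_b\in G(\mathbb{A}_{F,f})\cap D(\mathfrak{c}')=D(\mathfrak{c}')^{\Gamma}=D(\mathfrak{c})$, the last equality being recorded in the text. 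Hence $g_a$ and $g_b$ already represent the same class in $\mathcal{B}_K$.

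\textbf{Surjectivity.} Let $[g']\in\mathcal{B}_{K'}^{\Gamma}$ and write, for $\sigma\in\Gamma$, ${g'}^{\sigma}=\gamma_{\sigma}\,g'\,u_{\sigma}$ with $\gamma_{\sigma}\in G(F')$, $u_{\sigma}\in D(\mathfrak{c}')$. The rigidity lemma forces $\gamma_{\sigma}$ and $u_{\sigma}$ to be uniquely determined, and uniqueness propagates the cocycle relation, so $\sigma\mapsto\gamma_{\sigma}$ is a genuine $1$-cocycle, giving a class in $H^{1}(\Gamma,G(F'))=H^{1}(F'/F,G)$, which injects into $H^{1}(F,G)$ by inflation--restriction. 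This class is everywhere locally trivial: at a finite place $v$ it restricts, via $H^{1}(F'_w/F_v,G)\hookrightarrow H^{1}(F_v,G)$, to the obstruction to moving $g'D(\mathfrak{c}')_w$ to a $\Gamma_v$-fixed point of $G(F'_w)/D(\mathfrak{c}')_w$, and that pointed $\Gamma_v$-set has the obvious fixed point $D(\mathfrak{c}')_w$; at an archimedean $v$, as $F'/F$ is totally real of odd prime degree $p$, $v$ splits completely in $F'$ and the local cohomology vanishes by Shapiro's lemma. The Hasse principle for the unitary group $G$ (equivalently, for isometry of hermitian forms over $K$) then forces the class in $H^{1}(F,G)$, hence $[\gamma_{\bullet}]$, to vanish; pick $\beta\in G(F')$ trivializing it and set $g_0:=\beta^{-1}g'$, so that $g_0D(\mathfrak{c}')$ is $\Gamma$-fixed in $G(\mathbb{A}_{F',f})/D(\mathfrak{c}')$. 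Finally, the vanishing of $H^{1}(\Gamma,D(\mathfrak{c}'))$ — established place by place from Lang's theorem and the smoothness of the (congruence) group schemes cutting out $D(\mathfrak{c}')$, the residue extensions that enter being unramified — lets one replace $g_0$ by a $\Gamma$-fixed representative $g_0''\in G(\mathbb{A}_{F,f})$; the class of $g_0''$ in $\mathcal{B}_K$ maps to $[g']$.

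\textbf{Main obstacle.} The delicate part is surjectivity, and inside it the two cohomological vanishing inputs. The vanishing of $H^{1}(\Gamma,D(\mathfrak{c}'))$ is where hypothesis (ii) — that the ramification of $F'/F$ avoids $\mathfrak{q}$ and, more to the point, stays away from the places where one needs this vanishing — has to be used with some care, because $H^{1}(\mathbb{Z}/p,\mathcal{O}_L^{\times})$ is genuinely nonzero for wildly ramified $L$, so one must check that only tame/unramified local extensions are involved (or supply an alternative argument there). Hypothesis (i), through the sharp numerical bound $\nu\ge(e+1)/(q-1)$ feeding Serre's lemma, is what collapses all the ``up to automorphism'' indeterminacies and is the single genuinely computational ingredient; the remaining pieces (the descent computations, the Galois-descent identities $G(F')^{\Gamma}=G(F)$ and $D(\mathfrak{c}')^{\Gamma}=D(\mathfrak{c})$, and the Hasse-principle input) are formal.
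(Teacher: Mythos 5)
Your proposal follows the same overall strategy as the paper: the decisive input is the triviality of the stabilizers $G(F')\cap gD(\mathfrak{c}')g^{-1}$, which the paper obtains by citing \cite[page 201, Remark 2]{Shimurabook1} and which you re-derive via a Minkowski--Serre torsion argument from hypothesis (i) (these are the same fact); injectivity is the same Galois-conjugate intersection argument; and surjectivity turns on $H^1(\Gamma,G(F'))=\{1\}$, which the paper isolates as Proposition \ref{Hilbert 90} and proves via the Hasse principle for hermitian forms, and which you reach by checking local triviality of the cocycle and invoking the same Hasse principle. In structure the two proofs coincide.

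The one place where you go beyond the paper is the final step of surjectivity, and it is a genuinely useful observation. After splitting the cocycle in $G(F')$ one has a $\Gamma$-fixed coset $b^{-1}xD(\mathfrak{c}')$ in $G(\mathbb{A}_{F',f})/D(\mathfrak{c}')$, and the paper at this point simply writes ``hence $b^{-1}x\in G(\mathbb{A}_{F,f})$''. You correctly point out that promoting a $\Gamma$-fixed coset to a $\Gamma$-fixed representative needs the further input $H^1(\Gamma,D(\mathfrak{c}'))=\{1\}$, since the residual cocycle $\gamma\mapsto d_\gamma\in D(\mathfrak{c}')$ must be shown to be a coboundary. Naming this step explicitly is an improvement on the paper's exposition.

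Unfortunately your own treatment of that step does not close the gap. You invoke Lang's theorem and smoothness of the congruence group schemes, which is fine at places where the relevant local extension is unramified (or split), and you yourself flag that $H^1(\mathbb{Z}/p,\mathcal{O}_L^{\times})\neq 0$ when $L$ is wildly ramified. But that is exactly the case that arises: $F'/F$ is (totally, wildly) ramified precisely at the places over $p$, and at such a $w$ the determinant is a $\Gamma$-equivariant retraction of $D(\mathfrak{c}')_w\cong GL_n(\mathfrak{g}'_w)$ onto $\mathfrak{g}'^{\times}_w$, so $H^1(\Gamma,D(\mathfrak{c}')_w)$ surjects onto $H^1(\Gamma,\mathfrak{g}'^{\times}_w)\cong\mathbb{Z}/e$ with $e=p$. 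The appeal to hypothesis (ii) cannot repair this: that hypothesis only says $F'/F$ is unramified at $\mathfrak{q}$, a prime dividing $\mathfrak{c}$ and prime to $p$; it says nothing about the $p$-places, which is where the wild ramification lives. To finish one must argue at $p$ by different means --- for instance, by choosing representatives of $\mathcal{B}_K$ and $\mathcal{B}_{K'}$ that are identically $1$ at places above $p$ (as the paper does elsewhere, e.g.\ in the proof of Theorem \ref{interpolation properties}), or by showing that the specific cocycle $d_\gamma$ produced from a $\Gamma$-fixed double coset is automatically a coboundary, rather than relying on a blanket vanishing of $H^1(\Gamma,D(\mathfrak{c}'))$, which is false.
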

\begin{proof} We recall that the sets $\mathcal{B}_K$ and
$\mathcal{B}_{K'}$ are defined as $\mathcal{B}_K=G^\theta(F)
\setminus G^\theta(\mathbb{A}_{F,f})/D(\mathfrak{c})$ and similarly
$\mathcal{B}_{K'}:=G^\theta(F') \setminus
G^\theta(\mathbb{A}_{F',f})/D(\mathfrak{c}')$, with
$\mathfrak{c}'=\mathfrak{c}\mathfrak{r}'$. The conditions above
imply (see \cite[page 201, remark 2]{Shimurabook1} that the groups
$D(\mathfrak{c})$ are sufficiently small, that is we have for every
$\alpha \in G^\theta(\mathbb{A}_{F,f})$ (resp. $\beta \in
G^\theta(\mathbb{A}_{F',f})$) that $G^\theta(F) \cap \alpha
D(\mathfrak{c}) \alpha^{-1}=\{1\}$ (resp. $G^\theta(F') \cap \beta
D(\mathfrak{c}') \beta^{-1}=\{1\}$). Now we are ready to prove the
injectivity.

Assume that $\imath(x)=\imath(x')$ for $x, x' \in \mathcal{B}_K$.
Then there exists $\gamma \in G^\theta(F')$ and $d \in
D(\mathfrak{c}')$ such that $g x = x' d$. This implies, that for all
$\gamma \in \Gamma(F'/F)$ that $g^\gamma x = x' d^\gamma$. In
particular we conclude that $g^{\gamma-1} = x D(\mathfrak{c}')x^{-1}
\cap G(F') = \{1\}$. Hence, we obtain that $g \in G(F)$ and
similarly that $d \in D(\mathfrak{c}') \cap G(\mathbb{A}_{F,f}) =
D(\mathfrak{c})$. Hence $x=x'$ in $\mathcal{B}_K$.

Next we prove the surjectivity of the map $\imath$. Let $x \in
G(\mathbb{A}_{F',f})/D(\mathfrak{c}')$. Then for $\gamma \in \Gamma$
we define $g_\gamma \in G(F')$ by $g_\gamma x = x^\gamma$. Then for
$\gamma_1,\gamma_2 \in \Gamma$ we have
\[
g_{\gamma_1 \gamma_2}x = x^{\gamma_1
\gamma_2}=(x^\gamma_1)^{\gamma_2}=(g_{\gamma_1}
x)^{\gamma_2}=g^{\gamma_2}_{\gamma_1} g_{\gamma_2} x.
\]
Under the conditions of the lemma we have that the stabilizer of
$G(\mathbb{A}_{F',f})/D(\mathfrak{c}')$ in $G(F')$ is trivial. That
is we have that
\[
g_{\gamma_1 \gamma_2}=g^{\gamma_2}_{\gamma_1} g_{\gamma_2}.
\]
That is $\gamma \mapsto g_{\gamma}$ gives an element in
$H^1(\Gamma,G(F'))$. As we will show in the next proposition, we
have that $H^1(\Gamma,G(F'))=\{1\}$, i.e. it is trivial. Granted
this, we then can find a $b \in G(F')$ so that $g_\gamma =
b^\gamma/b$. That means, $b^\gamma b^{-1}x = x^\gamma$ and hence
$b^{-1}x \in G(\mathbb{A}_{F,f})$. This in turn implies the
surjectivity of the map $\imath$.
\end{proof}

\begin{prop}\label{Hilbert 90} Let $\Delta:=Gal(F'/F)$ be the Galois group of a
totally  real field extension and assume that $(2,|\Delta|)=1$.
Consider $G$, a unitary group over $F$ (with CM field K), and write
$G'$ for the base changed to $F'$ unitary group. Then the first
non-abelian cohomology group is trivial, that is
\[
H^1(\Delta,G'(F))= H^1(\Delta,G(F')=1
\]
\end{prop}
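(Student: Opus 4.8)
The plan is to reduce the vanishing of $H^1(\Delta, G'(F))$ to the classical Hilbert 90 statement $H^1(\Delta, GL_m(K')) = 1$ together with a ``twisting'' argument. First I would recall that a unitary group $G = U(\phi)/F$ over $F$ with respect to the quadratic CM extension $K/F$ has the property that after base change to $K$ it becomes an inner form of $GL_n$: $G \times_F K \cong GL_n/K$. Consequently $G'(F) = G(F') = U(\phi)(F')$, and base changing from $F'$ to $K' = F'K$ identifies $U(\phi)(K')$ with $GL_n(K')$. One must be slightly careful here with which Galois groups act: $\Delta = \Gal(F'/F) \cong \Gal(K'/K)$ acts on both $F'$ and $K'$, and since $p$ (equivalently $2 \nmid |\Delta|$) is odd and $\Delta$ acts compatibly with the involution $\rho$ of $K'/F'$, the cohomology set $H^1(\Delta, U(\phi)(F'))$ embeds into $H^1(\Delta, GL_n(K'))$ via the inflation-restriction or rather the base-change map; by classical Hilbert 90 the latter is trivial.

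The extra ingredient needed is that a $1$-cocycle with values in $GL_n(K')$ which happens to take values in the subgroup $U(\phi)(F')$ and is trivialized in $GL_n(K')$ can already be trivialized inside $U(\phi)(F')$. I would handle this by a descent argument: a cocycle $\gamma \mapsto g_\gamma \in U(\phi)(F')$ in $Z^1(\Delta, GL_n(K'))$ is a coboundary, say $g_\gamma = b^{\gamma} b^{-1}$ for some $b \in GL_n(K')$; the cocycle condition together with $g_\gamma \in U(\phi)$ forces $b^* \phi b =: \phi'$ to be a $\Delta$-invariant hermitian form over $K'$, hence a form over $K$, equivalent to $\phi$ over $K'$. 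Since $2 \nmid |\Delta|$, the obstruction to descending this equivalence from $K'$ to $K$ lives in $H^1(\Delta, U(\phi'))$ composed with a norm/averaging argument, which vanishes because one can average: replacing $b$ by $b \cdot \big(\frac{1}{|\Delta|}\sum_{\gamma} (\text{something})\big)$ produces an element of $U(\phi)(F')$ trivializing the original cocycle. Alternatively, and perhaps more cleanly, one invokes the fact that over the field $F'$ the pointed set $H^1(\Delta, U(\phi)(F'))$ classifies $F'$-forms of $\phi$ that become isomorphic over the (odd-degree, hence here relevant) extension, and odd-degree extensions do not change hermitian forms up to isomorphism — this is a standard consequence of Springer's theorem for hermitian forms: a hermitian form over $F'$ (or $K$) that is isotropic/equivalent after an odd-degree extension is already so.

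Thus the key steps in order are: (1) identify $G'(F)$ with $U(\phi)(F')$ and record the $\Delta$-action; (2) use $G \times_F K \cong GL_n/K$ to embed the cohomology into $H^1(\Delta, GL_n(K')) = 1$; (3) run the descent/averaging (or Springer odd-degree) argument to conclude that the trivialization can be chosen inside $U(\phi)(F')$; (4) observe the argument is symmetric in the two displayed forms $H^1(\Delta, G'(F))$ and $H^1(\Delta, G(F'))$ since these are literally the same set. The main obstacle I expect is step (3): making the descent of the trivializing element precise without circularity, since naively one is trying to prove a $H^1$-vanishing using a $H^1$-vanishing. The clean way around this is to phrase it via Springer's theorem on the behaviour of hermitian forms under odd-degree field extensions (which uses $2 \nmid |\Delta|$ crucially), so that no separate cohomological input beyond classical Hilbert 90 is required; I would state Springer's theorem as the one external fact invoked and cite it, then the rest is formal.
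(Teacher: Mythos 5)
Your proposal is essentially correct, and it lands on the same pivotal reduction the paper uses: show that two hermitian forms over $K$ which become isomorphic after the odd-degree base change to $K'$ were already isomorphic over $K$. The difference is in how that reduction is established and how the key input is justified. You get there by inflating the problem into $GL_n(K')$, using classical Hilbert 90 to write the cocycle as a coboundary $g_\gamma=b^\gamma b^{-1}$ with $b\in GL_n(K')$, observing that $\phi':=b^*\phi b$ descends to a hermitian form over $K$, and then needing a Springer-type odd-degree statement to identify $\phi'$ with $\phi$ over $K$ — which you propose to cite. Be aware that the first-listed averaging idea does not work: one cannot average a trivializing element in the non-abelian group $GL_n(K')$, and your instinct that the Springer route is the clean one is right. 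Also, note that the cohomology-set map $H^1(\Delta,U(\phi)(F'))\to H^1(\Delta,GL_n(K'))$ is not automatically injective; the descent computation you sketch is precisely what extracts useful information from the vanishing of the target, so the word ``embeds'' should be replaced by that explicit argument. The paper, by contrast, skips the trip through $GL_n$ entirely: it directly invokes the standard interpretation of $H^1(\Delta, G(F'))$ as classifying $K$-forms of $\phi$ that become isomorphic over $K'$, and then proves the required odd-degree rigidity from scratch using the Hasse principle for hermitian spaces over CM fields — the archimedean signatures are pinned down, and at each finite place the local invariant $\epsilon_v$ is shown to be preserved by reducing to a quaternion algebra degree count, which is where the hypothesis $(2,|\Delta|)=1$ enters. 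So the paper's argument is self-contained where yours cites Springer's theorem; otherwise the two routes are parallel. If you go your route, cite the hermitian-form version of Springer's theorem explicitly (it is not the classical quadratic-form statement verbatim) rather than the averaging heuristic.
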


Before we start with the proof of the above proposition we recall
the following Hasse principle for unitary groups. We introduce some
notation first. Let $K/F$ be a quadratic extension of $p$-adic
fields. As it is explained in \cite[page 30 and page
56]{Shimurabook1} for each even $n$ there exits, up to isomorphism,
exactly two $n$-dimensional hermitian spaces. The unitary group
$U(V^+)$ corresponding to the one of them is quasi-split, it is
associated to the hermitian space with maximal isotropic space of
dimension $n/2$. We write $U(V^-)$ for the other one. It corresponds
to the hermitian space with an anisotropic subspace of dimension 2
over $K$. For $n$ odd there is only one isomorphic class of unitary
groups for $K/F$. For a hermitian space $V$ we define
$\epsilon(V)=\pm 1$ if $dim_K(V)$ is even and $V \cong V^{\pm}$ and
$\epsilon(V)=1$ if $dim_K(V)$ is odd. Now we consider the
archimidean case. We pick complex hermitian space $(V,\phi)$. If
$dim_\mathbb{C}V$ is odd we set $\epsilon(V)=1$. If
$dim_\mathbb{C}V$ is even, then if we write $U(V) \cong U(p,q)$, we
set $\epsilon(V)=(-1)^{\frac{n}{2}-p}$.

We turn now to global considerations. We consider a totally real
field $F$ and totally imaginary quadratic extention $K$. Then we
have the following well known result

\begin{thm} Let $n$ be a natural number. For every place $v$ of $F$ that is not split in $K$ choose a
hermitian space $V_v$ of dimension $n$ associated to the extension
$K_v/F_v$ such that if we write $G_v$ for the corresponding unitary
group and define $\epsilon_v(G_v):=\epsilon_v(V_v)$, then
$\epsilon(G_v)=1$ for almost all $v$. Then, there exists a unitary
group $G$ over $F$ such that for each place $v$ of $F$ $G \otimes_F
F_v \cong G_v$ if and only if $\prod_{v}\epsilon_v(G_v)=1$.
\end{thm}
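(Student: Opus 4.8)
The plan is to recognize this as the classical Hasse principle for hermitian forms (a theorem of Landherr) and to reduce it to the Hasse norm theorem for the quadratic extension $K/F$. I would first set up the local classification of hermitian spaces: at a non-archimedean place $v$ inert or ramified in $K$, the $n$-dimensional hermitian spaces over $K_v/F_v$ are classified up to isometry by their discriminant in $F_v^\times/N_{K_v/F_v}(K_v^\times)$, a group of order $2$ (at split $v$ this group is trivial and the space is unique; at the archimedean places, which are all real since $K$ is a CM field, the discriminant again lies in $\mathbb{R}^\times/N_{\mathbb{C}/\mathbb{R}}(\mathbb{C}^\times)$ of order $2$ and records the signature modulo $2$). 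In each case one checks that the sign $\epsilon_v(V_v)$ defined in the text is, up to a fixed normalization depending only on $n$ (and equal to $1$ when $n$ is odd, so that the statement is vacuous in that case), the image of $\mathrm{disc}(V_v)$ under these identifications.

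Next I would recall the global classification: a hermitian space over $K/F$ is determined up to isometry by its dimension, its signatures at the real places, and its discriminant class in $F^\times/N_{K/F}(K^\times)$, subject to local compatibility of these data. With this in hand, the ``only if'' direction is short: if $G=U(V)$ with $V\otimes_F F_v\cong V_v$ for all $v$, then $\mathrm{disc}(V)\in F^\times/N_{K/F}(K^\times)$ localizes to $\epsilon_v(G_v)$ at every $v$, and Hilbert reciprocity (the product formula for the norm residue symbol of $K/F$) gives $\prod_v\epsilon_v(G_v)=1$.

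For the ``if'' direction the key input is the fundamental exact sequence of class field theory for the quadratic extension $K/F$,
\[
1\longrightarrow F^\times/N_{K/F}(K^\times)\longrightarrow \bigoplus_v F_v^\times/N_{K_v/F_v}(K_v^\times)\longrightarrow \mathbb{Z}/2\mathbb{Z}\longrightarrow 1,
\]
the last arrow being the sum of local norm-residue symbols; this is precisely the Hasse norm theorem together with the product formula. Given local data $(\epsilon_v)$ with $\prod_v\epsilon_v=1$, exactness yields a class $\delta\in F^\times/N_{K/F}(K^\times)$ with the prescribed localizations. I would then build $V$ as a diagonal hermitian space of dimension $n$ with discriminant $\delta$ and with prescribed signatures at the real places (compatible with $\delta$ because the archimedean $\epsilon_v$ already encode $\delta$ modulo norms there), and use the local classification to conclude $V\otimes_F F_v\cong V_v$, hence $U(V)\otimes_F F_v\cong G_v$, for every $v$.

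The hard part is not any deep theorem --- the exactness of the displayed sequence supplies all the arithmetic content --- but the careful bookkeeping around the normalization: making precise how $\epsilon_v$ relates to $\mathrm{disc}(V_v)$ (so that the reciprocity relation reads exactly $\prod_v\epsilon_v=1$ with no stray factor), and verifying at the real places that the signature data, which one is partly free to choose, is consistently determined together with the global discriminant $\delta$. Once these compatibilities are nailed down the proof is a direct assembly of class field theory and the classification of hermitian forms, and I would cite \cite{Shimurabook1} for the local and global classification statements.
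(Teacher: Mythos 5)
The paper does not prove this theorem; it is stated as a ``well known result'' (the Hasse principle for hermitian forms and their unitary groups, going back to Landherr), with the underlying local and global classification available in \cite{Shimurabook1}. So there is no internal argument to compare against, but your reduction to the exact sequence of class field theory for $K/F$ is the standard proof and is correct in substance. One imprecision in the bookkeeping you rightly identify as the delicate point: the normalization relating $\epsilon_v(V_v)$ to the class of $\mathrm{disc}(V_v)$ is \emph{not} a constant depending only on $n$. It is the local quadratic character of $K_v/F_v$ evaluated at a fixed global element (essentially the determinant of the global split form of rank $n$, which up to sign is $(-1)^{n/2}$), and so it genuinely varies with $v$; what saves the argument is that, being the localization of a global element, the product of these local symbols over all $v$ is $1$ by Hilbert reciprocity, so the factors cancel and the reciprocity relation reads exactly $\prod_v\epsilon_v(G_v)=1$ with no stray sign. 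You should also make explicit that the ``if'' direction uses not only the exactness of the displayed sequence to produce $\delta$, but also the \emph{existence} half of the global classification of hermitian forms (that any compatible collection of dimension, archimedean signatures, and discriminant class is realized by a global form), not merely the Hasse-principle uniqueness statement; both are standard and both are needed.
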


We note that the condition is trivial if $n$ is odd. Before we start
with the proof of the Proposition \ref{Hilbert 90} we need one more
lemma. That is,

\begin{lem}Let $F'/F$ be a finite Galois extension of $p$-adic fields such that
$(|G|,2)=1$ for $G:=G(F'/F)$. Let $(V,\phi)$ be a hermitian form
over $K/F$ and write $(V',\phi')$ for the base-changed hermitian
form to $F'$. Then we have $\epsilon(V)=\epsilon(V')$.
\end{lem}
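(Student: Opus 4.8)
The plan is to reduce the statement to the classification of hermitian spaces over a quadratic extension of a $p$-adic field — rank together with discriminant in $F^\times/N_{K/F}(K^\times)$ — and then to check that this discriminant invariant is unaffected by the odd base change. First I would dispose of the trivial cases: if $\dim_K V$ is odd, or if $K$ is split (so $K=F\times F$, in which case $K'=F'\times F'$), then $\epsilon(V)=\epsilon(V')=1$ by definition. So assume $K/F$ is a quadratic field extension and $n:=\dim_K V$ is even. Since $[K:F]=2$ and $[F':F]=|G|$ is odd, $K\not\subseteq F'$; hence $K':=KF'$ is a field, quadratic over $F'$, and restriction gives an isomorphism $\mathrm{Gal}(K'/F')\cong\mathrm{Gal}(K/F)$ under which the nontrivial automorphism of $K'/F'$ goes to that of $K/F$. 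Computing the Gram matrix of $V$ in a $K$-basis and then viewing it over $K'$ shows $\mathrm{disc}(V')=\mathrm{disc}(V)$ as an element of $F^\times$ (now to be taken modulo $N_{K'/F'}(K'^\times)$), and the quasi-split space $V^+$ over $K/F$ base-changes to the quasi-split space $(V')^+$ over $K'/F'$. Since $V\cong V^+$ exactly when $\mathrm{disc}(V)\equiv(-1)^{n/2}$ modulo $N_{K/F}(K^\times)$, the desired equality $\epsilon(V)=\epsilon(V')$ follows once one knows:
\[
x\in N_{K/F}(K^\times)\iff x\in N_{K'/F'}(K'^\times)\qquad\text{for every }x\in F^\times .
\]

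The heart of the argument, and the only place the hypothesis $2\nmid|G|$ enters, is this norm equivalence. Both $F^\times/N_{K/F}(K^\times)$ and $F'^\times/N_{K'/F'}(K'^\times)$ are cyclic of order $2$. Transitivity of the norm in the tower $F\subset F'\subset K'$ (equivalently $F\subset K\subset K'$) gives $N_{F'/F}\bigl(N_{K'/F'}(K'^\times)\bigr)\subseteq N_{K/F}(K^\times)$, so $N_{F'/F}$ induces a homomorphism $\bar N\colon F'^\times/N_{K'/F'}(K'^\times)\to F^\times/N_{K/F}(K^\times)$; and because the nontrivial automorphism of $K'/F'$ restricts to complex conjugation on $K$, one has $N_{K'/F'}(y)=N_{K/F}(y)$ for $y\in K^\times$, so the inclusion $F^\times\hookrightarrow F'^\times$ descends to $\bar\imath\colon F^\times/N_{K/F}(K^\times)\to F'^\times/N_{K'/F'}(K'^\times)$. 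The composite $\bar N\circ\bar\imath$ is induced by $x\mapsto N_{F'/F}(x)=x^{[F':F]}$, i.e. by raising to the odd exponent $[F':F]$, which is the identity on a group of order $2$. Hence $\bar\imath$ is injective, so it is an isomorphism; applied to $x=\mathrm{disc}(V)\cdot(-1)^{-n/2}$ this is exactly the equivalence above. Feeding it back, $V\cong V^+$ if and only if $V'\cong(V')^+$, and therefore $\epsilon(V)=\epsilon(V')$.

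I do not anticipate a serious difficulty; the points that need care are only bookkeeping: that base change leaves the Gram matrix, hence the discriminant in $F^\times$, unchanged and carries the quasi-split form to the quasi-split form, and that $\bar N$, $\bar\imath$ are well defined with composite $x\mapsto x^{[F':F]}$. One may equally run the norm equivalence through local class field theory — for $x\in F^\times$ the Artin symbol $\theta_{K'/F'}(x)$ restricts to $\theta_{K/F}\bigl(N_{F'/F}(x)\bigr)=\theta_{K/F}(x)^{[F':F]}=\theta_{K/F}(x)$, so it is trivial precisely when $\theta_{K/F}(x)$ is — which is the same computation.
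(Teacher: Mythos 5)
Your argument is correct, but it proceeds by a genuinely different route than the paper. The paper first reduces to the case $n=2$ via Witt decomposition ($\epsilon(V)=-1$ iff the anisotropic kernel has dimension $2$), translates a $2$-dimensional anisotropic hermitian space into a division quaternion algebra $B/F$, and then argues that $B\otimes_F F'$ stays a division algebra by a dimension count against the splitting-field criterion (in effect, the Hasse invariant $\tfrac12\in\mathbb{Q}/\mathbb{Z}$ is multiplied by $[F':F]$, which is odd); there is a harmless slip there, $[M_m(B):F]=4m^2$ rather than $4m$, but the parity conclusion is unaffected. You instead stay entirely within the classification of hermitian spaces by rank and discriminant in $F^\times/N_{K/F}(K^\times)$, observe that the discriminant is literally unchanged under base extension, and reduce everything to the single norm-group statement $N_{K/F}(K^\times)=F^\times\cap N_{K'/F'}(K'^\times)$, which you prove cleanly by composing the two induced maps on order-$2$ quotients to get the $[F':F]$-th power map (equivalently, by the compatibility of the local Artin map with $N_{F'/F}$). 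Your route handles all even $n$ uniformly with no reduction step, explicitly disposes of the split case $K=F\times F$ which the paper leaves implicit, and isolates the role of the hypothesis $2\nmid[F':F]$ in a single transparent place; the paper's route buys the reader a concrete picture (anisotropic plane $\leftrightarrow$ division quaternion algebra) that is otherwise hidden inside the discriminant invariant. One cosmetic point: "restricts to complex conjugation on $K$" should read "restricts to the nontrivial automorphism of $K/F$," as $K$ here is a local field; and it is worth saying once that $K\cap F'=F$ (so that $K'$ is indeed a field quadratic over $F'$ and restriction $\mathrm{Gal}(K'/F')\to\mathrm{Gal}(K/F)$ is an isomorphism), which follows because $[K\cap F':F]$ divides both $2$ and the odd number $[F':F]$.
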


\begin{proof} The statement is clear if $n$ is odd. So we are left
with the situation where $n$ is even. Now we can reduce everything
to the case $n=2$. Indeed, by definition, $\epsilon(V)=-1$ if $V$
has an anisotropic space of dimension two and $\epsilon(V)=1$ if
there is none. That means in order to prove the lemma, we need to
show that in our situation a (two-dimensional) anisotropic hermitian
$(V,\phi)$ over $F$ remains anisotropic after base change to $F'$.
But we can study this question by study the same question for
quaternion algebras (see \cite[pages 24-25]{Shimurabook1}, that is,
if we write $B/F$ for the corresponding to $V$ division algebra
(since $V$ is anisotropic), then the base changed quaternion algebra
$B':=B \otimes_F F'$ is a division algebra. But we know that $F'$
will split $B$, that is $B'$ is not a division algebra if and only
if there is an $F$-algebra $A$ that is similar to $B$, contains $F'$
and $[A:F']=[F':F]^2$. But that means that $A \cong M_m(B)$ for some
$m$, and hence $[A:F]=4m$. But $[A:F]=[A:F'][F':F]=[F':F]^3$. Since
we assume that $([F':F],2)=1$ we conclude the proof of the lemma.
\end{proof}

\begin{proof}(of Proposition \ref{Hilbert 90}) Let us write $(V,\phi)/K$ for the hermitian space
over $K$ that correspond to the group $G(F)$. Then the space $(V':=V
\otimes_K K', \phi':=\phi_K \otimes K')$ correspond to the group
$G(F')$. Then we know that the group $H^1(\Gamma,G(F'))$ classifies
classes over $K$ of hermitian forms $(W,\theta)/K$ that become
isomorphic to $(V',\phi')$ over $K'$. Since the signature at the
archimedean places is determined by $\phi'$ we know that also the
signatures of the forms $\theta$ at infinite is fixed. So there is
only freedom at the finite places. If $n$ is odd there is nothing
more to prove. If $n$ is even, then we can use the previous lemma to
establish that $\epsilon_{v'}(V')$ determines $\epsilon_v(W)$ for
every $v$ under $v'$. Hence there is only one class that can be base
changed to $(V',\phi')$ and hence we conclude the proof of the
proposition.
\end{proof}

\section{Proof of the ``Torsion-Congruences'': The CM Method.}

We are now ready to prove the main result of this work, namely the
``torsion congruences'' for the motives that we described in the
introduction.

\subsection{Explicit Results I; the case $n=1$.} Before we start with our approach to prove the
``torsion-congruences'' in this case we mention that in
\cite{Bouganis4} we have tackled these congruences using another
method. There we have used the Eisenstein measure of
Katz-Hida-Tilouine. We start by fixing our setting in the most
interesting case, that of an elliptic curve with CM. As it is easily
seen our results extend to the case of Hilbert modular forms with
CM.

Let $E$ be an elliptic curve defined over $\Q$ with CM by the ring
of integers $\mathfrak{R}_0$ of a quadratic imaginary field $K_0$.
We fix an isomorphism $\mathfrak{R}_0 \cong End(E)$ and we write
$\Sigma_0$ for the implicit $CM$ type of $E$. Let us write
$\psi_{K_0}$ for the Gr\"{o}ssencharacter attached to $E$. That is,
$\psi_{K_0}$ is a Hecke character of $K_0$ of (ideal) type $(1,0)$
with respect to the CM type $\Sigma_0$ and satisfies
$L(E,s)=L(\psi_{K_0},s)$. We fix an odd prime $p$ where the elliptic
curve has good ordinary reduction. We fix an embedding
$\bar{\Q}\hookrightarrow \bar{\Q}_p$ and, using the selected CM
type, we fix an embedding $K_0 \hookrightarrow \bar{\Q}$. The
ordinary assumption implies that $p$ splits in $K_0$, say to $\p$
and $\bar{\p}$ where we write $\p$ for the prime ideal that
corresponds to the $p$-adic embedding $K_0 \hookrightarrow
\bar{\Q}\hookrightarrow \bar{\Q}_p$. We write $N_E$ for the
conductor of $E$ and $\mathfrak{f}$ for the conductor of
$\psi_{K_0}$.

We consider a finite totally real extension $F$ (resp. $F' \supset
F$) of $\Q$ which we assume unramified at the primes of $\Q$ that
ramify in $K_0$ and at $p$. We write $\mathfrak{r}$ (resp.
$\mathfrak{r}'$) for its ring of integers and we fix an integral
ideal $\mathfrak{n}$ of $\mathfrak{r}$ that is relative prime to $p$
and to $N_E$. Let $K$ (resp $K'$) be the CM-field $FK_0$ (resp.
$F'K_0=F'K$) and let $\mathfrak{R}$ (resp. $\mathfrak{R}'$) be its
ring of integers. Let $F(p^\infty\mathfrak{n})$ be the ray class
field of conductor $p^\infty \mathfrak{n}$ and let $F'$ be ramified
only at primes above $p$, hence $F \subset F' \subset
F(p^\infty\mathfrak{n})$. Furthermore, assume $F'/F$ to be cyclic of
order $p$ and that the primes of $F'$ that ramify in $F'/F$ are
split in $K'$. That is if we write $\theta_{F'/F}$ for the relative
different of $F'/F$ then
$\theta_{F'/F}=\mathfrak{P}\bar{\mathfrak{P}}$ in $K'$. We write
$\Gamma$ for the Galois group $Gal(F'/F) \cong Gal(K'/K)$. Note that
in both $F$ and $F'$ all primes above $p$ split in $K$ and $K'$
respectively. Finally we write $\tau$ for the nontrivial element
(complex conjugation) of $Gal(K/F) \cong Gal(K'/F')$ and we set
$g:=[F:\Q]$.

We now consider the base changed elliptic curves $E/F$ over $F$ and
$E/F'$ over $F'$. We note that the above setting gives the following
equalities between the $L$ functions,
\begin{equation}
L(E/F,s) = L(\psi_K,s),\,\,\,L(E/F',s)=L(\psi_{K'},s)
\end{equation}
where $\psi_K := \psi_{K_0} \circ N_{K/K_0}$ and $\psi_{K'}:= \psi_K
\circ N_{K'/K} = \psi_{K_0} \circ N_{K'/K_0}$, that is the
base-changed characters of $\psi_{K_0}$ to $K$ and $K'$.

We write $G_F$ for the Galois group
$Gal(F(p^{\infty}\mathfrak{n})/F)$ and
$G_{F'}:=Gal(F'(p^{\infty}\mathfrak{n})/F')$ for the analogue for
$F'$. Note that the above setting (the ramification of $F'$ over
$F$) introduces a transfer map $ver:G_F \rightarrow G_{F'}$.
Moreover we have an action of $\Gamma=Gal(F'/F)$ on $G_{F'}$ by
conjugation. We will shortly define measures $\mu_{E/F}$ of $G_F$
and $\mu_{E/F'}$ of $G_{F'}$ that interpolate the critical value at
$s=1$ of the elliptic curve $E/F$ and $E/F'$ respectively twisted by
finite order characters of conductor dividing
$p^\infty\mathfrak{n}$. These are simply defined by
$\mu_{E/F}:=\mu^{HLS}_{\psi}$ and $\mu_{E/F'}:=\mu^{HLS}_{\psi'}$,
where we have taken the measure $\mu^{HLS}_{\pi,\chi}$ constructed
above in the case $n=1$, $\pi$ trivial and $\chi=\psi$.

\begin{thm}\label{congruences1}We have the congruences
\begin{equation}
\int_{G_F}\epsilon \circ ver \,\,\,\, d\mu_{E/F} \equiv
\int_{G_{F'}}\epsilon\,\,\,\,\, d\mu_{E/F'} \mod{p\Z_p}
\end{equation}
for all $\epsilon$ locally constant $\Z_p$-valued functions on
$G_{F'}$ such that $\epsilon^{\gamma}=\epsilon$ for all $\gamma \in
\Gamma$, where
$\epsilon^\gamma(g):=\epsilon(\tilde{\gamma}g\tilde{\gamma}^{-1})$
for all $g \in G_{F'}$ and for some lift $\tilde{\gamma} \in
Gal(F'(p^\infty\mathfrak{n})/F))$ of $\gamma$.
\end{thm}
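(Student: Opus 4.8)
The plan is to deduce Theorem \ref{congruences1} directly from the congruences between Eisenstein series proved in Section 5 (the ``Congruences of Eisenstein series of $U(n,n)$'' theorem and its Corollary) together with the reciprocity law on CM points of Proposition \ref{reciprocityP} and the bijection of CM points of Proposition \ref{bijection of CM points}. Concretely, in the case at hand we have $n=1$, $\pi$ trivial, $\chi=\psi$, and the measures are $\mu_{E/F}=\mu^{HLS}_{\psi}$, $\mu_{E/F'}=\mu^{HLS}_{\psi'}$; by Theorem \ref{interpolation properties} integrating a finite character against these measures produces (up to the periods $\Omega_p$, $\Omega_\infty$) the values of the normalized Siegel Eisenstein series $E(z,m(A),\epsilon\psi,\nu)$ evaluated at the CM points attached to the sets $\mathcal{B}_K$ and $\mathcal{B}_{K'}$. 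So the statement to be proven is really a statement comparing $\sum_{a,b\in\mathcal{B}_{K'}}(\text{values of }E')$ with $\sum_{a,b\in\mathcal{B}_K}(\text{values of }E)$ modulo $p$.

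First I would fix a locally constant $\Z_p$-valued $\epsilon$ on $G_{F'}$ with $\epsilon^\gamma=\epsilon$. By the definition of the transfer map and $\psi^p=\psi'\circ ver$, the right-hand object is built from $E'(z,m(A),\epsilon\psi',\nu')$ while the left-hand object is built from $E(z,m(A),\epsilon\circ ver\,\psi^p,\nu^p)$. The Corollary to the Eisenstein congruences theorem gives precisely
\[
Res^{K'}_K\big({E'}^{(a')}(z,m(A),\epsilon\psi',\nu')\big)\equiv Frob_p\big(E^{(a)}(z,m(A),\epsilon\circ ver\,\psi^p,\nu^p)\big)\pmod p,
\]
so the key is to evaluate both sides at the appropriate CM points and sum over $\mathcal{B}$. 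The restriction map $Res^{K'}_K=\Delta^*$ is, by Lemma \ref{algebraic q-expansion of pull-back }, compatible with the diagonal embedding $\Delta:\mathbb{H}_F\hookrightarrow\mathbb{H}_{F'}$, which on CM points sends $\underline{X(\mathfrak U)}$ over $K$ to $\underline{X(\mathfrak U)}\otimes_{\mathfrak r}\mathfrak r'$ over $K'$; this is exactly the abelian variety attached to the induced CM pair $(\Sigma',K')$. Hence evaluating $\Delta^*({E'}^{(a')})$ at the CM point for $\mathcal{B}_K$ equals evaluating ${E'}^{(a')}$ at the CM point for $\imath(a')\in\mathcal{B}_{K'}$, and by Proposition \ref{bijection of CM points} (whose hypotheses are arranged by the choice of auxiliary conductor $\mathfrak n$, or rather $\mathfrak c$) the map $\imath:\mathcal{B}_K\to\mathcal{B}_{K'}^\Gamma$ is a bijection, so summing over $\mathcal{B}_K$ of the restricted $E'$ recovers the $\Gamma$-invariant part of the sum over $\mathcal{B}_{K'}$ — and the non-$\Gamma$-invariant contributions cancel modulo $p$ by the orbit argument already used in the proof of the Eisenstein congruences theorem. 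On the other side, $Frob_p$ acting on the value of $E$ at the CM point is, by Proposition \ref{reciprocityP}, the same as the Frobenius conjugate of the value, so after dividing by the $p$-adic period $\Omega_p(Y,\Sigma)$ (which is a $p$-adic unit) the $Frob_p$ matches the $\psi\mapsto\psi^p$, $\nu\mapsto\nu^p$ operation built into the measure. Assembling these identifications, dividing through by $(f,\check f)=1$ (trivial since $\pi$ is trivial, so no $p$-divisibility issue arises here and the measures are genuinely integral), gives the desired congruence between the two integrals.

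The main obstacle, and the step that needs care rather than the formal bookkeeping, is the matching of periods and the verification that all the normalizing factors — the $Q(\beta,A,k,\nu)$, the $\Phi_\mu$ values at places above $p$, the volume and Gauss-sum factors $\chi_v^{-1}(\det u)\,vol(L_v)$, and the archimedean automorphy factor $J_k$ — are genuinely $\Gamma$-invariant and transform correctly under $\psi\mapsto\psi^p$, $\nu\mapsto\nu^p$ modulo $p$; this is where Propositions \ref{coefficients}, \ref{coefficients2} and the congruence $Q(\beta,A,pk,\nu^p)\equiv Q(\beta,A,k,\nu')\pmod p$ are invoked, and one must also check that the $p$-adic and complex periods of the CM algebra $Y=\bigoplus K$ factor as the $n$-th power of the periods of the CM pair $(K,\Sigma)$ (our Lemma on $(\Omega_\infty(Y,\Sigma),\Omega_p(Y,\Sigma))=(\Omega_\infty(\Sigma)^n,\Omega_p(\Sigma)^n)$), so that dividing by $\Omega_p$ is legitimate and period-independent. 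Once that is in place, the congruence $\int_{G_F}\epsilon\circ ver\,d\mu_{E/F}\equiv\int_{G_{F'}}\epsilon\,d\mu_{E/F'}\pmod{p\Z_p}$ follows by combining the Eisenstein congruence, the reciprocity law, and the CM-point bijection as above, completing the proof.
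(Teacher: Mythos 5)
Your proposal is correct and follows the same route as the paper, which simply defers the proof of Theorem~\ref{congruences1} to the argument for Theorem~\ref{congruences2} with $\pi$ trivial and $f=1$; you have in effect unwound that deferral, invoking the same ingredients (the Eisenstein series congruence and its corollary, Lemma~\ref{algebraic q-expansion of pull-back }, Proposition~\ref{reciprocityP}, Proposition~\ref{bijection of CM points}, the period-matching lemmas, and the $\Gamma$-orbit cancellation modulo $p$). One small imprecision: you write that the $\Gamma$-orbit cancellation is ``the orbit argument already used in the proof of the Eisenstein congruences theorem,'' but there the orbits are of pairs $(\beta',\mathfrak{a})$, whereas here they are of pairs $(a',b')\in\mathcal{B}_{K'}\times\mathcal{B}_{K'}$; the mechanism is analogous but the objects are different, and the relevant statement in the paper is the proposition and corollary on $\Gamma$-invariance of $E^{(a'),\nu'}_{\phi'\psi'}$ appearing just before the final theorem of Section~7.
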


\begin{proof}The proof of this theorem is exactly the same as the proof
of Theorem \ref{congruences2} that we prove below for the case
$n=2$. One simply needs to set there $f=1$ and $\pi$ the trivial
representation. The rest of the proof is identical, so we defer the
proof for the next section.
\end{proof}

As it is explained in appendix in Theorem \ref{RitterWeissCong} a
remark of Ritter and Weiss allow us to conclude from Theorem
\ref{congruences1} the following theorem

\begin{thm}\label{torsion-congruences1} With notation as before we have that the
torsion-congruen ces hold, i.e.
\[
ver(\mu_{E/F}) \equiv \mu_{E/F'} \mod{T},
\]
where $T$ the trace ideal.
\end{thm}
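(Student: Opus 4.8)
The plan is to deduce Theorem \ref{torsion-congruences1} from the congruence of measures in Theorem \ref{congruences1} by invoking the appendix's reformulation of the Ritter--Weiss lemma (Theorem \ref{RitterWeissCong}). First I would recall exactly what Theorem \ref{congruences1} gives: for every locally constant $\Z_p$-valued function $\epsilon$ on $G_{F'}$ that is $\Gamma$-invariant,
\[
\int_{G_F}\epsilon\circ ver\,\,d\mu_{E/F}\equiv\int_{G_{F'}}\epsilon\,\,d\mu_{E/F'}\mod{p\Z_p}.
\]
Since locally constant functions are dense in the space of continuous $\Z_p$-valued functions, and integration against a $p$-adic measure is continuous, this congruence of integrals against all $\Gamma$-invariant locally constant $\epsilon$ is equivalent to a congruence at the level of the measures themselves: $ver(\mu_{E/F})\equiv\mu_{E/F'}\bmod p$ when both sides are viewed as elements of $\Z_p[[G_{F'}]]$ and we pair against $\Gamma$-invariant test functions. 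The content of the appendix (Theorem \ref{RitterWeissCong}) is precisely that this ``weak'' congruence, tested only against $\Gamma$-invariant functions, is equivalent to the ``strong'' congruence modulo the trace ideal $T$ in $\Z_p[[G_{F'}]]^\Gamma$, where $T$ is generated by the elements $\sum_{\gamma\in\Gamma}\alpha^\gamma$ with $\alpha\in\Z_p[[G_{F'}]]$.

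The key steps, in order, would be: (i) observe that $ver(\mu_{E/F})$ lands in $\Z_p[[G_{F'}]]^\Gamma$, because the image of the transfer map is fixed by the conjugation action of $\Gamma$ (this is a standard property of the transfer/Verlagerung for the abelianized Galois groups in our setting, using that $F'/F$ is cyclic of degree $p$ unramified outside $p$); similarly $\mu_{E/F'}$ is $\Gamma$-invariant because $E$ is defined over $F$ (equivalently over $\Q$), so the measure $\mu_{E/F'}$ interpolating $L$-values of $E/F'$ is stable under $\Gamma=\mathrm{Gal}(F'/F)$. (ii) Rewrite the integral congruence of Theorem \ref{congruences1}: for $\Gamma$-invariant $\epsilon$, pairing against $\epsilon$ is the same as pairing $ver(\mu_{E/F})$ and $\mu_{E/F'}$ against $\epsilon$, so we get that $ver(\mu_{E/F})-\mu_{E/F'}$, as an element of $\Z_p[[G_{F'}]]^\Gamma$, is annihilated modulo $p$ by every $\Gamma$-invariant test functional. (iii) Apply Theorem \ref{RitterWeissCong} to convert this into $ver(\mu_{E/F})-\mu_{E/F'}\in T$, i.e. $ver(\mu_{E/F})\equiv\mu_{E/F'}\bmod T$.

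A small technical point I would address explicitly is the matching of normalizations: the measures $\mu_{E/F}=\mu^{HLS}_{\psi}$ and $\mu_{E/F'}=\mu^{HLS}_{\psi'}$ are the ones constructed in Section 4 (Theorem \ref{interpolation properties}) specialized to $n=1$ with $\pi$ trivial, and Theorem \ref{congruences1} is stated precisely for these; so no extra comparison is needed, but one should note that the period factors $\Omega_p(Y,\Sigma)$ appearing in the interpolation formula are $p$-adic units (being defined via $\omega_{can}$), so dividing by them does not disturb $p$-integrality, and that $\psi'=\psi\circ N_{K'/K}$ is exactly the base change appearing on the Eisenstein side via $ver$. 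The main obstacle — though it has been entirely outsourced to the appendix here — is the Ritter--Weiss equivalence in step (iii): translating a congruence that holds only after pairing with $\Gamma$-fixed elements into membership in the trace ideal. Given that the excerpt promises this as Theorem \ref{RitterWeissCong}, the proof of \ref{torsion-congruences1} itself is then a short formal deduction, and I would present it as such: invoke \ref{congruences1}, check the two invariance statements in (i), and quote \ref{RitterWeissCong}.
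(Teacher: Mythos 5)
Your proposal is correct and takes essentially the same route as the paper: the paper deduces Theorem \ref{torsion-congruences1} directly by combining Theorem \ref{congruences1} with the Ritter--Weiss reformulation (Theorem \ref{RitterWeissCong} in the appendix), which is precisely your plan. The only additional content you supply --- the $\Gamma$-invariance of $ver(\mu_{E/F})$ and of $\mu_{E/F'}$, and the identity $\int_{G_F}(\epsilon\circ ver)\,d\mu_{E/F}=\int_{G_{F'}}\epsilon\,d(ver(\mu_{E/F}))$ --- is exactly what the appendix packages inside Lemma \ref{Gamma-invariance} and the proof of Theorem \ref{RitterWeissCong}, so it is a harmless elaboration rather than a deviation.
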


We note here that the important improvement in comparison to the
previous result in \cite{Bouganis4} is that we do not need to make
any assumption on the relation between the various class groups of
$F,F',K$ and $K'$.

\subsection{Explicit Results II; the case $n=2$.} In this section we
explain our results in the case of $n=2$.

\textbf{Quaternion algebras and unitary groups in two variables:}
Following Harris \cite{Harris2} we explain a relation between the
unitary group in two variables and quaternion algebras. Let $D$ be a
quaternion algebra over a totaly real field $F$ and assume that
there is an embedding $i:K \hookrightarrow D$ for a CM field $K$
with $K^+=F$. We consider the algebraic group over $F$
\[
GU_{K}(D):= (H_{K} \times D^{\times})/H_{F}
\]
where $H_{K}= Res_{K/\Q}G_{m}/K$ and $H_{F}=Res_{F/\Q}G_m/F$ and
$H_{F}$ is embedded diagonally into $H_{K} \times D^{\times}$. Next
we will identify the group $GU_K(D)$ with a unitary group as its
notation suggests.

Let us write $\iota :D  \rightarrow D$ for a main involution of $D$
s.t the reduced norm and trace are related by $N_{D}(d)=Tr_D(d\cdot
d^\iota)$. We can then obtain the non-degenerate inner form
$(x,y)_D:=Tr_D(x\cdot y^\iota)$ on the four dimensional $F$-vector
space $D$ and define the orthogonal group $GO(D)$ as
\[
GO(D)= \{g \in GL_E(D) | (gx,gy)_D = \nu(g) (x,y)_D\}
\]
for some homomorphism $\nu : GO(D) \rightarrow H_F$. Further there
is a map $\rho:D^\times \times D^\times \rightarrow GO(D)$, defined
by
\[
\rho(d_1,d_2)(x):= d_1xd_2^{-1}, \,\,x \in D.
\]
The map $\rho$ has kernel $H_F$ embedded diagonally in $D^\times
\times D^\times$ and $\nu(\rho(d_1,d_2))=N_D(d_1\cdot d_2^{-1})$.
Let now consider $K$ as above i.e there is an embedding $i: K
\hookrightarrow D$ and $D$ splits over $K$. We define the group
$GU_K(D)$ as the subgroup of $K$-linear elements of $GO(D)$.
Actually the group $GU_K(D)$ is a the group of unitary similitudes
of the a hermitian form $(\cdot,\cdot)_{D,K}$ characterized uniquely
from the properties that $(\cdot,\cdot)_D=
Tr_{K/F}(\cdot,\cdot)_{D,K}$ and $(x,y)_{D,K}=x\cdot \bar{y}$ for
$x,y \in K$. Now the relation with our previous considerations is
that the map $\rho$ restricted to $H_K \times D^\times$ has image in
$GL_K(D) \cap GO(D)$ and induces an isomorphism $(H_K \times
D^\times)/H_F \cong GU_K(D)$. Finally when the quaternion algebra is
unramified at all infinite places then the hermitian form
$(\cdot,\cdot)_{D,K}$ is positive definite.

\textbf{The setting:} Now we recall the setting that we are
interested in. We consider a Hilbert cuspidal form $f$ of $F$, which
is a newform. We write $N_{f}$ for its conductor. We assume that
$N_f$ is square free and relative prime to $p$. We now impose the
following assumptions on $f$.
\begin{enumerate}
\item $f$ has a trivial Nebentypus.
\item There exists a finite set $S$ of finite places of $F$ such that
we have (i) $ord_v(N_f) \neq 0$ for all $v \in S$, (ii) for $v \in
S$ we have that $v$ is inert in $K$ and finally (iii) $\sharp S +
[F:\mathbb{Q}]$ is even.
\end{enumerate}
Let us write $D/F$ for the totally definite quaternion algebra that
we can associate to the set $S$, i.e. $D$ is ramified at all finite
places $v \in S$ and also at all infinite places. Note that our
assumptions imply that there exist an embedding $K \hookrightarrow
D$. If we write $\pi_f$ for the cuspidal automorphic representation
of $GL_2(\mathbb{A}_F)$ associated to $f$ then our assumptions imply
that there exist a Jacquet-Langlands correspondence $\pi:=JL(\pi_f)$
to $D^\times(\mathbb{A}_F)$. As we explained above there exists an
isomorphism
\[
(D^\times \times K^\times)/F^\times \cong GU(\theta)(F)
\]
for some totally definite two dimensional Hermitian form
$(W,\theta)$. In particular, since the representation $\pi$ is taken
of trivial central character we can consider the representation
$\mathbf{1} \times \pi$ of $(D_{\mathbb{A}_F}^\times \times
\mathbb{A}^\times_K)/\mathbb{A}^\times_F$, where $\mathbf{1}$ the
trivial representation (character) on $\mathbb{A}^\times_K$. In
particular $\pi$ induces an automorphic representation, by abuse of
notation we denote it again with $\pi$, on $GU(\theta)$ and by
restriction to $U(\theta)$. Moreover it is known that
\[
L(\pi,s)=L(BC(\pi'),s),
\]
where $BC(\pi_f)$ is the base-change of $\pi_f$ from
$GL_2(\mathbb{A}_F)$ to $GL_2(\mathbb{A}_K)$. By abusing the
notation we keep writing $f$ for the hermitian form that corresponds
to the representation $\pi$ of the definite unitary group
$U(\theta)$. It is determined by a finite set of values. Then the
main theorem then is the following:

\begin{thm}\label{torsion-congruences2} Assume the following conditions
\begin{enumerate}
\item The $p$-adic realizations of $M(\pi)$ and $M(\psi)$ have
$\mathbb{Z}_p$-coefficients.
\item The prime $p$ is unramified in $F$ (but may ramify in $F'$).
\item If we write $K^*$ for the reflex field of $(K,\Sigma)$ (note
that this is also the reflex field of $(K',\Sigma')$), then for the
primes $\mathfrak{p}$ above $p$ in $K^*$ we have
$N_{K^*/\mathbb{Q}}(\mathfrak{p})=p$.
\item If we write $(f,f)$ for the standard normalized
Peterson inner product of $\pi$, then $(f,f)$ has trivial valuation
at $p$
\end{enumerate}
Then we have that the torsion congruences hold true for the motive
$M(\psi)/F \otimes M(f)/F$, where $\phi$ a Hecke character of $K$
infinity type $-k\Sigma$ with $k \geq 2$.
\end{thm}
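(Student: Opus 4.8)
The plan is to reduce Theorem \ref{torsion-congruences2} to a congruence between the two $p$-adic measures attached to the motive, and then to feed that congruence into the reformulation of Ritter and Weiss recalled in the appendix (Theorem \ref{RitterWeissCong}). Set $\mu_F := \mu^{HLS}_{\pi,\psi}$ on $G_F$ and $\mu_{F'} := \mu^{HLS}_{\pi',\psi'}$ on $G_{F'}$, where $\pi' := BC(\pi)$ is the base change of $\pi$ to $U(\theta')$ and $\psi' := \psi\circ N_{K'/K}$; by Theorem \ref{interpolation properties} these interpolate the critical values of $M(\psi)/F\otimes M(f)/F$ and of its base change to $F'$, so by Theorem \ref{RitterWeissCong} it is enough to prove, for every $\Gamma$-invariant locally constant $\mathbb{Z}_p$-valued $\epsilon$ on $G_{F'}$, that
\[
\int_{G_F}\epsilon\circ ver\, d\mu_F \;\equiv\; \int_{G_{F'}}\epsilon\, d\mu_{F'}\pmod{p\mathbb{Z}_p},
\]
i.e.\ the exact analogue of Theorem \ref{congruences1} for $n=2$. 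Hypothesis (4) enters already here: the integrality argument following Theorem \ref{interpolation properties} shows $(f,\check{f})\mu^{HLS}$ is $p$-integral, so once $(f,f)$ is a $p$-adic unit both $\mu_F$ and $\mu_{F'}$ are themselves $p$-integral (the $F'$-side inner product being a $p$-adic unit as well, $f$ being a base change); without this the displayed congruence is vacuous.

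To prove the displayed congruence I would first unfold the construction of $\mu^{HLS}$ from the proof of Theorem \ref{interpolation properties}: $\int_{G_{F'}}\epsilon\, d\mu_{F'}$ equals, up to the unit $(f,\check{f})^{-1}$, a double sum over $a',b'\in\mathcal{B}_{K'}$ of $\epsilon\psi'(\det b')\,{E'}^{(a')}_{\epsilon\psi'}(\underline{A_{a'}}\times\underline{A_{b'}},j_1\times j_2)\,f(b')\check{f}(a')$, the Eisenstein values being taken at the CM points of $U(n,n)_{/F'}$ cut out by the doubling embedding. By Proposition \ref{bijection of CM points} --- applicable because the conductor $\mathfrak{c}$ of $\psi$ may be chosen divisible by a suitable prime of $F$, as required there --- the set $\mathcal{B}_{K'}$ is the union of $\imath(\mathcal{B}_K)$ with $\Gamma$-orbits of length $p$. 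Since $\epsilon$, $\psi$, the auxiliary tuple $\nu$, and $f$ (being a base change) are all $\Gamma$-stable, and since the Fourier coefficients $Q(\beta,A,k)$, $n_{\mathfrak{a}}(\beta,m(A))$ and $\Phi_\mu$ of the Siegel Eisenstein series are $\Gamma$-equivariant by Propositions \ref{coefficients2} and \ref{coefficients}, each length-$p$ orbit contributes a multiple of $p$; hence modulo $p$ only the diagonal part indexed by $\imath(\mathcal{B}_K)$ survives.

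On that diagonal part I would use that evaluating ${E'}$ at a CM point in the image of the diagonal map $\Delta$ coincides with evaluating the pull-back $\Delta^*E' = Res^{K'}_K E'$ at the corresponding CM point of $U(n,n)_{/F}$ --- using the compatibility of $\Delta$ with the base change $\underline{A}\mapsto\underline{A}\otimes_{\mathfrak{r}}\mathfrak{r}'$ of PEL data and with the bijection of Proposition \ref{bijection of CM points}. The congruence of Siegel Eisenstein series proved in Section~5 (in its twisted form, the corollary there) then gives
\[
Res^{K'}_K\!\big({E'}^{(a')}(z,m(A),\epsilon\psi',\nu')\big)\;\equiv\; Frob_p\!\big(E^{(a)}(z,m(A),\epsilon\circ ver\,\psi^p,\nu^p)\big)\pmod p,
\]
and Proposition \ref{reciprocityP} identifies the value of $Frob_p(E)$ at our CM point with the $\Phi_{\mathfrak{p}}$-conjugate of the value of $E$ there. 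Because hypothesis (3) forces $N_{K^*/\mathbb{Q}}(\mathfrak{p})=p$, the residue field at $\mathfrak{p}$ is $\mathbb{F}_p$, so $\Phi_{\mathfrak{p}}$ acts trivially modulo $\mathfrak{p}$ and the conjugation drops out mod $p$; combined with $x^p\equiv x\pmod p$ for $p$-integral $x$ (which reconciles the passage from $\psi$ to $\psi^p=\psi'\circ ver$, from $\nu$ to $\nu^p$, and from $\Omega_p$ to $\Omega_p^p$, hence weight $k$ to weight $pk$), this collapses the diagonal sum into $\int_{G_F}\epsilon\circ ver\, d\mu_F$, which is the asserted congruence. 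Ritter--Weiss then upgrades it to $ver(\mu_F)\equiv\mu_{F'}\pmod T$.

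The main obstacle I expect is keeping the complex-multiplication bookkeeping coherent across all the identifications simultaneously: one must verify that the diagonal embedding of symmetric spaces, the bijection $\mathcal{B}_K\cong\mathcal{B}_{K'}^{\Gamma}$, the base change of PEL data $\underline{A}\mapsto\underline{A}\otimes_{\mathfrak{r}}\mathfrak{r}'$, and the $p^\infty$-level structures $j_1\times j_2$ are all compatible on the nose, and that every quantity in play --- the Eisenstein Fourier coefficients, the periods $\Omega_\infty(Y,\Sigma)$ and $\Omega_p(Y,\Sigma)$ over both $F$ and $F'$, the inner product $(f,\check{f})$, and the values $\psi(\mathfrak{a})$ --- is genuinely $p$-integral, so that the shifts $\psi\mapsto\psi^p$ and $k\mapsto pk$ together with the Frobenius conjugation really do become harmless modulo $p$. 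Concretely, the delicate point is showing that the weight-$pk$, Frobenius-twisted Eisenstein object over $F$ produced in Section~5 is congruent modulo $p$, after evaluation at CM points, to the genuine weight-$k$ object defining $\mu_F$; this is exactly where assumptions (1), (2) and (3) are consumed.
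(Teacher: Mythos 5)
Your proposal reproduces the paper's own strategy faithfully: reduction to the abelian congruence via Ritter--Weiss (Theorem~\ref{RitterWeissCong}), unfolding the doubling-method formula for $\mu^{HLS}$ as a double sum over CM points, the bijection $\mathcal{B}_K \cong \mathcal{B}_{K'}^\Gamma$ of Proposition~\ref{bijection of CM points}, the vanishing of length-$p$ $\Gamma$-orbits modulo $p$, the pull-back identity $\Delta^*E'=Res^{K'}_K E'$, the Section~5 congruence of Siegel Eisenstein series, the CM reciprocity law (Proposition~\ref{reciprocityP}), and the base-change compatibility of the vectors (Theorem~\ref{base-change}). Two details are stated imprecisely, though neither affects the substance. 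First, the vanishing mod $p$ of the length-$p$ orbits is not directly a consequence of the Fourier-coefficient $\Gamma$-equivariance in Propositions~\ref{coefficients2} and~\ref{coefficients} (the latter is a $K$-to-$K'$ comparison, not a $\Gamma$-equivariance statement); the paper instead establishes the $\Gamma$-invariance of the evaluated Eisenstein series $E^{(a'),\nu'}_{\epsilon\psi'}(\underline{A_{a'}}\times\underline{A_{b'}},\cdot)$ directly from the $\Gamma$-invariance of the local sections $\mu_{\epsilon\psi'}$ and the permutation action of $\gamma$ on $P(F')\backslash G(F')$, then takes the inner product with $f'$ using $f'({a'}^\gamma)=f'(a')$. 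Second, the claim that ``$\Phi_{\mathfrak{p}}$ acts trivially modulo $\mathfrak{p}$ so the conjugation drops out mod $p$'' glosses over a genuine step: the paper divides the Frobenius-twisted sum by $\Omega_p(Y,\Sigma)^p$, observes that the resulting quotient lies in $\mathbb{Z}_p$ (so $\Phi_{\mathfrak{p}}$ fixes it exactly), and then separately handles the leftover period factor via the congruences $\Omega_p^{\Phi_{\mathfrak{p}}}/\Omega_p\equiv\Omega_p^p/\Omega_p\pmod{\mathfrak{m}}$ and $\bigl(\Omega_p^{\Phi_{\mathfrak{p}}}/\Omega_p\bigr)^p\equiv\Omega_p^{\Phi_{\mathfrak{p}}}/\Omega_p\pmod{p}$. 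This period bookkeeping is exactly where hypothesis~(3) is consumed and is the delicate point you flagged at the end; your proposal identifies it correctly but resolves it with less precision than the paper does.
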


We now construct measures $\mu_{F}$ (resp. $\mu_{F'}$) on
$G:=G^{ab}_F(p^\infty)$ (resp. $G':=G_{F'}(p^\infty)$) that
interpolate modified values of $L(\pi_f \times \pi_{\psi},1)$ (resp
$L(\pi'_f \times \pi_{\psi'})$) where $\pi_{\psi}$ (resp.
$\pi_{\psi'}$) is the cuspidal automorphic representation obtained
by automorphic induction of $\psi$ (resp. $\psi'$). We consider the
CM algebras $Y=K \oplus K$ and $Y'=K'\oplus K'$. Then we define the
measures by
\[
\int_{G}\chi \, \mu_F:=\frac{\int_{Gal(K(p^\infty)/K)} \tilde{\chi}
\mu^{HLS}_{\pi,\psi}}{\Omega_p(Y,\Sigma)} \,\,\,\left(resp.
\,\,\,\int_{G'}\chi\, \mu_F:=\frac{\int_{Gal(K'(p^\infty)/K')}
\tilde{\chi}'\,
\mu^{HLS}_{\pi',\psi'}}{\Omega_p(Y',\Sigma')}\right),
\]
where $\tilde{\chi}$ (resp. $\tilde{\chi}'$ is the base change of
$\chi$ (resp. $\chi'$ from $F$ (resp. $F'$) to $K$ (resp. $K'$).

We will prove the following theorem. As we explain in the appendix
(see Theorem \ref{RitterWeissCong}) the following theorem implies
Theorem \ref{torsion-congruences2}.

\begin{thm}\label{congruences2} Assume that $\pi_f$ and $\psi$ have coefficients in $\mathbb{Q}_p$. Let $\epsilon$ be a locally constant $\mathbb{Z}_p$-valued function of $G_{F'}$ with $\epsilon^\gamma=\epsilon$, for $\gamma \in
Gal(F'/F)$. Then we have the congruences
\[
(f,f)\int_{G'}\epsilon(g)\, ver(\mu_{F})(g) \equiv
(f',f')\int_{G'}\epsilon(g)\mu_{F'}(g)\,\, \mod{p}.
\]
In particular we have that if $(f,f)$ is a $p$-adic unit then the
torsion congruences hold for $M(\pi)/F \otimes M(\psi)/F$ and the
extension $F'/F$.
\end{thm}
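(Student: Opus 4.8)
The plan is to reduce Theorem \ref{congruences2} to the congruences between Siegel-type Eisenstein series on $U(n,n)$ for $n=2$ that were already established, combined with the doubling-method machinery constructed in Section 3 and the reciprocity law on CM points of Section 5. First I would recall that by construction $\int_{G'}\epsilon\, ver(\mu_F)$ and $\int_{G'}\epsilon\,\mu_{F'}$ are obtained, via the definitions of $\mu_F$ and $\mu_{F'}$, by evaluating the measures $\mu^{HLS}_{\pi,\psi}$ and $\mu^{HLS}_{\pi',\psi'}$ against the appropriate locally constant functions and dividing by the $p$-adic periods $\Omega_p(Y,\Sigma)$ and $\Omega_p(Y',\Sigma')$. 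Using Lemma \ref{definition of algebraic periods} and the subsequent lemmas relating $(\Omega_p(Y,\Sigma),\Omega_\infty(Y,\Sigma))$ to the CM-pair periods, together with the compatibility of $\Sigma'$ being the lift of $\Sigma$ (so that the relevant abelian varieties for $K'$ are obtained from those for $K$ by $-\otimes_{\mathfrak r}\mathfrak r'$), the two period normalizations match up to a $p$-adic unit. This lets me pass to the integral ($p$-adic avatar) formulation where everything is phrased in terms of $q$-expansions of Eisenstein series evaluated at CM points.

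Next I would unwind the definition of $\mu^{HLS}_{\pi,\chi}$ from the proof of Theorem \ref{interpolation properties}: it is a sum over $a,b\in\mathcal B$ of $\chi\psi(\det b)\,E_{\chi\psi}(\underline{A_a}\times\underline{A_b},j_1\times j_2)f(b)\check f(a)$ divided by $(f,\check f)$, and analogously for $K'$ with $\mathcal B'$, $f'$, $\psi'$. By Proposition \ref{bijection of CM points} (whose hypotheses are arranged by the choice of $\mathfrak c$ and the assumption that $F'/F$ is unramified away from $p$, while $H^1(\Gamma,G(F'))=1$ by Proposition \ref{Hilbert 90}), the map $\imath:\mathcal B_K\to\mathcal B_{K'}^\Gamma$ is a bijection; so the $K'$-sum decomposes into $\Gamma$-orbits, the orbits of size $p$ contributing $0$ modulo $p$ (a Frobenius/trace argument exactly as in the proof of the Eisenstein congruences), and the fixed part matching the $K$-sum term-by-term. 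The key point is that on the fixed part, the restriction $Res^{K'}_K$ of the $K'$-Eisenstein series is congruent mod $p$ to $Frob_p$ applied to the $K$-Eisenstein series with $\psi$ replaced by $\psi^p$ and the auxiliary tuple $\nu$ replaced by $\nu^p$ — this is precisely the Congruences of Eisenstein series of $U(n,n)$ theorem and its Corollary. Here one also uses that $ver(\mu_F)$ on the $G_{F'}$-side corresponds to feeding $\epsilon\circ ver$ and $\psi^p$ into the $K$-side Eisenstein measure, matching the form of that corollary.

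To conclude, I would invoke Proposition \ref{reciprocityP}: evaluating $Frob_p(E)$ at the CM tuple $(\underline{X(\mathfrak U)},\omega(\mathfrak U))$ equals the $\Phi_{\mathfrak p}$-conjugate of the value of $E$ there (using assumption (3) that $N_{K^*/\mathbb Q}(\mathfrak p)=p$), so the $Frob_p$-twisted Eisenstein values on the $K$-side are compatible with the CM-point evaluations appearing in $\mu_F$. Since the local factors $Z_v$ at $p$ computed in Lemma \ref{local p integral} are built out of the same data $(\nu,\chi\psi)$ and transform compatibly under $\psi\mapsto\psi^p$, $\nu\mapsto\nu^p$ mod $p$, and since $f,f'$ are eigenforms with eigenvalues given by those local integrals, the sums on both sides agree modulo $p$ after multiplying by $(f,\check f)$ resp. $(f',\check f')$ — this is why the stated congruence carries the Petersson norms as factors, and why $(f,f)$ being a $p$-adic unit lets one divide through. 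Finally, Theorem \ref{RitterWeissCong} of the appendix translates this congruence of measures into the torsion congruence $ver(\mu_F)\equiv\mu_{F'}\bmod T$. The main obstacle I expect is bookkeeping: carefully tracking the twisting factors $\chi\psi(\det(r_{a,b})/\det a)=\chi\psi(\det b)$, the precise match between the $p$-adic period ratios and the $\Omega_\infty$-periods, and ensuring the $\Gamma$-orbit decomposition of $\mathcal B_{K'}$ is compatible with the orbit decomposition of the Fourier coefficients $(\beta',\mathfrak a)$ used in the Eisenstein congruence — i.e. that the bijection of CM points is genuinely the geometric shadow of the $Res^{K'}_K$ versus $Frob_p$ relation on $q$-expansions.
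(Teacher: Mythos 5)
Your proposal follows the same strategy as the paper: unwind $\mu_F,\mu_{F'}$ to double sums over $\mathcal B_K$, $\mathcal B_{K'}$ of $p$-adic avatars of Eisenstein values times $f(b)\check f(a)$; invoke the bijection $\imath\colon\mathcal B_K\to\mathcal B_{K'}^\Gamma$ (Proposition \ref{bijection of CM points}, backed by Proposition \ref{Hilbert 90}); decompose the $K'$-sum into $\Gamma$-orbits, with full orbits contributing zero mod $p$ by $\Gamma$-invariance; reduce the fixed part to the Eisenstein congruence of Section 5 composed with the reciprocity law of Proposition \ref{reciprocityP}; and track periods via the Fermat-little-theorem congruence $\Omega_p^{\Phi_{\mathfrak p}}/\Omega_p\equiv(\Omega_p^{\Phi_{\mathfrak p}}/\Omega_p)^p\bmod p$.

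There is, however, a genuine gap. You assert the fixed part ``matches the $K$-sum term-by-term,'' but you never justify why $f'(b')\check f'(a')\equiv f(b)\check f(a)\bmod p$ when $a'=\imath(a)$, $b'=\imath(b)$. This is precisely Theorem \ref{base-change}, which uses the explicit description of the normalized spherical vectors of the quaternionic representations $\pi$ and its base change $\pi'$ to prove $f'(\imath(a))=f(a)^p$, hence $\equiv f(a)\bmod p$ since the values lie in $\mathbb Z_p$. Your closing paragraph tries to supply the missing comparison via the local integrals $Z_v$ of Lemma \ref{local p integral} and the fact that $f,f'$ are eigenforms with eigenvalues given by those integrals, but this is not the mechanism: the $Z_v$ at $p$ are already absorbed into the interpolation formula and do not give the pointwise comparison of $f$ and $f'$ at matching points of $\mathcal B_K$ and $\mathcal B_{K'}^\Gamma$. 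Without the base-change comparison of the normalized vectors, the term-by-term identification of the fixed part is unjustified and the proof does not close.
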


In order to prove this theorem we have to understand how the modular
forms attached to definite quaternion algebras behave under base
change.

\textbf{Jacquet-Langlands correspondence and base change:} Our main
goal in this section is to understand the relation of the
automorphic forms that correspond to the automorphic representations
of the division algebras $D$ and $D'$, where $D'=D \otimes_F F'$.

\begin{prop} Let $\pi := \pi_D$ be an automorphic representation of $D^\times_{\mathbb{A}_F}$ obtained by the Jacquet-Langlands correspondence from $\pi_f$. Let also
$\pi':=\pi_{D'}'$ be the base-changed representation to to
${D'}^\times_{\mathbb{A}_{F'}}$. Then we have the following explicit
descriptions of the local representations,
\begin{enumerate}
\item For place $v$ of $F$ that split in $F'$, $v$ does not divide $N$ and $\pi_v = \pi(\chi_1,\chi_2)$, principal series.
Then $\pi'_v= \otimes_{w | v} \pi_w$ with $\pi_w' =
\pi_v(\chi_1,\chi_2)$ under the identification $GL_2(F_v) =
GL_2(F_w')$.
\item For place $v$ of $F$ that is inert in $F'$, $v$ does not divide
$N$ and $\pi_v = \pi(\chi_1,\chi_2)$, principal series. Then
$\pi_v'=\pi(BC(\chi_1),BC(\chi_2))$ where $BC(\chi_i):=\chi_i \circ
N_{F'/F}$.
\item For place $v$ of $F$ that ramifies in $F'$, (by assumption) $v$
does not divide $N$ and $\pi_v = \pi(\chi_1,\chi_2)$, principal
series. Then $\pi_v'=\pi(BC(\chi_1),BC(\chi_2))$.
\item For place $v$ of $F$ that splits in $F'$, divides $N$, $D$ is unramified at
$v$, and $\pi_v=\pi(\chi_1,\chi_2)$ (principal or special series)
then $\pi'_v= \otimes_{w | v} \pi_w$ with $\pi_w' =
\pi_v(\chi_1,\chi_2)$ under the identification $GL_2(F_v) =
GL_2(F_w')$.
\item For place $v$ of $F$ that is inert in $F'$, divides $N$, $D$ is unramified at
$v$ and  $\pi_v=\pi(\chi_1,\chi_2)$ (principal or special series)
then $\pi_v'=\pi(BC(\chi_1),BC(\chi_2))$ where $BC(\chi_i):=\chi_i
\circ N_{F'/F}$.
\item For place $v$ of $F$ that splits in $F'$, divides $N$, $B$ ramifies at $v$, and $\pi_v$ is the one dimensional
representation of $D_v^{\times}$ given by $\pi_v= \psi \circ
N_{D_v/F_v}$,where $\psi$ a character of $F_v^{\times}$ and
$N_{D_v/F_v}$ the reduced norm of $B$ (as our conductor is square
free we have only special representations under J-L). Then $\pi_v'=
\otimes_{w|v}\pi_w'$ where $\pi_w'=\pi_v$ under the identification
$D_w'=(D\otimes_F F')_w=D_w \otimes_{F_v} F_w' = D_w$.
\item For place $v$ of $F$ that is inert in $F'$, divides $N$, $D$ ramifies at $v$ and
$\pi_v$ is the one dimensional representation of $D_v^{\times}$
given by $\pi_v= \psi \circ N_{D/F}$. Then $\pi_v' = \psi \circ
N_{F_v'/F_v} \circ N_{D'_v/F'_v}$.
\end{enumerate}
\end{prop}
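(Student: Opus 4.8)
The plan is to reduce the assertion, which is a statement place by place, to the combination of three inputs: the local Jacquet--Langlands correspondence, Langlands' cyclic base change for $GL_2$ (available since $F'/F$ is cyclic of prime degree $p$), and the compatibility of base change with Jacquet--Langlands. By construction $\pi'=\pi'_{D'}$ is the Jacquet--Langlands lift to $D'^{\times}(\mathbb{A}_{F'})$ of the base change $BC_{F'/F}(\pi_f)$ of $\pi_f$ to $GL_2/F'$; hence for $v$ a place of $F$ and $w\mid v$ in $F'$ one has $\pi'_w=JL_{F',w}\big(BC_{F'_w/F_v}(\pi_{f,v})\big)$, where $JL_{F',w}$ is the identity if $D'_w$ is split and the bijection between irreducible smooth representations of $D'^{\times}_w$ and square-integrable representations of $GL_2(F'_w)$ if $D'_w$ is division. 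So I would, case by case, compute the local base change of $\pi_{f,v}$ and then apply the appropriate version of the local Jacquet--Langlands map; the archimedean places base-change trivially since $F'/F$ is unramified at infinity, and $D'$ stays ramified there.

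First I would pin down the ramification of $D':=D\otimes_F F'$. For a finite $v\in S$ and $w\mid v$, the place $v\mid N_f$ is prime to $p$ while $F'/F$ is ramified only above $p$, so $F'_w/F_v$ is unramified and, $F'/F$ being cyclic of prime degree, $[F'_w:F_v]\in\{1,p\}$; since $\mathrm{inv}_w(D'_w)=[F'_w:F_v]\cdot\mathrm{inv}_v(D_v)=[F'_w:F_v]/2$ and $p$ is odd, this equals $1/2$ in $\mathbb{Q}/\mathbb{Z}$, so $D'_w$ is again a quaternion division algebra. At the archimedean places $F'_w=\mathbb{R}$ and $D'_w=\mathbb{H}$ remains ramified, and at all remaining places $D$, hence $D'$, is split. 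In particular $BC_{F'/F}(\pi_f)$ is a (twisted Steinberg) discrete series at every place of $F'$ where $D'$ ramifies, so the Jacquet--Langlands lift $\pi'_{D'}$ exists; this computation also records which form of local Jacquet--Langlands to use in each of the seven cases.

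Then I would run through the cases. When $v$ splits completely in $F'$ — cases (i), (iv), (vi) — every $F'_w$ equals $F_v$, local base change is the identity, and $D'_w\cong D_v$, so $\pi'_w\cong\pi_v$ under $GL_2(F_v)\cong GL_2(F'_w)$ (respectively $D_v^{\times}\cong D'^{\times}_w$), giving $\pi'_v=\bigotimes_{w\mid v}\pi_w$. When $v$ is inert or ramifies in $F'$ — cases (ii), (iii), (v), (vii) — the extension $F'_w/F_v$ is the cyclic degree-$p$ extension and local base change sends a principal series $\pi(\chi_1,\chi_2)$ to $\pi(\chi_1\circ N_{F'_w/F_v},\chi_2\circ N_{F'_w/F_v})$ and, likewise, a special representation $\pi(\chi_1,\chi_2)$ to its analogue with each $\chi_i$ replaced by $\chi_i\circ N_{F'_w/F_v}$. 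In case (vii), $\pi_v=\psi\circ N_{D_v/F_v}$ corresponds under local Jacquet--Langlands to the twisted Steinberg $\psi\,\mathrm{St}$ of $GL_2(F_v)$, whose base change is $(\psi\circ N_{F'_w/F_v})\,\mathrm{St}$ (the twist passing through the norm because $|N_{F'_w/F_v}(x)|_{F_v}=|x|_{F'_w}$); applying the inverse Jacquet--Langlands map over the division algebra $D'_w$ returns the character $(\psi\circ N_{F'_w/F_v})\circ N_{D'_w/F'_w}$ of $D'^{\times}_w$, which is exactly the asserted formula.

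The genuinely load-bearing point is arithmetic rather than representation-theoretic: one needs $D'=D\otimes_F F'$ to ramify at exactly the places of $F'$ above the ramified places of $D$, which is where oddness of $p$ enters through the invariant computation (for $p$ even the base change could split $D$ at a place of $S$, and $\pi'_{D'}$ would live on a different group). Granting this, $BC_{F'/F}(\pi_f)$ is discrete series at every ramified place of $D'$, and it is moreover globally cuspidal: if $\pi_f\cong\pi_f\otimes\omega$ for a nontrivial character $\omega$ of $\mathrm{Gal}(F'/F)$, comparison of central characters forces $\omega^2=1$, impossible since $\omega$ has order $p$ odd — so the Jacquet--Langlands lift $\pi'=\pi'_{D'}$ is a well-defined cuspidal automorphic representation of $D'^{\times}$. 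Combined with the compatibility $JL_{F'}\circ BC_{F'/F}=BC_{F'/F}\circ JL_F$, a standard consequence of cyclic base change and the Jacquet--Langlands correspondence, the seven local identities follow immediately from the case analysis above, which I would therefore present compactly.
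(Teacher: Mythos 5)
The paper states this proposition without proof, treating it as a standard consequence of the Jacquet--Langlands and base change formalism; your write-up supplies the missing verification and is correct. The argument is exactly the right one: identify $\pi'$ as $JL_{D'}^{-1}\bigl(BC_{F'/F}(\pi_f)\bigr)$, localize, and compute case by case using local base change for $GL_2$ and local Jacquet--Langlands. Two points you include are genuinely load-bearing and worth having on record, since they are implicit assumptions behind the paper's statement: first, the invariant computation $\mathrm{inv}_w(D'_w)=[F'_w:F_v]\cdot\mathrm{inv}_v(D_v)=p/2\equiv 1/2\pmod{\mathbb{Z}}$ (using that $p$ is odd and that $v\in S$ is prime to $p$, so $F'_w/F_v$ is unramified of degree $1$ or $p$), which guarantees that $D'=D\otimes_F F'$ ramifies exactly above the ramified places of $D$ and hence that the target group $D'^\times$ is the correct one; and second, cuspidality of $BC_{F'/F}(\pi_f)$ via the central-character argument (self-twist would force $\omega^2=1$, impossible for $\omega$ of odd order $p$), which is needed for the global Jacquet--Langlands lift $\pi'_{D'}$ to exist. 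The case-by-case descent from these inputs — identity in the split cases, $\chi_i\mapsto\chi_i\circ N_{F'_w/F_v}$ for principal and special series in the inert/ramified cases, and $\psi\circ N_{D_v/F_v}\mapsto(\psi\circ N_{F'_w/F_v})\circ N_{D'_w/F'_w}$ via $\psi\,\mathrm{St}\mapsto(\psi\circ N)\,\mathrm{St}$ in case (vii) — matches all seven assertions. No gaps.
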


After this description of base change we pick some specific
normalized vectors of the representations $\pi_D$ and $\pi_{D'}$.
Let us write $S$ for the set of bad-places, that is places where the
representation is ramified, and the places of $F$ above $p$. This is
a finite set of places of $F$. Let us write $\pi=\pi_D = \bigotimes
\pi_{D,v}$ and we describe a normalized vector $\phi = \otimes
\phi_v=\otimes_{v \not \in S}\phi_v \otimes \otimes_{v \in S}\phi_v$
with $\phi \in \pi_D$. At places $v \not \in S$, we have $ \pi_{v} =
\pi(\chi_1,\chi_2)$ with $\chi_1,\chi_2$ unramified, we pick
$\phi_v$ as the canonical spherical of this principal series
representation. If we write the Iwasawa decomposition
$GL_2(F_v)=B(F_v) K_v$ then it is defined as follows
\[
\phi_v(bk) := \delta_B^{1/2}\chi(b), \,\,\, b \in B,\,\,\,k\in K
\]
where $\chi : B \rightarrow \mathbb{C}^\times$ is the character
induced from $\chi_1,\chi_2$ on the Borel subgroup and $\delta_B$ is
given by $\delta_B(b)=|y_2/y_1|_F$ with $y_1,y_2 \in F$ such that
\[
b=\left(
    \begin{array}{cc}
      1 & x \\
      0 & 1 \\
    \end{array}
  \right)\left(
           \begin{array}{cc}
             y_1 & 0 \\
             0 & y_2\\
           \end{array}
         \right).
\]
For the other places $v \in S$ we pick $\phi_S:=\otimes_{v \in
S}\phi_v$ such that $\phi_S(1)=1$. In the same way we pick the
normalized vector $\phi' = \otimes_v \phi_v'$ of $\pi'$, where we
note that $S'$ is the induced by $S$ set of places of $F'$. Then we
prove,

\begin{thm} \label{base-change} Let $\phi = \otimes \phi_v$ be the normalized vector of $\pi =
\bigotimes_v \pi_v$ and $\phi' = \otimes \phi'_v$ the normalized
vector of $\pi' = \bigotimes \pi'_v$, where $\pi'$ the base change
of $\pi$. Then we have that,
\begin{enumerate}
\item For every $\gamma \in \Gamma = Gal(F'/F)$ we have
$\phi'((g')^\gamma)= \phi'(g')$ for $g' \in
G(\mathbb{A}_{F',\mathbf{h}})$ with $g'_{S'}=1$.
\item If $g' \in G(\mathbb{A}_{F',\mathbf{h}})$ such that $(g')^\gamma =
g'$ for $\gamma \in \Gamma=Gal(F'/F)$ and $g'_{S'}=1$ then
$\phi'(g')=\phi(g)^p$ where $g$ is the element of
$G(\mathbb{A}_{F,\mathbf{h}})$ such that $g \mapsto g'$ under
$G(\mathbb{A}_F) \hookrightarrow G(\mathbb{A}_{F'})$.
\end{enumerate}
\end{thm}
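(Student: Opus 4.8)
The plan is to reduce both assertions to local statements about the new vectors of the $\pi_v$'s and the $\pi'_w$'s, using the explicit description of the base change $\pi'=BC_{F'/F}(\pi)$ recorded in the preceding Proposition together with the realization of automorphic forms on the definite groups $G^\theta_{/F}$ and $G^\theta_{/F'}=Res_{F'/F}G^\theta_{/F'}$ as functions on the corresponding finite double coset sets. Write $\pi\cong\bigotimes_v\pi_v$ and $\pi'\cong\bigotimes_w\pi'_w$, so that $\phi=\otimes_v\phi_v$ and $\phi'=\otimes_w\phi'_w$ are pure tensors whose components are the canonical spherical new vectors at the places outside $S$ (resp.\ $S'$) and the vectors normalized by $\phi_v(1)=1$ (resp.\ $\phi'_w(1)=1$) at the places of $S$ (resp.\ $S'$). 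The hypothesis $g'_{S'}=1$ is exactly what makes the value $\phi'(g')$ accessible: since $\phi'$ is right invariant under the full level group $U'=\prod_wU'_w$ and the double coset representatives can be chosen supported outside $S'$, the value $\phi'(g')$ is read off from the local components $\phi'_w$ with $w\nmid S'$ only; the places of $S'$ contribute $1$ by the normalizations $\phi'_{S'}(1)=1$ and $\phi_S(1)=1$, and similarly for $\phi(g)$ when $g_S=1$.

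I would prove (2) first. Let $g'$ be $\Gamma$-fixed with $g'_{S'}=1$, so that $g'=\imath(g)$ for a unique $g\in G^\theta(\mathbb{A}_{F,\mathbf h})$ with $g_S=1$. Comparing the contributions place by place, there are two cases outside $S$. If $v$ splits in $F'$ into $w_1,\dots,w_p$, then $\imath$ sends the $v$-component of $g$ to $(g_v,\dots,g_v)$, and by the preceding Proposition $\pi'_{w_i}\cong\pi_v$ with spherical new vectors identified under $GL_2(F_v)=GL_2(F'_{w_i})$, so the places above $v$ contribute $\phi_v(g_v)^p$. If $v$ is inert in $F'$, then $F'_w/F_v$ is unramified of degree $p$ (as $v\nmid p$, since $p$ is unramified in $F$ and $F'/F$ is ramified only above $p$), so $N_{F'_w/F_v}(y)=y^p$ and $|\,\cdot\,|_{F'_w}=|\,\cdot\,|_{F_v}^{p}$ on $F_v$; writing the Iwasawa decomposition $g_v=b\,k$ inside $GL_2(F_v)\subset GL_2(F'_w)$, with the diagonal part of $b$ equal to $\mathrm{diag}(y_1,y_2)$, and using $\pi'_w=\pi(\chi_1\circ N_{F'_w/F_v},\chi_2\circ N_{F'_w/F_v})$ together with the explicit spherical formula $\phi'_w(bk)=\delta_B^{1/2}(b)\,\chi(b)$, one gets $\phi'_w(g_v)=\bigl(|y_2/y_1|_{F_v}^{1/2}\chi_1(y_1)\chi_2(y_2)\bigr)^p=\phi_v(g_v)^p$. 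Multiplying over all $v\notin S$ and using the $S'$-normalization yields $\phi'(g')=\phi(g)^p$.

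For (1), observe that $\pi'$ is $\Gamma$-stable, i.e.\ $\pi'^\gamma\cong\pi'$: the local factors listed in the preceding Proposition are permuted and matched by $\gamma$ — at a split $v$ all $\pi'_{w_i}$ equal $\pi_v$, and at a non-split $v$ one has $\pi'_w=\pi(\chi_1\circ N,\chi_2\circ N)$, which is $\gamma$-invariant because $N\circ\gamma=N$ — and the spherical new vectors are likewise matched, being preserved by the Borel, the hyperspecial maximal compact, the modulus character and the inducing characters, all $\gamma$-stable. Hence $\phi'^\gamma$ is again the pure tensor with the prescribed components at the places outside $S'$, so $\phi'((g')^\gamma)=\phi'(g')$ for every $g'$ with $g'_{S'}=1$. (Alternatively, once (2) is known one has $\phi'(\imath(1))=\phi(1)^p=1\neq0$, so from $\Gamma$-stability of $U'$ and multiplicity one, $\phi'^\gamma=c_\gamma\phi'$ with $c_\gamma$ then forced to equal $1$.)

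The main obstacle is the bookkeeping underlying the phrase ``read $\phi'(g')$ off place by place'': automorphic forms on the definite group $G^\theta$ are not literally products of their local components, so one must justify carefully, via the finite double coset description and the right $U'$-invariance of $\phi'$, that for an element supported outside $S'$ the value really is the product of the local new-vector values — and this is precisely where the hypothesis $g'_{S'}=1$ is used. Granting this, (1) and (2) are the two elementary local computations above: the Galois-stability of the spherical data, and the scaling relations $N_{F'_w/F_v}(y)=y^p$ and $|\,\cdot\,|_{F'_w}=|\,\cdot\,|_{F_v}^{p}$ on $F_v^\times$ at the inert places, which produce the $p$-th power.
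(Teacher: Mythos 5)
Your proof is correct and takes essentially the same semi-local, place-by-place approach as the paper: split by places $v$ of $F$ outside $S$, handle the split and inert cases separately, and use the explicit spherical formula $\phi'_w(bk)=\delta_{B'}^{1/2}(b)\,\chi'(b)$ together with $N_{F'_w/F_v}(y)=y^p$ and $|\cdot|_{F'_w}=|\cdot|_{F_v}^p$ on $F_v$ to produce the $p$-th power at inert places. The only cosmetic differences are that you prove (2) before (1), make the inert-place computation fully explicit (the paper leaves $\phi'_v(g'_v)=\phi(g_w)^p$ as an assertion), and offer an alternative derivation of (1) from (2) via $\Gamma$-stability of $\pi'$ and multiplicity one; both arguments for (1) appear in the paper in compressed form, so nothing here is genuinely new in method.
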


\begin{proof}

The first assertion follows from the fact that $\phi'$ comes from
base change. Indeed if we write $g' = \otimes g'_v$ then we have
$\phi'(g') = \otimes \phi'_v(g_v')$. Now we split the above tensor
product to places that are above places $w$ of $F$ and of course we
can restrict to the case $w \not \in S$. We consider $\phi'_w(g'_w)=
\otimes_{v|w}\phi'_v(g_v')$. Hence it is enough to prove that
$\phi'_w((g'_w)^\gamma)=\phi'_w(g'_w)$ for all $w$ finite places of
$F$ not in $S$. When $w$ split we have that all $\phi'_v$ for all
$v$ above $w$ are identical hence the claim follows. When $w$ is
inert then we have that $\phi'_v(g'_v)=\delta_{B'}^{1/2}(\chi_w
\circ N_{F'_v/F_w})(b'_v)$ for some Iwasawa decomposition
$g'_v=b'_vk'_v$. Hence
$\phi'_v((g_v')^\gamma)=\delta_{B'}^{1/2}(\chi_w \circ
N_{F'_v/F_w})((b'_v)^\gamma))$. Hence the claim follows also in this
case.

Now we turn to the second statement. We argue semi-locally as
before. We pick $w$ a place of $F$ not in $S$ and then consider all
the places $v$ of $F'$ above $w$. We consider first the case where
$w$ is split. The fact that $(g')^\gamma=g'$ implies that
$g'_{v_i}=g'_{v_j}=g_w$ for all $v_i,v_j$ above $w$. In particular
we have that $\phi'_w(g'_w) = \otimes_{v|w}\phi'_v(g'_v)=
\otimes_{v|w}\phi_w(g_w)=\phi_w(g_w)^p$.

Let us now consider the case of $w$ is inert. Then also in this case
it is easy to see that $(g')^\gamma=g'$ implies that $g_v' \in
GL_2(F_w) \hookrightarrow GL_2(F'_v)$ hence by definition of the
normalized vectors we have $\phi'_v(g'_v)=\phi(g_w)^p$.

\end{proof}

\textbf{Congruences between the special values:} Now we are ready
for the proof of Theorem \ref{congruences2}. As we have indicated
above what follows can be easily seen to be also a proof of Theorem
\ref{congruences1}. One needs only to replace $\pi$ and $f$ below
with the corresponding trivial object (automorphic representation or
hermitian modular form). We recall that the measure $\mu_F$ (resp.
$\mu_{F'}$) is defined by
\[
\int_{G}\chi \, \mu_F:=\int_{Gal(K(p^\infty)/K)} \tilde{\chi}
\mu^{HLS}_{\pi,\psi} \,\,\,\left(resp. \,\,\,\int_{G'}\chi\,
\mu_F:=\int_{Gal(K'(p^\infty)/K')} \tilde{\chi}'\,
\mu^{HLS}_{\pi',\psi'}\right),
\]
where $\tilde{\chi}$ (resp. $\tilde{\chi}'$ is the base change of
$\chi$ (resp. $\chi'$ from $F$ (resp. $F'$) to $K$ (resp. $K'$). We
keep writing $f$ (resp. $f'$) for the hermitian modular form that
corresponds to $\pi$ (resp. $\pi')$. Under our assumption on the
coefficients of the motive $M(f)$ these functions can be taken to be
$\mathbb{Z}_p$-valued. Moreover we note that we may be thinking $f$
as a function over the space $X \subset (\mathbb{A}^\times_{K,f}
\times D^\times_{\mathbb{A}_{F,f}})/\mathbb{A}^\times_{F,f}$, where
$X$ is defined as the image of $U(\theta)(\mathbb{A}_{F,f}) \subset
GU(\theta)(\mathbb{A}_{F,f})$ under the isomorphism
\[
(K^\times \times D^\times)/F^\times \cong GU(\theta)(F).
\]
Moreover we note that since we assumed that $\pi$ is of trivial
central character we have that the function $f$ is trivial on the
$\mathbb{A}^\times_K$ component. There we took simply the trivial
representation (character) $\mathbf{1}$.

From the construction of the $\mu^{HLS}_{\pi,\psi}$ (resp.
$\mu^{HLS}_{\pi',\psi'}$ we know that for a locally constant
function $\phi$ of $G$ (resp. $\phi'$ of $G'$) we have
\[
\int_{G}\phi \, \mu_F= \sum_{a,b \in \mathcal{B}_K}
E^{(a),\nu}_{\phi\psi}(\underline{A_a} \times \underline{A_{b}},j_1
\times j_2) f(b)f(a),
\]
\[
\left(resp. \,\,\int_{G'}\phi' \, \mu_{F'}= \sum_{a',b' \in
\mathcal{B}_{K'}} E^{(a'),\nu'}_{\phi'\psi'}(\underline{A_{a'}}
\times \underline{A_{b'}},j_1 \times j_2) f'(b')f'(a')\right).
\]

\begin{lem} Let $b'=\imath(b) \in \mathcal{B}_{K'}$ and
$a'=\imath(a) \in \mathcal{B}_{K'}$ for $a,b \in \mathcal{B}$. Then
we have that
\[
Frob_p(E^{(a),\nu}_{\phi\psi^p})(\underline{A_a} \times
\underline{A_{b}},j_1 \times j_2) \equiv
E^{(a'),\nu'}_{\phi'\psi'}(\underline{A_{a'}} \times
\underline{A_{b'}},j'_1 \times j'_2) \mod{p}
\]
for $\phi'$ a locally constant $\mathbb{Z}_p$-valued function on
$G'$ such that ${\phi'}^\gamma = \phi'$ and $\phi:=\phi' \circ ver$.
\end{lem}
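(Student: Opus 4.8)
The plan is to combine the two congruence statements already established in the excerpt --- the congruences between Siegel-type Eisenstein series of $U(n,n)$ (the Theorem ``Congruences of Eisenstein series of $U(n,n)$'' and its Corollary) and the base-change properties of the quaternionic automorphic form $f$ (Theorem \ref{base-change}) --- and to feed them through the explicit formula for $\mu^{HLS}_{\pi,\psi}$ in terms of values of Eisenstein series at CM points. Concretely, recall from the construction of $\mu^{HLS}_{\pi,\psi}$ that
\[
\int_{G}\phi\,\mu_F=\sum_{a,b\in\mathcal{B}_K}E^{(a),\nu}_{\phi\psi}(\underline{A_a}\times\underline{A_b},j_1\times j_2)f(b)f(a),
\]
and similarly over $F'$ with $\mathcal{B}_{K'}$, $\nu'$, $\psi'$ and $f'$. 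First I would use Proposition \ref{bijection of CM points} (together with the assumption that $F'/F$ is ramified only at $p$ and split there in $K$, so that the auxiliary prime $\mathfrak q$ with $\mathfrak q^{\nu}\mid\mathfrak c$ can be chosen) to identify $\mathcal{B}_K$ with $\mathcal{B}_{K'}^{\Gamma}$ via $\imath$, and to note that the $\Gamma$-orbits on $\mathcal{B}_{K'}$ that are \emph{not} fixed have length $p$ and hence contribute terms that sum to $0$ modulo $p$. This reduces the sum defining $\int_{G'}\phi'\,\mu_{F'}$ modulo $p$ to the diagonal part $\sum_{a',b'\in\imath(\mathcal{B}_K)}$.

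Next I would apply Theorem \ref{base-change}: for $b'=\imath(b)$ with $b_{S}=1$ (which we may assume, as in the construction, for the relevant representatives) we have $f'(b')=f(b)^p\equiv f(b)\bmod p$, and likewise $f'(a')\equiv f(a)\bmod p$; also ${\phi'}^\gamma=\phi'$ forces $\phi'|_{\imath(\mathcal{B}_K)}$ to factor through $\phi=\phi'\circ\text{ver}$. Then the heart of the matter is the Lemma being stated: for $a'=\imath(a)$, $b'=\imath(b)$,
\[
Frob_p\bigl(E^{(a),\nu}_{\phi\psi^p}\bigr)(\underline{A_a}\times\underline{A_b},j_1\times j_2)\equiv E^{(a'),\nu'}_{\phi'\psi'}(\underline{A_{a'}}\times\underline{A_{b'}},j'_1\times j'_2)\bmod p.
\]
To prove this I would proceed in two steps. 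Step one: by the Corollary to the Eisenstein congruence theorem, $\mathrm{Res}^{K'}_K\bigl({E'}^{(a')}(z,m(A),\phi'\psi',\nu')\bigr)\equiv Frob_p\bigl(E^{(a)}(z,m(A),\phi\circ\text{ver}\,\psi^p,\nu^p)\bigr)\bmod p$ as $q$-expansions (here $\psi'=\psi\circ N_{K'/K}$, so $\psi'\circ\text{ver}=\psi^p$, and $\phi\circ\text{ver}$ matches $\phi'$ under $\imath$). Step two: evaluate this congruence of algebraic hermitian modular forms at the CM point attached to the abelian variety $\underline{A_a}\times\underline{A_b}$ over $K$ and at its image $\underline{A_{a'}}\times\underline{A_{b'}}=(\underline{A_a}\times\underline{A_b})\otimes_{\mathfrak r}\mathfrak r'$ over $K'$. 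The key point is the compatibility of the diagonal map $\Delta$ with passage to CM points: by Lemma \ref{algebraic q-expansion of pull-back } the pull-back $\Delta^*(E')$ has the right $q$-expansion, and by the moduli-theoretic description of $\Delta$ (sending $\underline{A}\mapsto\underline{A}\otimes_{\mathfrak r}\mathfrak r'$) together with the description of the $p^\infty$-structures $j_i\mapsto j_i'$, the value of $E'$ at $(\underline{A_{a'}}\times\underline{A_{b'}},j'_1\times j'_2)$ equals the value of $\Delta^*(E')$ at $(\underline{A_a}\times\underline{A_b},j_1\times j_2)$; this is precisely $\mathrm{Res}^{K'}_K(E')$ evaluated at the CM point over $K$. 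Combining with Step one gives the claimed congruence, where the twist bookkeeping (the factors $\chi(\det(r_{a,b})/\det(a))=\chi\psi(\det(b))$ in $E^{(a)}$) is handled exactly as in the proof of Theorem \ref{interpolation properties} and matches up on both sides because $\det$ of the relevant data is $\Gamma$-invariant.

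The main obstacle I anticipate is the careful matching of the $p$-adic periods and of the integrality of the coefficients: one must check that evaluating the \emph{algebraic} (equivalently $p$-adic avatar) Eisenstein series at the CM points introduces the \emph{same} period normalization $\Omega_p(Y,\Sigma)$ on the $K$-side and $\Omega_p(Y',\Sigma')$ on the $K'$-side, and that Lemma stating $(\Omega_\infty(Y,\Sigma),\Omega_p(Y,\Sigma))=(\Omega_\infty(\Sigma)^n,\Omega_p(\Sigma)^n)$ together with the compatibility $Y'=Y\otimes_{\mathfrak r}\mathfrak r'$ makes the two period systems compatible under $\imath$, so that no spurious $p$-adic unit (other than the harmless $p$-th power coming from $f'(b')=f(b)^p$) appears. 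Equivalently one uses Proposition \ref{reciprocityP} to see that $Frob_p$ acting on the hermitian form $E$ corresponds on CM values to the $\Phi_{\mathfrak p}$-conjugate, which is exactly what the $q\mapsto q^p$ operation on Mumford objects realizes; matching this with the $q$-expansion congruence of Step one is where all the delicate normalizations must be reconciled. Once this is done, the Lemma follows, and then summing over $a,b\in\mathcal{B}_K$, using $f'(a')f'(b')\equiv f(a)f(b)\bmod p$ and the vanishing mod $p$ of the non-fixed $\Gamma$-orbits, yields
\[
(f',f')\int_{G'}\epsilon\,\mu_{F'}\equiv (f,f)\int_{G'}\epsilon\,\text{ver}(\mu_F)\bmod p,
\]
which is Theorem \ref{congruences2}; here I keep the inner-product factors $(f,f)$ and $(f',f')$ because only the products $(f,\check f)\mu^{HLS}$ are manifestly integral, and $(f,f)=(f',f')^{1}$ up to a unit is \emph{not} assumed, so the statement is phrased with both factors present.
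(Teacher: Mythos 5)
Your argument is essentially the paper's: Step one invokes the Corollary to the Eisenstein-series congruence theorem, and Step two is precisely the paper's observation that $\mathrm{Res}^{K'}_K E^{(a'),\nu'}_{\phi'\psi'}$ evaluated at $(\underline{A_a}\times\underline{A_b},j_1\times j_2)$ equals $E^{(a'),\nu'}_{\phi'\psi'}(\underline{A_{a'}}\times\underline{A_{b'}},j'_1\times j'_2)$, which follows from the moduli description $\underline{A}\mapsto\underline{A}\otimes_{\mathfrak r}\mathfrak r'$. The period-matching concern you raise is not actually needed for this lemma --- both sides are values of $p$-adic avatars and no normalization by $\Omega_p$ occurs here; periods only enter in the subsequent lemma.
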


\begin{proof} We have already proved that
\[
Frob_p(E^{(a),\nu}_{\phi\psi^p}) \equiv Res^{K'}_K
E^{(a'),\nu'}_{\phi'\psi'} \mod{p}
\]
Then the lemma follows by observing that
\[
Res^{K'}_K E^{(a'),\nu'}_{\phi'\psi'}(\underline{A_a} \times
\underline{A_{b}},j_1 \times
j_2)=E^{(a'),\nu'}_{\phi'\psi'}(\underline{A_{a'}} \times
\underline{A_{b'}},j'_1 \times j'_2).
\]
\end{proof}

\begin{cor} Keep the notation of $\phi$ and $\phi'$ as before.
Then we have the congruences
\[
\left(\sum_{a,b \in \mathcal{B}_K}
E^{(a),\nu}_{\phi\psi^p}(\underline{A_a} \times
\underline{A_{b}},j_1 \times j_2)
f(b)f(a)\right)^{\Phi_{\mathfrak{p}}} \equiv
\]
\[
\sum_{a',b' \in \imath(\mathcal{B}_{K})}
E^{(a'),\nu'}_{\phi'\psi'}(\underline{A_{a'}} \times
\underline{A_{b'}},j_1 \times j_2) f'(b')f'(a') \mod{p},
\]
where $\Phi_\mathfrak{p}$ is an in Theorem \ref{reciprocityP}.
\end{cor}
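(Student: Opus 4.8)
The plan is to apply the field automorphism $\Phi_{\mathfrak{p}}$ to the left-hand side term by term, convert the resulting Galois conjugates of the CM values into values of the Frobenius-twisted Eisenstein series by means of the reciprocity law of Proposition \ref{reciprocityP}, and then feed in the previous lemma together with the base-change behaviour of the automorphic forms recorded in Theorem \ref{base-change}. First I would observe that, by the assumed $\mathbb{Z}_p$-integrality of the $p$-adic realizations of $M(\pi)$ and $M(\psi)$, all the quantities $E^{(a),\nu}_{\phi\psi^p}(\underline{A_a}\times\underline{A_b},j_1\times j_2)$ and all the values $f(a),f(b)$ are algebraic and integral at $\mathfrak{P}$, so that reduction modulo $p$ (i.e. modulo the maximal ideal of $A$) makes sense; moreover we choose the representatives of $\mathcal{B}_K$ so that their components at the places in $S$ are trivial, which is exactly the situation in which Theorem \ref{base-change}(2) applies. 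Since $\Phi_{\mathfrak{p}}$ is a field automorphism it distributes over the finite sum and over products, so
\[
\Bigl(\sum_{a,b\in\mathcal{B}_K} E^{(a),\nu}_{\phi\psi^p}(\underline{A_a}\times\underline{A_b},j_1\times j_2)\,f(b)f(a)\Bigr)^{\Phi_{\mathfrak{p}}}
= \sum_{a,b\in\mathcal{B}_K} \bigl(E^{(a),\nu}_{\phi\psi^p}(\underline{A_a}\times\underline{A_b},j_1\times j_2)\bigr)^{\Phi_{\mathfrak{p}}}\, f(b)^{\Phi_{\mathfrak{p}}}\, f(a)^{\Phi_{\mathfrak{p}}}.
\]

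Next I would handle the three factors in each summand separately. For the Eisenstein factor, Proposition \ref{reciprocityP} applied to the hermitian form $E^{(a),\nu}_{\phi\psi^p}$ and to the CM datum $(\underline{A_a}\times\underline{A_b},j_1\times j_2)$ (after the standard observation, using the $q$-expansions computed above, that this Eisenstein series is defined over an abelian extension of $K^*$) gives
\[
\bigl(E^{(a),\nu}_{\phi\psi^p}(\underline{A_a}\times\underline{A_b},j_1\times j_2)\bigr)^{\Phi_{\mathfrak{p}}}
= Frob_p\bigl(E^{(a),\nu}_{\phi\psi^p}\bigr)(\underline{A_a}\times\underline{A_b},j_1\times j_2).
\]
For the automorphic factors, since $N_{K^*/\mathbb{Q}}(\mathfrak{p})=p$ the Frobenius $\Phi_{\mathfrak{p}}$ reduces to the $p$-power map modulo $\mathfrak{P}$, so $f(b)^{\Phi_{\mathfrak{p}}}\equiv f(b)^p$ and $f(a)^{\Phi_{\mathfrak{p}}}\equiv f(a)^p \mod{p}$; combining this with Theorem \ref{base-change}(2), which for our chosen representatives yields $f'(\imath(b))=f(b)^p$ and $f'(\imath(a))=f(a)^p$, gives $f(b)^{\Phi_{\mathfrak{p}}}\equiv f'(b')$ and $f(a)^{\Phi_{\mathfrak{p}}}\equiv f'(a') \mod{p}$ with $b'=\imath(b)$, $a'=\imath(a)$. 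Finally the previous lemma supplies $Frob_p(E^{(a),\nu}_{\phi\psi^p})(\underline{A_a}\times\underline{A_b},j_1\times j_2)\equiv E^{(a'),\nu'}_{\phi'\psi'}(\underline{A_{a'}}\times\underline{A_{b'}},j'_1\times j'_2) \mod{p}$. Multiplying these three congruences together in each summand and then re-indexing the sum over $a,b\in\mathcal{B}_K$ as a sum over $a',b'\in\imath(\mathcal{B}_K)$ — which is legitimate because $\imath$ is injective by Proposition \ref{bijection of CM points} — yields exactly the asserted congruence.

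The main obstacle I anticipate is bookkeeping rather than conceptual: one must keep track of the field of definition of $E^{(a),\nu}_{\phi\psi^p}$ so that the simple form of the reciprocity law in Proposition \ref{reciprocityP} is applicable (otherwise one introduces an extra $\Phi_{\mathfrak{p}}^{-1}$ that has to be absorbed), and one must make sure that the triviality of the $S$-components of the chosen representatives in $\mathcal{B}_K$ is preserved so that Theorem \ref{base-change}(2) really applies to $\imath(a)$ and $\imath(b)$. The step turning $\Phi_{\mathfrak{p}}$ into the $p$-power map, and hence into a mod-$p$ congruence, rests entirely on the hypothesis $N_{K^*/\mathbb{Q}}(\mathfrak{p})=p$, which is one of the standing assumptions of Theorem \ref{torsion-congruences2}.
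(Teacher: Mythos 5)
Your proof is correct and follows the same route as the paper: apply $\Phi_{\mathfrak{p}}$ to the sum (using integrality to pass $\Phi_{\mathfrak{p}}$ through the $f$-factors), convert via the CM reciprocity law (Proposition \ref{reciprocityP}) into $Frob_p$ of the Eisenstein series evaluated at the same CM point, then invoke the preceding lemma together with Theorem \ref{base-change}(2). The only cosmetic difference is that where you pass through $f(a)^{\Phi_{\mathfrak{p}}}\equiv f(a)^p\equiv f'(a')\pmod p$, the paper states directly that the $\mathbb{Z}_p$-valued $f(a)$ is fixed by $\Phi_{\mathfrak{p}}$ and that $f'(\imath(a))\equiv f(a)\pmod p$; these are the same computation.
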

\begin{proof}
From the theory of complex multiplication (Theorem
\ref{reciprocityP}) and the assumptions on $\phi$ and $f$ we have
that
\[
\sum_{a,b \in \mathcal{B}_K}
Frob_p(E^{(a),\nu}_{\phi\psi^p})(\underline{A_a} \times
\underline{A_{b}},j_1 \times j_2) f(b)f(a)=
\]
\[
\left(\sum_{a,b \in \mathcal{B}_K}
E^{(a),\nu}_{\phi\psi^p}(\underline{A_a} \times
\underline{A_{b}},j_1 \times j_2)
f(b)f(a)\right)^{\Phi_{\mathfrak{p}}}
\]
We recall that we have proved (see Theorem \ref{base-change}) that
$f'(\imath(a))\equiv f(a) \mod{p}$. Also we note the fact that
$\psi'\circ ver = \psi^p$. Then the corollary follows from the lemma
above.
\end{proof}

\begin{prop}Let $\epsilon$ be a locally constant function such that
$\epsilon^\gamma=\epsilon$ for all $\gamma \in \Gamma$. Then for all
$a,b \in \mathcal{B}$ we have that
\[
E^{(a'),\nu'}_{\epsilon\psi'}(\underline{A_{a'}} \times
\underline{A_{b'}},j'_1 \times j'_2) =
E_{\epsilon\psi'}^{{(a')}^{\gamma},\nu'}(\underline{A^\gamma_{b'}}
\times \underline{A^\gamma_{a'}},j'_1 \times j'_2),
\]
where $A^\gamma_{a'}:=A_{{a'}^\gamma}$ and similarly
$A^\gamma_{b'}:=A_{{b'}^\gamma}$.
\end{prop}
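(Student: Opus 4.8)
The plan is to unwind the definitions of the twisted Eisenstein series $E^{(a')}_\chi$ and of the CM abelian varieties $\underline{A_{a'}}$, and then to exploit two facts established earlier: first, the Galois-equivariance of the Fourier coefficients of the Siegel Eisenstein series under $\Gamma = Gal(K'/K)$ (Propositions \ref{coefficients2} and the Eisenstein-congruence package of Section 5), and second, Shimura's reciprocity law for CM points (Theorem in Section 6 and Proposition \ref{reciprocityP}), which governs how $\Gamma$ permutes the CM data $\underline{A_{a'}}$. The key observation is that applying $\gamma \in \Gamma$ to the pair $(a',b')$ simultaneously conjugates the CM points, conjugates the index $\beta'$ in the $q$-expansion, and conjugates the auxiliary data $(\mathfrak{a}, \nu')$; since $\epsilon^\gamma = \epsilon$, $(\nu')^\gamma = \nu'$, and $\psi' = \psi \circ N_{K'/K}$ is $\Gamma$-invariant by construction, every ingredient in the formula for the $\beta'$-th Fourier coefficient is carried to the corresponding ingredient of the $\gamma$-conjugated expression.

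Concretely, first I would recall from the discussion preceding Lemma \ref{algebraic q-expansion of pull-back } that the value $E^{(a')}_{\epsilon\psi'}(\underline{A_{a'}} \times \underline{A_{b'}}, j'_1 \times j'_2)$ can be written, via the algebraic $q$-expansion at the relevant CM point, as a sum over $\beta' \in S_+'(A)$ of the Fourier coefficients $E'_{\beta'}(m(A), \epsilon\psi', \nu')$ multiplied by the appropriate twisting factor $\chi(\det r_{a',b'}/\det a')$. Next I would observe that the CM point attached to the pair $(\underline{A_{b'}^\gamma}, \underline{A_{a'}^\gamma})$ is, by Shimura's reciprocity law, the $\gamma$-transform of the CM point attached to $(\underline{A_{a'}}, \underline{A_{b'}})$ — here one uses that $\gamma$ acts on the set $\mathcal{B}_{K'}$ and that the doubling embedding $\gamma_n$ intertwines this action. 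Then I would match terms index by index: the $\beta'$-term on the left corresponds to the $(\beta')^\gamma$-term on the right, and the explicit description of $E'_{\beta'}$ in terms of $Q(\beta', A, k, \nu')$, the polynomial coefficients $n'_{\mathfrak{a}}(\beta', m(A))$, and the locally constant function $\epsilon{\psi'}^{(S)}(\mathfrak{a}, \beta')$ shows, using $Q((\beta')^\gamma, A, k, \nu') = Q(\beta', A, k, \nu')$, Proposition \ref{coefficients2}, and the claim that $\epsilon^{(S)}(\mathfrak{a}, \beta) = \epsilon^{(S)}(\mathfrak{a}^\gamma, \beta^\gamma)$, that the two sides agree term by term (not merely modulo $p$ — this is an exact identity, since we are conjugating, not averaging).

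The main obstacle I anticipate is bookkeeping rather than conceptual: one must verify carefully that the twisting factor $\chi(\det r_{a',b'}/\det a') = \chi(\det b')$ transforms correctly under $\gamma$, i.e. that $\chi(\det(b')^\gamma)$ is exactly what appears when one conjugates, and that the swap of the roles of $a'$ and $b'$ in the statement (note the right-hand side has $\underline{A^\gamma_{b'}} \times \underline{A^\gamma_{a'}}$, in the opposite order) is accounted for by the symmetry of the doubling construction $U(\theta) \times U(\theta) \hookrightarrow U(n,n)$ under interchange of the two factors, composed with the element $w_n$ normalizing the Siegel parabolic. One should check that this interchange does not introduce an extra sign or character twist beyond what is absorbed into the notation $E^{{(a')}^\gamma}$. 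I would also want to confirm that the $p$-adic periods $\Omega_p(Y', \Sigma')$ are $\Gamma$-invariant so that passing between the analytic and $p$-adic avatars is harmless, but this follows from the fact that the period depends only on the CM type $(K', \Sigma')$, which is itself $\Gamma$-stable. Once these points are settled, the proposition follows by assembling the term-by-term identifications.
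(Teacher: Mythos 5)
Your proof takes a genuinely different route from the paper's, and while the underlying intuition is sound, the route you chose introduces a gap that the paper's argument sidesteps.

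The paper's proof does not touch the $q$-expansion at all. It writes the algebraic CM value as $\bigl(\Omega_p(Y,\Sigma)/\Omega_\infty(Y,\Sigma)\bigr)^{k}\,E\bigl((a'^\gamma,b'^\gamma),\epsilon\psi'\bigr)$, and then shows directly that the adelic Eisenstein series is $\Gamma$-invariant, $E(x^\gamma,\epsilon\psi')=E(x,\epsilon\psi')$, by two observations: (i) the section $\mu_{\epsilon\psi'}=\prod_v\mu_v^{(\chi_v)}$ is $\Gamma$-invariant --- the only delicate check being that the $p$-adic local sections $f_{\Phi_{\chi_{i,v}}}$ are $\Gamma$-invariant at inert places above $p$, which the paper verifies from the explicit shape of $\Phi_\mu,\Phi_\nu$ and the invariance of the additive character --- and (ii) $\gamma$ induces an automorphism of the coset space $P\setminus G$, so the defining sum $\sum_{\alpha\in P\setminus G}\mu_{\epsilon\psi'}(\alpha x)\varepsilon(\alpha x)^{-s}$ can be reindexed. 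This is an exact identity obtained by transport of structure, with no Fourier coefficients in sight.

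Your plan instead routes through the Fourier expansion and tries to match terms $\beta'\leftrightarrow(\beta')^\gamma$. The first step already has a hole: you assert that the CM-point value ``can be written, via the algebraic $q$-expansion at the relevant CM point, as a sum over $\beta'$.'' The algebraic $q$-expansion is the evaluation at the Mumford object at a cusp, not at a CM abelian variety; there is no ``algebraic $q$-expansion at a CM point.'' One can salvage this by passing to the complex-analytic picture (where the value at $z_{a',b'}\in\mathbb{H}_{F'}$ really is the Fourier series evaluated there) and then comparing with the algebraic value via $\Omega_\infty$, but once you do that you must track how $\gamma$ acts simultaneously on the $\Sigma'$-indexed coordinates of $z_{a',b'}$, on the indices $\beta'\in S^n(K')$, and on the exponential factor $\sum_{\sigma'\in\Sigma'}\mathrm{tr}(\beta'^{\sigma'}z_{\sigma'})$, and verify these cancel. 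That is precisely the nontrivial bookkeeping you flagged as ``not conceptual,'' and it is where the paper's choice to avoid $q$-expansions pays off. A secondary confusion: you invoke ``Shimura's reciprocity law'' (Proposition \ref{reciprocityP}) to control the $\gamma$-action on the CM data, but that proposition governs the action of Frobenius in $\mathrm{Gal}(\overline{\mathbb{Q}}/K^*)$, not the action of $\Gamma=\mathrm{Gal}(K'/K)$ on $\mathcal{B}_{K'}$; the latter is handled in Section 6.2 (the bijection $\mathcal{B}_K\to\mathcal{B}_{K'}^\Gamma$), which is a different statement. Finally, your ingredients from Section~5 ($\Gamma$-equivariance of $n_{\mathfrak{a}}(\beta,m(A))$, of $Q$, and of $\epsilon^{(S)}$) are indeed used in the paper, but in the proof of the mod-$p$ congruence theorem for Eisenstein series, not in this proposition, which is an exact equality obtained more economically.
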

\begin{proof} We write the locally constant function $\epsilon$ as a
sum of finite characters. That is, $\epsilon=\sum_j c_j \chi_j$ with
$c_j \in \mathbb{Q}(\epsilon,\chi_j)$. Now the fact that
$\epsilon^\gamma = \epsilon$ implies that this sum is of the form
\[
\epsilon = \sum_{i}c_i \chi_i + \sum_k c_k\sum_{\gamma \in
\Gamma}\chi_k^\gamma,
\]
where for the first sum we have $\chi_i^\gamma = \chi_i$, that is
$\chi_i$ comes from base change from $K$. From the definition of the
Eisenstein series we have
\[
E_{\epsilon\psi}^{\nu'}(\underline{A^\gamma_{a'}} \times
\underline{A^\gamma_{b'}},j'_1 \times
j'_2)=\left(\frac{\Omega_p(Y,\Sigma)}{\Omega_\infty(Y,\Sigma)}\right)^kE(({a'}^\gamma,{b'}^\gamma),\epsilon\psi'),
\]
where $\phi$ is the sections that we have constructed in section 3.
But we know that
\[
E(x,\phi,\epsilon\psi) = \sum_{\alpha \in
A}\mu_{\epsilon\psi}(\alpha x) \varepsilon(\alpha
x)^{-s}|_{s=0},\,\,\,A:=P \setminus G.
\]
The invariance of $\epsilon$ with respect to the action of $\Gamma$
implies the invariance of $\mu_{\epsilon\psi'}$ with respect to the
action of $\Gamma$. Indeed we recall that for a character $\chi$ we
have defined $\mu^{(\chi)}:=\prod_{v \in \mathbf{h} \cup
\mathbf{a}}\mu_v^{(\chi_v)}$ supported on
$P(\mathbb{A}_F)D(\mathfrak{c}) \cap
P(\mathbb{A}_F)w_nP(\mathbb{A}_F)$, where for $x= pw \in
P(\mathbb{A}_F)D(\mathfrak{c}) \cap
P(\mathbb{A}_F)w_nP(\mathbb{A}_F)$
\[
\mu_v^{(\chi_v)}(x_v)=\left\{
                  \begin{array}{ll}
                    \chi_v(\lambda_0(p_v)), & \hbox{$v \in \mathbf{h}$ and $v \not | \,\,\mathfrak{c}$;} \\
                    \chi_v(\lambda_0(p_v))\chi_v(\lambda_0(w_v)), & \hbox{$v \in \mathbf{h}$ and $v | \mathfrak{c}$;} \\
                    \chi_v(\lambda_0(p_v))j_{w_v}^k(\mathbf{i})^{-1}, & \hbox{$v \in \mathbf{a}$;} \\
                    f_{\Phi_\chi}(x_v), & \hbox{$v | p$.}
                  \end{array}
                \right.
\]
Hence $\mu_{\epsilon\psi'}(x)=\sum_j c_j
\mu^{(\chi_j)\psi'}(x)=\sum_{i}c_i \mu^{(\chi_i)\psi'}(x)+ \sum_k
c_k\sum_{\gamma \in \Gamma}\mu^{(\chi_k^\gamma)\psi'}(x)$ (note that
${\psi'}^\gamma =\psi'$). It is clear that the ``trace'' part, that
is $\sum_k c_k\sum_{\gamma \in
\Gamma}\mu^{(\chi_k^\gamma)\psi'}(x)$, is invariant under the
operation of $\Gamma$. But also the other part is invariant thanks
to the fact that the characters $\chi_i$ (as well as $\psi'$) that
appear there are of the form $\tilde{\chi}_i \circ N_{K'/K}$, with
$\tilde{\chi}$ a finite order character of $G_K$. The only thing
that needs to be also remarked is that for $v | p$ that is inert in
$F'$ we have that
$f_{\Phi_{\chi_{i,v}}}(x^\gamma_v)=f_{\Phi_{\chi_{i,v}}}(x_v)$, but
this follows easily from the definition of the section and the
invariance of $\chi_i$. Indeed we recall that if we write
$\chi_v=(\chi_1,\chi_2)$
\[
f_{\Phi_{\chi_v}}(x)=\chi_2(det(x)) |det(x)|^s
\int_{GL_{2n}(F'_v)}\Phi_{\chi_v}((0,Z)x) \chi_1\chi_2(det(Z))
|det(Z)|^{2s} d^\times Z,
\]
where we recall that $\Phi_{\chi_v}(X,Y)=\Phi_\mu(X)
\widehat{\Phi_\nu}(Y)$. It is easy to see that the functions
$\Phi_\mu(X)$ and $\Phi_\nu(Y)$ are invariant with respect to
$\Gamma$ and hence also
\[
\widehat{\Phi_\nu}(Y)=\int_{M_n(F'_v)}\Phi_\nu(X)\psi(\transpose{X}Y)dX
\]
since we have (note that $\psi^\gamma=\psi$)
\[
\widehat{\Phi_\nu}(Y^\gamma)=\int_{M_n(F'_v)}\Phi_\nu(X)\psi(\transpose{X}
Y^\gamma)dX=\int_{M_n(F'_v)}\Phi_\nu(X^{\gamma^{-1}})\psi(\transpose{X}^{\gamma^{-1}}
Y)dX=\widehat{\Phi_\nu}(Y).
\]
From these observations we conclude that
$f_{\Phi_{\chi_{i,v}}}(x^\gamma_v)=f_{\Phi_{\chi_{i,v}}}(x_v)$.

Now we also remark that the invariance of $\varepsilon$ follows from
its very definition (see \cite[page 95]{Shimurabook1}). Hence we
have that
\[
E(x^\gamma,\epsilon\psi') = \sum_{\alpha \in
A}\mu_{\epsilon\psi'}(\alpha x^\gamma) \varepsilon(\alpha
x^\gamma)^{-s}=\sum_{\alpha \in
A}\mu_{\epsilon\psi'}(\alpha^{\gamma^{-1}} x)
\varepsilon(\alpha^{\gamma^{-1}} x)^{-s}.
\]
But since $\gamma$ induces an automorphism of $A$ we have that the
last summation is equal to $E(x,\epsilon\psi')$. That is, we
conclude that
\[
E(({a'}^\gamma,{b'}^\gamma),\epsilon\psi')=E(({a'},{b'}),\epsilon\psi'),
\]
and hence also the proposition.
\end{proof}

An immediate corollary of this proposition is,

\begin{cor}Assume that $\phi'$ is a locally constant function with
${\phi'}^\gamma = \phi$. Then we have that
\[
E^{(a'),\nu'}_{\phi'\psi'}(\underline{A_{a'}} \times
\underline{A_{b'}},j'_1 \times j'_2) =
E^{({a'}^{\gamma}),\nu'}_{\phi'\psi'}(\underline{A^\gamma_{a'}}
\times \underline{A^\gamma_{b'}},j'_1 \times j'_2),
\]
where we also note that $A^\gamma_{a'}=A_{{a'}^\gamma}$ and
similarly $A^\gamma_{b'}=A_{{b'}^\gamma}$. In particular, as we have
seen that $f'({a'}^\gamma)=f'(a')$ and $f'({b'}^\gamma)=f'(b')$ we
have that
\[
E^{(a'),\nu'}_{\phi'\psi'}(\underline{A_{a'}} \times
\underline{A_{b'}},j'_1 \times j'_2)f'(b')f'(a') =
E^{({a'}^{\gamma}),\nu'}_{\phi'\psi'}(\underline{A^\gamma_{a'}}
\times \underline{A^\gamma_{b'}},j'_1 \times
j'_2)f'({b'}^{\gamma})f'({a'}^{\gamma}).
\]
\end{cor}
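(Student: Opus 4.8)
The plan is to deduce the corollary formally from the preceding proposition together with the base-change invariance of the hermitian form $f'$ recorded in Theorem \ref{base-change}. First I would note that the $\Gamma$-invariance hypothesis imposed on $\phi'$ is precisely the hypothesis $\epsilon^\gamma = \epsilon$ needed in the previous proposition; applying that proposition with $\epsilon$ replaced by $\phi'$ therefore yields at once
\[
E^{(a'),\nu'}_{\phi'\psi'}(\underline{A_{a'}} \times \underline{A_{b'}},j'_1 \times j'_2) = E^{({a'}^{\gamma}),\nu'}_{\phi'\psi'}(\underline{A^\gamma_{a'}} \times \underline{A^\gamma_{b'}},j'_1 \times j'_2),
\]
where, as in the proposition, $\gamma$ permutes the representatives of $\mathcal{B}_{K'}$ so that $A^\gamma_{a'} = A_{{a'}^\gamma}$ and $A^\gamma_{b'} = A_{{b'}^\gamma}$ are well defined. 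This gives the first assertion.

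For the second assertion I would multiply both sides of the displayed identity by $f'(b')f'(a')$. By Theorem \ref{base-change}(1) the hermitian modular form $f'$ attached to the base-changed representation $\pi'$ satisfies $f'((g')^\gamma) = f'(g')$ for every $g' \in G(\mathbb{A}_{F',\mathbf{h}})$ whose component at $S'$ is trivial; since $a' = \imath(a)$ and $b' = \imath(b)$ come from $\mathcal{B}_K$, they have trivial component at $S'$, so $f'({a'}^\gamma) = f'(a')$ and $f'({b'}^\gamma) = f'(b')$. Substituting $f'(b')f'(a') = f'({b'}^\gamma)f'({a'}^\gamma)$ into the right-hand side then gives
\[
E^{(a'),\nu'}_{\phi'\psi'}(\underline{A_{a'}} \times \underline{A_{b'}},j'_1 \times j'_2)f'(b')f'(a') = E^{({a'}^{\gamma}),\nu'}_{\phi'\psi'}(\underline{A^\gamma_{a'}} \times \underline{A^\gamma_{b'}},j'_1 \times j'_2)f'({b'}^{\gamma})f'({a'}^{\gamma}),
\]
which is the claimed equality.

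I do not expect a genuine obstacle here: the corollary is a purely formal consequence of the two inputs already in hand. The only delicate point is bookkeeping — keeping the twist $(a')$ versus $({a'}^\gamma)$ in the superscript of the Eisenstein series in step with the $\gamma$-action on the abelian varieties $A_{a'},A_{b'}$ — but this is exactly what the statement of the preceding proposition has already organized, so nothing new is required. The substantive content (the $\Gamma$-invariance of the Eisenstein measure $\mu_{\epsilon\psi'}$, of the local sections $f_{\Phi}$ at the places above $p$, and of the auxiliary function $\varepsilon$) lies in the proof of that proposition, which I would simply invoke.
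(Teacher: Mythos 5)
Your proposal is correct in structure and matches the only reasonable reading of the paper, which supplies no explicit proof for this corollary: the first displayed identity is the preceding proposition applied with $\epsilon = \phi'$, and the second follows by multiplying through by $f'(a')f'(b')$ and invoking the $\Gamma$-invariance of $f'$ from Theorem \ref{base-change}(1). The two ingredients you cite are exactly the two the paper has set up for this purpose.

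There is, however, one small imprecision in your justification of the hypothesis $a'_{S'}=b'_{S'}=1$. You write that this holds ``since $a'=\imath(a)$ and $b'=\imath(b)$ come from $\mathcal{B}_K$.'' But if $a'$ actually lies in the image of $\imath$ then $\imath$ maps into $\mathcal{B}_{K'}^\Gamma$, so $(a')^\gamma = a'$ and the corollary is vacuous. The statement is invoked later precisely for representatives $a',b'\in\mathcal{B}_{K'}$ that are \emph{not} $\Gamma$-fixed (the point of the subsequent argument is that the $\Gamma$-orbits of size $p$ contribute $p$ equal terms, which vanish mod $p$). The correct justification for the triviality of the $S'$-components is the normalization of the double-coset representatives: just as the set $\mathcal{B}_K$ is chosen so that $b_v=1$ at all $v$ in $S$, the representatives in $\mathcal{B}_{K'}$ are chosen with trivial component at every place in $S'$, and Theorem \ref{base-change}(1) then applies to all of them. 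This does not change the shape of your argument, only the clause explaining why the hypothesis of the base-change theorem is met.
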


\begin{lem} We have the congruences
\[
(f,f) \frac{\int_{G}\phi \, \mu_F}{\Omega_p(Y,\Sigma)} \equiv
(f,f)\frac{\sum_{a,b \in \mathcal{B}_K}
E^{(a),\nu}_{\phi\psi^p}(\underline{A_a} \times
\underline{A_{b}},j_1 \times j_2)
f(b)f(a)}{\Omega_p(Y,\Sigma)^p}\times
\frac{\Omega_p(Y,\Sigma)^{\Phi_{\mathfrak{p}}}}{\Omega_p(Y,\Sigma)}\mod{p}
\]
\end{lem}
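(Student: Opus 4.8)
The plan is to unwind the definition of $\mu_F$ in terms of the Harris--Li--Skinner measure and then reduce everything to the reciprocity law on CM points, Proposition \ref{reciprocityP}. By the construction of $\mu^{HLS}_{\pi,\psi}$ and the definition of $\mu_F$ recalled just above, $\int_G \phi\,\mu_F$ is a sum over $\mathcal{B}_K$ of the values $E^{(a),\nu}_{\phi\psi}(\underline{A_a}\times\underline{A_b},j_1\times j_2)f(b)f(a)$, the evaluation being the $p$-adic avatar of the algebraic Siegel Eisenstein series at the indicated CM datum; by definition of the $p$-adic avatar this is $E^{(a),\nu}_{\phi\psi}(\underline{A_a}\times\underline{A_b},\omega(j_1\times j_2))$, and $\omega(j_1\times j_2)$ differs from a fixed algebraic differential by the $p$-adic period $\Omega_p(Y,\Sigma)$ introduced above. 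Thus the left-hand side of the lemma is, up to the appropriate power of $\Omega_p(Y,\Sigma)$ and the factor $(f,f)$, a $\Phi_{\mathfrak p}$-sensitive algebraic expression, and the task is to move from the weight-$k$ character $\phi\psi$ to the weight-$pk$ character $\phi\psi^p$ at the cost of one Frobenius.

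First I would establish the $q$-expansion congruence $E^{\nu}_{\phi\psi}\equiv Frob_p(E^{\nu}_{\phi\psi^p}) \bmod p$ of $p$-adic hermitian modular forms on $U(n,n)_{/F}$, and likewise for the twisted series $E^{(a),\nu}$. This is read off the explicit Fourier coefficients of section 3 (the formula for $E_\beta(m(A),\phi,\epsilon\psi)$) together with the congruences for the factors $Q$ and $n_{\mathfrak a}$ established in section 5 and the elementary fact that $x^{pk}\equiv x^{k}\bmod p$ for $p$-adic units $x$ (equivalently, the $p$-adic avatars of $\psi$ and $\psi^p$ agree modulo $p$ on $p$-adic units); the passage from $q$-expansions to $p$-adic modular forms is legitimate because $p$ is unramified in $F$, so the $q$-expansion principle holds for $U(n,n)_{/F}$.

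Then I would evaluate both sides at the CM datum $(\underline{A_a}\times\underline{A_b},j_1\times j_2)$ and sum over $a,b\in\mathcal{B}_K$. Since $\phi$ and $f$ are $\mathbb{Z}_p$-valued by hypothesis they are fixed by $Frob_p$ and by $\Phi_{\mathfrak p}$, and Proposition \ref{reciprocityP} (applicable since $\phi$, $\psi^p$ and $\nu$ give algebraic data, so $E^{(a),\nu}_{\phi\psi^p}$ may be taken to be defined over $K^*$) gives $Frob_p(E^{(a),\nu}_{\phi\psi^p})(\underline{A_a}\times\underline{A_b},j_1\times j_2)=\big(E^{(a),\nu}_{\phi\psi^p}(\underline{A_a}\times\underline{A_b},j_1\times j_2)\big)^{\Phi_{\mathfrak p}}$. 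The remaining point is the period bookkeeping: the $p$-adic avatar carries $\Omega_p(Y,\Sigma)$ through $\omega(j)$, the weight-$pk$ normalisation accounts for $\Omega_p(Y,\Sigma)^{p}$ against the weight-$k$ normalisation $\Omega_p(Y,\Sigma)$, and applying $\Phi_{\mathfrak p}$ turns $\Omega_p(Y,\Sigma)$ into $\Omega_p(Y,\Sigma)^{\Phi_{\mathfrak p}}$, which is exactly the correction factor $\Omega_p(Y,\Sigma)^{\Phi_{\mathfrak p}}/\Omega_p(Y,\Sigma)$ appearing in the statement; multiplying by $(f,f)$ to clear denominators then yields the claimed congruence modulo $p$. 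The hard part is precisely this last step — tracking how $\Omega_p(Y,\Sigma)$ transforms as the Frobenius is pushed through the $p$-adic avatar, and verifying that the weight-$k$--to--weight-$pk$ comparison of the second step really involves the matching characters $\phi\psi$ and $\phi\psi^p$ — both of which rest on the explicit Fourier-coefficient formulas and on the congruences of section 5.
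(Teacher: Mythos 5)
Your proposal takes a much more roundabout path than the paper does, and the central step you propose is both unnecessary for this lemma and not correct as stated. You want to first prove a $q$-expansion congruence $E^{\nu}_{\phi\psi}\equiv \mathrm{Frob}_p(E^{\nu}_{\phi\psi^p}) \bmod p$ on $U(n,n)_{/F}$ and then invoke the reciprocity law of Proposition \ref{reciprocityP}. But the Frobenius congruence that the paper actually establishes (the Theorem of section 5) is $\mathrm{Res}^{K'}_K(E'_{\epsilon\psi',\nu'}) \equiv \mathrm{Frob}_p(E_{\epsilon\circ\mathrm{ver}\,\psi^p,\nu^p}) \bmod p$, and its validity depends crucially on restricting from $K'$ to $K$: the restriction sums coefficients over $\mathrm{Tr}_{K'/K}$-fibres, and $\Gamma$-orbits of size $p$ annihilate themselves modulo $p$, leaving only the $p$-th powers. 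A one-field version over $F$ alone has no such mechanism: $\mathrm{Frob}_p$ sends a $q$-expansion $\sum_h c(h)q^h$ to one supported on $pL$, and there is no reason for the coefficients of $E^{\nu}_{\phi\psi}$ at indices $h\notin pL$ to vanish modulo $p$. So Step 1 of your plan fails, and Proposition \ref{reciprocityP} (which the paper uses in the corollary preceding this lemma and in the theorem that follows it) is not what drives this particular lemma.

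The paper's proof is a two-line algebraic manipulation carried out entirely at the level of the values at the fixed CM datum, with no Frobenius twist of the Eisenstein series at all. First, since $\psi$ is $\mathbb{Z}_p^\times$-valued, $\psi^p\equiv\psi \bmod p$, so for each $a,b$ one has $E^{(a),\nu}_{\phi\psi^p}(\underline{A_a}\times\underline{A_b},j_1\times j_2)\equiv E^{(a),\nu}_{\phi\psi}(\underline{A_a}\times\underline{A_b},j_1\times j_2) \bmod{\mathfrak m}$, where $\mathfrak m$ is the maximal ideal (the same Fermat argument takes care of the normalizing factors that depend on the weight $k$ versus $pk$). Second, $\Phi_{\mathfrak p}$ acts as the $p$-th power map modulo $\mathfrak m$, so $\Omega_p(Y,\Sigma)^{\Phi_{\mathfrak p}}/\Omega_p(Y,\Sigma)\equiv\Omega_p(Y,\Sigma)^{p-1} \bmod{\mathfrak m}$. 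Substituting these into the right-hand side immediately collapses it to $(f,f)\sum_{a,b}E^{(a),\nu}_{\phi\psi}(\cdots)f(b)f(a)/\Omega_p(Y,\Sigma)$, which is the left-hand side by the definition of $\mu_F$; finally both sides lie in $\mathbb Z_p$, so the $\mathfrak m$-congruence descends to one modulo $\mathfrak m\cap\mathbb Z_p = p\mathbb Z_p$. You should strip out the Frobenius-and-reciprocity machinery and replace it with this elementary reduction.
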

\begin{proof}Since $\psi^p \equiv \psi \mod{p}$ we have that
\[
(f,f)\sum_{a,b \in \mathcal{B}_K}
E^{(a),\nu}_{\phi\psi^p}(\underline{A_a} \times
\underline{A_{b}},j_1 \times j_2) f(b)f(a) \equiv (f,f)\sum_{a,b \in
\mathcal{B}_K} E^{(a),\nu}_{\phi\psi}(\underline{A_a} \times
\underline{A_{b}},j_1 \times j_2) f(b)f(a) \mod{\mathfrak{m}},
\]
where $\mathfrak{m}$ is the maximal ideal in $J_\infty$, which of
course contains $p$. Dividing by the unit $\Omega_p(Y,\Sigma)$ and
observing that
$\frac{\Omega_p(Y,\Sigma)^{\Phi_{\mathfrak{p}}}}{\Omega_p(Y,\Sigma)}\equiv
\frac{\Omega_p(Y,\Sigma)^p}{\Omega_p(Y,\Sigma)} \mod{\mathfrak{m}}$
we get the congruences
\[
\frac{(f,f)}{\Omega_p(Y,\Sigma)}\sum_{a,b \in \mathcal{B}_K}
 E^{(a)\nu}_{\phi\psi}(\underline{A_a} \times
\underline{A_{b}},j_1 \times j_2) f(b)f(a) \equiv
\]
\[
\frac{(f,f)}{\Omega_p(Y,\Sigma)^p}\sum_{a,b \in \mathcal{B}_K}
E^{(a),\nu}_{\phi\psi^p}(\underline{A_a} \times
\underline{A_{b}},j_1 \times j_2) f(b)f(a) \times
\frac{\Omega_p(Y,\Sigma)^{\Phi_{\mathfrak{p}}}}{\Omega_p(Y,\Sigma)}
\mod{\mathfrak{m}}
\]
Since both sides belong to $\mathbb{Z}_p$ we have that the
congruences are modulo $\mathfrak{m} \cap \mathbb{Z}_p = p$.
\end{proof}

\begin{thm} For $\phi'$ a locally
constant $\mathbb{Z}_p$-valued function on $G'$ such that
${\phi'}^\gamma = \phi'$ we have that
\[
(f,\check{f}) \frac{\int_{G}\phi \, \mu_F}{\Omega_p(Y,\Sigma)}
\equiv (f,\check{f}) \frac{\int_{G'}\phi' \,
\mu_{F'}}{\Omega_p(Y,\Sigma)^p}\mod{p},
\]
where $\phi:=\phi' \circ ver$. In particular if $(f,\check{f})  \in
\mathbb{Z}^{\times}_p$ then we have that
\[
\frac{\int_{G}\phi \, \mu_F}{\Omega_p(Y,\Sigma)} \equiv
\frac{\int_{G'}\phi' \, \mu_{F'}}{\Omega_p(Y,\Sigma)^p}\mod{p}
\]
\end{thm}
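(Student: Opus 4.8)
The plan is to unwind both sides into the finite sums of Eisenstein values at CM points recalled just above, match the two index sets through the diagonal embedding, kill the off-diagonal contributions modulo $p$, and conclude with the congruences between Siegel--Eisenstein series of $U(n,n)$ together with the reciprocity law of Proposition~\ref{reciprocityP}. First I would fix the auxiliary modulus $\mathfrak{c}$ so that the hypotheses of Proposition~\ref{bijection of CM points} hold; then $\imath\colon\mathcal{B}_K\stackrel{\sim}{\rightarrow}\mathcal{B}_{K'}^{\Gamma}$ is a bijection, and the group $\Gamma=\mathrm{Gal}(F'/F)$ (of order $p$) acts diagonally on $\mathcal{B}_{K'}\times\mathcal{B}_{K'}$, its fixed locus being exactly $\imath(\mathcal{B}_K)\times\imath(\mathcal{B}_K)$ while every other orbit has cardinality $p$.

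Next I would observe that the off-diagonal part of $\int_{G'}\phi'\,\mu_{F'}$ is divisible by $p$: by the Corollary on the $\Gamma$-invariance of the $\phi'$-twisted Eisenstein values (which uses ${\phi'}^{\gamma}=\phi'$, $(f')^{\gamma}=f'$ and $(\psi')^{\gamma}=\psi'$), the summand $E^{(a'),\nu'}_{\phi'\psi'}(\underline{A_{a'}}\times\underline{A_{b'}},j'_1\times j'_2)f'(b')f'(a')$ is constant along the $\Gamma$-orbit of $(a',b')$, so each free orbit contributes $p$ times a $p$-integral number, hence $\equiv0\pmod p$. Therefore $\int_{G'}\phi'\,\mu_{F'}\equiv\sum_{a,b\in\mathcal{B}_K}E^{(\imath a),\nu'}_{\phi'\psi'}(\underline{A_{\imath a}}\times\underline{A_{\imath b}},j'_1\times j'_2)f'(\imath b)f'(\imath a)\pmod p$, which by the Corollary with the $\Phi_{\mathfrak{p}}$-conjugate (itself assembled from the Eisenstein congruences of $U(n,n)$, the compatibility of $\Delta$ with the canonical trivializations contained in Lemma~\ref{algebraic q-expansion of pull-back }, the base-change relation of Theorem~\ref{base-change}, and Proposition~\ref{reciprocityP}) is $\equiv X^{\Phi_{\mathfrak{p}}}\pmod p$, where $X:=\sum_{a,b\in\mathcal{B}_K}E^{(a),\nu}_{\phi\psi^p}(\underline{A_a}\times\underline{A_b},j_1\times j_2)f(b)f(a)$ and $\phi=\phi'\circ ver$. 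It then remains to feed this into the preceding lemma, which gives $(f,\check f)\,\dfrac{\int_{G}\phi\,\mu_F}{\Omega_p(Y,\Sigma)}\equiv(f,\check f)\,\dfrac{X}{\Omega_p(Y,\Sigma)^{p}}\cdot\dfrac{\Omega_p(Y,\Sigma)^{\Phi_{\mathfrak{p}}}}{\Omega_p(Y,\Sigma)}\pmod p$: since $\Omega_p(Y,\Sigma)$ is a $p$-adic unit, one is reduced to the congruence $X\,\Omega_p(Y,\Sigma)^{\Phi_{\mathfrak{p}}}/\Omega_p(Y,\Sigma)\equiv X^{\Phi_{\mathfrak{p}}}\pmod p$, and this follows from the reciprocity law of Proposition~\ref{reciprocityP} describing the action of $\Phi_{\mathfrak{p}}$ on the $p$-adic periods, together with the congruence $\Omega_p(Y,\Sigma)^{\Phi_{\mathfrak{p}}}\equiv\Omega_p(Y,\Sigma)^{p}$ modulo the maximal ideal. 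All the quantities occurring lie in $\mathbb{Z}_p$ (recall that $(f,\check f)\mu^{HLS}$ is integral-valued), so the congruence modulo the maximal ideal of the ambient local ring descends to one modulo $p$; the ``in particular'' then follows on multiplying by $(f,\check f)^{-1}\in\mathbb{Z}_p^{\times}$.

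The step I expect to be the main obstacle is this last one: keeping accurate track of the powers of the $p$-adic period $\Omega_p(Y,\Sigma)$ and of how the arithmetic Frobenius $\Phi_{\mathfrak{p}}$ interacts with them, so that the Frobenius entering through the Eisenstein congruences (the operator $q\mapsto q^{p}$ on the Igusa tower) matches the one entering through the reciprocity law on CM points, and so that this is done compatibly with the normalization of $\mu_F$, which carries a single power of $\Omega_p(Y,\Sigma)$, versus the $\psi^{p}$-Eisenstein series, which carries $\Omega_p(Y,\Sigma)^{p}$; this bookkeeping is precisely what the preceding lemma packages. A subsidiary point to be checked is that the auxiliary modulus $\mathfrak{c}$ can be chosen to satisfy simultaneously the conductor hypothesis of Proposition~\ref{bijection of CM points} and the divisibility by the bad primes required elsewhere, so that the identification $\mathcal{B}_K\cong\mathcal{B}_{K'}^{\Gamma}$ is genuinely available.
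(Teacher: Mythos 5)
Your argument follows essentially the same path as the paper's proof: you use the bijection $\imath\colon\mathcal{B}_K\stackrel{\sim}{\to}\mathcal{B}_{K'}^{\Gamma}$ of Proposition~\ref{bijection of CM points}, the $\Gamma$-invariance of the $\phi'\psi'$-twisted Eisenstein values (so that non-fixed $\Gamma$-orbits on $\mathcal{B}_{K'}\times\mathcal{B}_{K'}$ contribute $\equiv 0\pmod p$), the Eisenstein congruences together with the reciprocity law of Proposition~\ref{reciprocityP} to identify the remaining diagonal sum with $\bigl((f,f)X\bigr)^{\Phi_{\mathfrak p}}$, and then the preceding lemma plus the congruence between the period ratio and its $p$-th power to close the loop — precisely the sequence of steps in the paper. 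The one place where your presentation is a bit looser than the paper's is the final period bookkeeping: writing the reduction as "$X\,\Omega_p^{\Phi_{\mathfrak p}}/\Omega_p\equiv X^{\Phi_{\mathfrak p}}\pmod p$" glosses over the fact that the clean way to extract the unit is to factor the quantity inside $\Phi_{\mathfrak p}$ as $\Omega_p(Y,\Sigma)^p\cdot\bigl((f,f)X/\Omega_p(Y,\Sigma)^p\bigr)$, observe that the second factor lies in $\mathbb{Z}_p$ and is hence $\Phi_{\mathfrak p}$-fixed, and then apply the congruence $\bigl(\Omega_p^{\Phi_{\mathfrak p}}/\Omega_p\bigr)^p\equiv\Omega_p^{\Phi_{\mathfrak p}}/\Omega_p\pmod p$ — which is exactly how the paper packages it; but you flag this bookkeeping as the delicate point, so the proposal is in substance correct and faithful to the paper.
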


\begin{proof} The fact that $\imath: \mathcal{B}_K \hookrightarrow
\mathcal{B}_{K'}^\Gamma$ is a bijection implies that
\[
(f',f')\int_{G'}\phi' \, \mu_{F'} \equiv \left((f,f) \sum_{a,b \in
\mathcal{B}_K} E^{(a),\nu}_{\phi\psi^p}(\underline{A_a} \times
\underline{A_{b}},j_1 \times j_2)
f(b)f(a)\right)^{\Phi_{\mathfrak{p}}} \mod{p}
\]
where we have used the fact that $(f,f) \in \mathbb{Z}_p$ and under
our assumptions $(f,f) \equiv (f',f') \mod{p}$. Dividing by the unit
$\Omega_p(Y,\Sigma)^p$ we get
\[
(f',f')\frac{\int_{G'}\phi' \, \mu_{F'}}{\Omega_p(Y,\Sigma)^p}
\equiv \frac{1}{\Omega_p(Y,\Sigma)^p}\left(\Omega_p(Y,\Sigma)^p
\frac{(f,f)\sum_{a,b \in \mathcal{B}_K}
E^{(a),\nu}_{\phi\psi^p}(\underline{A_a} \times
\underline{A_{b}},j_1 \times j_2)
f(b)f(a)}{\Omega_p(Y,\Sigma)^p}\right)^{\Phi_{\mathfrak{p}}} \mod{p}
\]
But
\[
\frac{(f,f)\sum_{a,b \in \mathcal{B}_K}
E^{(a),\nu}_{\phi\psi^p}(\underline{A_a} \times
\underline{A_{b}},j_1 \times j_2) f(b)f(a)}{\Omega_p(Y,\Sigma)^p}
\in \mathbb{Z}_p
\]
hence
\[
(f',f')\frac{\int_{G'}\phi' \, \mu_{F'}}{\Omega_p(Y,\Sigma)^p}
\equiv
\left(\frac{\Omega_p(Y,\Sigma)^{\Phi_{\mathfrak{p}}}}{\Omega_p(Y,\Sigma)}\right)^p
\frac{(f,f)\sum_{a,b \in \mathcal{B}_K}
E^{(a),\nu}_{\phi\psi^p}(\underline{A_a} \times
\underline{A_{b}},j_1 \times j_2) f(b)f(a)}{\Omega_p(Y,\Sigma)^p}
\mod{p}
\]
But
$\left(\frac{\Omega_p(Y,\Sigma)^{\Phi_{\mathfrak{p}}}}{\Omega_p(Y,\Sigma)}\right)^p
\equiv
\frac{\Omega_p(Y,\Sigma)^{\Phi_{\mathfrak{p}}}}{\Omega_p(Y,\Sigma)}
\mod{p}$ hence we obtain
\[
(f',f')\frac{\int_{G'}\phi' \, \mu_{F'}}{\Omega_p(Y,\Sigma)^p}
\equiv
\frac{\Omega_p(Y,\Sigma)^{\Phi_{\mathfrak{p}}}}{\Omega_p(Y,\Sigma)}\times
\frac{(f,f)\sum_{a,b \in \mathcal{B}_K}
E^{(a),\nu}_{\phi\psi^p}(\underline{A_a} \times
\underline{A_{b}},j_1 \times j_2) f(b)f(a)}{\Omega_p(Y,\Sigma)^p}
\mod{p}
\]
But we have already shown that
\[
\frac{\Omega_p(Y,\Sigma)^{\Phi_{\mathfrak{p}}}}{\Omega_p(Y,\Sigma)}\times
\frac{(f,f)\sum_{a,b \in \mathcal{B}_K}
E^{(a),\nu}_{\phi\psi^p}(\underline{A_a} \times
\underline{A_{b}},j_1 \times j_2) f(b)f(a)}{\Omega_p(Y,\Sigma)^p}
\equiv (f,f) \frac{\int_{G}\phi \, \mu_F}{\Omega_p(Y,\Sigma)}
\mod{p},
\]
which concludes the proof of the theorem.
\end{proof}

Using this last theorem and the Theorem \ref{RitterWeissCong} of the
Appendix we conclude the torsion congruences of theorems
\ref{torsion-congruences1} and \ref{torsion-congruences2}.

\section{Appendix.}

We introduce the following general setting. Let $p$ be an odd prime
number. We write $F$ for a totally real field and $F'$ for a totally
real Galois extension with $\Gamma:=Gal(F'/F)$ of order $p$. We
assume that the extension is unramified outside $p$. We write
$G_F:=Gal(F(p^\infty)/F)$ where $F(p^\infty)$ is the maximal abelian
extension of $F$ unramified outside $p$ (may be ramified at
infinity). We make the similar definition for $F'$. Our assumption
on the ramification of $F'/F$ implies that there exist a transfer
map $ver:G_F \rightarrow G_{F'}$ which induces also a map
$ver:\mathbb{Z}_p[[G_F]] \rightarrow \mathbb{Z}_p[[G_{F'}]]$,
between the Iwasawa algebras of $G_F$ and $G_{F'}$, both of them
taken with coefficients in $\mathbb{Z}_p$. Let us now consider a
motive $M/F$ (by which we really mean the usual realizations of it
and their compatibilities) defined over $F$ such that its $p$-adic
realization has coefficients in $\mathbb{Z}_p$. Then under some
assumptions on the critical values of $M$ and some ordinarity
assumptions at $p$ (to be made more specific later) it is
conjectured that there exists an element $\mu_F \in
\mathbb{Z}_p[[G_F]]$ that interpolates the critical values of $M/F$
twisted by characters of $G_F$. Similarly we write $\mu_{F'}$ for
the element in $\mathbb{Z}_p[[G_{F'}]]$ associated to $M/{F'}$, the
base change of $M/F$ to $F'$. Then the so-called torsion congruences
read
\[
ver(\mu_{F}) \equiv \mu_{F'} \mod{T},
\]
where $T$ is the trace ideal in $\mathbb{Z}_p[[G_F']]^\Gamma$
generated by the elements $\sum_{\gamma \in \Gamma} \alpha^\gamma$
with $\alpha \in \mathbb{Z}_p[[G_F']]$.

We now provide a sketch of proof of the following theorem following
the work of Ritter and Weiss \cite{RW}.

\begin{thm}\label{RitterWeissCong} A necessary and sufficient condition for the ``torsion
congruences'' to hold is the following:

For every locally constant $\mathbb{Z}_p$-valued function $\epsilon$
of $G_{F'}$ satisfying $\epsilon^\gamma = \epsilon$ for all $\gamma
\in \Gamma$ the following congruences hold
\[
\int_{G_F} \epsilon \circ ver(x)  \mu_{F}(x) \equiv \int_{G_{F'}}
\epsilon(x) \mu_{F'}(x) \mod{p}\mathbb{Z}_p
\]
\end{thm}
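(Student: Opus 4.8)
The plan is to transfer the problem to the group ring level and reduce it to a purely combinatorial statement about the Iwasawa algebras $\mathbb{Z}_p[[G_F]]$ and $\mathbb{Z}_p[[G_{F'}]]$, exactly as in the appendix of Ritter--Weiss \cite{RW}. First I would record the key structural facts about the transfer map: since $F'/F$ has degree $p$ and is ramified only at places above $p$, the map $ver:G_F\to G_{F'}$ and its linearization $ver:\mathbb{Z}_p[[G_F]]\to\mathbb{Z}_p[[G_{F'}]]$ are well defined, and $\Gamma=\mathrm{Gal}(F'/F)$ acts on $G_{F'}$ by conjugation with $\mathbb{Z}_p[[G_{F'}]]^{\Gamma}$ containing the image of $ver$. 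The trace ideal $T\subseteq \mathbb{Z}_p[[G_{F'}]]^{\Gamma}$ is generated by the norms $\sum_{\gamma\in\Gamma}\alpha^{\gamma}$, $\alpha\in\mathbb{Z}_p[[G_{F'}]]$.

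The main step is a duality/pairing argument. An element $\mu\in\mathbb{Z}_p[[G]]$ is completely determined by the collection of its ``integrals'' $\int_G \epsilon\, d\mu$ against locally constant $\mathbb{Z}_p$-valued functions $\epsilon$ on $G$ (equivalently, against finite characters $\epsilon$, extended $\mathbb{Z}_p$-linearly), because $\mathbb{Z}_p[[G]]=\varprojlim_U \mathbb{Z}_p[G/U]$ and $\mathbb{Z}_p[G/U]$ is $\mathbb{Z}_p$-dual to the space of $\mathbb{Z}_p$-valued functions on $G/U$ via $(g,\epsilon)\mapsto\epsilon(g)$. Thus the congruence $ver(\mu_F)\equiv\mu_{F'}\pmod T$ holds if and only if $\int_{G_{F'}}\epsilon\,d(ver(\mu_F))\equiv\int_{G_{F'}}\epsilon\,d\mu_{F'}\pmod{p\mathbb{Z}_p}$ for all $\epsilon$ in the annihilator (under this pairing, reduced mod $p$) of $T$. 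The point, which I would extract from \cite{RW}, is that the mod-$p$ annihilator of $T$ inside the space of $\mathbb{Z}_p$-valued functions on $G_{F'}$ is exactly the space of $\Gamma$-invariant functions: a function $\epsilon$ pairs trivially mod $p$ with every norm $\sum_{\gamma}\alpha^{\gamma}$ if and only if $\sum_{\gamma}\epsilon(g^{\gamma})\equiv 0$ is automatic — no, more precisely, pairing $\epsilon$ with $\sum_\gamma \alpha^\gamma$ gives $\sum_\gamma \int \epsilon^{\gamma^{-1}} d\alpha$, so $\epsilon\perp T$ mod $p$ exactly when $\sum_{\gamma\in\Gamma}\epsilon^{\gamma}\equiv 0\pmod p$; since $|\Gamma|=p$, for a $\Gamma$-invariant $\epsilon$ this sum is $p\epsilon\equiv 0$, and conversely one checks these span. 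Finally I would use the elementary compatibility $\int_{G_F}(\epsilon\circ ver)\,d\mu_F=\int_{G_{F'}}\epsilon\,d(ver(\mu_F))$, valid by definition of the pushforward of a measure along $ver$.

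Putting these together: the torsion congruences hold iff for all $\Gamma$-invariant locally constant $\mathbb{Z}_p$-valued $\epsilon$ on $G_{F'}$ one has $\int_{G_F}\epsilon\circ ver\,d\mu_F\equiv\int_{G_{F'}}\epsilon\,d\mu_{F'}\pmod{p\mathbb{Z}_p}$, which is exactly the claimed statement. I would carry out the steps in the order: (1) recall the $\mathbb{Z}_p[[G]]$--function duality and reduce ``$\equiv\pmod T$'' to pairing against functions killing $T$ mod $p$; (2) identify that space of functions with $\Gamma$-invariant functions, using $|\Gamma|=p$ and a spanning argument for $T$ (one writes $\alpha=\sum a_h h$ and computes $\sum_\gamma\alpha^\gamma$ explicitly on orbit representatives, distinguishing the free $p$-orbits from the fixed points, whose contribution is $p\cdot(\text{fixed part})$); (3) invoke the $ver$--compatibility of integrals to rewrite the left-hand side. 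The main obstacle is step (2): one must verify carefully that the mod-$p$ reduction of $T$ is precisely the kernel of ``integrate every $\Gamma$-invariant function,'' i.e. that there are no further relations — this is where the hypothesis $[\Gamma]=p$ (so that $\mathbb{F}_p[\Gamma]$ has the simple structure $\mathbb{F}_p[\Gamma]\cong\mathbb{F}_p[t]/(t-1)^p$ with socle and cosocle both one-dimensional) is essential, and it is the heart of the Ritter--Weiss reformulation. Everything else is formal manipulation of profinite group rings and measures.
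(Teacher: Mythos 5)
Your instinct to reduce the statement to a pairing/group-ring computation is sound, and it is close in spirit to the paper's argument, but there are two genuine gaps in the outline as written.

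First, the key claim in your step (2) is false as stated. The mod-$p$ annihilator of $T$ is not the $\Gamma$-invariant functions; it is the kernel of the norm $N_\Gamma(\epsilon)=\sum_{\gamma}\epsilon^{\gamma}$ modulo $p$, and since $\Gamma\cong C_p$ this kernel is the whole augmentation submodule, which is much larger than the invariants (e.g.\ $\epsilon^{\gamma_0}-\epsilon$ always lies in it). If one tried to use the wrong claim directly — ``$\mu\in T$ iff $\mu$ pairs to $0\pmod p$ against all $\Gamma$-invariant $\epsilon$'' — it would already fail for $\mu=g_1-g_1^{\gamma}$ with $g_1$ a non-fixed coset: this pairs to $0$ with every $\Gamma$-invariant function but does not lie in $T$ (it is not even $\Gamma$-invariant). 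What saves the argument is precisely what you do not mention: both $ver(\mu_F)$ and $\mu_{F'}$ are themselves $\Gamma$-invariant, so one is testing a $\Gamma$-invariant element against $\Gamma$-invariant functions, and in that restricted setting the pairing does detect membership in $T$ (on each finite quotient $G_{F'}/V$, membership of a $\Gamma$-invariant element in $T$ amounts to the coefficients at the fixed cosets being $\equiv 0\pmod p$, exactly what testing with $\Gamma$-invariant $\delta$-functions sees). For $ver(\mu_F)$ the $\Gamma$-invariance is formal, but for $\mu_{F'}$ it is a content-bearing statement — the paper's Lemma on $\Gamma$-invariance, which rests on $M/F'$ being base changed from $M/F$ together with the inductivity of $L$-functions and of the local $\varepsilon$- and Euler-factors at $p$. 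This is a necessary ingredient that the proposal omits.

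Second, your sketch works at a single finite level modulo $p$, which at best gives a congruence mod $T+p\Lambda$ rather than mod $T$. To get the sharper statement one needs the Ritter--Weiss inverse-limit finesse: $\mathbb{Z}_p[[G_F]]=\varprojlim_U \mathbb{Z}_p[G_F/U]/p^{m_F(U)}$ with $m_F(U)$ defined via the cyclotomic character, together with the inequality $m_F(U)\geq m_{F'}(V)-1$ for $U=ver^{-1}(V)$, so that $ver$ descends to a map of the filtered quotients and the congruence mod $T(V)$ at each finite level assembles to a congruence mod $T$ in the limit. Your outline does not address this passage at all, and it is not automatic from a single mod-$p$ pairing argument. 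The paper's proof sidesteps both issues by being completely explicit: it writes out the $V$-component of each measure as $\sum_y L^*(\cdot,\delta^{(y)})\cdot y$, distinguishes fixed cosets (where the hypothesis of the theorem is invoked) from free $\Gamma$-orbits (where the $\Gamma$-invariance lemma produces a trace element), and passes to the limit. You have correctly identified the fixed-versus-free dichotomy as the heart of the matter, but the proposal as written would not close without the two supplements above.
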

As we said above we will prove the theorem following Ritter and
Weiss as in \cite{RW}. We start by recalling some of their notation
in (loc. cit.).

For a coset $x$ of an open subgroup $U$ of $G_F$ we set
\[
\delta^{(x)}(g)=\left\{
                 \begin{array}{ll}
                   1, & \hbox{$g \in x$;} \\
                   0, & \hbox{otherwise.}
                 \end{array}
               \right.
\]
We also for the given motive $M$ define the partial $L$-function
\[
L(M,\delta^{(x)},s)=\sum_{j}c_j L(M,\chi_j,s),
\]
where $\chi_j$ are finite order characters of $G_F/U$ such that
$\delta^{(x)}(g)=\sum_j c_j \chi_j(g)$ and $L(M,\chi_j,s)$ is the
standard twisted $L$-function of $M$ by $\chi_j$.

We call an open subgroup of $G_F$ admissible, if $\mathcal{N}_F(U)
\subset 1 + p\mathbb{Z}_p$, and define $m_F(U) \geq 1$ by
$\mathcal{N}_F(U)= 1 + p^{m_F(U)}\mathbb{Z}_p$. Here
$\mathcal{N}_F:G_F \rightarrow \mathbb{Z}_p$ stands for the
cyclotomic character. The following lemma is proved in \cite{RW}

\begin{lem} $\mathbb{Z}_p[[G_F]]$ is the inverse limit of the system
$\mathbb{Z}_p[G_F/U]/p^{m_F(U)}\mathbb{Z}_p[G_F/U]$ with $U$ running
over the cofinal system of admissible open subgroups of $G_F$.
\end{lem}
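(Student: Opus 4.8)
The plan is to identify $\mathbb{Z}_p[[G_F]]$ with $\varprojlim_U \mathbb{Z}_p[G_F/U]/p^{m_F(U)}\mathbb{Z}_p[G_F/U]$ where $U$ ranges over admissible open subgroups, by exhibiting a natural map and showing it is an isomorphism. Recall that, by definition, $\mathbb{Z}_p[[G_F]] = \varprojlim_{U} \mathbb{Z}_p[G_F/U]$ where $U$ runs over \emph{all} open subgroups of $G_F$ (equivalently, all open normal subgroups, since $G_F$ is abelian here). The first observation is that the admissible subgroups form a cofinal system: given any open $V \subseteq G_F$, since $\mathcal{N}_F(G_F)$ is an open subgroup of $\mathbb{Z}_p^\times$ and $\mathcal{N}_F$ is continuous, $\mathcal{N}_F(V)$ is open in $\mathbb{Z}_p^\times$, hence contains $1 + p^N\mathbb{Z}_p$ for some $N$; then $U := V \cap \mathcal{N}_F^{-1}(1+p^N\mathbb{Z}_p)$ is an open subgroup contained in $V$ with $\mathcal{N}_F(U) \subseteq 1 + p\mathbb{Z}_p$, i.e. admissible. (One uses here that $p$ is odd so that $1 + p\mathbb{Z}_p$ is a pro-$p$ group with no torsion obstruction, and that $\mathbb{Z}_p^\times \cong \mu_{p-1} \times (1+p\mathbb{Z}_p)$.) Consequently $\mathbb{Z}_p[[G_F]] = \varprojlim_{U \text{ admissible}} \mathbb{Z}_p[G_F/U]$, and it remains only to compare this with the system in which one additionally reduces mod $p^{m_F(U)}$.

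Next I would construct the comparison map. There is an obvious projection $\varprojlim_{U} \mathbb{Z}_p[G_F/U] \to \varprojlim_{U} \mathbb{Z}_p[G_F/U]/p^{m_F(U)}\mathbb{Z}_p[G_F/U]$, where both limits run over admissible $U$, induced levelwise by the quotient maps; one must check this is compatible with the transition maps, i.e. that for admissible $U' \subseteq U$ the natural map $\mathbb{Z}_p[G_F/U'] \to \mathbb{Z}_p[G_F/U]$ carries $p^{m_F(U')}\mathbb{Z}_p[G_F/U']$ into $p^{m_F(U)}\mathbb{Z}_p[G_F/U]$. This holds because $U' \subseteq U$ forces $\mathcal{N}_F(U') \subseteq \mathcal{N}_F(U)$, hence $1 + p^{m_F(U')}\mathbb{Z}_p \subseteq 1 + p^{m_F(U)}\mathbb{Z}_p$, so $m_F(U') \geq m_F(U)$; since the ring map is $\mathbb{Z}_p$-linear it scales the ideal $p^{m_F(U')}(\cdot)$ into $p^{m_F(U)}(\cdot)$. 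So the comparison map is well-defined; it is clearly a ring homomorphism.

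For injectivity: suppose $(x_U)_U$ lies in the kernel, so $x_U \in p^{m_F(U)}\mathbb{Z}_p[G_F/U]$ for every admissible $U$. Fix an admissible $U$ and $n \geq 1$; I claim $x_U \in p^n \mathbb{Z}_p[G_F/U]$. Choose an admissible $U' \subseteq U$ with $m_F(U') \geq n$ (possible: shrink $U$ inside $\mathcal{N}_F^{-1}(1 + p^n\mathbb{Z}_p)$ as in the cofinality argument, so that $\mathcal{N}_F(U') \subseteq 1 + p^n\mathbb{Z}_p$, forcing $m_F(U') \geq n$). Then $x_{U'} \in p^{m_F(U')}\mathbb{Z}_p[G_F/U'] \subseteq p^n\mathbb{Z}_p[G_F/U']$, and applying the transition map $\mathbb{Z}_p[G_F/U'] \to \mathbb{Z}_p[G_F/U]$ (which sends $x_{U'}$ to $x_U$ by compatibility) shows $x_U \in p^n\mathbb{Z}_p[G_F/U]$. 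Since $n$ was arbitrary and $\mathbb{Z}_p[G_F/U]$ is a finitely generated free $\mathbb{Z}_p$-module, $\bigcap_n p^n\mathbb{Z}_p[G_F/U] = 0$, hence $x_U = 0$ for all $U$. For surjectivity: given a compatible system $(\bar{y}_U)_U$ with $\bar{y}_U \in \mathbb{Z}_p[G_F/U]/p^{m_F(U)}$, I would lift it to an element of $\varprojlim_U \mathbb{Z}_p[G_F/U]$. This is the step requiring the most care, and I expect it to be the main obstacle: one cannot lift each $\bar y_U$ independently and hope for compatibility. The cleanest route is to pass to a cofinal chain: since $G_F$ is a finitely generated $\mathbb{Z}_p$-module times a finite group (more precisely $G_F$ is a profinite abelian group which is the Galois group of $F(p^\infty)/F$, topologically finitely generated), one can select a decreasing sequence $U_1 \supseteq U_2 \supseteq \cdots$ of admissible open subgroups, cofinal, with $m_F(U_k) \to \infty$; it suffices to build a compatible lift along this chain. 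Then one constructs lifts $y_k \in \mathbb{Z}_p[G_F/U_k]$ inductively so that $y_{k+1} \mapsto y_k$ under the transition map and $y_k \equiv \bar y_{U_k} \bmod p^{m_F(U_k)}$: at each stage, choose any lift $\tilde y_{k+1}$ of $\bar y_{U_{k+1}}$, note that its image in $\mathbb{Z}_p[G_F/U_k]$ is congruent to $y_k$ mod $p^{m_F(U_k)}$ by compatibility of the $\bar y$'s, and correct $\tilde y_{k+1}$ by an element of $p^{m_F(U_k)}\mathbb{Z}_p[G_F/U_{k+1}]$ — which lies in the kernel of reduction mod $p^{m_F(U_k)}$ but maps onto the full $p^{m_F(U_k)}\mathbb{Z}_p[G_F/U_k]$ since the transition map is surjective — to arrange exact compatibility with $y_k$ while preserving the congruence mod $p^{m_F(U_{k+1})}$ (here one uses $m_F(U_{k+1}) \geq m_F(U_k)$, so adjusting by something in $p^{m_F(U_k)}(\cdot)$ is compatible only after a further correction mod $p^{m_F(U_{k+1})}$; the bookkeeping is routine but must be done honestly). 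The resulting $(y_k)$ defines an element of $\varprojlim_k \mathbb{Z}_p[G_F/U_k] = \mathbb{Z}_p[[G_F]]$ mapping to $(\bar y_U)$. This completes the identification and proves the lemma.
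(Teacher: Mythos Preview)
The paper does not prove this lemma; it simply cites Ritter--Weiss \cite{RW}. Your strategy --- cofinality of admissible subgroups, construction of the comparison map, injectivity, surjectivity --- is the natural one, and the first three steps are correct. The surjectivity argument, however, has a genuine gap at exactly the point you flag as ``routine bookkeeping''. In the inductive step you correct $\tilde y_{k+1}$ by an element of $p^{m_F(U_k)}\mathbb{Z}_p[G_F/U_{k+1}]$ to force $\pi(y_{k+1})=y_k$; but since $m_F(U_{k+1})\geq m_F(U_k)$ may be strict, this correction can destroy the congruence $y_{k+1}\equiv \bar y_{U_{k+1}}\pmod{p^{m_F(U_{k+1})}}$. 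Any ``further correction'' restoring that congruence is by an element of $p^{m_F(U_{k+1})}\mathbb{Z}_p[G_F/U_{k+1}]$, whose image in $\mathbb{Z}_p[G_F/U_k]$ has no reason to vanish --- so it re-breaks the compatibility with $y_k$. The two constraints fight each other and the naive induction does not close.

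There are two clean fixes. The quickest uses profiniteness: the kernels $I_U:=\ker\big(\mathbb{Z}_p[[G_F]]\to \mathbb{Z}_p[G_F/U]/p^{m_F(U)}\big)$, for $U$ admissible, form a neighborhood basis of $0$ in the profinite ring $\mathbb{Z}_p[[G_F]]$, because for any open $V$ and any $n$ one can find an admissible $U\subseteq V$ with $m_F(U)\geq n$ (exactly the construction you used for injectivity); hence $\mathbb{Z}_p[[G_F]]=\varprojlim_U \mathbb{Z}_p[[G_F]]/I_U$, which is the claim. Alternatively, keep your sequential setup but replace the inductive lift by a limit: for each $N$ lift $\bar y_{U_N}$ arbitrarily to $z^{(N)}\in \mathbb{Z}_p[G_F/U_N]$ and project down to $z^{(N)}_k\in \mathbb{Z}_p[G_F/U_k]$ for $k\leq N$; compatibility of the $\bar y$'s gives $z^{(N')}_k-z^{(N)}_k\in p^{m_F(U_N)}\mathbb{Z}_p[G_F/U_k]$ for $N'\geq N\geq k$, so each sequence $(z^{(N)}_k)_N$ is $p$-adically Cauchy in the complete module $\mathbb{Z}_p[G_F/U_k]$, and the limits $y_k$ form a compatible lift of $(\bar y_U)$.
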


We now assume that the motive is critical and satisfy the usual
ordinarity assumption at $p$. Moreover we assume that its $p$-adic
realization has coefficients in $\mathbb{Z}_p$. Then conjecturally
there exits a measure $\mu_{F} \in \mathbb{Z}_p[[G_F]]$ such that
for any finite order character $\chi$ of $G_F$ we have
\[
\int_{G_F}\chi(g) \mu_F(g) = L^*(M,\chi) \in \mathbb{Z}_p[\chi],
\]
where $L^*(M,\chi,)$ involves the critical value $L(M,\chi,0)$ of
$M$ twisted by the finite order character $\chi$, some archimedean
periods related to $M$, a modification of the Euler factors above
$p$, $L_p(M,\chi,s)$, and finally some epsilon factors above $p$ of
the corresponding representation $M_p \otimes \chi$.

In the same spirit as above, if $\delta^{(x)}$ is the characteristic
function of a coset of an open subgroup $U$ we define
\[
L^*(M,\delta^{(x)}):=\sum_{j}c_j L^*(M,\chi_j).
\]
Then by the very definition of the element $\mu_F$ we have that its
image in $\mathbb{Z}_p[G_F/U]/ p^{m(U)}$ is given by
\[
\sum_{x \in G_F/U} L^*(M,\delta^{(x)}) \cdot x \mod{p^{m(U)}}.
\]

We finally need the following lemma, which is the analogues to Lemma
3 (2) of \cite{RW}.

\begin{lem}\label{Gamma-invariance} Let $y$ be a coset of a
$\Gamma$-stable admissible open subgroup of $G_{F'}$. Then
\[
L^*(M/F',\delta^{(y)}_{F'}) = L^*(M/F',\delta^{(y^\gamma)}_{F'}),
\]
for all $\gamma \in \Gamma$. In particular we have that $\mu_{F'}
\in \mathbb{Z}_p[[G_{F'}]]^\Gamma$.
\end{lem}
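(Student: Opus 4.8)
The plan is to reduce the statement to the case of finite order characters and then exploit that $M/F'$ is, by construction, the base change of $M/F$. First I would write $\delta^{(y)}_{F'} = \sum_j c_j \chi_j$ as a finite $\mathbb{Z}_p$-linear combination of finite order characters $\chi_j$ of $G_{F'}/U$, where $U$ is the given $\Gamma$-stable admissible open subgroup; then $\delta^{(y^\gamma)}_{F'} = \sum_j c_j \chi_j^\gamma$, where $\chi_j^\gamma(g) := \chi_j(\tilde\gamma^{-1} g \tilde\gamma)$ for a lift $\tilde\gamma$ of $\gamma$ to $\mathrm{Gal}(F'(p^\infty)/F)$, and by the definition of $L^*(M/F',\delta^{(\cdot)}_{F'})$ it suffices to prove $L^*(M/F',\chi^\gamma) = L^*(M/F',\chi)$ for every finite order character $\chi$ of $G_{F'}$.

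For this, recall that $\gamma \in \Gamma = \mathrm{Gal}(F'/F)$ induces an automorphism of $\mathrm{Spec}(F')$ over $\mathrm{Spec}(F)$, and that since $M/F' = (M/F)\otimes_F F'$ there is a canonical isomorphism $\gamma^*(M/F') \cong M/F'$ of motives over $F'$ compatible with all realizations and their comparison isomorphisms. Since $\gamma^*$ is a tensor functor and $\gamma^*(\chi) = \chi^\gamma$, tensoring gives an isomorphism $(M/F')\otimes \chi^\gamma \cong \gamma^*\big((M/F')\otimes\chi\big)$ respecting the Betti, de Rham and $\ell$-adic structures. Pulling back a motive over $F'$ along the field automorphism $\gamma$ merely relabels the places of $F'$ (it carries a place $w$ to $w^{\gamma}$ and identifies the local data at $w$ with the local data at $w^\gamma$), so it leaves invariant every quantity entering $L^*$: the completed twisted $L$-function, whence $L(M/F'\otimes\chi^\gamma,0) = L(M/F'\otimes\chi,0)$; the archimedean periods, which depend only on $M/F'$ and not on the twist; and the $p$-adic Euler-factor modification $L_p(M/F',\chi,s)$ together with the local epsilon factors above $p$, since $\gamma$ simply permutes the primes of $F'$ above $p$, pairing $(M/F')_w\otimes\chi_w$ with $(M/F')_{w^\gamma}\otimes\chi^\gamma_{w^\gamma}$ and thus permuting the corresponding local factors. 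Hence $L^*(M/F',\chi^\gamma) = L^*(M/F',\chi)$.

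Finally, to deduce $\mu_{F'}\in \mathbb{Z}_p[[G_{F'}]]^\Gamma$, one observes that the $\Gamma$-stable admissible open subgroups are cofinal among all admissible open subgroups: for any admissible $U$ the intersection $\bigcap_{\gamma\in\Gamma}U^\gamma$ is again admissible, open, and $\Gamma$-stable. For such a $U$, by the description of $\mu_{F'}$ in the finite layers recalled above, its image in $\mathbb{Z}_p[G_{F'}/U]/p^{m_{F'}(U)}$ equals $\sum_{y\in G_{F'}/U}L^*(M/F',\delta^{(y)}_{F'})\cdot y$; applying $\gamma$ permutes the cosets by $y\mapsto y^\gamma$, and by the first part the coefficient of $y^\gamma$ equals that of $y$, so this element is $\Gamma$-fixed. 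Passing to the inverse limit over the cofinal system of $\Gamma$-stable admissible $U$ yields $\gamma(\mu_{F'}) = \mu_{F'}$. The main obstacle is the local bookkeeping at $p$ in the middle paragraph: one must check that the $p$-adic modification of the Euler factors and, in particular, the normalization of the local epsilon factors (Gauss sums) are genuinely permuted by $\gamma$ and not merely equal up to a $p$-adic unit, which is where one uses that $\gamma$ is an automorphism fixing the additive characters defining those constants, together with the running hypotheses ($F'/F$ unramified outside $p$, ordinarity at $p$) that put the relevant local representations into the shape for which these factors were defined.
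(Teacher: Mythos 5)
Your proof is correct and takes essentially the same route as the paper: reduce to finite-order characters, then use that $M/F'$ is the base change of $M/F$ to show each ingredient of $L^*$ is $\Gamma$-equivariant. The one cosmetic difference is that you phrase the invariance of the global part via $\gamma^*$ of the motive and relabeling of places, whereas the paper invokes the inductive formula $L(M/F',\chi,s)=L\big(M/F,\mathrm{Ind}^{F'}_F\chi,s\big)$ together with $\mathrm{Ind}^{F'}_F\chi = \mathrm{Ind}^{F'}_F\chi^\gamma$; these are two expressions of the same fact, and for the local factors above $p$ both arguments come down to the observation that $\gamma$ permutes the primes of $F'$ above $p$ and hence permutes the corresponding local modifications and $\epsilon$-factors. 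Your final paragraph supplying the cofinality of $\Gamma$-stable admissible subgroups fills in the ``in particular'' clause slightly more explicitly than the paper does, which is a nice touch but not a different argument.
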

\begin{proof}The key observation is that $M/F'$ is the base-change
of $M/F$. Obviously it suffices to show the statement for finite
order characters. That is to show
\[
L^*(M/F',\chi) = L^*(M/F',\chi^\gamma),
\]
for $\chi$ a finite order character of $G_{F'}$. We recall that
\[
L^*(M/F',\chi)=e_p(M,\chi) \mathcal{L}_p(M,\chi)
\frac{L_{(S,p)}(M/F',\chi,0)}{\Omega_\infty(M)},
\]
where $L_{(S,p)}(M/F',\chi,0)$ is the critical value at $s=0$ of the
$L$-function $L(M/F',\chi,s)$ with the Euler factors at $S$ and
those above $p$ removed, where $S$ a finite $\Gamma$-invariant set
of places of $F'$. Moreover
$\mathcal{L}_p(M,\chi):=\prod_{v|p}\mathcal{L}_v(M,\chi)$ is a
modification of the Euler factor at places above $p$ and
$e_p(M,\chi):=\prod_{v|p}e_v(M,\chi)$, the local epsilon factors
above $p$.

We now observe that we have that
$L_{(S,p)}(M/F',\chi,s)=L_{(S,p)}(M/F',\chi^\gamma,s)$ since by the
inductive properties of the $L$-functions
\[
L_{(S,p)}(M/F',\chi,s)=L(M/F,ind^{F'}_{F}\chi,s)=L_{(S,p)}(M/F,ind^{F'}_F\chi^\gamma,s)=L(M/F',\chi^\gamma,s).
\]
Similarly one shows that
$\mathcal{L}_p(M,\chi)=\mathcal{L}_p(M,\chi^\gamma)$ and
$e_p(M,\chi)=e_p(M,\chi^\gamma)$ as the right sides of the equations
are nothing more than permutations of the left sides of the equation
(again the fact that $M/F'$ is the base change of $M/F$ is needed).
\end{proof}

The following lemma has also been shown by Ritter and Weiss.

\begin{lem} If $V$ is an admissible open subgroup of $G_{F'}$ and
$U$ an admissible open subgroup of $G_F$ in $ver^{-1}(V)$, then
$m_{F}(U) \geq m_{F'}(V)-1$
\end{lem}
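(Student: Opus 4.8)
The plan is to deduce the inequality from a single formal input: the compatibility of the transfer map with the cyclotomic characters. Concretely, I would first establish that $\mathcal{N}_{F'}\circ ver=\mathcal{N}_F^{\,p}$ as maps $G_F\to\mathbb{Z}_p^\times$. This rests on two standard facts. Writing $j\colon G_{F'}\to G_F$ for the homomorphism induced by $F\subseteq F'$, the composite $j\circ ver$ is the $[F':F]=p$-th power map on $G_F$, since composing the transfer with the natural corestriction always yields multiplication by the index; and $\mathcal{N}_{F'}=\mathcal{N}_F\circ j$, because the cyclotomic character is given by the Galois action on $p$-power roots of unity and so does not depend on the base field. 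Hence $\mathcal{N}_{F'}(ver(g))=\mathcal{N}_F(j(ver(g)))=\mathcal{N}_F(g^p)=\mathcal{N}_F(g)^p$ for all $g\in G_F$. These are precisely the properties of $ver$ used in \cite{RW}, so I would simply quote them.

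Granted this, the proof is short. Since $U\subseteq ver^{-1}(V)$ we have $ver(U)\subseteq V$, hence $\mathcal{N}_{F'}(ver(U))\subseteq \mathcal{N}_{F'}(V)=1+p^{m_{F'}(V)}\mathbb{Z}_p$. On the other hand $\mathcal{N}_{F'}(ver(U))=\{x^p:x\in\mathcal{N}_F(U)\}=(1+p^{m_F(U)}\mathbb{Z}_p)^p$, and I would invoke the standard fact that, for $p$ odd and every $m\geq 1$, the $p$-th power map sends $1+p^m\mathbb{Z}_p$ isomorphically onto $1+p^{m+1}\mathbb{Z}_p$ --- immediate from $(1+p^m a)^p=1+p^{m+1}a+\binom{p}{2}p^{2m}a^2+\cdots$, in which every term past the first two has valuation strictly larger than $m+1$ (note $m_F(U)\geq 1$ because $U$ is admissible). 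Therefore $\mathcal{N}_{F'}(ver(U))=1+p^{m_F(U)+1}\mathbb{Z}_p$, and the inclusion $1+p^{m_F(U)+1}\mathbb{Z}_p\subseteq 1+p^{m_{F'}(V)}\mathbb{Z}_p$ forces $m_F(U)+1\geq m_{F'}(V)$, which is the assertion.

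The argument is essentially formal; the only genuine points are the corestriction identity $j\circ ver=(\cdot)^p$ and the unit-filtration computation $(1+p^m\mathbb{Z}_p)^p=1+p^{m+1}\mathbb{Z}_p$, which is where the hypothesis that $p$ be odd enters (for $p=2$ it already fails at $m=1$). The main thing to be careful about is that the transfer occurring here is a map between the specific Galois groups $G_F=\mathrm{Gal}(F(p^\infty)/F)$ and $G_{F'}=\mathrm{Gal}(F'(p^\infty)/F')$ rather than between full abelianized absolute Galois groups; one should check --- using that $F'/F$ is abelian and unramified outside $p$, so $F\subseteq F'\subseteq F(p^\infty)\subseteq F'(p^\infty)$, together with functoriality of the transfer --- that this $ver$ is compatible with the abstract group-theoretic transfer, so that $j\circ ver=(\cdot)^p$ really does hold on $G_F$.
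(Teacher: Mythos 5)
The paper does not prove this lemma; it simply states it and cites Ritter--Weiss \cite{RW}, so there is no internal proof to compare against. Your argument is correct and is, as far as I can tell, essentially the argument Ritter and Weiss themselves give. The whole content sits in the identity $\mathcal{N}_{F'}\circ ver=\mathcal{N}_F^{\,p}$, which you deduce correctly from the two standard facts $j\circ ver=(\cdot)^p$ (corestriction after transfer is multiplication by the index) and $\mathcal{N}_{F'}=\mathcal{N}_F\circ j$ (the cyclotomic character is intrinsic to the action on $\mu_{p^\infty}$, hence compatible with the inclusion $j$). From there the computation $(1+p^m\mathbb{Z}_p)^p=1+p^{m+1}\mathbb{Z}_p$ for $p$ odd and $m\geq 1$ is correct, and the inclusion $\mathcal{N}_{F'}(ver(U))\subseteq\mathcal{N}_{F'}(V)$ immediately yields $m_F(U)+1\geq m_{F'}(V)$.

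Two small points worth being explicit about, both of which you already flag. First, the hypothesis that $U$ is admissible is what guarantees $m_F(U)\geq 1$, and this is genuinely needed for the binomial estimate on $(1+p^ma)^p$; without it the lower-order terms $\binom{p}{2}p^{2m}a^2$ need not have valuation $>m+1$. Second, the identity $j\circ ver=(\cdot)^p$ is a statement about the abstract transfer $\mathrm{Gal}(\overline{F}/F)^{\mathrm{ab}}\to\mathrm{Gal}(\overline{F}/F')^{\mathrm{ab}}$, and one does need to check that it descends to the quotients $G_F$ and $G_{F'}$. This is exactly the point where the hypothesis that $F'/F$ is unramified outside $p$ enters: under the Artin map, $ver$ corresponds to the inclusion $\mathbb{A}_F^\times\hookrightarrow\mathbb{A}_{F'}^\times$ and $j$ to the norm, and the kernels cut out by ``unramified outside $p$'' are preserved precisely because, for $v\nmid p$, the inclusion carries $\mathcal{O}_v^\times$ into $\prod_{w\mid v}\mathcal{O}_w^\times$ (using that $v$ is unramified in $F'$) and the norm carries $\prod_{w\mid v}\mathcal{O}_w^\times$ into $\mathcal{O}_v^\times$. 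Spelling this out would close the one gap you acknowledge at the end; otherwise the proof is complete.
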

In particular, as it is explained in \cite{RW}, one can conclude
from this lemma that the map $ver:\mathbb{Z}_p[[G_F]] \rightarrow
\mathbb{Z}_p[[G_{F'}]]$ induces a map
\[
\varprojlim_{U} \mathbb{Z}_p[G_F/U]/p^{m_F(U)} \rightarrow
\varprojlim_{V,\Gamma-stable} \mathbb{Z}_p[G_{F'}/V]p^{m_{F'}(V)-1}.
\]

Now we are ready to prove Theorem \ref{RitterWeissCong} following
the strategy of Ritter and Weiss in \cite{RW}.

\begin{proof}(of Theorem \ref{RitterWeissCong}) We consider the components of $\mu_{F'}$ and
$ver(\mu_F)$ in $\mathbb{Z}_p[G_{F'}/V]p^{m_{F'}(V)-1}$ for a
$\Gamma$-stable admissible open subgroup $V$ of $G_{F'}$. We note
that $ver(\mu_F)$ is the image under the transfer map of the $U$-
component of $\mu_F$ where $U:=ver^{-1}(V) \subseteq G_F$ which
contains $N:=ker(ver)$. These components are the images of
\begin{enumerate}
\item $\sum_{y \in G_{F'}/V}L^*(M/F',\delta^{(y)})y$,
\item $\sum_{x \in G_F/U}L^*(M/F,\delta^{(x)})ver(x)$
\end{enumerate}
in $(\mathbb{Z}_p[G_{F'}/V]/p^{m_{F'}(V)-1})^\Gamma$. We now show
that the sums in (i) and (ii) are congruent modulo $T(V)$, the image
of the trace ideal in
$(\mathbb{Z}_p[G_{F'}/V]/p^{m_{F'}(V)-1})^\Gamma$. We consider the
following two case

\textbf{$y$ is fixed by $\Gamma$:} Then $\delta^{(y)}_{F'}$ is a
locally constant function as in the Theorem \ref{RitterWeissCong},
hence we have
\[
L^*(M/F',\delta^{(y)}_{F'}) \equiv L^*(M/F,\delta^{(y)}_F \circ ver)
\mod{p}.
\]

If $y=ver(x)$ then $\delta^{(y)}_{F'} \circ ver=\delta^{(x)}_F$.
Then the corresponding summands in (i) and (ii) cancel out modulo
$T(V)$ since $p\alpha$ is a $\Gamma$ trace whenever $\alpha$ is
$\Gamma$-invariant.

If $y \not \in im(ver)$ then $\delta^{(y)}_{F'} \circ ver =0$ and
then again by the theorem we have $L^*(M/F',\delta^{(y)}_{F'})
\equiv 0$ modulo $p$, hence the corresponding summand vanishes
modulo $T(V)$.

\textbf{$y$ is not fixed by $\Gamma$:}Then we have by Lemma
\ref{Gamma-invariance} that
\[
L^*(M/F',\delta^{(y)}_{F'})=L^*(M/F',\delta^{(y^\gamma)}_{F'}),
\]
for all $\gamma \in \Gamma$. That means that the $\Gamma$ orbit of
$y$ yields the sum
\[
L^*(M/F',\delta^{(y)}_{F'}) \sum_{\gamma \in \Gamma}y^\gamma,
\]
which is in $T(V)$.
\end{proof}

\end{document}